%% file: main.tex
\documentclass[10pt]{amsart}

\usepackage[T1]{fontenc}
\usepackage{marginnote}
\usepackage{xfrac}
\usepackage{amsmath}
\usepackage{amssymb}
\usepackage{amsthm}
\usepackage{latexsym}
\usepackage{pstricks}
\usepackage{amsbsy}
\usepackage{tikz-cd}
\usepackage{mathtools}
\usepackage{faktor}  
\usepackage{soul}
\usepackage{tikz}
\usetikzlibrary{cd}
\usepackage{float}
\usepackage{xcolor}
\usepackage[dvipsnames]{xcolor}

\tikzset{curve/.style={settings={#1},to path={(\tikztostart)
    .. controls ($(\tikztostart)!\pv{pos}!(\tikztotarget)!\pv{height}!270:(\tikztotarget)$)
    and ($(\tikztostart)!1-\pv{pos}!(\tikztotarget)!\pv{height}!270:(\tikztotarget)$)
    .. (\tikztotarget)\tikztonodes}},
    settings/.code={\tikzset{quiver/.cd,#1}
        \def\pv##1{\pgfkeysvalueof{/tikz/quiver/##1}}},
    quiver/.cd,pos/.initial=0.35,height/.initial=0}
\usetikzlibrary{calc}
\tikzset{tail reversed/.code={\pgfsetarrowsstart{tikzcd to}}}
\tikzset{2tail/.code={\pgfsetarrowsstart{Implies[reversed]}}}
\tikzset{2tail reversed/.code={\pgfsetarrowsstart{Implies}}}

\newcommand{\redbox}[1]{\fcolorbox{red}{white}{$\displaystyle#1$}}
\newcommand{\magentabox}[1]{\fcolorbox{magenta}{white}{$\displaystyle#1$}}
\newcommand{\bluebox}[1]{\fcolorbox{blue}{white}{$\displaystyle#1$}}

\usepackage{array}
\usepackage[all]{xy}
\usepackage{amscd}
\usepackage{mathrsfs}
\usepackage{txfonts}
\usepackage{amsfonts}
\usepackage{graphicx}
\usepackage{listings}
\usepackage{url}
\usepackage{bookmark}
\newtheorem{theorem}{Theorem}[section]
\newtheorem{lemma}[theorem]{Lemma}
\newtheorem{corollary}[theorem]{Corollary}
\newtheorem{proposition}[theorem]{Proposition}
\newtheorem{definition}[theorem]{Definition}
\newtheorem{defprop}[theorem]{Definition-Proposition}
\newtheorem{remark}[theorem]{Remark}
\newtheorem{example}[theorem]{Example}

\usepackage{marginnote}

\begin{document}
\include{macro}

\title{PRESILTING SEQUENCES FOR 0-AUSLANDER EXTRIANGULATED CATEGORIES}
\author[I. Nonis]{Iacopo Nonis}
\address{School of Mathematics \\ 
University of Leeds \\ 
Leeds, LS2 9JT \\ 
United Kingdom \\
}
\email{mmin@leeds.ac.uk}

\begin{abstract}
Let $\C$ be a reduced $0$-Auslander extriangulated category. Motivated by Pan--Zhu silting reduction for such categories, we introduce the notion of (signed) presilting sequences in $\C$ and establish a bijection between (signed) presilting sequences in $\C$ and (signed) $\tau$-exceptional sequences over $\Lambda = \End_{\C}(P)$, where $P$ is a projective generator of $\C$. This correspondence provides a new perspective on the Buan--Marsh bijection between signed $\tau$-exceptional sequences and ordered support $\tau$-rigid objects. Furthermore, we introduce a new category $\Mcluster(\C)$, called the $\tau$-cluster morphism category of $\C$, whose objects are certain extension-closed subcategories of $\C$ and whose morphisms are described in terms of signed presilting sequences. As an application, we recover the $\tau$-cluster morphism category of $\Lambda$ from $\Mcluster(\C)$.
\end{abstract}

\maketitle
\tableofcontents  

\section{Introduction}

The notion of extriangulated categories, introduced by Nakaoka and Palu in \cite{Nakaoka-Palu_extriangulated_categories}, provides a unified generalization of both exact and triangulated categories. Notable instances of extriangulated categories include extension-closed subcategories of triangulated categories. Moreover, the class of extriangulated categories is closed under taking ideal quotients formed by projective-injective objects \cite{Nakaoka-Palu_extriangulated_categories}, as well as certain relative structures \cite{TriangulatedQuotientRevisited, n-exangulated_categories_I, Sakai} and localizations \cite{Localization_of_extriangulated_categories}. 

A fundamental technique in silting theory for triangulated categories \cite{Keller-Vossieck, SiltinMutationInTriangulatedCat} is that of reduction \cite{Iyama-Yang}. Silting reduction studies collections of silting objects or subcategories that share a common direct summand, and produces a bijective correspondence with silting objects in a smaller category obtained by “factoring out” this common part. Recently, this notion has been extended to the broader setting of extriangulated categories; see \cite{HereditaryCotorsionPairsAndSiltingSubcatInExtriCat, TT, Siltin_reduction_in_exact_cat, PanZhu_SiltingReduction, borve2024silting}.  

Motivated by the additive categorification of cluster algebras \cite{ClusterCategories}, Adachi, Iyama, and Reiten introduced $\tau$-tilting theory in \cite{tau-tiling-theory}. Let $\Lambda$ be a finite dimensional algebra, and let $\tau$ denote the Auslander–Reiten translation in $\modd \Lambda$. Recall that a $\Lambda$-module $M$ is called $\tau$-\textit{rigid} if $\Hom_\Lambda(M,\tau M)=0$, and $\tau$-\textit{tilting} if, in addition, $\abs{M}=\abs{\Lambda}$, where $\abs{X}$ denotes the number of isomorphism classes of indecomposable direct summands of a $\Lambda$-module $X$. Denote by $\CLambda(\Lambda)$ the full subcategory of the bounded derived category $D^b(\modd\Lambda)$ given by $\modd\Lambda\oplus\modd\Lambda[1]$. An object $U = M\oplus P[1]$ in $\CLambda(\Lambda)$ is said to be \textit{support $\tau$-rigid} \cite{tauExcSeq_BM} if $M$ is $\tau$-rigid, $P$ is projective, and $\Hom_\Lambda(P,M) = 0$. Additionally, $U$ is called \textit{support $\tau$-tilting} if it is support $\tau$-rigid and $\abs{M}+\abs{P} = \abs{\Lambda}$. This corresponds to $(M,P)$ being a $\tau$-rigid (resp $\tau$-tilting) pair in the terminology of \cite{tau-tiling-theory}.

Reduction techniques for $\tau$-tilting modules were subsequently developed by Jasso \cite{Jasso_Reduction}. For a $\Lambda$-module $M$, let $M^{\perp_0}$ denote the full subcategory of $\modd \Lambda$ consisting of all modules $X$ such that $\Hom_{\Lambda}(M,X)=0$, and define ${}^{\perp_0}M$ dually. Given a basic $\tau$-rigid $\Lambda$-module $M$, Jasso introduced the \textit{$\tau$-perpendicular category} $J(M) = M^{\perp_0} \cap {}^{\perp_0}\tau M$, thereby extending the notion of perpendicular categories of Geigle--Lenzing from the hereditary setting. He showed that $J(M)$ is equivalent to the module category of a finite dimensional algebra $\Gamma_M$. The algebra $\Gamma_M$ is known as a \textit{$\tau$-tilting reduction}.
This perspective was subsequently broadened to the setting of support $\tau$-rigid objects in \cite{Beyond_tau-tilting_theory}. More precisely, for a support $\tau$-rigid object $U = M \oplus P[1]$ in $\CLambda(\Lambda)$, one defines $J(U) = {}^{\perp_0}\tau M \cap (P \oplus M)^{\perp_0}$. In this case, the equivalence between $J(U)$ and the module category of a finite dimensional algebra $\Gamma_U$ follows by a direct generalization of Jasso’s result which is the special case $P=0$ (see \cite[Thm. 4.12]{Beyond_tau-tilting_theory}). We refer to a projective object in a $\tau$-perpendicular subcategory $J(U)$ as \textit{relative projective}.

Exceptional sequences were first introduced in algebraic geometry \cite{Bondal, GR, Gorodentsev}, and were later studied in the representation theory of hereditary algebras by Crawley-Boevey and Ringel \cite{Boevey-ExcSeq, braid_group_action_on_exc_seq_Ringel}. An exceptional sequence $(M_1,\cdots, M_t)$ is a sequence of indecomposable exceptional objects, where an object is called exceptional if its endomorphism algebra is a division ring and it has no self-extensions in any degree. Moreover, the sequence is required to satisfy $\Hom(M_i,M_j) = 0 = \Ext^{\geq 1}(M_i,M_j)$ for $i>j$.

Motivated by $\tau$-tilting theory and Jasso’s reduction for $\tau$-tilting modules, Buan and Marsh \cite{tauExcSeq_BM} introduced $\tau$-exceptional sequences for a finite dimensional algebra $\Lambda$ as a generalization of exceptional sequences in the hereditary setting. A sequence $(X_1,\dots,X_t)$ of indecomposable $\Lambda$-modules is called \textit{$\tau$-exceptional} if $X_t$ is $\tau$-rigid in $\modd \Lambda$, and $(X_1,\dots,X_{t-1})$ is a $\tau$-exceptional sequence in $J(X_t)$. If $t=\abs{\Lambda}$, the sequence is said to be \textit{complete}. Complete $\tau$-exceptional sequences always exist for arbitrary finite dimensional algebras, in contrast to complete classical exceptional sequences, which in general exist only in the hereditary case. Moreover, over hereditary algebras, exceptional sequences and $\tau$-exceptional sequences coincide.

More generally, a \textit{signed $\tau$-exceptional sequence}, is a $\tau$-exceptional sequence in $\CLambda(\Lambda)$ in which relative projective objects can be assigned a sign (see Section \ref{Sec_signed_tau_exc_seq}). Signed $\tau$-exceptional sequences generalise classical signed exceptional sequences introduced by Igusa and Todorov in \cite{signed_exc_seq}. Signed exceptional sequences were originally defined to describe morphisms in the \textit{cluster morphism category}, a category defined by Igusa--Todorov whose objects are finitely generated wide subcategories of the module category of a hereditary algebra. Since then, cluster morphism categories have been actively studied in the representation theory of finite dimensional algebras. The definition of the cluster morphism category was generalised by Buan and Marsh \cite{A_category_of_wide_subcategories} to the case of $\tau$-tilting finite algebras, that is, algebras with only finitely many isoclasses of indecomposable $\tau$-rigid modules \cite{tauTiltingFiniteAlgebrasAnd_g-vectors}, and given the name of \textit{$\tau$-cluster morphism category} in \cite{Hanson-Igusa_tau-cluster_morphism_categories_and_picture_groups}. The definition was extended to arbitrary finite dimensional algebras by Buan and Hanson in \cite{tau-perpendicular_wide_subategories}. The objects of the $\tau$-cluster morphism category are $\tau$-perpendicular subcategories of the module category of a finite dimensional algebra, and morphisms can be described in terms of signed $\tau$-exceptional sequences. The $\tau$-cluster morphism category exhibits notable connections with topology. In particular, Hanson and Igusa \cite{Hanson-Igusa_tau-cluster_morphism_categories_and_picture_groups} showed that its classifying space admits the structure of a cube complex. Furthermore, in many cases, this classifying space is a $K(\pi,1)$ space, where $\pi$ is the picture group of the algebra \cite{igusa2016picture} (see also \cite{Pairwise_compatibility_for_2-simple_minded_collections, BarnardHanson2026}). Finally, the $\tau$-cluster morphism category has also been studied from both geometric \cite{tau_cluster_geometric, Kaipel_the_category_of_a_partitioned_fan} and silting-theoretic perspectives \cite{borve2021two}.

Gorsky, Nakaoka, and Palu \cite{GNP_HereditaryExtriangulated} introduced 0-Auslander extriangulated categories as a generalisation of the homotopy categories of two-term complexes of projectives, and more broadly, two-term categories of the form $\mathcal{R}*\mathcal{R}[1]$ associated with a rigid subcategory $\mathcal{R}$ in a triangulated category. Moreover, 0-Auslander extriangulated categories admit a mutation theory of maximal rigid objects which recovers many well-known mutations in representation theory, see \cite[Section 4]{GNP_HereditaryExtriangulated} and \cite[Section 5]{some_appl_of_extriang_cat}.  
Throughout this paper, let $k$ be a field, and let $(\C,\EE,\mathfrak{s})$ be a $k$-linear, Hom-finite, Krull–Schmidt, and reduced $0$-Auslander extriangulated category (see Definition \ref{0-Auslander-definition}) with $\proj \C = \add P$, and set $\Lambda = \End_\C(P)$. Motivated by the connection between silting theory in $\C$ and the $\tau$-tilting theory of $\Lambda$, as well as the compatibility between Pan–Zhu silting reduction and Jasso’s reduction for $\tau$-tilting modules \cite{PanZhu_SiltingReduction}, this paper aims to establish a link between the theory of $0$-Auslander extriangulated categories and the theory of (signed) $\tau$-exceptional sequences over $\Lambda$.

Let $\Lambda$ be a finite dimensional algebra. A tuple $(\U_1,\dots,\U_t)$ of indecomposable objects in $\CLambda(\Lambda)$ is called an \textit{ordered support $\tau$-rigid object} if $\bigoplus_{i=1}^t \U_i$ is a support $\tau$-rigid object in $\CLambda(\Lambda)$. If, in addition, $t=\abs{\Lambda}$, the tuple is called an \textit{ordered support $\tau$-tilting object}; see \cite{tauExcSeq_BM}.
Buan--Marsh established a bijection between ordered support $\tau$-rigid objects of length $t$ in $\CLambda(\Lambda)$ and signed $\tau$-exceptional sequences of length $t$ in $\CLambda(\Lambda)$. Let $U = M \oplus P[1]$ be a support $\tau$-rigid object in $\CLambda(\Lambda)$. The above bijection is built from a correspondence, denoted by $\Eps_U$, which associates an indecomposable support $\tau$-rigid object in $\CLambda(J(U))$ to each indecomposable support $\tau$-rigid object $X$ in $\CLambda(\Lambda)$, such that $X \oplus U$ is support $\tau$-rigid. This construction can be viewed as a reduction procedure for support $\tau$-rigid objects, extending Jasso’s $\tau$-tilting reduction.

Recall that an object $U \in \C$ is called \textit{presilting} (or \textit{rigid}) if $\EE(U,U)=0$. In \cite{PanZhu_SiltingReduction}, the authors established a bijection between presilting objects in $\C$ and support $\tau$-rigid objects over $\Lambda$, where $\Lambda = \End_{\C}(P)$ and $P$ is a projective generator of $\C$. More precisely, given a presilting object $X \in \C$, write $X = X' \oplus I_X$, where $I_X$ is the maximal injective direct summand of $X$, and let $\Omega I_X$ denote the corresponding projective object under the equivalence $\Omega \colon \inj \C \to \proj \C$ (see Section \ref{Section_0-Auslander}). Then, $H_P(X) = \C(P,X') \oplus \C(P,\Omega I_X)[1]$ defines a bijection between presilting objects in $\C$ and support $\tau$-rigid objects in $\CLambda(\Lambda)$ (see Proposition \ref{presilting_supp.tau-rigid}).

Let $U$ be a presilting object in $\C$. Inspired by the definition of (co-)Bongartz complements of presilting objects in $2$-Calabi--Yau triangulated categories \cite{Jasso_Reduction}, the notions of Bongartz and co-Bongartz complements of $U$ in $\C$ were introduced in \cite{GNP_HereditaryExtriangulated} (see Def.-Prop. \ref{BongartzCoBongartz_of_presilting}).  Let $B_U = B[U]\oplus U$ and $C_U = C[U]\oplus U$ denote the Bongartz and co-Bongartz completions of $U$ in $\C$, respectively. Denote by $U^\perp$ the full subcategory of $\C$ consisting of all objects $X$ such that $\EE(U,X)=0$, and define ${}^\perp U$ dually.  

Pan and Zhu \cite{PanZhu_SiltingReduction} considered the subcategory $\C_U = {}^\perp C_U \cap B_U^\perp$ and its additive quotient $\widetilde{\C}_U = \C_U/[U]$, where $[U]$ denotes the ideal of morphisms in $\C_U$ factoring through $\add U$. They showed that $\C_U$ is $0$-Auslander and that $\widetilde{\C}_U$ is a reduced $0$-Auslander extriangulated category with $\proj \widetilde{\C}_U = \add B[U]$. Moreover, $\C_U$ coincides with the subcategory ${}^\perp U \cap U^\perp$ considered in \cite{GNP_HereditaryExtriangulated} (see \cite[Remark 4.20(2)]{PanZhu_SiltingReduction}), and $\widetilde{\C}_U$ is extriangulated equivalent to the Verdier quotient $\C/{\text{thick}(\add U)}$ studied by B{\o}rve in \cite{borve2024silting}.  

The projection functor $\pi_U \colon \C_U \to \widetilde{\C}_U$ induces a bijection between $\ind\presilt \C_U \setminus \add U$ and $\ind\presilt \widetilde{\C}_U$, where $\ind\presilt\C_U$ and $\ind\presilt\widetilde{\C}_U$ denote the set of indecomposable presilting objects in $\C_U$ and $\widetilde{\C}_U$, respectively (see Lemma \ref{proj_functior_bij}). Furthermore, Pan and Zhu \cite[Subsection~4.4]{PanZhu_SiltingReduction} established a connection between $\widetilde{\C}_U$ and the $\tau$-tilting reduction $\Gamma_{H_P(U)}$ of $H_P(U)$. Namely, the functor $\widetilde{\C}_U(B_U,-)$ induces an equivalence $\widetilde{\C}_U / [C[U]] \simeq \modd \Gamma_{H_P(U)}$, where $\add C[U] = \inj \widetilde{\C}_U$. Composing this with the equivalence between $\modd \Gamma_{H_P(U)}$ and $J(H_P(U))$ yields a bijection $\widetilde{H}_{B_U}$ between presilting objects in  $\widetilde{\C}_U$ and support $\tau$-rigid objects in  $\CLambda(J(H_P(U))$ (see Lemma \ref{H_tilda}).  

Motivated by the compatibility between Pan--Zhu reduction and Jasso’s $\tau$-tilting reduction, as well as results in \cite{borve2021two}, we prove our first main theorem, which can be viewed as a compatibility theorem between Pan--Zhu (pre)silting reduction and the Buan--Marsh reduction of support $\tau$-rigid objects. This generalizes \cite[Theorem~4.22]{PanZhu_SiltingReduction} to the level of support $\tau$-rigid objects.

\begin{theorem}[{Theorem \ref{compatibility_thm}}]\label{introThm1}
    Let $U$ be a presilting object in $\C$. There is a commutative diagram of bijections given by 
    \begin{equation*}
    \begin{tikzcd}[ampersand replacement=\&,cramped]
    	{\ind\presilt \C_U\setminus\add U} \& {\{W\in\ind\CLambda(\Lambda)\setminus\add H_P(U)\mid W\oplus H_P(U) \text{ is supp. }\tau\text{-rigid}\}} \\
    	{\ind\presilt\widetilde{\C}_U} \& {\{Z\in\ind\CLambda(J(H_P(U)))\mid Z \text{ is supp. }\tau_{J(H_P(U))}\text{-rigid}\}.}
    	\arrow["{H_P}", from=1-1, to=1-2]
    	\arrow["{\pi_U}"', from=1-1, to=2-1]
    	\arrow["{\Eps_{H_P(U)}}", from=1-2, to=2-2]
    	\arrow["{\widetilde{H}_{B_U}}", from=2-1, to=2-2]
    \end{tikzcd}
    \end{equation*}
\end{theorem}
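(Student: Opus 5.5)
The plan is to reduce everything to commutativity. All four maps are already known to be bijections. $H_P$ restricts to a bijection between the displayed sets by Proposition \ref{presilting_supp.tau-rigid}: $X\in\C_U$ presilting is equivalent to $X\oplus U$ being presilting, hence to $H_P(X)\oplus H_P(U)$ being support $\tau$-rigid. $\pi_U$ is a bijection by Lemma \ref{proj_functior_bij}, $\widetilde{H}_{B_U}$ by Lemma \ref{H_tilda}, and $\Eps_{H_P(U)}$ by Buan--Marsh. So it suffices to show that
\[\Eps_{H_P(U)}(H_P(X))\cong \widetilde{H}_{B_U}(\pi_U X)\]
for every $X\in\ind\presilt\C_U\setminus\add U$. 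Write $H_P(U)=M\oplus Q[1]$, so $J(H_P(U))={}^{\perp_0}\tau M\cap(M\oplus Q)^{\perp_0}$. Recall that $\Eps$ is defined casewise. If $W=H_P(X)$ is a module not in $\mathrm{Gen}\,M$, then $\Eps(W)=f_M(W)$, the torsion-free part with respect to $\mathrm{Gen}\,M$. If $W\in\mathrm{Gen}\,M$ or $W$ is a shifted projective, then $\Eps(W)$ is a shifted relative projective $P'[1]$, read off from the Bongartz completion of $M\oplus W$ in the appropriate torsion class.

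On the other side, $\widetilde{H}_{B_U}(\pi_U X)$ is computed by splitting $\pi_U X=Y\oplus I$ with $I\in\add C[U]=\inj\widetilde{\C}_U$. The summand $Y$ goes to the module $\widetilde{\C}_U(B_U,Y)$, transported into $J(H_P(U))$ via $\modd\Gamma_{H_P(U)}\simeq J(H_P(U))$. The summand $I$ goes to the shift of the relative projective corresponding to $\Omega_{\widetilde{\C}_U}I\in\add B[U]$.

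The first step is a dichotomy for indecomposable $X$. I claim that $\pi_U X$ is non-injective in $\widetilde{\C}_U$ exactly when $W=H_P(X)$ is a module outside $\mathrm{Gen}\,M$. To prove it, I would use the defining $\EE$-triangles of the co-Bongartz completion, $U'\to C[U]'\to I'\dashrightarrow$ with $I'$ injective in $\C$, together with the characterisation of $\mathrm{Gen}\,M$ through right $\add U$-approximations: $W\in\mathrm{Gen}\,M$ iff $X$ admits a conflation $U_0\to X'' \to \cdots$ exhibiting $X$ in $\add(C[U]\oplus U\oplus \inj\C)$ modulo $U$. The second step handles the non-injective case. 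Here I would invoke Pan--Zhu \cite[Thm.~4.22]{PanZhu_SiltingReduction}, which states that $\widetilde{\C}_U(B_U,X)$ corresponds to $f_M(\C(P,X))$ under $\modd\Gamma\simeq J$. This is exactly the compatibility on modules, so the square commutes in this case.

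The third step, the injective case, is the main obstacle. Here $\pi_U X\in\add C[U]$ is indecomposable injective in $\widetilde{\C}_U$. I must show that the relative projective $\Omega_{\widetilde{\C}_U}(\pi_U X)\in\add B[U]$, pushed through $\widetilde{\C}_U(B_U,-)$ and then into $J(H_P(U))$, is the relative projective prescribed by Buan--Marsh. My first attempt would be to take the $\EE$-triangle $B'\to U''\to X\dashrightarrow$ in $\C_U$, with $U''$ projective in $\C_U$ (that is, $\add(U\oplus B[U])$), which realises $\Omega_{\widetilde{\C}_U}$. I would then apply $\C(P,-)$ to get an exact sequence in $\modd\Lambda$ and identify the resulting module with the Bongartz-type relative projective $f_M(P_W)$ from the Buan--Marsh construction. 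This covers both subcases: $W\in\mathrm{Gen}\,M$, and $W=Q'[1]$ with $X$ injective in $\C$, where one uses $\Omega\colon\inj\C\to\proj\C$. The delicate point is checking that the torsion-free parts match. For this I would use that $B_U$ maps to the Bongartz completion of $H_P(U)$ \cite{GNP_HereditaryExtriangulated}, so that $\widetilde{\C}_U(B_U,B[U])$ is the projective generator of $\modd\Gamma_{H_P(U)}$, which corresponds to the relative projectives $f_M(\text{Bongartz complement})$ of $J(H_P(U))$.
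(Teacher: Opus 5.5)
\textbf{There is a genuine gap in Step~3.} The reduction to a pointwise check is fine. So is the dichotomy in Step~1: it is Lemma~\ref{H_P(X)inGenH_P(U)} plus the fact that objects of $\inj\C$ lying in $\C_U$ are injective there. Your sketch of that proof is imprecise, though; for instance, the co-Bongartz triangle is $P\infl U''\defl C[U]$. The module case via Lemma~\ref{H_tilda} is also fine.

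Step~3 carries the real content of the theorem, and you leave it as a ``first attempt''. Your closing argument (that $\widetilde{\C}_U(B_U,B[U])$ is the projective generator of $\modd\Gamma_{H_P(U)}$) shows only that both composites send the injectives of $\widetilde{\C}_U$ onto the same \emph{set} of shifted relative projectives. Two bijections between the same finite set need not coincide, so this cannot tell you which relative projective a given indecomposable $X\in\add C[U]$ is sent to. You need a pointwise pairing statement:
\begin{enumerate}
\item[(i)] $\Omega_{\widetilde{\C}_U}X\cong B^X$, the summand of $B[U]$ attached to $X$ by the exchange conflation $B^X\infl\overline{U}\defl X$ of Proposition~\ref{exchange_in_C}. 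The paper gets this from (ET3)$^{\mathrm{op}}$ and \cite[Cor.~3.6]{Nakaoka-Palu_extriangulated_categories}, using $\overline{U}\in\add U$.
\item[(ii)] When $U$ has no injective summands, $H_P(B^X)$ is the partner of $H_P(X)$ in Proposition~\ref{exchange_in_Lambda}.
\end{enumerate}
Part (ii) is all of Section~\ref{Sec:Exchange}, and it is not just ``apply $\C(P,-)$''. For $X\in\inj\C$ one has $\C(P,X)=0$, so applying $\C(P,-)$ to $B^X\infl\overline{U}\defl X$ loses $X$ entirely. You must pass to the rotated conflation $\Omega X\infl B^X\defl\overline{U}$ and show that $\Omega X\to B^X$ is a minimal left $\add B[U]$-approximation, so as to match Proposition~\ref{exchange_in_Lambda}(ii); this is Lemma~\ref{compatibilityI_C}.

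\textbf{The plan also never deals with injective summands of $U$}, i.e.\ $H_P(U)=M\oplus Q[1]$ with $Q\neq0$. Proposition~\ref{exchange_in_Lambda} is stated only for a $\tau$-rigid module. In this paper, $\Eps_{M\oplus Q[1]}$ is defined as the composite \eqref{Eps_U_via_comm_diagram}. On shifted objects it applies $f_M$ and then, after transport to $\Gamma_M$, a second torsion-free functor $f_{P'}$ (Proposition~\ref{epsilonP[1]}), where $P'[1]$ comes from $I_U$. The one-step formula Step~3 aims at is
\[
\Eps_{H_P(U)}(H_P(X))\cong f_M\,\C(P,B^X)[1],
\]
with $B^X$ the exchange partner relative to all of $U$. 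This formula is true, but it has to be derived from that composite (or from a precisely cited direct description). The paper does this by writing $U=U'\oplus I_U$ and factoring the square through $\widetilde{\C}_{U'}$ (Lemmas~\ref{commutativity_of_piU+V}, \ref{comm_of_(a)}, \ref{comm_of_(d)}).

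The mechanism that accounts for the $f_{P'}$-step is the conflation $\Omega_{\widetilde{\C}_U}I_X\infl U_0\defl I_X$ in $\widetilde{\C}_{U'}$, whose deflation is a right $\add I_U$-approximation. Applying \cite[Prop.~3.15]{Nakaoka-Palu_extriangulated_categories} twice gives a conflation $\Omega_{\widetilde{\C}_{U'}}U_0\infl\Omega_{\widetilde{\C}_{U'}}I_X\defl\Omega_{\widetilde{\C}_U}I_X$. Applying $\widetilde{\C}_{U'}(B_{U'},-)$ then identifies $\widetilde{\C}_{U'}(B_{U'},\Omega_{\widetilde{\C}_U}I_X)$ with $f_{P'}\,\widetilde{\C}_{U'}(B_{U'},\Omega_{\widetilde{\C}_{U'}}I_X)$.

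Do not imitate Case~2 of the proof of Lemma~\ref{comm_of_(d)} here. It argues instead that $f_{P'}$ acts trivially, via $\widetilde{\C}_{U'}(I_U,I_X)=0$ and $\Omega_{\widetilde{\C}_{U'}}I_X\cong\Omega_{\widetilde{\C}_U}I_X$. Both claims fail for $\C=K^{[-1,0]}(\proj kA_2)$ with $P_2\subset P_1$, $U=\Sigma P_2$ (so $U'=0$) and $X=\Sigma P_1$. There $\C(\Sigma P_2,\Sigma P_1)\neq0$, and $\Omega_{\C}X=P_1$ while $\Omega_{\widetilde{\C}_U}X=\PP_{S_1}$. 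The two sides of the square still agree, both being $S_1[1]=f_{P_2}(P_1)[1]$.
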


Motivated by Theorem  \ref{introThm1} and the recursive definition of signed $\tau$-exceptional sequences, we propose the following recursive definition as an analogue of signed $\tau$-exceptional sequences in $0$-Auslander extriangulated categories.

\begin{definition}[Definition {\ref{def_signed_presilt_seq}}]\label{def_signed_presilt_intro}
    Let $t\in\{1,\cdots,\abs{P}\}$. A $t$-tuple of indecomposable objects $(U_1,\cdots,U_t)$ in $\C$ is called a \emph{signed presilting sequence} if 
    \begin{itemize}
        \item[(a)] The object $U_t$ is non-injective presilting in $\C$, or $U_t = \Sigma P_{U_t}\in \inj\C$ for an indecomposable projective object $P_{U_t}$ of $\C$; 
        \item[(b)] The sequence $(U_1,\cdots,U_{t-1})$ is a signed presilting sequence in $\widetilde{\C}_{U_t}$.
    \end{itemize}
    If $t = \abs{P}$, $(U_1,\cdots,U_t)$ is called a \emph{signed silting sequence}. A signed presilting sequence in which every object $U_{i}$ is non-injective in $\C_{(U_{i+1},\cdots, U_t)}$ where  $$ \C_{(U_{i+1},\cdots, U_t)}  = \C^{\C_{(U_{i+2},\cdots,U_t)}}_{U_{i+1}},$$ is called a \emph{presilting sequence}. 
\end{definition}

Our second main result establishes a bijection between signed presilting sequences in $\C$ and signed $\tau$-exceptional sequences in $\CLambda(\Lambda)$, which restricts to a bijection between presilting sequences in $\C$ and $\tau$-exceptional sequences in $\modd \Lambda$. In particular, this correspondence provides a new interpretation of the Buan--Marsh bijection between support $\tau$-rigid objects and signed $\tau$-exceptional sequences.

\begin{theorem}[{Proposition \ref{signed_presilt_seq-signed_tau-exc_seq_bijection}, Corollary \ref{presilt_seq-tau-exc_seq-bijection}, Theorem \ref{xi-H_P-varphi}}]\label{thm1.4intro}
        Let $t\in\{1,\cdots,\abs{P}\}$. There is a bijection 
    \[
        \xi_t :
        \vcenter{\hbox{$
        \left\{
        \begin{array}{c}
        \text{(signed) presilting sequences} \\
        \text{of length } t \text{ in } \C
        \end{array}
        \right\}
        $}}
        \;\longrightarrow\;
        \vcenter{\hbox{$
        \left\{
        \begin{array}{c}
        \text{(signed) $\tau$-exceptional sequences} \\
        \text{of length } t \text{ in } \mathcal{C}(\Lambda)
        \end{array}
        \right\},
        $}}
        \]
        given by $$ (U_1,\cdots, U_t)\mapsto \left(\widetilde{H}_{B_{\bigoplus_{i=2}^t U_i}}(U_1), \widetilde{H}_{B_{\bigoplus_{i=3}^t U_i}}(U_2),\cdots, \widetilde{H}_{B_{U_t}}(U_{t-1}), H_P(U_t)\right).$$ Moreover, the bijection $\xi_t$ fits into a commutative diagram of bijections
        \[\begin{tikzcd}[ampersand replacement=\&,cramped]
	\begin{array}{c} \vcenter{\hbox{$         \left\{         \begin{array}{c}         \text{ordered presilting objects} \\   \text{of length } t \text{ in } \C  \end{array}         \right\}         $}} \end{array} \& \begin{array}{c}  \vcenter{\hbox{$         \left\{         \begin{array}{c}         \text{ordered support $\tau$-rigid objects} \\         \text{of length } t \text{ in } \mathcal{C}(\Lambda)         \end{array}         \right\}        $}} \end{array} \\
	\begin{array}{c} \vcenter{\hbox{$         \left\{         \begin{array}{c}         \text{signed presilting sequences} \\         \text{of length } t \text{ in } \C         \end{array}         \right\}         $}} \end{array} \& \begin{array}{c}  \vcenter{\hbox{$         \left\{         \begin{array}{c}         \text{signed $\tau$-exceptional sequences} \\         \text{of length } t \text{ in } \mathcal{C}(\Lambda)         \end{array}         \right\}.        $}} \end{array}
	\arrow["{H_P}", from=1-1, to=1-2]
	\arrow[equals, from=1-1, to=2-1]
	\arrow["{\psi_t}", from=1-2, to=2-2]
	\arrow["{\xi_t}", from=2-1, to=2-2]
\end{tikzcd}\]
\end{theorem}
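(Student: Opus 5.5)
The plan is to argue by induction on $t$, using Theorem \ref{introThm1} as the inductive step and the Buan--Marsh recursion as a template. First I would fix the recursive description of $\psi_t$. By \cite{tauExcSeq_BM}, an ordered support $\tau$-rigid object $(W_1,\dots,W_t)$ is sent to
\[
\bigl(\psi^{J(W_t)}_{t-1}(\Eps_{W_t}(W_1),\dots,\Eps_{W_t}(W_{t-1})),\,W_t\bigr),
\]
where $\psi^{J(W_t)}_{t-1}$ is the same bijection computed in $\CLambda(J(W_t))$. This is legitimate because $J(W_t)\simeq\modd\Gamma_{W_t}$.

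On the $\C$ side, the top-left entry of the diagram is identified with signed presilting sequences. I would make this identification explicit recursively. Given an ordered presilting object $(U_1,\dots,U_t)$, each $U_i$ with $i<t$ lies in $\C_{U_t}\setminus\add U_t$. By Lemma \ref{proj_functior_bij}, $(\pi_{U_t}U_1,\dots,\pi_{U_t}U_{t-1})$ is an ordered presilting object in $\widetilde{\C}_{U_t}$. This uses that $\pi_{U_t}$ preserves and reflects presilting direct sums, which follows from Pan--Zhu reduction: presilting objects of $\C$ containing $U_t$ correspond to presilting objects of $\widetilde{\C}_{U_t}$. Since $\widetilde{\C}_{U_t}$ is again reduced $0$-Auslander with $\proj=\add B[U_t]$, the recursion applies there. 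Checking the base case (a) amounts to splitting on whether $U_t$ is injective, matching $H_P$ sending $\Sigma P_{U_t}$ to $P_{U_t}[1]$.

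Now define $\xi_t$ as in the statement and show it is a bijection by induction. For $t=1$, $\xi_1=H_P$ restricted to indecomposables, which is a bijection by Proposition \ref{presilting_supp.tau-rigid}; injective objects correspond to shifted projectives. For the inductive step, take a signed presilting sequence $(U_1,\dots,U_t)$. Apply the induction hypothesis in $\widetilde{\C}_{U_t}$, with projective generator $B[U_t]$ and $\widetilde{H}_{B_{U_t}}$ playing the role of $H_P$, together with the algebra $\Gamma_{H_P(U_t)}$ and $J(H_P(U_t))$. This gives a bijection between signed presilting sequences of length $t-1$ in $\widetilde{\C}_{U_t}$ and signed $\tau$-exceptional sequences of length $t-1$ in $\CLambda(J(H_P(U_t)))$. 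Pairing with $U_t\mapsto H_P(U_t)$ then yields $\xi_t$ directly from the recursive definitions on both sides.

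The main technical point is showing that the reduction functors compose correctly. Concretely, $\widetilde{H}$ computed inside $\widetilde{\C}_{U_t}$ at a later stage must agree with $\widetilde{H}_{B_{\bigoplus_{i\ge j}U_i}}$ computed from $\C$. This needs an iterated-reduction identification, $(\widetilde{\C}_{U})_{\,V}\simeq\widetilde{\C}_{U\oplus V}$ compatibly with the Bongartz completions, so that $B_{U\oplus V}$ is sent to the Bongartz completion of $V$ in $\widetilde{\C}_U$. I would prove this first, using the description $\C_U={}^\perp U\cap U^\perp$ and the fact that $\EE$ is unchanged under the quotient by $[U]$. Then the signed/unsigned restriction follows. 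Non-injectivity in $\C_{(U_{i+1},\dots,U_t)}$ corresponds to the image not being a shifted projective, that is, to landing in the module category, which gives the restriction to presilting and $\tau$-exceptional sequences.

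For commutativity, induct again. Apply Theorem \ref{introThm1} to $U=U_t$ and each $U_i$, obtaining $\Eps_{H_P(U_t)}(H_P(U_i))=\widetilde{H}_{B_{U_t}}(\pi_{U_t}U_i)$. Hence $\psi_t(H_P(U_1),\dots,H_P(U_t))$ equals $\bigl(\psi^{J}_{t-1}(\widetilde{H}_{B_{U_t}}(\pi U_1),\dots),H_P(U_t)\bigr)$. By induction in $\widetilde{\C}_{U_t}$, using the diagram there with $H_P$ replaced by $\widetilde{H}_{B_{U_t}}$, this is $\bigl(\xi^{\widetilde{\C}_{U_t}}_{t-1}(\pi U_1,\dots,\pi U_{t-1}),H_P(U_t)\bigr)=\xi_t(U_1,\dots,U_t)$, once more using the iterated-reduction compatibility.
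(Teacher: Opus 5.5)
Your argument is sound in outline but organised differently from the paper's. You run a genuine induction through the reduced category: in effect $\xi_t=\bigl(F^{-1}_{H_P(U_t)}\circ\xi^{\widetilde{\C}_{U_t}}_{t-1},\,H_P(U_t)\bigr)$. Bijectivity and commutativity with $\psi_t$ then become formal, because both sides obey the same recursion and Theorem \ref{compatibility_thm} is only needed for the single object $U_t$ at each level. The real work moves into recovering the closed formula.

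The paper never reduces inside $\widetilde{\C}_{U_t}$. It applies Theorem \ref{compatibility_thm} in $\C$ directly with $U=\bigoplus_{j>i}U_j$. It uses $J(U\oplus V)=J_{J(U)}(\Eps_U(V))$ from \cite[Thm.~4.3]{tau-perpendicular_wide_subategories} to get Lemma \ref{J(H_P)=J(HTilde)}, and hence well-definedness. It proves injectivity and surjectivity by peeling off the last entry, using bijectivity of $H_P$ and of each $\widetilde{H}_{B_X}$. Commutativity comes from $\Eps^{J(U)}_{\Eps_U(V)}\circ\Eps_U=\Eps_{U\oplus V}$ \cite[Thm.~6.12]{tau-perpendicular_wide_subategories}. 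The flat route has the closed formula built in; yours has the bijection built in.

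The step you flag as the main technical point is not covered by the argument you sketch. You identify $(\widetilde{\C}_U)_V$ with $\widetilde{\C}_{U\oplus V}$, and $B_{U\oplus V}$ with the Bongartz completion of $V$ in $\widetilde{\C}_U$. The latter is true: modulo $[U]$, both have additive closure equal to the Ext-projectives of $(U\oplus V)^\perp\cap{}^\perp U$. But this only matches the Hom-functor part of $\widetilde{H}$.

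$\widetilde{H}$ also contains a $\tau$-tilting reduction equivalence, and the two sides use different ones:
\begin{itemize}
\item the nested map is $F^{-1}_{H_P(U)}\circ\bigl(F^{\Gamma_{H_P(U)}}_{H_{B_U}(V)}\bigr)^{-1}$ composed with Hom, with values in $J_{J(H_P(U))}(\widetilde{H}_{B_U}(V))$;
\item the flat map is $F^{-1}_{H_P(U\oplus V)}$ composed with Hom, with values in $J(H_P(U\oplus V))$.
\end{itemize}
You must show that these subcategories coincide and that the two equivalences agree on objects. That is an iterated Jasso-reduction statement on the module side, which ``$\EE$ is unchanged modulo $[U]$'' does not reach.

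The clean fix uses the paper's own tools. Let $X$ be presilting in $\widetilde{\C}_{U\oplus V}$. Theorem \ref{compatibility_thm} (for $U\oplus V$ and for $U$) together with \cite[Thm.~6.12]{tau-perpendicular_wide_subategories} gives
\[
\widetilde{H}_{B_{U\oplus V}}(X)=\Eps_{H_P(U\oplus V)}(H_P(X))=\Eps^{J(H_P(U))}_{\widetilde{H}_{B_U}(V)}\bigl(\widetilde{H}_{B_U}(X)\bigr).
\]
Now apply Theorem \ref{compatibility_thm} in the reduced $0$-Auslander category $\widetilde{\C}_U$, whose projective generator is $B[U]$ and where $H_{B_U}$ plays the role of $H_P$. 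This gives $\widetilde{H}^{\widetilde{\C}_U}_{B_V}(X)=\Eps^{\Gamma_{H_P(U)}}_{H_{B_U}(V)}\bigl(H_{B_U}(X)\bigr)$.

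Apply $F^{-1}_{H_P(U)}$ to this. Since $F^{-1}_{H_P(U)}\circ H_{B_U}=\widetilde{H}_{B_U}$ (Lemma \ref{H_tilda}), and $\Eps^{J(H_P(U))}$ is obtained by transport along $F_{H_P(U)}$ (Section \ref{Sec_signed_tau_exc_seq}), the two sides agree. The equality of the target subcategories is Lemma \ref{J(H_P)=J(HTilde)}.

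With this inserted your induction closes. By that point, however, you are using everything the paper's shorter flat argument uses.
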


Inspired by the construction of the $\tau$-cluster morphism category $\Mcluster(\Lambda)$ and the bijection $\xi_t$ from Theorem \ref{thm1.4intro}, we define a new category, denoted by $\Mcluster(\C)$, which we call the \textit{$\tau$-cluster morphism category} of $\C$. The objects of $\Mcluster(\C)$ are subcategories of $\C$ of the form $\C_U$, where $U$ is a presilting object in $\C$, and the morphisms can be described in terms of signed presilting sequences in $\C$.

We further construct a functor $F \colon \Mcluster(\C) \to \Mcluster(\Lambda)$ which is dense, faithful, and a discrete fibration. Finally, we show how to recover the $\tau$-cluster morphism category $\Mcluster(\Lambda)$ from $\Mcluster(\C)$ as follows.

\begin{theorem}[{Proposition \ref{functor-tau-cluster-morph-cat}, Theorem \ref{F_dense_faithful_discrete_fibration}, Proposition \ref{discete_fibration_equivalence}}]\label{tau-cluster-intro}
    There exists a functor $F:\Mcluster(\C)\to\Mcluster(\Lambda)$ which is dense, faithful, and a discrete fibration. Let ${\Mcluster(\C)}/{\sim}$ be quotient category defined via an equivalence relation on the objects and morphisms of $\Mcluster(\C)$ that identifies objects and morphisms having the same image in $\Mcluster(\Lambda)$ under $F$ (see Definition \ref{congruence_cat_def}). Then the categories ${\Mcluster(\C)}/{\sim}$ and $\Mcluster(\Lambda)$ are equivalent.
\end{theorem}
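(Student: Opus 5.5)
The statement combines three ingredients: the construction of $F$, the verification of its properties, and the quotient equivalence. I would treat them in that order.

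\emph{Construction of $F$.} On objects I would set $F(\C_U) = J(H_P(U))$. This is well defined because $\C_U$ is determined by $U$ up to additive closure. I would check this using the Bongartz completion description $\C_U={}^\perp C_U\cap B_U^\perp$, or use the indexing by $\add U$ built into the definition of $\Mcluster(\C)$.

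A morphism $\C_U\to\C_{U'}$ in $\Mcluster(\C)$ is, after the analogue of the Buan--Hanson description, given by a presilting object of $\widetilde{\C}_U$. Equivalently, it is a signed presilting sequence in $\widetilde{\C}_U$, up to the identification provided by Theorem \ref{thm1.4intro}. I would send such a morphism to the support $\tau$-rigid object $\widetilde{H}_{B_U}(-)$ in $\CLambda(J(H_P(U)))$, which is a morphism $J(H_P(U))\to J(\cdot)$ in $\Mcluster(\Lambda)$. Two checks are needed here:
\begin{itemize}
\item[(i)] the target is $J(H_P(U\oplus V))$, where $V$ is a lift of the morphism to $\C_U$;
\item[(ii)] composition is preserved.
\end{itemize}
Check (i) follows from Theorem \ref{introThm1}: applied to $U\oplus V$, it gives $\Eps_{H_P(U)}(H_P(V))=\widetilde{H}_{B_U}(\pi_U V)$, and Buan--Marsh show that $J$ of the reduced object inside $J(H_P(U))$ equals $J(H_P(U)\oplus H_P(V))$. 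Check (ii) reduces, via the same commutative square, to the fact that composition in $\Mcluster(\Lambda)$ is defined through $\Eps$. Composition in $\Mcluster(\C)$ is defined through iterated Pan--Zhu reduction, so the square of Theorem \ref{introThm1}, applied in the reduced category $\widetilde{\C}_U$, transports one to the other. I expect this reduction-in-stages compatibility, $(\widetilde{\C}_U)_{\pi V}\simeq \widetilde{\C}_{U\oplus V}$ compatibly with the functors $\widetilde{H}$, to be the main obstacle. I would derive it from the definition $\C_{(U_{i+1},\dots,U_t)}=\C^{\C_{(\dots)}}_{U_{i+1}}$ together with Theorem \ref{thm1.4intro}, which already encodes that the iterated $\widetilde{H}$ agree with $\psi_t\circ H_P$.

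\emph{Properties of $F$.} Density holds because every $\tau$-perpendicular subcategory is $J(W)$ for some support $\tau$-rigid $W$, and by Proposition \ref{presilting_supp.tau-rigid} we have $W=H_P(U)$ for some presilting $U$. For faithfulness and the discrete fibration property, I would fix an object $\C_{U'}$ and a morphism $J(W)\to F(\C_{U'})$ in $\Mcluster(\Lambda)$. Such a morphism is a support $\tau$-rigid object of $\CLambda(F(\C_{U'}))$ with $F(\C_{U'})$ in the source role; for a discrete fibration the fixed object should be the codomain, so I would phrase it in whichever direction the paper's convention fixes. The bijection $\widetilde{H}_{B_{U'}}$ from Lemma \ref{H_tilda}, combined with Theorem \ref{introThm1}, then gives a unique presilting object of $\widetilde{\C}_{U'}$ lifting it. This lift yields a unique morphism in $\Mcluster(\C)$ with the prescribed end and image, which is exactly the discrete fibration property. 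Faithfulness follows from the same uniqueness, since two parallel morphisms with equal image are both lifts of that image.

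\emph{Quotient equivalence.} Finally, for ${\Mcluster(\C)}/{\sim}$, I would first check that the relation of Definition \ref{congruence_cat_def} is compatible with composition; this holds because $F$ is a functor. Then $F$ factors through a functor $\bar F\colon {\Mcluster(\C)}/{\sim} \to \Mcluster(\Lambda)$. This functor is injective on objects by construction and surjective on objects by density. It is full because of the discrete fibration lifting, using that any two lifts of the source are identified by $\sim$. It is faithful by the definition of $\sim$. Therefore $\bar F$ is an isomorphism, and in particular an equivalence.
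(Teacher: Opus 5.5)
Your outline is correct. Its skeleton matches the paper's: $F(\C_U)=J(H_P(U))$, $F(h^{\C_U}_V)=g^{J(H_P(U))}_{\widetilde{H}_{B_U}(V)}$, the target check via the compatibility square and the description of iterated $\tau$-perpendicular categories, density via $H_P$, and the passage to ${\Mcluster(\C)}/{\sim}$.

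It departs from the paper in two places.

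\emph{Faithfulness and lifting.} You use directly that $\widetilde{H}_{B_U}$, together with $\pi_U$, is a bijection on \emph{all} basic presilting objects of $\widetilde{\C}_U$. So each $g^{J(H_P(U))}_X$ has exactly one preimage with source $\C_U$, and faithfulness is the uniqueness half. The paper instead factors morphisms into irreducibles and argues through $\xi_t$, the map $\overline{\varphi}_t$ and \cite[Prop.~11.8]{A_category_of_wide_subcategories}. Your route is shorter, and it makes uniqueness of lifts (with a priori different codomains) explicit, which the paper does not spell out.

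\emph{Preservation of composition.} The paper never needs the stage-wise comparison you flag as the main obstacle. It applies Theorem \ref{compatibility_thm} in $\C$ to $U$ and to $U\oplus V$, and combines this with \cite[Thm.~6.12]{tau-perpendicular_wide_subategories}, namely $\Eps_{H_P(U)\oplus H_P(V)}=\Eps^{J(H_P(U))}_{\Eps_{H_P(U)}(H_P(V))}\circ\Eps_{H_P(U)}$. This gives $(\Eps^{J(H_P(U))}_{\widetilde{H}_{B_U}(V)})^{-1}(\widetilde{H}_{B_{U\oplus V}}(W))=\widetilde{H}_{B_U}(W)$ at once. Reading this identity off Theorem \ref{thm1.4intro} also works: compare the first entries of $\xi=\psi\circ H_P$ on $(W_j,V,U_1,\dots,U_a)$ and on $(W_j,U_1,\dots,U_a)$. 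However, this covers only indecomposable $V$, since that theorem concerns sequences of indecomposables. For general $V$ you must either induct on $\abs{V}$, writing $h^{\C_U}_V=h^{\C_{U\oplus V_1}}_{V'}\circ h^{\C_U}_{V_1}$ and using associativity in $\Mcluster(\Lambda)$, or simply cite Theorem 6.12.

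Three points need fixing.

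(1) \emph{Direction of the discrete fibration.} The paper's convention fixes the \emph{domain}: it lifts $g\colon F(C)\to D$. Only this version holds here, so your setup ``a morphism $J(W)\to F(\C_{U'})$'' and the hedge ``whichever direction'' are not harmless. In Section \ref{sec:Examples}, put $W=\PP_{\begin{smallmatrix}2\end{smallmatrix}}\oplus\PP_{\begin{smallmatrix}2\\1\end{smallmatrix}}$. Then $\C_{\PP_{\begin{smallmatrix}2\end{smallmatrix}}}$ is the only object over $J(2)$, and its only morphism into $\C_W$ is $h_{\PP_{\begin{smallmatrix}2\\1\end{smallmatrix}}}$. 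Hence only one of the two arrows $J(2)\to 0=F(\C_W)$ lifts with codomain $\C_W$. Your actual argument (presilting objects of $\widetilde{\C}_{U'}$, i.e.\ morphisms out of $\C_{U'}$) is the domain-fixed one, so state it that way.

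(2) \emph{Well-definedness on objects.} You need the converse of what you wrote: $\C_U=\C_{U'}$ must force $\add U=\add U'$. This holds because $\add U=\proj\C_U\cap\inj\C_U$, by Definition-Proposition \ref{C_U_is_0-Auslander} and \cite[Lemma 4.12]{PanZhu_SiltingReduction}. Indexing objects by $\add U$ is not an option, since objects are subcategories.

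(3) \emph{Composition in the quotient.} In ${\Mcluster(\C)}/{\sim}$, composable classes need not have composable representatives. Composition is therefore defined through the lift, and its well-definedness uses the discrete fibration property, not functoriality alone. With that in place, your conclusion is correct: $\bar F$ is bijective on objects, full by lifting and faithful by the definition of $\sim$, hence an isomorphism. This is cleaner than the paper's appeal to faithfulness of $F$ on possibly non-parallel representatives.
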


The paper is organised as follows. In Section \ref{Section_0-Auslander}, we recall the necessary terminology and results on ($0$-Auslander) extriangulated categories. Section \ref{Sec_signed_tau_exc_seq} is devoted to the theory of (signed) $\tau$-exceptional sequences. In particular, we recall the construction of the Buan--Marsh bijection between the indecomposable direct summands of the Bongartz complement and the co-Bongartz complement of a $\tau$-rigid $\Lambda$-module $M$.

Motivated by the exchange correspondence mentioned above, in Section \ref{Sec:Exchange} we construct a bijection between the indecomposable direct summands of the Bongartz and co-Bongartz complements of a presilting object $X$ in $\C$, and show that these constructions are compatible via the bijection $H_P$. In Section \ref{Sec:Compatibility}, we prove Theorem \ref{introThm1}. In Section \ref{Sec:presilting_seq}, we introduce the notion of a (signed) presilting sequence (see Definition \ref{def_signed_presilt_intro}) and prove Theorem \ref{thm1.4intro}. In Section \ref{sec:tau-cluster_morph_cat}, we introduce the $\tau$-cluster morphism category of $\C$, denoted by $\Mcluster(\C)$, and establish some of its basic properties. As an application, in Section \ref{sec:tau-cluster-of-Lambda} we recover the $\tau$-cluster morphism category of $\Lambda$, denoted by $\Mcluster(\Lambda)$, from $\Mcluster(\C)$ (see Theorem \ref{tau-cluster-intro}). We conclude with Section \ref{sec:Examples}, where we present an example illustrating the theory developed in this paper.

\subsection*{Future work} Buan, Hanson, and Marsh \cite{BHM_mutation} introduced a notion of right and left mutation (BHM mutation) for $\tau$-exceptional sequences over an arbitrary finite dimensional algebra $\Lambda$. In forthcoming work \cite{Nonis_mutation_presilting_sequences} the author develops a mutation theory for presilting sequences and proves that this new mutation is compatible with the BHM mutation of $\tau$-exceptional sequences via the map $\xi_t$.

\section{Background on extriangulated categories }\label{Section_0-Auslander}

This section collects some terminology and useful properties of extriangulated categories. For further details, we refer to the foundational work of Nakaoka and Palu \cite{Nakaoka-Palu_extriangulated_categories} or to Palu’s survey \cite{some_appl_of_extriang_cat}.

\begin{definition}
    Let $\C$ be an additive category, and let $\EE: \C^\mathrm{op}\times \C\to \mathrm{Ab}$ be an additive bifunctor, where $\mathrm{Ab}$ denotes the category of abelian groups. 
    \begin{itemize}
        \item[(a)] Let $A,B\in\C$. An element $\delta\in\EE(B,A)$ is called \emph{$\EE$-extension}. A zero element $0\in\EE(B,A)$ is called \emph{split} $\EE$-extension. Using the additivity of $\C$ and $\EE$, for $\delta\in\EE(B,A)$ and $\delta'\in\EE(B',A')$, we define the $\EE$-extension $\delta\oplus\delta'\in\EE(B\oplus B', A\oplus A')$ to be the image of $(\delta,0,0,\delta')$ under the natural isomorphism. 
        \item[(b)] Let $\delta\in\EE(B,A)$ be an $\EE$-extension, and let $f\in\C(A,A')$ and $g\in\C(B',B)$. By functoriality of $\EE$ there exist induced $\EE$-extensions $f_*\delta = \EE(f,B)( \delta)\in \EE(B,A')$ and $h^*\delta = \EE(h, A)(\delta)\in\EE(B',A)$. In particular, we have $\EE(h,f)(\delta) =h^*f_*\delta = f_*h^*\delta$ in $\EE(B',A')$. 
        \item[(c)] Let $f: A\to A'$ and $h: B\to B'$ be morphisms in $\C$, and let $\delta \in \EE(B,A)$ and $\delta'\in\EE(B',A')$ be  $\EE$-extensions. We call $(f, h): \delta\to\delta'$ a \emph{morphism} of $\EE$-extensions provided $f_*\delta = h^*\delta$.  
    \end{itemize}
\end{definition}

\begin{definition}
    Let $A$ and $C$ be objects in $\C$. We say that two sequences in $A\xrightarrow{a} B\xrightarrow{c} C$ and $A\xrightarrow{a'}B'\xrightarrow{c'} C$ in $\C$ are \emph{equivalent} if there exists an isomorphism $b: B\to B'$ making the following diagram commute 
    $$\begin{tikzcd}[ampersand replacement=\&, cramped]
	A \& B \& C \\
	A \& {B'} \& C. 
	\arrow["a", from=1-1, to=1-2]
	\arrow[equals, from=1-1, to=2-1]
	\arrow["b", from=1-2, to=1-3]
	\arrow["b"', from=1-2, to=2-2]
	\arrow[draw=none, "\cong", from=1-2, to=2-2]
	\arrow[equals, from=1-3, to=2-3]
	\arrow["{a'}", from=2-1, to=2-2]
	\arrow["{b'}", from=2-2, to=2-3]
\end{tikzcd}$$
We denote by $[A\xrightarrow{a}B\xrightarrow{c}C]$ the equivalent class of $A\xrightarrow{a}B\xrightarrow{c}C$.
\end{definition}

\begin{definition}
    Let $A,C$ be objects in $\C$. 
    \begin{itemize}
        \item[(a)] A \emph{realization} $\mathfrak{s}$ is a map that given an $\EE$-extension $\delta\in \EE(C,A)$ associates an equivalence class $\mathfrak{s}(\delta) = [A\xrightarrow{a}B\xrightarrow{b}C]$ such that the following condition holds:

        Let $\delta\in \EE(C,A)$ and $\delta'\in\EE(C',A')$ be $\EE$-extensions with $\mathfrak{s}(\delta) = [A\xrightarrow{a}B\xrightarrow{b}C]$ and $\mathfrak{s}(\delta') = [A'\xrightarrow{a'}B'\xrightarrow{b'}C']$. Then, for any morphism $(f,g):\delta\to\delta'$, there exists a morphism $g:B\to B'$ making the following diagram commute: 
        \[\begin{tikzcd}[ampersand replacement=\&,cramped]
        	A \& B \& C \\
        	{A'} \& {B'} \& {C'}.
        	\arrow["a", from=1-1, to=1-2]
        	\arrow["f"', from=1-1, to=2-1]
        	\arrow["b", from=1-2, to=1-3]
        	\arrow["g"', from=1-2, to=2-2]
        	\arrow["h", from=1-3, to=2-3]
        	\arrow["{a'}", from=2-1, to=2-2]
        	\arrow["{b'}", from=2-2, to=2-3]
        \end{tikzcd}\]
    \end{itemize}
    We say that $A\xrightarrow{a}B\xrightarrow{b}C$ \emph{realizes} $\delta$ if $\mathfrak{s}(\delta) = [A\xrightarrow{a}B\xrightarrow{b}C]$, and the triple $(f,g,h)$ \emph{realizes} $(f,h)$ if the above diagram commutes. 
    \item[(b)] A realization $\mathfrak{s}$ is \emph{additive} if: 
    \begin{itemize}
        \item[(i)] For any $A,B\in\C$, the realization of the split $\EE$-extension $0\in\EE(B,A)$ is given by $\mathfrak{s}(0) = 0$; 
        \item[(ii)] For $\delta\in\EE(B,A)$ and $\delta'\in\EE(B',A')$, the realization of $\delta\oplus\delta'$ is given by $\mathfrak{s}(\delta\oplus \delta ') = \mathfrak{s}(\delta)\oplus\mathfrak{s}(\delta')$. 
    \end{itemize}
\end{definition}

With the terminology established above, we are ready to state the main definition of this section. 

\begin{definition}[{\cite[Def. 2.12]{Nakaoka-Palu_extriangulated_categories}}]
    An \emph{extriangulated category} is a triple $(\C, \EE, \mathfrak{s})$ satisfying the following axioms: 
    \begin{itemize}
        \item[(ET1)] $\EE: \C^\mathrm{op}\times \C\to \mathrm{Ab}$ is an additive bifunctor; 
        \item[(ET2)] $\mathfrak{s}$ is an additive realization of $\EE$; 
        \item[(ET3)] Let $\delta\in \EE(C,A)$ and $\delta'\in\EE(C',A')$ be $\EE$-extensions realised by $A\xrightarrow{a} B\xrightarrow{b} C$ and $A'\xrightarrow{a'} B\xrightarrow{b'} C$, respectively. Then, for any commutative diagram 
        \[\begin{tikzcd}[ampersand replacement=\&,cramped]
        	A \& B \& C \\
        	{A'} \& {B'} \& {C'}
        	\arrow["a", from=1-1, to=1-2]
        	\arrow["f"', from=1-1, to=2-1]
        	\arrow["b", from=1-2, to=1-3]
        	\arrow["g"', from=1-2, to=2-2]
        	\arrow["{a'}"', from=2-1, to=2-2]
        	\arrow["{b'}"', from=2-2, to=2-3]
        \end{tikzcd}\]
        in $\C$, there exists a morpshism $(f,h): \delta\to \delta'$ such that $h\circ b = b'\circ g$; 
        \item[(ET3)$^\mathrm{op}$] Dual of $\mathrm{(ET3)}$; 
        \item[(ET4)] Let $\delta\in \EE(C',A)$ and $\delta'\in\EE(A',B)$ be $\EE$-extensions realised by $A\xrightarrow{f}B\xrightarrow{f'}C'$ and $B\xrightarrow{g}C\xrightarrow{g'}A'$, respectively. Then, there exists an object $B'$ in $\C$, a commutative diagram in $\C$ of the form 
        \[\begin{tikzcd}[ampersand replacement=\&,cramped]
        	A \& B \& C \& {\phantom{\bullet}} \\
        	A \& C \& {B'} \& {\phantom{\bullet}} \\
        	\& {A'} \& {A'} \\
        	\& {\phantom{\bullet}} \& {\phantom{\bullet}}
        	\arrow["f", from=1-1, to=1-2]
        	\arrow[equals, from=1-1, to=2-1]
        	\arrow["{f'}", from=1-2, to=1-3]
        	\arrow["g"', from=1-2, to=2-2]
        	\arrow["\delta", dashed, from=1-3, to=1-4]
        	\arrow["d", from=1-3, to=2-3]
        	\arrow["h", from=2-1, to=2-2]
        	\arrow["{h'}", from=2-2, to=2-3]
        	\arrow["{g'}"', from=2-2, to=3-2]
        	\arrow["{\delta''}", dashed, from=2-3, to=2-4]
        	\arrow["e", from=2-3, to=3-3]
        	\arrow[equals, from=3-2, to=3-3]
        	\arrow["{\delta'}"', dashed, from=3-2, to=4-2]
        	\arrow["{f'_*\delta'}", dashed, from=3-3, to=4-3]
        \end{tikzcd}\]
        and an $\EE$-extension $\delta''\in\EE(B', A)$ realized by $A\xrightarrow{h}C\xrightarrow{h'}B'$ satisfying the following property: 
        
        \begin{itemize}
            \item[(i)] $C'\xrightarrow{d}B'\xrightarrow{e}A'$ realizes $f'_*\delta'$; 
            \item[(ii)] $d^*\delta'' = \delta$; 
            \item[(iii)] $f_*\delta'' = e^*\delta$;  
        \end{itemize}
        
        \item[(ET4)$^\mathrm{op}$] Dual of $\mathrm{(ET4)}$. 
    \end{itemize}
\end{definition}

\begin{definition}
    Let $(\C, \EE, \mathfrak{s})$ be an extriangulated category. 
    \begin{itemize}
        \item[(a)] A sequence $A\xrightarrow{i}B\xrightarrow{p}C$ that realizes some $\EE$-extension in $\EE(C,A)$ is called \emph{conflation}. We refer to the morphism $A\xrightarrow{i}B$ as \emph{inflation}, denoted by $A\infl B$, and to the morphism $B\xrightarrow{p}C$ as \emph{deflation}, denoted by $B\defl C$; 
        \item[(b)] An \emph{$\EE$-triangle} (or \emph{extriangle}) is a diagram $A\overset{i}{\infl} B \overset{p}{\defl} C\overset{\delta}{\dashrightarrow}$ where  $A\overset{i}{\infl} B \overset{p}{\defl} C$ is a conflation realizing $\delta\in\EE(C,A)$ (we often omit the rightmost dashed arrow); 
        \item[(c)] A \emph{morphism of extriangles} is a diagram
        \[\begin{tikzcd}[ampersand replacement=\&,cramped]
        	A \& B \& C \& \phantom{\bullet} \\
        	X \& Y \& Z \& \phantom{\bullet}
        	\arrow["i", tail, from=1-1, to=1-2]
        	\arrow["f"', from=1-1, to=2-1]
        	\arrow["p", two heads, from=1-2, to=1-3]
        	\arrow["g"', from=1-2, to=2-2]
        	\arrow["\delta", dashed, from=1-3, to=1-4]
        	\arrow["h", from=1-3, to=2-3]
        	\arrow["j", tail, from=2-1, to=2-2]
        	\arrow["q", two heads, from=2-2, to=2-3]
        	\arrow["\delta'", dashed, from=2-3, to=2-4]
        \end{tikzcd}\]
        where $(f,h)\colon \delta \to \delta'$ is a morphism of the $\mathbb{E}$-extensions realized by $(f,g,h)$.
    \end{itemize}
\end{definition}

\begin{example}\label{extriang_examples}
    \begin{itemize}
        \item[(i)] Let $\C$ be an exact category \cite{exact_categories}. Then, $\C$ is an extriangulated category by setting $\EE(C,A) = \Ext^1_\C(C,A)$ for all $C,A\in \C$, and letting $\mathfrak{s}$ to be the realization that to each $\delta\in\Ext^1_\C(C,A)$ associates a short exact sequence $0\to A\overset{f}{\infl} B \overset{g}{\defl} C \to 0$. Conflations correspond to admissible short exact sequences, in which admissible monomorphisms are conflations, and similarly, admissible epimorphisms are deflations. 
        \item[(ii)] Let $\C$ be a triangulated category.  Then, $\C$ becomes an extriangulated category by taking $\EE(C,A) = \Hom_\C(C, \Sigma A)$, where $\Sigma$ denotes the shift functor on $\C$, and $\mathfrak{s}$ defined by completing $\delta\in\Hom_\C(C, \Sigma A)$ to a disinguished triangle $A\to B\to C\overset{\delta}{\dashrightarrow} \Sigma A$ and forgetting the morphism $\delta$. Notice that every morphism is both an inflation and a deflation by the rotation axiom in a triangulated category. 
        \item[(iii)] Let $\C$ be an extriangulated category and let $\B$ be an extension-closed subcategory of $\C$. Then, $\B$ inherits a natural extriangulated structure by restricting $\EE$ and $\mathfrak{s}$ from $\C$ \cite[Rmk. 2.18]{Nakaoka-Palu_extriangulated_categories}. In other words, $\EE_\B(-,-) = \EE(-,-)|_\B$ and conflations in $\B$ are conflations in $\C$ with all terms in $\B$. 
    \end{itemize}
\end{example}

\begin{definition}
    Let $\C$ be an extriangulated category. 
    \begin{itemize}
        \item[(a)] \cite[Def. 3.23]{Nakaoka-Palu_extriangulated_categories} An object $P\in\C$ is called \emph{projective} if $\C(P, g)$ is surjective for any deflation $g$, and dually an object $I\in\C$ is called \emph{injective} if $\C(h,I)$ is surjective for any inflation $h$. We denote the category of projective (resp. injective) objects in $\C$ by $\proj\C$ (resp. $\inj\C$).  
        \item[(b)] \cite[Def. 3.25]{Nakaoka-Palu_extriangulated_categories} We say that $\C$ has \emph{enough projectives} (resp. \emph{enough injectives}) if for every object $X\in\C$ there exists a deflation $P\defl X$ (resp. an inflation $I\infl X$) with $P$ projective (resp. $I$ injective).  
    \end{itemize}
\end{definition}

Projective and injective objects in $\C$ can be characterised in terms of the vanishing conditions of the bifunctor $\EE$. 

\begin{proposition}[{\cite[Prop. 3.24]{Nakaoka-Palu_extriangulated_categories}}]
    $P\in\C$ is projective (resp. injective) if and only if $\EE(P,-) = 0$ (resp. $\EE(-,I) = 0$).
\end{proposition}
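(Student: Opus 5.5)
We prove the projective case; the injective case is dual, obtained by passing to the opposite extriangulated category, in which inflations and deflations are exchanged and the two arguments of $\EE$ are swapped. The tool throughout is the long exact sequence attached to an $\EE$-triangle. For an $\EE$-triangle $A\overset{i}{\infl} B\overset{p}{\defl} C\overset{\delta}{\dashrightarrow}$, Nakaoka--Palu establish the exact sequence
\[
\C(X,A)\xrightarrow{i\circ -}\C(X,B)\xrightarrow{p\circ -}\C(X,C)\xrightarrow{\delta_\sharp}\EE(X,A)\xrightarrow{i_*}\EE(X,B)\xrightarrow{p_*}\EE(X,C),
\]
natural in $X$. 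Here $\delta_\sharp(u)=u^*\delta$.

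\emph{Step 1: if $\EE(P,-)=0$ then $P$ is projective.} Let $p\colon B\defl C$ be any deflation and complete it to an $\EE$-triangle $A\infl B\overset{p}{\defl} C\overset{\delta}{\dashrightarrow}$. Apply the exact sequence above with $X=P$. Since $\EE(P,A)=0$, exactness at $\C(P,C)$ shows that $\C(P,p)$ is surjective. Hence $P$ is projective.

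\emph{Step 2: if $P$ is projective then $\EE(P,-)=0$.} Fix an object $A$ and an extension $\delta\in\EE(P,A)$, realized by $A\overset{i}{\infl} B\overset{p}{\defl} P$. Because $p$ is a deflation and $P$ is projective, the identity $1_P$ lifts to a morphism $s\colon P\to B$ with $ps=1_P$. Exactness at $\C(P,P)$ then gives
\[
\delta=1_P^*\delta=\delta_\sharp(1_P)=\delta_\sharp(ps)=0,
\]
since the composite $\delta_\sharp\circ(p\circ-)$ vanishes. Therefore $\EE(P,A)=0$ for every $A$.

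If one prefers not to quote the long exact sequence, the one fact it supplies here is $p^*\delta=0$ for the realizing triangle. This is the main point to check, and it follows from the axioms. By (ET2) the split extension $0\in\EE(P,A)$ is realized by $A\to A\oplus P\to P$. Applying (ET3)$^{\mathrm{op}}$ to the pair $(s,1_P)$ yields a morphism of extensions $(g,1_P)\colon 0\to\delta$ for some $g\colon A\to A$, so $g_*0=1_P^*\delta$, that is, $\delta=0$.
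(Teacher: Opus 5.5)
Your proof is correct and is essentially the standard Nakaoka--Palu argument, which the paper only cites and does not reproduce. For one direction you lift $1_P$ through the realizing deflation and use $p^*\delta=0$ (equivalently, exactness of $\C(P,B)\to\C(P,P)\to\EE(P,A)$); for the other you use exactness of $\C(P,B)\to\C(P,C)\to\EE(P,A)$; the injective case follows by duality. Your axiom-level variant, which applies (ET3)$^{\mathrm{op}}$ to $b=s\circ(0\ 1)\colon A\oplus P\to B$ and $c=1_P$, is also valid; note that it proves $\delta=0$ directly rather than the general fact $p^*\delta=0$ you announced.
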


Given an extriangulated category $\C$, one can construct a new extriangulated category by taking the quotient by the ideal of morphisms factoring through projective-injective objects.

\begin{example}[{\cite[Prop. 3.30]{Nakaoka-Palu_extriangulated_categories}}]
    Let $J = (\proj\C\cap \inj\C)$ be the ideal of morphisms factoring through projctive-injective objects. Then, $\widetilde{\C} = \C/J$ has the structure of an extriangulated category, induced from that of $\C$.
\end{example}

\subsection{0-Auslander extriangulated categories} $0$-Auslander extriangulated categories were defined by Gorsky, Nakaoka, and Palu \cite{GNP_HereditaryExtriangulated} as a generalization of the homotopy category of two-term complexes of projectives. In this subsection, we review their definition and the main properties that will be used throughout this paper. For examples of $0$-Auslander extriangulated categories, we refer to \cite{GNP_HereditaryExtriangulated} and \cite[Section 5]{some_appl_of_extriang_cat}.

\begin{definition}\label{0-Auslander-definition}
    An extriangulated category $\C$ is called \emph{$0$-Auslander} if: 
    \begin{itemize}
        \item[(a)] $\C$ has enough projective objects; 
        \item[(b)] For every object $X\in\C$, there exists a conflation $P_1\infl P_0\defl X$, with $P_1, P_0\in\proj\C$; 
        \item[(c)] For every object $P\in\proj\C$, there exists a conflation $P\infl Q\defl I$, with $Q$ projective-injective and $I$ injective. 
    \end{itemize}
    We say that $\C$ is \emph{reduced} if, moreover, its only projective-injective object is $0$. 
\end{definition}

The following results summarize some of the main properties of reduced $0$-Auslander extriangulated categories. These properties will be used throughout the paper without further comment.

\begin{proposition}[{\cite[Section 4.5]{PPPP}}]\label{0-Auslander-properties}
    Let $\C$ be a reduced $0$-Auslander extriangulated category. The following statements hold. 
    \begin{itemize}
        \item[(i)] $\C(P,I) = 0$ for every $P\in\proj\C$ and $I\in\inj\C$; 
        \item[(ii)] For all $X\in\C$, there is an $\EE$-triangle $P\infl X \defl I$ with $P\in\proj\C$ and $I\in\inj\C$; 
        \item[(iii)] Any morphism $P\to X$ (resp. $X\to I$) with $P\in\proj\C$ (resp. $I\in\inj\C$) is a deflation (resp. inflation); 
        \item[(iv)] For each $P\in\proj\C$ (resp. $I\in\inj\C$) fix an $\EE$-triangle $P\infl 0\defl \Sigma P$ (resp. $\Omega I\infl 0\defl I$). Then, there are mutually quasi-inverse equivalences 
        $$ \Sigma: \proj\C \rightleftarrows \inj\C: \Omega$$
        \item[(v)] Suppose that $\C$ admits a projective generator $P$, that is, $\proj\C = \add P$ for some projective object $P\in\C$. Let $\Lambda = \End_\C(P)$. Then, the functor
        $$ \C(P,-): \C\to\modd\Lambda$$ induces an equivalence of categories $\C/[\Sigma P]\to \modd\Lambda$.
    \end{itemize}
\end{proposition}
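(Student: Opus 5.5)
We prove the five statements in order, using only the axioms of a reduced $0$-Auslander category (Definition \ref{0-Auslander-definition}) and the fact that $P$ is projective iff $\EE(P,-)=0$, and $I$ is injective iff $\EE(-,I)=0$. First we establish a preliminary fact: in the reduced case, condition (c) gives, for each $P\in\proj\C$, a conflation $P\infl 0\defl I$ with $I$ injective, because the projective-injective middle term must be $0$. We set $\Sigma P := I$. The step we expect to be the main obstacle is (ii), together with the verification in (iv) that every injective arises as some $\Sigma P$.

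For (i), take a morphism $f\colon P\to I$ with $P$ projective and $I$ injective. The conflation $P\infl 0\defl\Sigma P$ shows that $P\to 0$ is an inflation. Since $I$ is injective, $\C(-,I)$ applied to this inflation is surjective, so $f$ factors through $0$ and hence $f=0$.

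For (ii), we try the following construction first. Start from the conflation $P_1\infl P_0\defl X$ of (b) and the conflation $P_1\infl 0\defl \Sigma P_1$. Applying (ET4)$^{\mathrm{op}}$, or the pushout-type axiom (ET4) to the composition with $P_1\infl 0$, yields a conflation $P_0\infl X\defl\Sigma P_1$. Here $P_0$ is projective and $\Sigma P_1$ is injective, which is what (ii) requires.

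We then derive (iii) from (ii). Given $g\colon P\to X$, take a deflation $P'\defl X$ from a projective, and use projectivity to relate the two maps. Then the map $(g,p)\colon P\oplus P'\to X$ is a deflation, and $P\oplus P'\to X$ can be compared with $g$. A cleaner route uses the characterisation that $g$ is a deflation whenever its cone-type object is injective. Concretely, we form the conflation associated with $P\infl 0$ and apply (ET4)$^{\mathrm{op}}$ to $g$ composed with $P\infl 0\defl\Sigma P$, producing $X\infl C\defl \Sigma P$ with $\EE$-extension pulled back along $g$. We then invoke the standard extriangulated fact (NP, Cor. 3.16) that a morphism is a deflation iff it appears as the second map of such a diagram. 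The statement for injectives is dual.

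For (iv), functoriality on morphisms comes from (ET3) applied to the chosen conflations: morphisms between conflations with middle term $0$ lift uniquely. Uniqueness holds because $\EE(P,-)$ is computed via $\C(P,\Omega-)$-type long exact sequences, namely $\C(P',\Sigma P)\cong\EE(\Sigma P',P)$-type identifications from the long exact sequences of NP. Full faithfulness follows from the isomorphisms $\C(P,P')\cong\EE(\Sigma P,P')\cong\C(\Sigma P,\Sigma P')$, both obtained from the long exact sequences attached to $P\infl 0\defl\Sigma P$, whose neighbouring terms vanish. Essential surjectivity of $\Sigma$ requires the dual argument: apply (ii) to an injective $I$ to get $P\infl I\defl I'$. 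This conflation splits since $\EE(I',P)$ pairs with injectivity, so $I$ corresponds to $\Sigma(\text{proj})$. The details here are the main obstacle, and we would follow PPPP.

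For (v), the functor $\C(P,-)$ kills $\Sigma P$ by (i). It is full and dense by applying it to (b), using a projective presentation and Yoneda on $\add P$. For faithfulness modulo $[\Sigma P]$, we use (ii): if $\C(P,f)=0$, then $f$ vanishes on $P_X\infl X$, so it factors through the deflation $X\defl I_X$ with $I_X\in\add\Sigma P$.
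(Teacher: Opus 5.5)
The genuine gap is in (iii). As printed, (iii) has ``inflation'' and ``deflation'' interchanged, and the version you set out to prove is false.

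If $P\neq 0$ is projective, the zero map $P\to P$ is not a deflation. Projectivity of $P$ makes every deflation $y\colon Z\defl P$ a split epimorphism: lift $1_P$ along $y$. Dually, the zero endomorphism of a nonzero injective is not an inflation.

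The true statement is that every morphism out of a projective is an inflation, and every morphism into an injective is a deflation. This is the version consistent with (ii) itself, and with Definition-Proposition~\ref{BongartzCoBongartz_of_presilting}, where $U'\to I$ is a deflation and $P\to U''$ an inflation.

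Your ``cleaner route'' in fact proves this, not what you claim. Let $\epsilon$ be the extension realised by $P\infl 0\defl\Sigma P$, and let $X\infl C\defl\Sigma P$ realise the push-forward $g_*\epsilon$. Then \cite[Cor.~3.16]{Nakaoka-Palu_extriangulated_categories} gives an $\EE$-triangle $P\xrightarrow{-g}X\oplus 0\to C$, so $g$ is an inflation.

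The dual argument shows morphisms into injectives are deflations: pull back $\Omega I\infl 0\defl I$ along $X\to I$. But it needs the conflations $\Omega I\infl 0\defl I$ to exist, so it must come after essential surjectivity in (iv).

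This matters downstream in (v) and (iv).

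\textbf{Density in (v).} This step rests on the corrected (iii). Condition (b) only gives a presentation of $\C(P,X)$ for objects $X$ you already have. To realise $M=\coker\C(P,u)$ for $u\colon P_1\to P_0$ in $\add P$, you must know that $u$ is an inflation. You then take $P_1\overset{u}{\infl}P_0\defl X$ and use $\EE(P,P_1)=0$ to get $\C(P,X)\cong M$.

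\textbf{Essential surjectivity in (iv).} The conflation $P_0\infl I\defl\Sigma P_1$ from (ii) need not split. Splitting would make $P_0$ a projective-injective summand of $I$, hence zero, and this fails for non-minimal choices in (b). The correct observation is that its inflation is zero by (i), so $I\to\Sigma P_1$ is a split monomorphism. Since $\Sigma$ is additive and fully faithful and $\C$ is Krull--Schmidt, $I\cong\Sigma Q$ for a summand $Q$ of $P_1$. This is also what gives $\inj\C=\add\Sigma P$, which your faithfulness step in (v) uses.

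The remaining parts are sound:
\begin{itemize}
\item (i);
\item (ii), which is the dual of \cite[Prop.~3.15]{Nakaoka-Palu_extriangulated_categories}, used exactly as in Lemma~\ref{Bongartz_co-Bongartz_Lemma};
\item full faithfulness of $\Sigma$ via $\C(P,P')\cong\EE(\Sigma P,P')\cong\C(\Sigma P,\Sigma P')$;
\item fullness and the identification of the kernel with $[\Sigma P]$ in (v).
\end{itemize}
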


Higher extensions were defined in \cite{Positive_and_negative_extensions_in_extr_cat}. For example, when $\C$ is a triangulated category viewed as an extriangulated category with $\EE(-,-) = \C(-,\Sigma-)$ (see Example \ref{extriang_examples}), then $\EE^i(-,-) = \C(-,\Sigma^i-)$ \cite[Cor. 3.23]{Positive_and_negative_extensions_in_extr_cat}. We say that an extriangulated category is \textit{hereditary} \cite[Def. 2.3]{GNP_HereditaryExtriangulated} if $\EE^2(-,-) = 0$.   If $\C$ has enough projectives, this condition is equivalent to condition (i) in Definition \ref{0-Auslander-definition} by \cite[Prop. 2.1(2)]{GNP_HereditaryExtriangulated}. Hence, $0$-Auslander extriangulated categories are hereditary. Combining these facts with \cite[Theorem 3.5]{Positive_and_negative_extensions_in_extr_cat}, we obtain the following 

\begin{theorem}[{\cite[Theorem 3.5]{Positive_and_negative_extensions_in_extr_cat}}]
    Let $\C$ be a $0$-Auslander extriangulated category and let $A\infl B\defl C$ be any $\EE$-triangle. Then, for every $X\in\C$ there are exact sequences of abelian groups
    $$\C(X,A)\to \C(X,B)\to \C(X,C)\to \EE(X,A)\to \EE(X,B)\to \EE(X,C)\to 0,$$
    and 
    $$\C(C,X)\to \C(B,X)\to \C(A,X)\to \EE(C,X)\to \EE(B, X)\to \EE(A,X)\to 0.$$
\end{theorem}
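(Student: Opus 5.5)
The plan is to specialise the long exact sequences of higher extensions of \cite{Positive_and_negative_extensions_in_extr_cat} to the hereditary case. First recall the relevant statement there: for an extriangulated category with enough projectives, higher extensions $\EE^i$ are defined, and every $\EE$-triangle $A\infl B\defl C\overset{\delta}{\dashrightarrow}$ yields, for each $X\in\C$, long exact sequences
$$\C(X,A)\to\C(X,B)\to\C(X,C)\to\EE(X,A)\to\EE(X,B)\to\EE(X,C)\to\EE^2(X,A)\to\cdots$$
and
$$\C(C,X)\to\C(B,X)\to\C(A,X)\to\EE(C,X)\to\EE(B,X)\to\EE(A,X)\to\EE^2(C,X)\to\cdots.$$
The connecting maps are given by pairing with $\delta$. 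The first step is to check that these hold under our hypotheses: a $0$-Auslander category has enough projectives by Definition \ref{0-Auslander-definition}(a), which is what the cited theorem requires.

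The second step is to show that $\C$ is hereditary, i.e.\ $\EE^2(-,-)=0$. By \cite[Prop.~2.1(2)]{GNP_HereditaryExtriangulated}, in a category with enough projectives this is equivalent to every object admitting a conflation $P_1\infl P_0\defl X$ with $P_0,P_1$ projective. That is exactly Definition \ref{0-Auslander-definition}(b). The underlying reason is that $\EE^2(X,-)$ is computed as $\EE(\Omega X,-)$ for a syzygy $\Omega X$, and here the syzygy can be taken to be $P_1$, which is projective. Substituting $\EE^2=0$ into both sequences truncates them and produces the terminal zeros, which gives the statement.

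I expect the only real obstacle to be bookkeeping: making sure the $\EE^2$ terms of \cite{Positive_and_negative_extensions_in_extr_cat} agree with the vanishing established in \cite{GNP_HereditaryExtriangulated}. As a sanity check, the covariant surjectivity can also be seen directly.

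\begin{itemize}
\item Fix a conflation $P_1\infl P_0\defl X$ with $P_0,P_1$ projective. Applying $\C(-,C)$ and using $\EE(P_0,C)=0$ shows that every element of $\EE(X,C)$ is the image of some $f\colon P_1\to C$.
\item Since $P_1$ is projective and $B\defl C$ is a deflation, $f$ lifts along $B\to C$.
\item Hence that element of $\EE(X,C)$ comes from $\EE(X,B)$, so $\EE(X,B)\to\EE(X,C)$ is surjective.
\end{itemize}

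The contravariant surjectivity $\EE(B,X)\to\EE(A,X)$ has no such direct argument, because injectives need not be enough in this setting. For it we rely on the vanishing of the obstruction group $\EE^2(C,X)$ as above.
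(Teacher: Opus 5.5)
Your argument matches the paper's: condition (a) gives enough projectives, condition (b) together with \cite[Prop.~2.1(2)]{GNP_HereditaryExtriangulated} gives $\EE^2=0$, and this truncates the long exact sequences of \cite[Theorem~3.5]{Positive_and_negative_extensions_in_extr_cat}. Your closing remark is wrong, although nothing rests on it: a $0$-Auslander category does have enough injectives, since applying (ET4) to $P_1\infl P_0\infl Q_0$ (with $Q_0$ projective-injective, by condition (c)) gives a conflation $X\infl Y\defl I_0$, and $Y$ is injective by your own covariant surjectivity applied to $P_1\infl Q_0\defl Y$, so the dual lifting argument shows directly that $\EE(B,X)\to\EE(A,X)$ is surjective.
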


\section{Signed $\tau$-exceptional sequences}\label{Sec_signed_tau_exc_seq}

In this section, we review the theory of signed $\tau$-exceptional sequence established in \cite{tauExcSeq_BM}. 

Let $\Lambda$ be a finite dimensional algebra. Denote by $\modd \Lambda$ the category of finitely generated (left) $\Lambda$-modules, and let $\proj \Lambda$ be the full subcategory of projective $\Lambda$-modules. Throughout this paper, all subcategories are assumed to be full and closed under isomorphisms. For a subcategory $\mathcal{X} \subseteq \modd \Lambda$, we write $\ind(\mathcal{X})$ for the set of isomorphism classes of indecomposable objects in $\mathcal{X}$. We set $\mathcal{X}^{\perp_0} = \{ Y \in \modd \Lambda \mid \Hom_\Lambda(\mathcal{X}, Y) = 0 \}$, and define ${}^{\perp_0}\mathcal{X}$ dually.  

We denote by $\Gen \mathcal{X}$ the smallest subcategory containing $\mathcal{X}$ that is closed under factor objects. For $M \in \modd \Lambda$, we write $\add M$ for the full subcategory consisting of all direct summands of finite direct sums of copies of $M$, and set $\Gen M := \Gen(\add M)$.  

We assume that $\Lambda$-modules are basic when possible, and we denote by $\abs{X}$ the number of pairwise non-isomorphic indecomposable direct summands of a $\Lambda$-module $X$. We set $\abs{\Lambda}:= n$. For an arbitrary module category $\W$, we denote by $\tau_{\W}$ (or simply $\tau$ when no confusion arises) the Auslander--Reiten translation in $\W$.

Following \cite{tau-tiling-theory} a pair of $\Lambda$-modules $(M,P)$ is called $\tau$\textit{-rigid} if $M$ is $\tau$-rigid, i.e. $\Hom_\Lambda(M,\tau M) = 0$, $P\in\proj \Lambda$, and $\Hom_\Lambda(P,M) = 0$. A $\tau$-rigid pair $(M,P)$ is called $\tau$-\textit{tilting} if $\abs{M}+\abs{P} = \abs\Lambda$. Buan and Marsh considered the corresponding object $M\oplus P[1]$ in the full subcategory $\CLambda(\Lambda):= \modd \Lambda\oplus \modd\Lambda[1]$ of the bounded derived category $D^b(\modd \Lambda)$. An object $M\oplus P[1]$ in $\CLambda(\Lambda)$ is called \textit{support $\tau$-rigid} (resp. \textit{support $\tau$-tilting}) if $(M,P)$ is a $\tau$-rigid (resp. $\tau$-tilting) pair (see \cite[Def. 1.1]{tauExcSeq_BM}). We denote by $\staurigid\Lambda$ (resp. $\stautilt\Lambda$) the set of support $\tau$-rigid (resp. $\tau$-tilting) objects in $\CLambda(\Lambda)$.

\begin{definition}[{\cite[Def. 1.2]{tauExcSeq_BM}}]
    Let $\Lambda$ be a finite dimensional algebra. For a positive integer $t$, an ordered $t$-tuple of indecomposable objects $(\T_1,\cdots, \T_t)$ in $\CLambda(\Lambda)$ is called an \emph{ordered support $\tau$-rigid object} if $\bigoplus_{i=1}^t\T_i$ is a basic support $\tau$-rigid object. If, in addition, $t=n$, then $(\T_1,\cdots, \T_t)$ is called an ordered support $\tau$-tilting object. 
\end{definition}

For a subcategory $\mathcal{X}$ of $\modd\Lambda$, we denote by $\P(\mathcal{X})$ the subcategory of $\mathcal{X}$ consisting of $\Ext$-projective modules in $\mathcal{X}$, i.e. the modules $X$ in $\mathcal{X}$ such that $\Ext^1_\Lambda(X, Y) = 0$ for all $Y$ in $\mathcal{X}$. 

\begin{theorem}[{\cite[Section 2.2]{tau-tiling-theory}}]\label{BongartzCo-Bongartz}
    Let $U = M\oplus P[1]$ be a basic support $\tau$-rigid object in $\CLambda(\Lambda)$. Then, up to isomorphism 
    \begin{itemize}
        \item[(i)] There exists a unique $\Lambda$-module $B[U]$ such that $B[U]\oplus U$ is support $\tau$-tilting and $\add(B[U]\oplus M) = \P({^\perp}\tau M\cap P^{\perp})$; 
        \item[(ii)] There exists a unique object $C[U] = N \oplus Q[1]$ in $\CLambda(\Lambda)$ such that $C[U]\oplus U$ is support $\tau$-tilting and $\Gen(N\oplus M) = \P(\Gen M)$. 
    \end{itemize}
\end{theorem}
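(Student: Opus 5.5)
The plan is to reduce the general statement to the classical Bongartz and co-Bongartz completions of Adachi--Iyama--Reiten for a $\tau$-rigid module over a quotient algebra. Write $U = M\oplus P[1]$ and let $e$ be the idempotent with $\add P = \add \Lambda e$. Put $\bar\Lambda = \Lambda/\langle e\rangle$. The condition $\Hom_\Lambda(P,M)=0$ means exactly that $M$ is a $\bar\Lambda$-module. Moreover, for $\bar\Lambda$-modules, $\tau$-rigidity over $\Lambda$ implies $\tau$-rigidity over $\bar\Lambda$ \cite{tau-tiling-theory}. So $M$ is a basic $\tau$-rigid $\bar\Lambda$-module. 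We also have $P^{\perp_0} = \modd\bar\Lambda$ inside $\modd\Lambda$, since $\Hom_\Lambda(\Lambda e,X) = eX$. Finally, $\abs{\bar\Lambda} = \abs{\Lambda}-\abs{P}$, and support $\tau$-tilting objects $N\oplus P[1]$ with this fixed $P$ correspond to $\tau$-tilting $\bar\Lambda$-modules $N$, by \cite[Prop.~2.3]{tau-tiling-theory}.

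For part (i), I will use the torsion class $\mathcal{T} = {}^{\perp}\tau M\cap P^{\perp}$, read with $\perp = \perp_0$. This is a functorially finite torsion class in $\modd\Lambda$ contained in $\modd\bar\Lambda$. I will check that it equals ${}^{\perp_0}\tau_{\bar\Lambda}M$ computed inside $\modd\bar\Lambda$. This follows from the standard identity $\Hom_\Lambda(X,\tau M)=0 \iff \Ext^1_\Lambda(M,\Gen X)=0$ \cite[Prop.~5.8]{ASS}-type (Auslander--Smal\o). Together with $\Ext^1_{\bar\Lambda}$ agreeing with $\Ext^1_\Lambda$ on $\bar\Lambda$-modules in degree one with respect to $\Gen$-closed classes, this gives the equality. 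I expect this to be the main obstacle: showing that the torsion classes ${}^{\perp_0}\tau_\Lambda M\cap\modd\bar\Lambda$ and ${}^{\perp_0}\tau_{\bar\Lambda}M$ coincide. I will argue via $\Ext$-projectives: both are sincere-over-$\bar\Lambda$ torsion classes containing $M$ as an $\Ext$-projective, and I will invoke \cite[Lemma~2.1]{tau-tiling-theory}.

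Once the identification is made, Bongartz's completion for $\bar\Lambda$ (\cite[Thm.~2.10]{tau-tiling-theory}) gives a $\tau$-tilting $\bar\Lambda$-module $T$ with $\add T = \P(\mathcal{T})$ and $M\in\add T$. I then define $B[U]$ as the basic complement of $M$ in $T$. In that case $B[U]\oplus M\oplus P[1]$ is support $\tau$-tilting. Uniqueness is immediate, because $\add(B[U]\oplus M)$ is prescribed as $\P(\mathcal T)$ and the objects are basic with no summands in $\add M$.

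For part (ii), I will take the torsion class $\Gen M$. It is functorially finite and its $\Ext$-projectives form a support $\tau$-tilting pair $(T',Q')$, where $T'$ is the basic module with $\add T' = \P(\Gen M)$ and $Q'$ is the maximal projective with $\Hom(Q',\Gen M)=0$, i.e.\ $\Hom(Q',M)=0$ \cite[Thm.~2.7]{tau-tiling-theory}. Then $P\in\add Q'$, and $M\in\add T'$ since $M$ is $\Ext$-projective in $\Gen M$ by $\tau$-rigidity. I set $N$ to be the complement of $M$ in $T'$ and $Q$ the complement of $P$ in $Q'$. This gives $C[U] = N\oplus Q[1]$ with $\Gen(N\oplus M)=\Gen M$, and $\add(N\oplus M)=\P(\Gen M)$. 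Uniqueness again follows from the bijection between support $\tau$-tilting pairs and functorially finite torsion classes \cite[Thm.~2.7]{tau-tiling-theory}: any such $C[U]$ yields a pair whose torsion class is $\Gen M$.
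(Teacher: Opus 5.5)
Your proposal is correct and is essentially the argument behind the citation the paper gives in place of a proof: you reduce to $\bar\Lambda=\Lambda/\langle e\rangle$ via \cite[Prop.~2.3]{tau-tiling-theory}, use the Bongartz completion over $\bar\Lambda$ for (i) and the $\Ext$-projectives of $\Gen M$ for (ii), and get uniqueness from the bijection with functorially finite torsion classes. You also rightly prove $\add(N\oplus M)=\P(\Gen M)$, which is what the misprinted ``$\Gen(N\oplus M)=\P(\Gen M)$'' in (ii) should say.

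The step you call the main obstacle is precisely \cite[Lemma~2.1(b)]{tau-tiling-theory}, and your first route already proves it. That route is Auslander--Smal{\o} together with $\Ext^1_{\bar\Lambda}=\Ext^1_\Lambda$ on $\bar\Lambda$-modules. This equality holds because $\Hom_\Lambda(\Lambda e,-)$ is exact, so $\modd\bar\Lambda$ is extension-closed in $\modd\Lambda$, and it also makes the two notions of $\Ext$-projective agree. You should drop the ``sincere torsion class with $M$ $\Ext$-projective'' heuristic: it cannot identify the torsion class, since e.g.\ $\Gen M$ may have the same properties.
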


We refer to $B[U]$ and $C[U]$ in Theorem \ref{BongartzCo-Bongartz} as the \textit{Bongartz complement} and \textit{co-Bongartz complement} of $U$, respectively (see \cite{tauExcSeq_BM, Beyond_tau-tilting_theory}). We denote by $B_U = B[U]\oplus U$ (resp. $C_U = C[U]\oplus U$) the \textit{Bongartz} (resp. \textit{co-Bongartz}) \textit{completion} of $U$. 

Let $U = M\oplus P[1]$ be a support $\tau$-rigid object in $\CLambda(\Lambda)$. The \textit{$\tau$-perpendicular category} of $U$ is defined to be $$J(U) = M^{\perp_0} \cap{^{\perp_0}\tau M}\cap P^{\perp_0}.$$ This category was originally defined by Jasso \cite{Jasso_Reduction} for the case $P = 0$ and later generalised in \cite{Beyond_tau-tilting_theory}. The following result summarizes the main facts about $J(U)$. 

\begin{theorem}[{\cite[Thm. 3.8]{Jasso_Reduction}\cite[Thm. 4.12, 4.16]{Beyond_tau-tilting_theory}}]\label{facts_about_J}\label{tau-perp-cat=modGamma}
    Let $U = M \oplus P[1]\in \CLambda(\Lambda)$ be a basic support $\tau$-rigid object. Then, $J(U)$ is a wide subcategory of $\modd\Lambda$ and it is equivalent to the module category of a finite dimensional algebra $\Gamma_U = \End_\Lambda(B[U]\oplus M)^{\text{op}}/I$ where $I$ is the ideal generated by all the morphisms factoring through $M$. Moreover, $\abs{\Gamma_U} = \abs{\Lambda}-\abs{U}$. 
\end{theorem}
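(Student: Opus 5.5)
We plan to reduce to the case $P=0$, which is Jasso's theorem, and then prove that case with classical tilting theory.

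\emph{Step 1: reduction to $P=0$.} Let $e$ be the idempotent with $P\cong\Lambda e$ and set $\Lambda'=\Lambda/\langle e\rangle$. Then $P^{\perp_0}=\modd\Lambda'$. Since $\Hom_\Lambda(P,M)=0$, $M$ is a $\Lambda'$-module. By \cite{tau-tiling-theory}, for $X,M\in\modd\Lambda'$ we have $\Hom_\Lambda(X,\tau M)=0$ if and only if $\Hom_{\Lambda'}(X,\tau_{\Lambda'}M)=0$. Hence $M$ is $\tau_{\Lambda'}$-rigid and
\[
J(U)=M^{\perp_0}\cap{}^{\perp_0}\tau_{\Lambda'}M\subseteq\modd\Lambda' .
\]
The Bongartz completion $B[U]\oplus M$ of $U$ is the Bongartz completion of $M$ over $\Lambda'$; this matches Theorem \ref{BongartzCo-Bongartz}(i), since $\P({}^\perp\tau M\cap P^\perp)$ is computed inside $\modd\Lambda'$. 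Also $\abs{\Lambda'}=\abs{\Lambda}-\abs{P}$. So it suffices to treat a $\tau$-rigid module $M$ over $\Lambda'$, and we rename $\Lambda'$ as $\Lambda$.

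\emph{Step 2: Jasso's argument.} Put $T=B[U]\oplus M$. This is a $\tau$-tilting module with $\Fac T={}^{\perp_0}\tau M$, and $\add T=\P({}^{\perp_0}\tau M)$. Then $T$ is a classical tilting module over $\bar\Lambda=\Lambda/\operatorname{ann}T$, and $\Fac T$ is the torsion class of $T$. Let $E=\End_\Lambda(T)$. Brenner--Butler theory gives that $F=\Hom_\Lambda(T,-)$ restricts to an exact equivalence from $\Fac T$ onto the torsion-free class $\mathcal{Y}(T)\subseteq\modd E$.

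\emph{Step 3: identifying the image and the algebra.} For $X\in\Fac T$, the condition $\Hom(M,X)=0$ says exactly that $F(X)$ is annihilated by the idempotent $e_M$ of $E$ corresponding to $M$. So $F$ identifies $J(U)$ with $\mathcal{Y}(T)\cap\modd E/\langle e_M\rangle$, and $E/\langle e_M\rangle\cong\Gamma_U$ up to the op-convention.

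This is the main obstacle: we must show that every $\Gamma_U$-module lies in $\mathcal{Y}(T)$, i.e.\ that $\operatorname{Tor}$/$\Ext$-vanishing forces $\mathcal{X}(T)\cap\modd\Gamma_U=0$. We would try Jasso's approach. Take a $\Gamma_U$-module $Y$ and a projective presentation of it by $F(B[U])$-summands. Realise the presentation as $F$ of a morphism $f\colon B_1\to B_0$ in $\add B[U]$. Then show that $\operatorname{Cok}f$, after killing its trace of $M$ (using the torsion pair $(\Fac M, M^{\perp_0})$ in $\Fac T$), lies in $J(U)$ and maps to $Y$. The key input is that $\Ext^1(M,-)$-vanishing plus $\tau$-rigidity make $\Hom(T,-)$ exact on the relevant sequences.

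\emph{Step 4: wideness and rank.} Once $F\colon J(U)\to\modd\Gamma_U$ is an exact equivalence of abelian categories, closure of $J(U)$ under kernels, cokernels and extensions in $\modd\Lambda$ is checked directly. Extensions are clear because $M^{\perp_0}$ and ${}^{\perp_0}\tau M$ are each extension closed. For kernels and cokernels, we use that $\Fac T\cap M^{\perp_0}$ is both a torsion-free class in $\Fac T$ and contained in ${}^{\perp_0}\tau M$. Finally, $\abs{\Gamma_U}=\abs{T}-\abs{M}=\abs{\Lambda'}-\abs{M}$. Combined with Step 1 this gives $\abs{\Lambda}-\abs{P}-\abs{M}=\abs{\Lambda}-\abs{U}$.
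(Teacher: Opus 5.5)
Steps 1 and 2, the identification $F(J(U))=\mathcal{Y}(T)\cap\modd(E/\langle e_M\rangle)$ in Step 3, and the rank count are correct. The gap is the step you call the main obstacle. That step is essential surjectivity, the real content of the theorem, and your sketch does not close it.

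Take any $E$-projective presentation $F(T_1)\xrightarrow{F(f)}F(T_0)\to Y\to 0$ of $Y\in\modd\Gamma_U$, with $T_0,T_1\in\add T$. Right exactness of $-\otimes_E T$ gives $\operatorname{Cok}f\cong Y\otimes_E T$. Brenner--Butler then gives $F(Y\otimes_E T)\cong Y/tY$, where $tY$ is the torsion part of $Y$ for the pair $(\mathcal{X}(T),\mathcal{Y}(T))$. Since $Y/tY$ is already a $\Gamma_U$-module, $\Hom_\Lambda(M,\operatorname{Cok}f)\cong F(\operatorname{Cok}f)e_M=0$, so killing the trace of $M$ changes nothing. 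Your construction is therefore just the unit $Y\to F(Y\otimes_E T)$, and asking that it be an isomorphism is the same as asking $tY=0$, i.e.\ the claim itself.

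The inputs you name ($\tau$-rigidity, $\Ext^1_\Lambda(M,\operatorname{Fac}T)=0$, exactness of $\Hom_\Lambda(T,-)$ on $\operatorname{Fac}T$) hold for every $\tau$-tilting module containing $M$. But the inclusion $\modd(E/\langle e_M\rangle)\subseteq\mathcal{Y}(T)$ fails for non-Bongartz completions. For example, let $\Lambda$ be of type $A_2$ with $P_1=\begin{smallmatrix}1\\2\end{smallmatrix}$ and $P_2=S_2$, let $M=P_1$, and let $T'=P_1\oplus S_1$ be its co-Bongartz completion. Then $\End(T')/[P_1]\cong k$, but its simple module $S$ satisfies $S\otimes T'\cong S_1/\operatorname{Im}(P_1\to S_1)=0$, so $S\in\mathcal{X}(T')$. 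Any proof must therefore use that $T$ is the Bongartz completion.

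The missing ingredient is this maximality.
\begin{itemize}
\item Let $X$ be an indecomposable summand of $B[U]$. The mutation $\mu_X(T)\neq T$ still has $M$ as a summand, so $\operatorname{Fac}\mu_X(T)\subsetneq{}^{\perp_0}\tau M=\operatorname{Fac}T$. It is therefore a left mutation, and $X\notin\operatorname{Fac}(T/X)$ by \cite{tau-tiling-theory}.
\item For the simple $E$-module $S_X$ one has $S_X\otimes_E T\cong X/\sum_{g\in\rad_\Lambda(T,X)}\operatorname{Im}g$. This is nonzero. Otherwise $X=\operatorname{Im}h+(\rad\End X)X$ for a right $\add(T/X)$-approximation $h$. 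Since $r(\operatorname{Im}h)\subseteq\operatorname{Im}h$ for all $r\in\End X$, iterating and using nilpotence of $\rad\End X$ gives $X=\operatorname{Im}h\in\operatorname{Fac}(T/X)$, a contradiction.
\item Hence $S_X\notin\mathcal{X}(T)$, so $S_X\in\mathcal{Y}(T)$. The $S_X$ are exactly the simple $\Gamma_U$-modules, and $\mathcal{Y}(T)$ is extension closed, so $\modd\Gamma_U\subseteq\mathcal{Y}(T)$.
\item Consequently $Y\cong F(Y\otimes_E T)$ with $Y\otimes_E T\in\operatorname{Fac}T\cap M^{\perp_0}=J(U)$.
\end{itemize}

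Step 4 needs the same inclusion. As written, $\operatorname{Fac}T\cap M^{\perp_0}$ being a torsion-free class inside $\operatorname{Fac}T$ does not give $\ker f\in\operatorname{Fac}T$ or $\operatorname{Cok}f\in M^{\perp_0}$. Once $\modd\Gamma_U\subseteq\mathcal{Y}(T)$ is known, apply $-\otimes_E T$, which is exact on short exact sequences in $\mathcal{Y}(T)$, to the kernel--image--cokernel factorisation of $F(f)$ in $\modd\Gamma_U$. This shows that $J(U)$ is wide.
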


Since $J(U)$ is equivalent to a module category, we can consider $\CLambda(J(U)):= J(U)\oplus J(U)[1]\subseteq \CLambda(\Lambda)$ and we say that $V= N\oplus Q[1] \in \CLambda(J(U))$ is support $\tau$-rigid in $\CLambda(J(U))$ if $N$ is $\tau_{J(U)}$-rigid, that is $\Hom_{J(U)}(N,\tau_{J(U)}N) = 0$ where $\tau_{J(U)}$ denotes the Auslander-Reiten translation in $J(U)$ (notice that $\tau_{J(U)}\ncong \tau$ in general), $Q\in \proj J(U)$, and $\Hom_{J(U)}(Q,N) = 0$. Thus, iteratively, we can define the $\tau$-perpendicular subcategory $$J_{J(U)}(V):= {(N\oplus Q)^{\perp_0}}\cap {^{\perp_0}\tau_{J(U)}N}\cap J(U).$$
We are prepared to state the definition of a signed $\tau$-exceptional sequence. 

\begin{definition}[{\cite[Def. 1.3]{tauExcSeq_BM}}]\label{Def-signed-tau-exc-seq}
    Let $\Lambda$ be a finite dimensional algebra and let $t$ be a positive integer. An ordered $t$-tuple of indecomposable objects $(\U_1,\cdots, \U_t)$ in $\CLambda(\Lambda)$ is a \emph{signed $\tau$-exceptional sequence} if 
    \begin{enumerate}
        \item[(a)] $\U_t$ is support $\tau$-rigid in $\CLambda(\Lambda)$, and
        \item[(b)] $(\U_1,\cdots, \U_{t-1})$ is a signed $\tau$-exceptional sequence in $\CLambda(J(\U_t))$.
    \end{enumerate}
     A \emph{$\tau$-exceptional sequence} $(\U_1,\cdots, \U_t)$ is a signed $\tau$-exceptional sequence in which every indecomposable object lies in $\modd\Lambda$. If $t=n$, the sequence is said to be \emph{complete}.  
\end{definition} 

The remaining part of this section aims to review a bijection between ordered support $\tau$-tilting objects and signed $\tau$-exceptional sequences. This is based on a bijective correspondence between indecomposable direct summands of the Bongartz complement and indecomposable direct summands of the co-Bongartz complement of a $\tau$-rigid $\Lambda$-module $M$. We refer to \cite[Section 3]{tauExcSeq_BM} for further details. 

Let $\K = K^b(\proj \Lambda)$ be the bounded homotopy category of projective $\Lambda$-modules. For a $\Lambda$-module $X$, we denote by $\PP_X$ its minimal projective presentation viewed as 2-term object in $\K$, that is if $P_X^{-1}\xrightarrow{f} P_X^0 \to X\to 0$ is the minimal projective presentation of $X$ in $\modd\Lambda$, then $\PP_X = 0\to P_X^{-1}\xrightarrow{f} P_X^0\to 0$  in $\K$. 

Fix $M$ to be a $\tau$-rigid $\Lambda$-module. Let $B = B[M]$ and $C\oplus Q[1]$ denote the Bongartz and co-Bongartz complement of $M$, respectively. Let $\CC_Q = \PP_C\oplus Q[1]$.

\begin{proposition}[{\cite[Proposition 3.7]{tauExcSeq_BM}}]\label{exchange_in_Lambda}
    With the notation above, there is a triangle 
    \begin{equation}\label{trianglePPCC}
        \PP_B \xrightarrow{\beta}\PP_M'\xrightarrow{\alpha} \CC_Q\to, 
    \end{equation}
    where $\beta$ (resp. $\alpha$) is a minimal left (resp. right) $\add \PP_M$ approximation. The triangle \eqref{trianglePPCC} is the direct sum of $\abs{\Lambda} - \abs{M}$ triangles 
    \begin{equation}\label{indec_triangles}
        \PP_{B_i} \xrightarrow{\beta_i} (\PP_M')_i\xrightarrow{\alpha_i} {\XX}_i\to,
    \end{equation}
    where $B = \bigoplus_iB_i$ is a decomposition of $B$ into indecomposable direct summands and $\XX_i$ are the indecomposable direct summands of  $\CC_Q$. 

    Let $B = B'\oplus B''$ be a decomposition of $B$ where $B'$ is the direct sum of indecomposable direct summands of $B$ such that the minimal left $\add U$-approximation $B_i\to U_i$ is not an epimorphism, and let $B''$ be the complement of $B'$ in $B$. We have the following two cases. 
    \begin{itemize}
        \item[(i)] For each direct summand $B_i$ of $B$ which is a summand of $B'$ there is an exact sequence in $\modd\Lambda$
        $$B_i\xrightarrow{\mu_i} U_i'\xrightarrow{\gamma_i}C_i\to 0,$$
        where $\mu_i$ (resp. $\gamma_i$) is a minimal left (resp. right) $\add U$-approximation and $C_i$ is and indecomposable direct summand of $C[M]$. This sequence is part of the long exact sequence associated to \eqref{indec_triangles}: 
        $$H^0(\PP_{B_i})\to H^0((\PP'_U)_i)\to H^0(\XX_i)\to H^1(\PP_{B_i})=0,$$
        where $\XX_i = \PP_{C_i}$. Then, the assignment $\rho(B_i):= C_i$ defines a bijection between indecomposable direct summands of $B'$ and indecomposable direct summands of $C$. 
        \item[(ii)] For each direct summand $B_i$ of $B$ which is a summand of $B''$ there is an exact sequence in $\modd\Lambda$
        $$Q_i\xrightarrow{\delta_i}B_i\xrightarrow{\mu_i} U_i'\to 0,$$
        where $Q_i$ is an indecomposable direct summand of $Q$, $U'\in\add U$, and $\delta_i: Q_i\to B_i$ is a minimal left $\add B$-approximation. This sequence is part of the long exact sequence associated to \eqref{indec_triangles}: 
        $$H^{-1}(\XX_i)\to H^0(\PP_{B_i})\to H^0((\PP'_U)_i)\to H^0(\XX_i)=0,$$
        where $\XX_i = Q_i[1]$. The assignment $\rho(B_i):= Q_i$ defines a bijection between indecomposable direct summands of $B''$ and indecomposable direct summands of $Q$. 
        \item[(iii)] The map $\rho$ in (i)-(ii) gives a bijection between the indecomposable direct summands of $B$ and the indecomposable direct summands of $C\oplus Q[1]$ in $\CLambda(\Lambda)$. 
    \end{itemize}
\end{proposition}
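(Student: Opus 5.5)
Since Proposition \ref{exchange_in_Lambda} is a result of Buan--Marsh, the plan is to reconstruct their argument in the two-term homotopy category $\K^{[-1,0]}$, where support $\tau$-rigid objects $M\oplus P[1]$ correspond to presilting objects $\PP_M\oplus P[1]$ (Adachi--Iyama--Reiten). The Bongartz completion $\PP_B\oplus\PP_M$ and the co-Bongartz completion $\CC_Q\oplus\PP_M$ are silting objects in $\K$. Since $\PP_M$ is presilting, the Bongartz construction gives a triangle $\PP_B\xrightarrow{\beta}\PP_M'\to X\to$ with $\beta$ a minimal left $\add\PP_M$-approximation. I would show $X\in\add\CC_Q$ and $\alpha$ a minimal right approximation as follows. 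By construction $\Hom(\PP_M,X[1])=0$, and $\Hom(X,\PP_M[1])=0$ follows from the approximation property. Hence $X\oplus\PP_M$ is presilting; it is in fact silting, since it generates $\PP_B$ and $\PP_M$ in the thick closure. Using $\Gen H^0(X\oplus\PP_M)\subseteq\Gen M$ (the cokernel of $B\to M'$ lies in $\Gen M$, and $H^{-1}$ contributes only shifted projectives), $X\oplus \PP_M$ is the silting object whose torsion class is $\Gen M$, i.e.\ the co-Bongartz completion. Minimality of $\beta$ ensures $X$ has no summand in $\add\PP_M$, so $X\cong\CC_Q$, and $\alpha$ is then a minimal right approximation.

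Next I would decompose into indecomposable triangles. For each indecomposable $B_i$, take the minimal left approximation $\beta_i$ and its cone $\XX_i$; since $\beta=\bigoplus\beta_i$ and minimal approximations are additive, \eqref{trianglePPCC} is the sum of the triangles \eqref{indec_triangles}. The key claim is that each $\XX_i$ is indecomposable and nonzero. It is nonzero because $\PP_{B_i}\notin\add\PP_M$. For indecomposability, note that endomorphisms of $\XX_i$ lift to endomorphisms of $\PP_{B_i}$ through the triangle, using the vanishing of $\Hom(\PP_M',\PP_{B_i}[1])$ and $\Hom(\XX_i,\PP_M'[1])$. This gives a surjection from the local ring $\End(\PP_{B_i})$, modulo maps factoring through $\PP_M$, onto $\End(\XX_i)$ modulo a radical-type ideal, so $\XX_i$ has a local endomorphism ring. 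Counting summands, $|B|=|\Lambda|-|M|=|C|+|Q|$, together with the decomposition shows that $B_i\mapsto\XX_i$ is a bijection onto the summands of $\CC_Q$. This yields (iii) once (i) and (ii) are established.

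For the case split, apply $H^0$ and the long exact cohomology sequence to \eqref{indec_triangles}. Here $H^0(\PP_{B_i})=B_i$, $H^0((\PP_M')_i)=M_i'$, and $H^1(\PP_{B_i})=0$, so $\mathrm{coker}(\mu_i)\cong H^0(\XX_i)$. If $\mu_i$ is not surjective, then $H^0(\XX_i)\neq 0$; since $\XX_i$ is indecomposable in $\add(\PP_C\oplus Q[1])$, it must be $\PP_{C_i}$, which gives (i). The approximation properties of $\mu_i,\gamma_i$ follow from applying $\Hom(\PP_M,-)$ and $H^0$ to the approximation property of $\beta_i$, using $\Hom_\K(\PP_X,\PP_Y)\to\Hom_\Lambda(X,Y)$ surjective. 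If $\mu_i$ is surjective, then $H^0(\XX_i)=0$, so $\XX_i=Q_i[1]$. Then $H^{-1}(\XX_i)=Q_i\to B_i$ gives the sequence in (ii), and minimality of $\delta_i$ as a left $\add B$-approximation comes from the rotated triangle $Q_i\to\PP_{B_i}\to\PP'_M\to$ together with $\Hom(\PP_B,\PP_M[1])=0$.

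The main obstacle is identifying the cone $X$ with $\CC_Q$, that is, showing that the torsion class of $X\oplus\PP_M$ is exactly $\Gen M$. The cleanest route is to verify $H^0(X)\in\Gen M$ and that $X\oplus\PP_M$ is silting, and then invoke uniqueness in Theorem \ref{BongartzCo-Bongartz}(ii).
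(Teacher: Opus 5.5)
Your overall route (take the cone of the minimal left $\add\PP_M$-approximation of $\PP_B$, identify it with $\CC_Q$ through the torsion class $\Gen M$, then split into indecomposable triangles and read off (i)--(ii) from $H^0$) is workable. As written, however, it has a genuine gap at exactly the step you flag as the main obstacle: you never show that the cone $X$ is a two-term complex.

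A cone of a morphism in $K^{[-1,0]}(\proj\Lambda)$ lies a priori only in $K^{[-2,0]}(\proj\Lambda)$. The Adachi--Iyama--Reiten correspondence, which you use to recognise $X\oplus\PP_M$ as the co-Bongartz completion from $\Gen H^0(X\oplus\PP_M)=\Gen M$, concerns two-term silting objects only. Relatedly, your presilting check omits $\Hom_\K(X,X[1])=0$, and outside the two-term range silting in $\K$ also needs $\Hom_\K(T,T[i])=0$ for $i\geq 2$.

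This is not cosmetic. Your argument never uses that $B$ is the \emph{Bongartz} complement. It only uses that $\PP_B\oplus\PP_M$ is two-term silting with $B$ a module, and the conclusion fails for other complements. For instance, let $\Lambda$ be the path algebra of $1\to 2$ (so $P_2=S_2$ and $\tau S_1=S_2$) and $M=P_1$; then $B=P_2$, $C=S_1$, $Q=0$. Run your argument with $S_1$ in place of $B$:
\begin{itemize}
\item $\Hom_\K(\PP_{S_1},P_1)=0$, so the approximation is $\PP_{S_1}\to 0$ and the cone is $\PP_{S_1}[1]$.
\item Every check you perform passes: $P_1\oplus\PP_{S_1}[1]$ is silting in $\K$, and $H^0(\PP_{S_1}[1])=0$.
\item Yet the cone is neither two-term nor isomorphic to $\CC_Q=\PP_{S_1}$.
\end{itemize}

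The missing ingredient is exactly where the Bongartz property enters. For $X\in K^{[-2,0]}(\proj\Lambda)$, being two-term is equivalent to $\Hom_\K(X,\Lambda[2])=0$, since this forces the differential in degree $-2$ to be split mono. From your triangle, $\Hom_\K(X,\Lambda[2])$ is the cokernel of $\beta^*\colon\Hom_\K(\PP_M',\Lambda[1])\to\Hom_\K(\PP_B,\Lambda[1])$. So you must show that every morphism $\PP_B\to\Lambda[1]$ factors through $\add\PP_M$.

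This follows from the Bongartz triangle $\Lambda\to Z\to\PP_M''\to\Lambda[1]$, in which $\PP_M''\to\Lambda[1]$ is a right $\add\PP_M$-approximation and $\PP_B$ is a summand of $Z$. This is the case $\C=K^{[-1,0]}(\proj\Lambda)$ of Definition-Proposition~\ref{BongartzCoBongartz_of_presilting}, combined with Proposition~\ref{presilting_supp.tau-rigid}(ii). Apply $\Hom_\K(-,\Lambda[1])$ to this triangle and use $\Hom_\K(\Lambda,\Lambda[1])=0$.

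Once $X$ is two-term, $\Hom_\K(X,X[1])=0$ follows from $\Hom_\K(X,\PP_M'[1])=0=\Hom_\K(X,\PP_B[2])$. Your identification, decomposition and counting then go through.

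For comparison, the paper does not reprove this proposition; it quotes it from Buan--Marsh. Its extriangulated analogue, Lemma~\ref{Bongartz_co-Bongartz_Lemma} with Proposition~\ref{exchange_in_C}, instead glues the Bongartz and co-Bongartz triangles by an octahedral-type argument. This produces $B\to\overline{U}\to C$ with $\overline{U}$ an extension of objects of $\add U$, hence in $\add U$. That route stays inside the two-term category by construction and needs no torsion-class identification.

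Two smaller points.
\begin{itemize}
\item In (ii), the vanishing that makes $\delta_i$ a left $\add\PP_B$-approximation is $\Hom_\K(\PP_M,\PP_B[1])=0$, not $\Hom_\K(\PP_B,\PP_M[1])=0$. Minimality of $\delta_i$ comes from $\delta_i\neq 0$, since otherwise $B_i\cong M_i'$.
\item Statement (i) asserts that $\mu_i$ and $\gamma_i$ are \emph{minimal}, but fullness of $H^0$ transfers only the approximation property. Since $H^0$ kills morphisms factoring through $\add\Lambda[1]$, minimality in $\K$ does not pass to $\modd\Lambda$ for free. For $\mu_i$, the factorization of maps $\PP_B\to\Lambda[1]$ through $\add\PP_M$ established above lets you remove such components by an automorphism of $\PP_M'$.
\end{itemize}
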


    Let $X$ be an indecomposable $\Lambda$-module. Note that, if $X$ lies in $\P(\Gen M)\setminus \add M$, then $X$ is an indecomposable direct summand of the co-Bongartz complement $C[M]$, while if $X$ lies in $(\proj\Lambda)\cap {^\perp M}$, then $X$ is an indecomposable direct summand of $Q$, where $Q[1]$ is as in Proposition \ref{exchange_in_Lambda}. Let $f_M$ denote the torsion-free functor associated to the torsion pair $(\Gen M, M^\perp)$. The next result can be seen as an extension of Jasso's reduction \cite[Cor. 3.18]{Jasso_Reduction}. 

    \begin{proposition}[{\cite[Prop. 5.6]{tauExcSeq_BM}}]\label{epsilonM}
        There is a bijection
        \[
        \begin{tikzcd}
            \{X\in \ind\Lambda\setminus\ind M \mid X\oplus M \text{ is } \tau\text{-rigid}\}\cup \ind(\proj\Lambda\cap {^\perp M})[1] \arrow[d, "\Eps_M"] \\
            \{Y\in \ind J(M) \mid Y \text{ is } \tau_{J(M)}\text{-rigid}\}\cup \ind\proj(J(M))[1]
        \end{tikzcd}
        \]
        given by 
        \[
            \Eps_M(X) = 
            \begin{cases}
            f_M(X) & \text{if } X \notin\Gen M; \\
            f_M(B_M^X)[1]  & \text{if } X  \in\Gen M; \\
            f_M(B_M^X)[1] & \text{if } X\in \ind(\proj\Lambda\cap {^\perp M})[1],
            \end{cases}
        \]
     where $B_M^X$ is the indecomposable direct summand of the Bongartz complement of $M$ associated to $X$ (see Proposition \ref{exchange_in_Lambda}). 
    \end{proposition}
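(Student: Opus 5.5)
The plan is to split both the domain and the codomain of $\Eps_M$ into two disjoint pieces and to show that $\Eps_M$ restricts to a bijection on each piece. The domain splits as $\mathcal{D}_1=\{X\notin\Gen M\}$ and $\mathcal{D}_2=\{X\in\Gen M\}\cup \ind(\proj\Lambda\cap{}^\perp M)[1]$. The codomain splits as $\mathcal{T}_1=\{Y\in\ind J(M)\mid Y\ \tau_{J(M)}\text{-rigid}\}$ (modules) and $\mathcal{T}_2=\ind\proj(J(M))[1]$ (shifted projectives). By definition $\Eps_M(\mathcal{D}_1)\subseteq \modd\Lambda$ and $\Eps_M(\mathcal{D}_2)\subseteq \modd\Lambda[1]$, so these images cannot overlap. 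It therefore suffices to prove that $\Eps_M\colon\mathcal{D}_1\to\mathcal{T}_1$ and $\Eps_M\colon\mathcal{D}_2\to\mathcal{T}_2$ are bijections.

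\emph{Second piece.} First I identify $\mathcal{D}_2$ with the indecomposable summands of $C\oplus Q[1]$. Take $X\in\Gen M$ indecomposable, not in $\add M$, with $X\oplus M$ $\tau$-rigid. Then $\Hom_\Lambda(M,\tau X)=0$, so $\Ext^1_\Lambda(X,\Gen M)=0$ by the Auslander--Reiten formula, and hence $X\in\P(\Gen M)\setminus\add M$. By Theorem \ref{BongartzCo-Bongartz}(ii), $X$ is then a summand of $C=C[M]$. Conversely, every summand of $C$ lies in $\P(\Gen M)$, and is therefore $\tau$-rigid together with $M$. The shifted projectives in $\mathcal{D}_2$ are exactly the summands of $Q[1]$, as noted before Proposition \ref{epsilonM}. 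Proposition \ref{exchange_in_Lambda}(iii) then gives the bijection $\rho^{-1}\colon X\mapsto B^X_M$ from $\mathcal{D}_2$ to the indecomposable summands of $B=B[M]$. Next I invoke Jasso's reduction (\cite{Jasso_Reduction}, and Theorem \ref{tau-perp-cat=modGamma} with $P=0$). Since $\add(B\oplus M)=\P({}^\perp\tau M)$ and $J(M)\simeq\modd\Gamma_M$ with $\Gamma_M=\End(B\oplus M)^{\mathrm{op}}/[M]$, the module $f_M(B)$ is a progenerator of $J(M)$. Moreover $f_M$ sends the indecomposable summands of $B$ bijectively to $\ind\proj J(M)$. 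Both sets have $\abs{\Lambda}-\abs{M}$ elements, and $f_M$ kills exactly $\add M$ inside $\P({}^\perp\tau M)$. Composing these two bijections and shifting by $[1]$ yields the second and third cases of $\Eps_M$, and shows that they form a bijection $\mathcal{D}_2\to\mathcal{T}_2$.

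\emph{First piece.} For $X\notin\Gen M$ I use Jasso's bijection \cite[Thm. 3.16, Cor. 3.18]{Jasso_Reduction}. This is the bijection between support $\tau$-tilting pairs of $\Lambda$ having $M$ as a summand and support $\tau$-tilting pairs of $J(M)$, given on the module part by $f_M$ on summands outside $\Gen M$. First, $f_M(X)\neq 0$ because $X\notin\Gen M$. Next, I show $f_M(X)$ is indecomposable and $\tau_{J(M)}$-rigid. For this, complete $X\oplus M$ to a support $\tau$-tilting pair and apply the bijection summand-wise; it is compatible with direct sums because $f_M$ is additive. The count of summands, $\abs{\Lambda}-\abs{M}$ on both sides, forces indecomposable summands to go to indecomposable summands. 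Injectivity: if $f_M(X)\cong f_M(X')$, then $X\oplus M$ and $X'\oplus M$ correspond to the same $\tau_{J(M)}$-rigid object. Jasso's bijection at the level of $\tau$-rigid modules containing $M$ then gives $X\cong X'$. Surjectivity: lift any indecomposable $\tau_{J(M)}$-rigid $Y$ through the inverse of Jasso's bijection to a $\tau$-rigid $X\oplus M$, with $X\notin\Gen M$ indecomposable and $f_M(X)\cong Y$.

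The main obstacle is this first piece. Jasso's results are formulated for (support) $\tau$-tilting objects, or for $\tau$-rigid modules containing $M$ as a whole, not for individual indecomposable summands. The crux is to show that the correspondence is well defined summand by summand: the image of an indecomposable $X$ must not depend on the chosen completion. I would first try to derive this from the explicit formula $X\mapsto f_M(X)$ together with additivity of $f_M$. Failing that, I would use the torsion-class description: $\Gen(X\oplus M)\cap M^{\perp_0}$ corresponds to $\Gen_{J(M)} f_M(X)$, and from this one can read off indecomposability and injectivity directly.
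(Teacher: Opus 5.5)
The paper gives no proof of this statement; it quotes it from Buan--Marsh, so I am judging your argument on its own. Splitting into a module part and a shifted part is the right approach, and your second piece is sound. The identification of $\mathcal{D}_2$ with the indecomposable summands of $C\oplus Q[1]$, via $\Hom_\Lambda(M,\tau X)=0 \Leftrightarrow \Ext^1_\Lambda(X,\Gen M)=0$, is correct. That $f_M$ induces a bijection $\ind\add B[M]\to\ind\proj J(M)$ is standard: under Jasso's equivalence, $f_M(B_i)$ corresponds to the indecomposable projective $\Gamma_M$-module at $B_i$.

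The genuine gap is in the first piece, where you suspected it. Your counting argument does not prove that $f_M(X)$ is indecomposable. Jasso's bijection sends a completion $(T,P)$ of $X\oplus M$ to some pair $(f_M(T),P')$, but you have no a priori control over $P'$, which must absorb both $P$ and the summands of $T$ lying in $\Gen M$. Even an exact count of isoclasses in $f_M(T)=\bigoplus_i f_M(T_i)$ would not exclude, say, $f_M(T_1)\cong A\oplus A'$ and $f_M(T_2)\cong A$. (Your counting for $f_M(B)$ in the second piece has the same defect, though there the fact can simply be cited.)

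The injectivity step appeals to a ``Jasso bijection at the level of $\tau$-rigid modules containing $M$'', which is not available. The assignment $N\mapsto f_M(N)$ is not injective on such modules, since summands in $\Gen M$ are killed, and completions of $X\oplus M$ and of $X'\oplus M$ are unrelated support $\tau$-tilting modules. Your fallbacks do not close this. Additivity of $f_M$ says nothing about indecomposability. The equality $\Gen(X\oplus M)\cap M^{\perp_0}=\Gen_{J(M)} f_M(X)$ only yields $\Gen(M\oplus X)=\Gen(M\oplus X')$, and getting $X\cong X'$ from that needs a further argument. Surjectivity as you sketch it also silently uses indecomposability: an indecomposable summand $Y$ of $\bigoplus_i f_M(T_i)$ is isomorphic to one of the $f_M(T_i)$ only if these are indecomposable.

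The missing idea is a lifting lemma.
\begin{itemize}
\item Suppose $X\oplus M$ and $X'\oplus M$ are both $\tau$-rigid. Then $\Ext^1_\Lambda(X,t_MX')=0$, because $t_MX'\in\Gen M$ and $\Hom_\Lambda(M,\tau X)=0$.
\item Applying $\Hom_\Lambda(X,-)$ to $0\to t_MX'\to X'\to f_MX'\to 0$ shows that every map $X\to f_MX'$ lifts to a map $X\to X'$. Hence $\Hom_\Lambda(X,X')\to\Hom_\Lambda(f_MX,f_MX')$ is surjective.
\item Taking $X'=X$, $\End_\Lambda(f_MX)$ is a nonzero quotient of the local ring $\End_\Lambda(X)$, so $f_MX$ is indecomposable.
\item If $\phi\colon f_MX\to f_MX'$ is an isomorphism, lift $\phi$ and $\phi^{-1}$ to $h\colon X\to X'$ and $h'\colon X'\to X$. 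Then $f_M(h'h)=1_{f_MX}\neq 0$, so $h'h$ is not nilpotent, hence a unit in $\End_\Lambda(X)$. Therefore $X\cong X'$.
\end{itemize}
With this lemma in hand, your Jasso-based arguments for $\tau_{J(M)}$-rigidity and for surjectivity go through. Applying the same lemma to the summands $B_i$ also makes the counting in your second piece rigorous.
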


    \begin{proposition}[{\cite[Prop. 5.11]{tauExcSeq_BM}}]\label{epsilonP[1]}
        Let $P\in\proj\Lambda$. There is a bijection 
          \[
        \begin{tikzcd}
            \{X\in \ind\Lambda\mid X\oplus P[1] \text{ is supp. } \tau\text{-rigid}\}\cup \ind(\proj\Lambda\setminus \add P)[1] \arrow[d, "\Eps_{P[1]}"] \\
            \{Y\in \ind J(P[1]) \mid Y \text{ is } \tau_{J(M)}\text{-rigid}\}\cup \ind\proj(J(M))[1]
        \end{tikzcd}
        \]
        given by 
          \[
            \Eps_{P[1]}(X) = 
            \begin{cases}
            X & \text{if } X\in\ind\modd\Lambda \text{ and } X\oplus P[1] \text{ is supp. }\tau\text{-rigid}; \\
            f_P(X)[1]  & \text{if } X  \in \ind(\proj\Lambda\setminus \add P)[1] \\
            \end{cases}
        \]
    \end{proposition}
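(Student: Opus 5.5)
The plan is to identify $J(P[1])$ with a module category over a quotient algebra and then treat the two cases in the definition of $\Eps_{P[1]}$ separately. Write $P = \Lambda e$ for an idempotent $e \in \Lambda$, so that $P^{\perp_0} = \{X \mid eX = 0\}$. Since $M = 0$ in $U = P[1]$, we get $J(P[1]) = P^{\perp_0} = \modd(\Lambda/\langle e\rangle)$. This is consistent with Theorem \ref{tau-perp-cat=modGamma}, since $\abs{\Lambda/\langle e\rangle} = \abs{\Lambda} - \abs{P}$. Throughout I use the torsion pair $(\Gen P, P^{\perp_0})$, whose torsion-free functor $f_P(X) = X/\operatorname{tr}_P(X)$ is isomorphic to $\Lambda/\langle e\rangle \otimes_\Lambda X$ on modules.

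\emph{First case: modules.} An indecomposable module $X$ with $X \oplus P[1]$ support $\tau$-rigid is exactly a $\tau$-rigid $X$ with $\Hom_\Lambda(P,X) = 0$, that is, a $\tau$-rigid $\Lambda/\langle e\rangle$-module. I then claim that for $X \in \modd(\Lambda/\langle e\rangle)$, $X$ is $\tau_\Lambda$-rigid if and only if it is $\tau_{\Lambda/\langle e\rangle}$-rigid.
\begin{itemize}
\item One direction is \cite[Lemma 2.1]{tau-tiling-theory}: for any ideal $I$ and $X, Y \in \modd \Lambda/I$, $\Hom_\Lambda(X,\tau Y) = 0$ implies $\Hom_{\Lambda/I}(X,\tau_{\Lambda/I} Y) = 0$.
\item For the converse with $I = \langle e\rangle$ idempotent, I would use $\Hom(X,\tau Y) \cong D\overline{\Hom}(Y,X)$ together with the characterisation that $X$ is $\tau$-rigid if and only if $\Ext^1(X,\Gen X) = 0$.
\item Because $\langle e\rangle$ is idempotent, $\modd \Lambda/\langle e\rangle$ is closed under extensions in $\modd\Lambda$. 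So $\Ext^1$-groups between $\Lambda/\langle e\rangle$-modules agree in the two categories, and $\Gen X$ is the same in both.
\end{itemize}
Hence this case of $\Eps_{P[1]}$, namely $X \mapsto X$, is a bijection onto the indecomposable $\tau_{J(P[1])}$-rigid modules.

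\emph{Second case: shifted projectives.} For an indecomposable $Q \in \proj\Lambda \setminus \add P$, the object $f_P(Q) = Q/\operatorname{tr}_P(Q) \cong \Lambda/\langle e\rangle \otimes_\Lambda Q$ is projective over $\Lambda/\langle e\rangle$. I would check the following three points.
\begin{itemize}
\item $f_P(Q)$ is nonzero, since $Q \notin \add P$ means its top is not in $\add \operatorname{top}(P)$.
\item $f_P(Q)$ is indecomposable, since it is a nonzero quotient of the local module $Q$.
\item Its top equals the top of $Q$. Hence $Q \mapsto f_P(Q)$ is injective on isoclasses, and it hits every indecomposable projective $\Lambda/\langle e\rangle$-module, namely $\Lambda/\langle e\rangle\,\epsilon$ for primitive idempotents $\epsilon$ not in $\add e$.
\end{itemize}
So $Q[1] \mapsto f_P(Q)[1]$ is a bijection from $\ind(\proj\Lambda\setminus\add P)[1]$ onto $\ind\proj(J(P[1]))[1]$.

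The two cases partition both the domain and the codomain (modules versus shifted objects), so combining them gives the bijection $\Eps_{P[1]}$. The main obstacle is the converse direction of the $\tau$-rigidity comparison, i.e.\ that $\tau_{\Lambda/\langle e\rangle}$-rigidity implies $\tau_\Lambda$-rigidity. If the $\Ext^1(X,\Gen X)$ argument runs into trouble, I would instead apply \cite[Lemma 2.1]{tau-tiling-theory} in the stronger form: for an idempotent ideal, a minimal projective presentation over $\Lambda/\langle e\rangle$ lifts to a presentation over $\Lambda$ modulo summands in $\add P$, and these summands do not affect $\Hom(-,X)$ because $\Hom(P,X) = 0$.
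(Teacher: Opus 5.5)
Your proposal is correct and follows the standard argument behind this result; the paper does not reprove it but only cites \cite[Prop.~5.11]{tauExcSeq_BM}. The argument identifies $J(P[1])=P^{\perp_0}$ with $\modd(\Lambda/\langle e\rangle)$, transfers $\tau$-rigidity between $\Lambda$ and $\Lambda/\langle e\rangle$, and matches indecomposable projectives via $f_P$. The step you flag as the main obstacle is exactly the idempotent case of \cite[Lemma~2.1]{tau-tiling-theory}, and your $\Ext^1(X,\Gen X)=0$ argument already gives both directions on its own. You can therefore drop the formula $\Hom(X,\tau Y)\cong D\overline{\Hom}(Y,X)$, which is misstated anyway: the Auslander--Reiten formula reads $\overline{\Hom}(X,\tau Y)\cong D\Ext^1(Y,X)$.
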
    

Combining Proposition \ref{epsilonM} and Proposition \ref{epsilonP[1]}, Buan and Marsh proved the following theorem. 

\begin{theorem}[{\cite[Thm. 3.6]{A_category_of_wide_subcategories}}]\label{epsilonU}
    Let $U=M\oplus P[1]\in\CLambda(\Lambda)$ be support $\tau$-rigid. There is a bijection 
     \[
        \begin{tikzcd}
            \{V\in \ind\CLambda(\Lambda)\mid U\oplus V \text{ is support } \tau\text{-rigid}\} \arrow[d, "\Eps_{U}"] \\
            \{W\in \ind\CLambda(J(U)) \mid W \text{ is support } \tau_{J(M)}\text{-rigid}\}.
        \end{tikzcd}
    \]
\end{theorem}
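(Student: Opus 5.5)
Theorem \ref{epsilonU} is assembled from the two special cases already recorded, Proposition \ref{epsilonM} (the case $U=M$) and Proposition \ref{epsilonP[1]} (the case $U=P[1]$). The plan is to reduce a general $U=M\oplus P[1]$ to these by passing through an intermediate $\tau$-perpendicular category. Write $U=M\oplus P[1]$ and first reduce by $P[1]$. By definition $J(P[1])=P^{\perp_0}$, and this is a module category: it is equivalent to $\modd(\Lambda/\langle e\rangle)$, where $e$ is the idempotent with $\add \Lambda e=\add P$. The condition that $M\oplus P[1]$ is support $\tau$-rigid says precisely that $M\in P^{\perp_0}$ and that $M$ is $\tau$-rigid. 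Since $\tau_{\Lambda/\langle e\rangle}$-rigidity of a module in $P^{\perp_0}$ coincides with $\tau$-rigidity (the standard fact for modules over $\Lambda/\langle e\rangle$, see \cite{tau-tiling-theory}), we get $\Eps_{P[1]}(M)=M$, and $M$ is $\tau_{J(P[1])}$-rigid.

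Next I would verify the identity $J(U)=J_{J(P[1])}(M)$ as subcategories of $\modd\Lambda$. Inside $\modd(\Lambda/\langle e\rangle)$ one has $J_{J(P[1])}(M)=M^{\perp_0}\cap{}^{\perp_0}\tau_{\Lambda/\langle e\rangle}M\cap P^{\perp_0}$. Two descriptions are available: Jasso's $J(M)=M^{\perp_0}\cap{}^{\perp_0}\tau M$, or the description via the Bongartz complement $\P({}^{\perp}\tau M)$ from Theorem \ref{BongartzCo-Bongartz}. The issue is that $\tau_{\Lambda/\langle e\rangle}M$ differs from $\tau M$, so the two perpendicular conditions must be compared on objects of $P^{\perp_0}$. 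For $Y\in P^{\perp_0}$, Auslander--Reiten duality gives $\Hom(Y,\tau M)=0\iff \Ext^1(M,\Gen Y)=0$, and this Ext condition can be computed equally in $\modd\Lambda$ or in $\modd(\Lambda/\langle e\rangle)$, since $P^{\perp_0}$ is closed under extensions and factors. This yields the identity, and this is also how the result is handled in \cite{Beyond_tau-tilting_theory}.

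Finally, define $\Eps_U := \Eps^{J(P[1])}_{M}\circ \Eps_{P[1]}$. Here $\Eps^{J(P[1])}_M$ denotes the bijection of Proposition \ref{epsilonM} applied inside the module category $J(P[1])$, with its shift-part accounted for via $\CLambda(J(P[1]))$. It then suffices to check two things.

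First, $\Eps_{P[1]}$ restricts to a bijection from $\{V\mid U\oplus V \text{ support } \tau\text{-rigid}\}$ onto $\{V'\in\ind\CLambda(J(P[1]))\mid M\oplus V' \text{ support } \tau_{J(P[1])}\text{-rigid}\}$. For $V$ a module, $V\oplus U$ being support $\tau$-rigid means $V\oplus M$ is $\tau$-rigid and lies in $P^{\perp_0}$, which matches the target condition by the comparison of $\tau$-rigidity above. For $V=Q[1]$ with $Q$ projective, $\Eps_{P[1]}(Q[1])=f_P(Q)[1]$. Here $f_P(Q)$ is a projective of $\Lambda/\langle e\rangle$, namely $Q/\mathrm{tr}_P Q$, and the condition $\Hom(Q,M)=0$ translates to $\Hom(f_P(Q),M)=0$ because $M\in P^{\perp_0}$. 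The summands of $U$ itself are excluded on both sides.

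Second, compose with Proposition \ref{epsilonM} over $J(P[1])$ and use $J(U)=J_{J(P[1])}(M)$ to land in $\ind\CLambda(J(U))$.

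I expect the main obstacle to be the identification $J(U)=J_{J(P[1])}(M)$, together with the compatibility of $\tau$-rigidity across the two module categories. The bookkeeping of the shifted projective summands is comparatively routine.
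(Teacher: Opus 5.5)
Your argument is correct, but it runs the reduction in the opposite order from the construction the paper records after the statement (diagram \eqref{Eps_U_via_comm_diagram}, following Buan--Marsh and B{\o}rve--Trygsland).

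The paper reduces by $M$ first. It transports $\widetilde{P}[1]=\Eps_M(P[1])\in\proj J(M)[1]$ to $\modd\Gamma_M$ along $F_M$, applies Proposition \ref{epsilonP[1]} over $\Gamma_M$, and returns via $F^{\Gamma_M}_{P'[1]}$ and $F_U^{-1}$. This route needs two inputs: the identity $J(U)=J_{J(M)}(\widetilde{P}[1])=J(M)\cap\widetilde{P}^{\perp_0}$ from \cite{tau-perpendicular_wide_subategories}, and the isomorphism $\Gamma^{\Gamma_M}_{P'[1]}\cong\Gamma_U$.

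You reduce by $P[1]$ first, which is more elementary. Here $J(P[1])=P^{\perp_0}$ is literally $\modd(\Lambda/\langle e\rangle)$, $\Eps_{P[1]}$ is the identity on modules, and no Jasso-type equivalences are needed. The only real input is $J(U)=J_{J(P[1])}(M)$. Your Auslander--Smal{\o} criterion, together with the comparison of $\Ext^1$ inside the Serre subcategory $P^{\perp_0}$ (equivalently, the comparison of $\tau$ and $\tau_{\Lambda/\langle e\rangle}$ in \cite{tau-tiling-theory}), establishes this. Your treatment of shifted projectives via $\Hom(f_P(Q),M)\cong\Hom(Q,M)$ is right. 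So are your tacit corrections of the statement: excluding $\add U$ from the domain, and reading $\tau_{J(U)}$ for $\tau_{J(M)}$.

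What the paper's order buys is compatibility with Section \ref{Sec:Compatibility}. In \eqref{big-diagram} a presilting $U=U'\oplus I_U$ is reduced first by its non-injective part $U'$ and then by $I_U$ inside $\widetilde{\C}_{U'}$, mirroring $\Eps_M$ followed by $\Eps^{\Gamma_M}_{P'[1]}$. To use your $\Eps_U:=\Eps^{J(P[1])}_M\circ\Eps_{P[1]}$ there, you would need it to agree with the paper's composite. That follows from the order-independence of iterated reduction \cite[Theorem 6.12]{tau-perpendicular_wide_subategories}, applied with $(P[1],M)$ and with $(M,P[1])$. Otherwise that proof would have to be reorganised to reduce by $I_U$ first.
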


Now we explain the construction of the map $\Eps_U$ in Theorem \ref{epsilonU} (see \cite[Section 3]{borve2021two}).

Let $\widetilde{P}[1] := \Eps_M(P[1])$ in $\proj J(M)[1]$. Then, the bijection $\Eps_M$ in Proposition \ref{epsilonM} restricts to a bijection 

\begin{equation}
    \Eps_M : \{X\in \CLambda(\Lambda)\setminus\add M\mid X\oplus M \oplus P[1] \text{ is supp. $\tau$-rigid}\}\to \{Y\in \CLambda(J(M))\mid Y\oplus \widetilde{P}[1] \text{ is supp. $\tau$-rigid}\}.
\end{equation}

There is an equivalence \cite[Thm. 3.8]{Jasso_Reduction}
$$ F_M:= \Hom_\Lambda(B_M,-): J(M)\to \modd\Gamma_M$$
which induces a bijection 

\begin{equation*}
    F_M: \staurigid J(M) \to \staurigid\Gamma_M.
\end{equation*}

The above bijection restricts to a bijection 

\begin{equation}\label{F_Meq}
    F_M : \{X\in \ind\CLambda(J(M))\mid X\oplus \widetilde{P}[1] \text{ is supp. $\tau$-rigid}\}\to \{Y\in \ind\CLambda(\Gamma_M)\mid Y\oplus{P'}[1] \text{ is supp. $\tau$-rigid}\},
\end{equation}

where $P'[1] = F_M(\widetilde{P}[1]):= F_M\left(\widetilde{P}\right)[1]$ in $(\proj\Gamma_M)[1]$. Applying Proposition \ref{epsilonP[1]} to the algebra $\Gamma_M$, we obtain a bijection

\begin{equation}
    \Eps_{P'[1]}^{\Gamma_M}: \left\{ X\in \ind\CLambda(\Lambda)\setminus\add P'[1] \;\middle|\; \begin{tabular}{@{}l@{}} $X\oplus P'[1]$ \\ \text{is supp. $\tau$-rigid} \end{tabular} \right\} \to \left\{ Y\in \ind\CLambda(J_{\Gamma_M}(P'[1])) \;\middle|\; \begin{tabular}{@{}l@{}} $Y$ \text{ is supp.} \\ $\tau_{J_{\Gamma_M}(P'[1])}$\text{-rigid} \end{tabular} \right\}.
\end{equation}

The equivalence $F_{P'[1]}^{\Gamma_M}$ (see \cite[Thm. 4.12(b)]{Beyond_tau-tilting_theory}) induces a bijection 

\begin{equation*}
    F_{P'[1]}^{\Gamma_M}: \staurigid J_{\Gamma_M}(P'[1]) \to \staurigid\Gamma_{P'[1]}^{\Gamma_M}
\end{equation*}

\begin{lemma}\label{Gamma_P'[1]^Gamma_M=Gamma_U}
    Under the assumptions above, we have $\Gamma_{P'[1]}^{\Gamma_M}\cong \Gamma_U$. 
\end{lemma}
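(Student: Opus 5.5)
Our plan is to compare the two algebras through the $\tau$-perpendicular categories they describe, rather than through their explicit presentations. By Theorem \ref{tau-perp-cat=modGamma}, $\Gamma_U$ is, up to Morita equivalence, the algebra whose module category is $J(U)$. Likewise $\Gamma^{\Gamma_M}_{P'[1]}$ is the algebra whose module category is $J_{\Gamma_M}(P'[1]) \subseteq \modd\Gamma_M$. Both algebras are basic by construction, as quotients of endomorphism algebras of basic Bongartz completions. Hence it suffices to produce an equivalence $J_{\Gamma_M}(P'[1]) \simeq J(U)$ compatible with the equivalences to the respective module categories.

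\textbf{Step 1: reduce to $J(M)$.} Transport along Jasso's equivalence $F_M = \Hom_\Lambda(B_M,-)\colon J(M)\to\modd\Gamma_M$. Since $F_M(\widetilde P) = P'$ and $F_M$ is an exact equivalence, $F_M$ restricts to an equivalence
\[ \widetilde P^{\perp_0}\cap J(M) \;\simeq\; P'^{\perp_0}\cap\modd\Gamma_M = J_{\Gamma_M}(P'[1]). \]
So it remains to show $J_{J(M)}(\widetilde P[1]) = \widetilde P^{\perp_0}\cap J(M)$ equals $J(U) = M^{\perp_0}\cap{}^{\perp_0}\tau M\cap P^{\perp_0}$.

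\textbf{Step 2: identify $\widetilde P$.} Recall that $\widetilde P[1] = \Eps_M(P[1]) = f_M(B^{P}_M)[1]$. Here $P$ is projective with $\Hom_\Lambda(P,M)=0$, so $P \in \proj\Lambda \cap {}^\perp M$ appears as a summand $Q_i$ of the co-Bongartz part. Proposition \ref{exchange_in_Lambda}(ii) then gives an exact sequence $P \xrightarrow{\delta} B^P_M \to U' \to 0$ with $U'\in\add M$. For $X\in J(M)\subseteq M^{\perp_0}$ we have $\Hom(f_M(B^P_M),X) = \Hom(B^P_M,X)$, because $X$ is torsion-free for $(\Gen M, M^{\perp_0})$. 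Applying $\Hom(-,X)$ to the sequence, and using $\Hom(U',X)=0$, yields an injection $\Hom(B^P_M,X)\hookrightarrow \Hom(P,X)$. This gives $J(U)\subseteq \widetilde P^{\perp_0}\cap J(M)$.

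For the converse inclusion, we will show that $\delta^*\colon\Hom(B^P_M,X)\to\Hom(P,X)$ is surjective for $X\in J(M)$. The natural tool is that $\delta$ is a minimal left $\add B$-approximation. We expect this surjectivity to be the main obstacle: $X$ need not lie in $\add B$, so the approximation property alone does not suffice. The first idea is to use that $B_M$ is Ext-projective in ${}^\perp\tau M$ and that $J(M)\subseteq{}^{\perp}\tau M$. Any map $P\to X$ factors through the $\P({}^\perp\tau M)$-approximation of $X$ (a $\Gen B_M$-cover). The approximation property of $\delta$ should then factor it through $B^P_M$, after discarding the components landing in $\add M$, which vanish into $X\in M^{\perp_0}$.

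A second route, if this bookkeeping is delicate, is to avoid it altogether. Both categories are wide, both contain $J(U)$, and both have rank $|\Lambda|-|M|-|P|$: by Theorem \ref{tau-perp-cat=modGamma} applied twice, the rank of $J_{J(M)}(\widetilde P[1])$ is $(n-|M|)-|\widetilde P|$, and $|\widetilde P| = |P|$ because $\Eps_M$ is a bijection. Equality would then follow from the fact that a wide inclusion of $\tau$-perpendicular categories of the same rank is an equality. Alternatively, one could cite \cite[Section 3]{borve2021two}, where the identity $J_{J(M)}(\Eps_M(P[1])) = J(M\oplus P[1])$ is established.

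\textbf{Step 3: conclude.} Combining Steps 1 and 2 gives $\modd\Gamma^{\Gamma_M}_{P'[1]}\simeq J_{\Gamma_M}(P'[1])\simeq J(U)\simeq\modd\Gamma_U$. Since both algebras are basic, Morita equivalence upgrades to the isomorphism $\Gamma^{\Gamma_M}_{P'[1]}\cong\Gamma_U$.
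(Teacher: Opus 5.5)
Your proof is correct. It shares the paper's skeleton: transport along $F_M$, so that $J_{\Gamma_M}(P'[1])$ corresponds to $J(M)\cap\widetilde{P}^{\perp_0}$, then identify the latter with $J(U)$. You carry out both steps differently, though.

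\textbf{How the paper does it.} The paper cites Buan--Hanson (Theorem 6.4) for $J(M\oplus P[1])=J_{J(M)}(\Eps_M(P[1]))$. It then compares endomorphism rings of Ext-projective generators directly: $\Gamma^{\Gamma_M}_{P'[1]}=\End_{\Gamma_M}(F_M(B'))\cong\End_\Lambda(B')$, with $B'=\P(J(M)\cap\widetilde{P}^{\perp_0})=\P(J(U))$.

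\textbf{How you do it.} You prove the perpendicular identity by hand, and finish with the fact that Morita equivalent basic algebras are isomorphic.

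\textbf{The reverse inclusion.} Your ``first idea'' is already a complete argument and need not be hedged.
Since $X\in J(M)\subseteq{}^{\perp_0}\tau M=\Gen B_M$, a right $\add B_M$-approximation $\pi\colon B_0\to X$ is surjective. Because $P$ is projective, any $g\colon P\to X$ lifts to $\tilde{g}\colon P\to B_0$.
Write $B_0=B_0'\oplus M_0$ with $B_0'\in\add B[M]$ and $M_0\in\add M$. The component of $\tilde{g}$ into $M_0$ is zero because $\Hom_\Lambda(P,M)=0$. The component into $B_0'$ factors through the left $\add B[M]$-approximation $\delta\colon P\to B^P_M$.
Combined with your injectivity argument, $\delta^*\colon\Hom_\Lambda(B^P_M,X)\to\Hom_\Lambda(P,X)$ is bijective for every $X\in J(M)$. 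Hence $J(M)\cap\widetilde{P}^{\perp_0}=J(M)\cap P^{\perp_0}=J(U)$.

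\textbf{The fallback.} I would drop the rank-based fallback. It rests on an unproved general claim, that an inclusion of $\tau$-perpendicular subcategories of equal rank is an equality. That claim is no more elementary than the identity itself.

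\textbf{What each route buys.} Your route gives a short, self-contained proof of exactly the special case of the Buan--Hanson identity needed here. It uses only Proposition~\ref{exchange_in_Lambda}(ii), the torsion pair $(\Gen M,M^{\perp_0})$, and $\Gen B_M={}^{\perp_0}\tau M$.
The paper's route gives an explicit description of the Bongartz completion of $P'[1]$ in $\CLambda(\Gamma_M)$ as $F_M(\P(J(U)))$, not merely an abstract algebra isomorphism. Your restricted equivalence also sends Ext-projectives to Ext-projectives, so you could recover this description if needed.
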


\begin{proof}
    By \cite[Lemma 2.17]{BHM_mutation}, $\Gamma_{P'[1]}^{\Gamma_M} = \End_{\Gamma_M}(B)$, where $B$ is the Bongartz completion of $P'[1]$ in $\CLambda(\Gamma_M)$. This means that $B$ is the maximal element in the codomain of $F_M$ in \eqref{F_Meq}. Since $F_M$ maps the maximal element of its domain to the maximal element of its codomain, we have that $B = F_M(B')$, where $B'$ is the maximal module in $J(M)$ which is orthogonal to $\widetilde{P}$. In other words, $B' = \P(J(M)\cap \widetilde{P}^{\perp_0})$ (see Theorem \ref{BongartzCo-Bongartz}(i) and \cite[Lemma 2.17]{BHM_mutation}). Using \cite[Thm. 6.4]{tau-perpendicular_wide_subategories} we have 
    \begin{align*}
        J(U) = J(M\oplus P[1]) &= J_{J(M)}(\Eps_M(P[1]))\\
        &= J_{J(M)}\left(\widetilde{P}[1]\right)\\
        &= J(M)\cap \widetilde{P}^{\perp_0}.
    \end{align*}
Hence, $B' = \P(J(U))$. Now, $\Gamma_U = \End_\Lambda(\P(J(U))) = \End_\Lambda(B')$ and therefore
\begin{align*}
    \Gamma_{P'[1]}^{\Gamma_M} = \End_{\Gamma_M}(B) = \End_{\Gamma_M}(F_M(B')) \cong \End_\Lambda(B') = \End_\Lambda(\P(J(U))) = \Gamma_U. 
\end{align*}
\end{proof}

Finally, the inverse of the equivalence $F_U: \modd\Gamma_U\to J(U)$ induces a bijection 
\begin{equation*}
    F_U^{-1}: \staurigid\Gamma_U\to \staurigid J(U). 
\end{equation*}

Now we define $\Eps_U$ via the following commutative diagram of bijections 
\begin{equation}\label{Eps_U_via_comm_diagram}
\begin{tikzcd}[ampersand replacement=\&,cramped]
	{\{X\in \ind\CLambda(\Lambda)\mid X\oplus U \text{ is supp. $\tau$-rigid}\}} \&\& {\{Y_1\in \ind\CLambda(J(M))\mid Y_1\oplus \widetilde{P}[1] \text{ is supp. $\tau$-rigid}\}} \\
	\&\& {\{Y_2\in \ind\CLambda(\Gamma_M)\mid Y_2\oplus{P'}[1] \text{ is supp. $\tau$-rigid}\}} \\
	\&\& {\{Y_3\in \ind\CLambda(J_{\Gamma_M}(P'[1]))\mid Y_3 \text{ is supp. $\tau_{J_{\Gamma_M}(P'[1])}$-rigid}\}.} \\
	{\{Y\in\ind\CLambda(J(U))\mid Y \text{ is supp. $\tau_{J(U)}$-rigid}\}} \&\& {\{Y_4\in\ind\CLambda(\Gamma_U)\mid Y_4 \text{ is supp. $\tau$-rigid}\}}
	\arrow["{\Eps_M}", from=1-1, to=1-3]
	\arrow["{\Eps_U}"', from=1-1, to=4-1]
	\arrow["{F_M}", from=1-3, to=2-3]
	\arrow["{\Eps_{P'[1]}^{\Gamma_M}}", from=2-3, to=3-3]
	\arrow["{F_{P'[1]}^{\Gamma_M}}", from=3-3, to=4-3]
	\arrow["{F_U^{-1}}", from=4-3, to=4-1]
\end{tikzcd}
\end{equation}
where we set 
$$\Eps^{J(M)}_{\widetilde{P}[1]} = \Eps^{J(M)}_{\Eps_M(P[1])}:= F^{-1}_U\circ F^{\Gamma_M}_{P'[1]}\circ \Eps^{\Gamma_M}_{P'[1]}\circ F_M,$$
see \cite[Theorem 6.12]{tau-perpendicular_wide_subategories}. We conclude this section by explaining the Buan--Marsh bijection between ordered support $\tau$-rigid objects of length $t$ in $\CLambda(\Lambda)$ and signed $\tau$-exceptional sequences of length $t$ in $\CLambda(\Lambda)$.

Let $\W$ be a $\tau$-perpendicular subcategory of $\modd\Lambda$ and let $(\T_1,\cdots,\T_t)$ be an ordered support $\tau$-rigid object in $\CLambda(\W)$. Let 

    \begin{alignat*}{2}
    \W_t      &= \W              &\qquad  \U_t      &= \T_t \\
    \W_{t-1}  &= J_{\W_t}(\U_t)   &\qquad  \U_{t-1}  &= \Eps_{\U_t}^{\W_t}(\T_{t-1})\\
    &\vdots &\qquad &\vdots \\
    \W_i &= J_{\W_{i+1}}(\U_{i+1}) &\qquad \U_i &= \Eps_{\U_{i+1}}^{\W_{i+1}}\cdots\Eps_{\U_{t-1}}^{\W_{t-1}}\Eps_{\U_t}^{\W_t}(\T_i)\\
     &\vdots &\qquad &\vdots \\
    \W_1 &=J_{\W_2}(\U_2) &\qquad \U_1 &= \Eps_{\U_{2}}^{\W_{2}}\cdots\Eps_{\U_{t-1}}^{\W_{t-1}}\Eps_{\U_t}^{\W_t}(\T_1). 
\end{alignat*}

\begin{theorem}[{\cite[Thm. 5.4, Rmk. 5.13 ]{tauExcSeq_BM}}]\label{BM_bijection}
    Let $t\in\{1,\cdots,n\}$. With the notation above, there is a bijection $\psi_t$ from the set of support $\tau$-rigid objects of length $t$ in $\CLambda(\Lambda)$ to the set of signed $\tau$-exceptional sequences of length $t$ in $\CLambda(\Lambda)$ given by 
    $$\psi_t(\T_1,\cdots,\T_t) = (\U_1\,\cdots,\U_t).$$
\end{theorem}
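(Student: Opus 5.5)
The plan is to argue by induction on $t$, working uniformly over an arbitrary $\tau$-perpendicular subcategory $\W$ of $\modd\Lambda$ (equivalently, by Theorem \ref{tau-perp-cat=modGamma}, over an arbitrary module category). This lets the inductive hypothesis be applied inside $J(\U_t)$. For $t=1$ there is nothing to prove: an ordered support $\tau$-rigid object of length one is an indecomposable support $\tau$-rigid object $\T_1$. By Definition \ref{Def-signed-tau-exc-seq}(a) this is exactly a signed $\tau$-exceptional sequence of length one, and $\psi_1$ is the identity.

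For the inductive step, fix $\U_t=\T_t$. Theorem \ref{epsilonU} gives a bijection $\Eps_{\T_t}$ from indecomposables $V$ with $V\oplus\T_t$ support $\tau$-rigid to indecomposable support $\tau_{J(\T_t)}$-rigid objects of $\CLambda(J(\T_t))$. The key additional input is that this bijection is compatible with direct sums. Precisely, for indecomposables $V_1,\dots,V_{t-1}$ as above, we need that $\T_t\oplus\bigoplus_i V_i$ is basic support $\tau$-rigid in $\CLambda(\Lambda)$ if and only if $\bigoplus_i\Eps_{\T_t}(V_i)$ is basic support $\tau$-rigid in $\CLambda(J(\T_t))$. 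Granting this, $(\T_1,\dots,\T_{t-1})\mapsto(\Eps_{\T_t}(\T_1),\dots,\Eps_{\T_t}(\T_{t-1}))$ is a bijection. It goes from ordered support $\tau$-rigid objects of length $t-1$ completing $\T_t$ to ordered support $\tau$-rigid objects of length $t-1$ in $\CLambda(J(\T_t))$.

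Now apply the inductive hypothesis $\psi_{t-1}$ inside $\W_{t-1}=J(\T_t)$. To see that the composite is exactly the iterated formula defining $\psi_t$, I would use the associativity of $\tau$-perpendicular reduction, \cite[Thm. 6.4, 6.12]{tau-perpendicular_wide_subategories}. This is the identity $J_{J(U)}(\Eps_U(V))=J(U\oplus V)$ together with $\Eps^{J(U)}_{\Eps_U(V)}\circ\Eps_U=\Eps_{U\oplus V}$. It guarantees that the subcategories $\W_i$ and the objects $\U_i$ produced recursively agree with those obtained by first reducing at $\T_t$ and then running $\psi_{t-1}$. Condition (b) of Definition \ref{Def-signed-tau-exc-seq} is then precisely the output of $\psi_{t-1}$, so $\psi_t$ lands in signed $\tau$-exceptional sequences. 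Conversely, given a signed sequence $(\U_1,\dots,\U_t)$, set $\T_t=\U_t$, apply $\psi_{t-1}^{-1}$ in $J(\U_t)$, and then apply $\Eps_{\U_t}^{-1}$ componentwise; this produces the inverse. Hence $\psi_t$ is bijective.

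The main obstacle is the direct-sum compatibility of $\Eps_U$. I would establish it through the factorisation \eqref{Eps_U_via_comm_diagram}, checking that each step preserves it. For $F_M$, $F^{\Gamma_M}_{P'[1]}$ and $F_U^{-1}$ this is clear, since they are induced by equivalences of module categories. For $\Eps_{P'[1]}^{\Gamma_M}$ it is easy, as it is essentially the identity on modules and sends projectives to shifted projectives. The real content lies in $\Eps_M$. There I would first try Jasso's reduction \cite[Thm. 3.16, Cor. 3.18]{Jasso_Reduction}: the map $N\mapsto f_M(N)$ gives a bijection between support $\tau$-tilting objects containing $M$ and support $\tau$-tilting objects of $J(M)$. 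Since every support $\tau$-rigid object completing $M$ is a summand of such an object (for instance its Bongartz completion), compatibility for arbitrary rigid sums follows by restriction. The summands lying in $\Gen M$ or in $\proj\Lambda\cap{}^\perp M[1]$ are handled via the exchange triangle of Proposition \ref{exchange_in_Lambda}, which identifies them with shifted summands of $f_M(B_M)$.
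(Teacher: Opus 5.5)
The paper does not prove this statement; it quotes Buan--Marsh. Your skeleton is the Buan--Marsh one: induct on $t$ over all module categories, reduce at $\T_t$ by $\Eps_{\T_t}$, recurse in $J(\T_t)$, and rest everything on $\Eps_{\T_t}$ respecting support $\tau$-rigidity of direct sums in both directions. Three simplifications are available.
First, you do not need the Buan--Hanson associativity results. The element $\U_i=\Eps^{\W_{i+1}}_{\U_{i+1}}\cdots\Eps^{\W_t}_{\U_t}(\T_i)$ is \emph{by definition} the output of $\psi_{t-1}$, computed in $\W_{t-1}$, on $(\Eps_{\T_t}(\T_1),\dots,\Eps_{\T_t}(\T_{t-1}))$.
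Second, since $\T_t$ is indecomposable, only $\Eps_M$ and $\Eps_{P[1]}$ from Propositions \ref{epsilonM} and \ref{epsilonP[1]} ever occur. So the factorisation \eqref{Eps_U_via_comm_diagram} can be dropped; its first arrow already presupposes the compatibility for $\Eps_M$ that you want to prove.
Third, support $\tau$-rigidity of a direct sum is a pairwise condition, so your ``iff'' only needs checking on pairs.

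The genuine gap is the one non-formal step: compatibility of $\Eps_M$ on summands $Z$ of the co-Bongartz completion, i.e.\ those in $\Gen M$ or of the form $Q[1]$.
Jasso's reduction only says that a support $\tau$-tilting $S=T\oplus Q[1]$ with $M\in\add T$ goes to the support $\tau_{J(M)}$-tilting object with module part $f_M(T)$. Its shifted part is then forced to be the maximal relative projective that is $\Hom$-orthogonal to $f_M(T)$. Nothing in Jasso's statement says this equals $\bigoplus_Z\Eps_M(Z)$.
Your remark that the exchange triangles ``identify them with shifted summands of $f_M(B_M)$'' is just the definition of $\Eps_M$. It does not show these are the right summands. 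Yet both your restriction argument and the converse (complete on the $J(M)$ side, pull back by Jasso, use injectivity of $\Eps_M$) depend on exactly this.

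What is needed is $\Hom_\Lambda(B^Z_M,f_MY)=0$ for every module summand $Y$ of $S$; this equals $\Hom_{J(M)}(f_MB^Z_M,f_MY)$ because $f_MY\in M^{\perp_0}$. It does follow from Proposition \ref{exchange_in_Lambda}.
Apply $\Hom_{D^b}(-,N)$ with $N=f_MY$ to $\PP_{B_i}\to(\PP'_M)_i\to\XX_i\to$. Since $\Hom_\Lambda(M,N)=0$, you get an injection $\Hom_\Lambda(B_i,N)\hookrightarrow\Hom_{D^b}(\XX_i,N[1])$.
If $\XX_i=\PP_{C_i}$, the target is $D\Hom_\Lambda(N,\tau C_i)$. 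Here $\Hom_\Lambda(N,\tau C_i)\subseteq\Hom_\Lambda(Y,\tau C_i)=0$, because $Y\to f_MY$ is surjective.
If $\XX_i=Q_i[1]$, the target is $\Hom_\Lambda(Q_i,N)$. This is a quotient of $\Hom_\Lambda(Q_i,Y)=0$, since $Q_i$ is projective.
Together with Jasso's theorem, $f_M(T)\oplus\bigoplus_Z\Eps_M(Z)$ is therefore support $\tau_{J(M)}$-rigid. By injectivity of $\Eps_M$ it has $n-\abs{M}$ pairwise non-isomorphic indecomposable summands, which is the rank of $J(M)$. Hence it is support $\tau$-tilting and coincides with Jasso's image.
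With this summand-wise description in place your argument goes through; as written, the key lemma is asserted rather than proved.
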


\section{Exchange in 0-Auslander extriangulated categories}\label{Sec:Exchange}

Throughout this section, let $k$ be a field and let $\C$ be a $k$-linear, Hom-finite, Krull–Schmidt, and reduced $0$-Auslander extriangulated category with projective generator $P$, that is $\proj\C = \add P$, for some $P\in\proj\C$. Then, $I = \Sigma P$ is an injective generator of $\C$ and $\inj\C = \add I$. Let $\Lambda = \End_\C(P)$. Recall, there is an equivalence of categories $\C(P,-): \C/[I]\to \modd\Lambda$. For an object $X$ in $\C$, we denote by $X^\perp$ the subcategory of $\C$ consisting of all the objects $Y$ in $\C$ such that $\EE(X,Y) = 0$, and define $^{\perp}X$ dually. 

An object $U\in\C$ is \emph{presilting} (or \emph{rigid}) if $\EE(U,U) = 0$. If in addition $\abs{U} = \abs{P}$, then $U$ is said to be \emph{silting} (see \cite[Thm. 4.3]{GNP_HereditaryExtriangulated}). We fix a presilting object $U\in\C$. In this section, we recall the definition of Bongartz and co-Bongartz completions of $U$ in $\C$ and we show there is an explicit bijection between indecomposable direct summands of the Bongartz and indecomposable direct summands of the co-Bongartz complements of $U$. We show that this bijection is compatible with the bijection between the indecomposable direct summands of the Bongartz complement and indecomposable direct summands of the co-Bongartz complement of $\C(P,U)$ explained in Proposition \ref{exchange_in_Lambda}. We begin with the definition of Bongartz and co-Bongartz completions of $U$. 

\begin{defprop}[{\cite[Prop. 4.6]{GNP_HereditaryExtriangulated}}]\label{BongartzCoBongartz_of_presilting}
    Let $f: U'\to I$ (resp. $g: P \to U''$) be a right (resp. left) minimal $\add U$-approximation. Then $f$ (resp. $g$) is a deflation (resp. conflation). Consider the $\EE$-triangles
    \begin{equation}\label{Bongartz&co-Bongarz_in_C}
        B[U]\overset{b}{\infl}U'\overset{f}{\defl} I \quad \text{ and } \quad P\overset{g}{\infl} U'' \overset{c}{\defl} C[U]
    \end{equation}
    Then, $B_U := B[U]\oplus U$ (resp. $C_U := C[U]\oplus U$) are silting objects. We call $B_U$ (resp. $C_U$) the \emph{Bongartz} (resp. \emph{co-Bongartz}) \emph{completion} of $U$. We refer to $B[U]$ (resp. $C[U]$) as the \emph{Bongartz} (resp. \emph{co-Bongartz}) \emph{complement} of $U$.
\end{defprop}

The following gives a way to construct a (reduced) $0$-Auslander extriangulated category from a presilting object. 

\begin{defprop}[{\cite[Section 4]{PanZhu_SiltingReduction}}]\label{C_U_is_0-Auslander}
    Let $U\in\C$ be a presilting object. Let $\C_U:= {^\perp C_U}\cap B_U^{\perp}$ and let $\widetilde{\C}_U := \C_U/[U]$. Then, $\C_U$ is $0$-Auslander and $\widetilde{\C}_U$ is a reduced $0$-Auslander extriangulated category with $\proj\C_U = \add B_U$ and $\inj\C_U = \add C_U$. In particular, $\C_U = {^\perp U}\cap U^\perp$. We refer to $\widetilde{\C}_U$ as \emph{presilting reduction} of $U$. 
\end{defprop}

The next result is an analog of \cite[Lemma 3.2]{tauExcSeq_BM} (see also \cite[Lemma 3.4]{tauExcSeq_BM}) for reduced 0-Auslander extriangulated categories. 

\begin{lemma}[{\cite[Thm. 4.19]{PanZhu_SiltingReduction}}]\label{Bongartz_co-Bongartz_Lemma}
    There is an $\EE$-triangle 
    \begin{equation}\label{Bongartz_co-Bongartz_E-triangle}
        B[U]\overset{\alpha}{\infl}\overline{U}\overset{\beta}{\defl} C[U]
    \end{equation}
    where $\overline{U}\in \add U$, and $\alpha: B[U]\to \overline{U}$ (resp. $\beta:\overline{U}\to C[U]$) is a minimal left (resp. right) $\add U$-approxiamtion. 
\end{lemma}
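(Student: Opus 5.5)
I would build the $\EE$-triangle by gluing the two defining $\EE$-triangles of Definition--Proposition \ref{BongartzCoBongartz_of_presilting} along the injective and projective terms. Everything takes place in the hereditary $0$-Auslander setting, so the long exact sequences of the preceding theorem (with right exactness at the $\EE$-terms) are available. Start from $B[U]\overset{b}{\infl}U'\overset{f}{\defl} I$ with $f$ a minimal right $\add U$-approximation of $I=\Sigma P$. The equivalence $\Omega$ of Proposition \ref{0-Auslander-properties}(iv) gives a triangle $P\infl 0\defl I$. I would apply (ET4)$^{\mathrm{op}}$ (a homotopy-pullback argument) to $f$ and the deflation $0\to I$. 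This produces an object $E$ with conflations $P\infl E\defl U'$ and $B[U]\infl E\defl 0$, so $E\cong B[U]$, and hence an $\EE$-triangle $P\infl B[U]\defl U'$.

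Rather than relying on that manipulation, which is delicate, a cleaner route is as follows. Take a minimal left $\add U$-approximation $\alpha\colon B[U]\to \overline{U}$. By Proposition \ref{0-Auslander-properties}(iii) every morphism to an injective is an inflation, but $\overline U$ need not be injective. So I would first show that $\alpha$ is an inflation. Use Proposition \ref{0-Auslander-properties}(ii) to write $B[U]$ via $P_0\infl B[U]\defl I_0$, and argue as in \cite[Prop. 4.6]{GNP_HereditaryExtriangulated} that a left approximation by a rigid object from an object $X$ with $\EE(U,X)=0$ is an inflation. Concretely, compose with an inflation $B[U]\to \overline U\oplus I'$ given by $(\alpha, \iota)$, where $\iota$ is an injective map. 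Then show that $\iota$ factors through $\alpha$, using $\EE(U,B[U])=0$ together with the fact that $B_U$ is silting, so $\C(\cdot,I)$-maps factor through $B_U$-approximations. Next, complete $\alpha$ to $B[U]\overset{\alpha}{\infl}\overline U\overset{\beta}{\defl} C'$.

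The key steps are then the following.
\begin{enumerate}
\item Show $C'$ is rigid and $\EE(U,C')=0=\EE(C',U)$. Apply $\C(-,U)$ and use that $\alpha$ is a left approximation, so $\C(\overline U,U)\to\C(B[U],U)$ is surjective; this gives $\EE(C',U)=0$. Apply $\C(U,-)$ and use right exactness together with $\EE(U,B[U])=0$ and $\EE(U,\overline U)=0$ to get $\EE(U,C')=0$. Rigidity of $C'$ follows similarly by combining the two sequences.
\item Show $C'\oplus U$ is silting and $C'$ has no summand in $\add U$. Minimality of $\alpha$ together with the Krull--Schmidt property gives the absence of summands in $\add U$. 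A count $\abs{C'}=\abs{B[U]}$ follows from the triangle as in the Grothendieck group argument of \cite[Thm. 4.3]{GNP_HereditaryExtriangulated}.
\item Identify $C'\cong C[U]$. Show $C'\oplus U\in{}^\perp U$ and that it is the minimal silting completion, i.e.\ the co-Bongartz one. Compare with $P\infl U''\defl C[U]$ via the uniqueness part of Definition--Proposition \ref{BongartzCoBongartz_of_presilting}, or via the silting bijection with support $\tau$-tilting objects. Under $\C(P,-)$ this matches the torsion class $\Gen$ description in Theorem \ref{BongartzCo-Bongartz}(ii).
\item Deduce that $\beta$ is a minimal right $\add U$-approximation. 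This follows since $\EE(U,B[U])=0$ gives surjectivity of $\C(U,\overline U)\to\C(U,C[U])$. Minimality follows from the minimality of $\alpha$ and the absence of summands of $\overline U$ that split off, as in Proposition \ref{exchange_in_Lambda}.
\end{enumerate}

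The main obstacle is step 3, identifying the cone with $C[U]$ rather than some other complement. I would first try to show that $C'\oplus U$ and $C_U$ are both silting with $\add$ contained in ${}^\perp U$ and both Ext-injective in $\C_U$. Then I would use $\inj\C_U=\add C_U$ from Definition--Proposition \ref{C_U_is_0-Auslander}: $C'$ is injective in $\C_U$ because $\EE(\C_U,C')$ embeds into $\EE(\C_U,B[U])$-type terms, which vanish as $B[U]$ lies in $\C_U$ and $\C_U$ is hereditary-closed. This gives $C'\in\add C_U$ without $U$-summands, and the count then forces $\add C'=\add C[U]$.
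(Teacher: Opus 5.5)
Your route differs from the paper's, and as written it has a genuine gap exactly where you locate the main obstacle. Step 1 and the approximation part of Step 4 are fine. With the repair below, they give $C'\in\inj\C_U$ with no summands in $\add U$, hence $\add C'\subseteq\add C[U]$.

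The reverse inclusion, however, rests entirely on the count $\abs{C'}=\abs{B[U]}$, and you give no argument for it. A single $\EE$-triangle only yields $[C']=[\overline U]-[B[U]]$ in $K_0(\C)$. That is a relation between sums, and it cannot bound $\abs{C'}$ below by $\abs{B[U]}$. To get the count you would have to split the triangle along $B[U]=\bigoplus_i B_i$, checking that each $\alpha_i$ is an inflation. Then you would need one of two further arguments:
\begin{itemize}
\item import the facts that classes of indecomposable summands of a presilting object are linearly independent in $K_0(\C)$ while those of the silting object $B_U$ span it (these need a reference in this generality); or
\item show directly that the cones $C'_i$ are indecomposable and pairwise non-isomorphic, for instance by recovering $B_i$ as the cocone of a minimal right $\add U$-approximation of $C'_i$.
\end{itemize}
Even then you only obtain $\add C'=\add C[U]$. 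The objects defined by the triangles in Definition--Proposition~\ref{BongartzCoBongartz_of_presilting} need not be basic: for $\C=K^{[-1,0]}(\proj\Lambda)$ with $\Lambda$ of type $A_2$ and $U$ the minimal projective presentation of the non-projective simple module, both $B[U]$ and $C[U]$ are a direct sum of two copies of an indecomposable. So multiplicities would still have to be matched.

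Two auxiliary justifications are also incorrect as written.
\begin{itemize}
\item The general principle that a left $\add U$-approximation of an object of $U^\perp$ is an inflation is false. For $U=0$ it would make every object projective, since a conflation $X\infl 0\defl Z$ forces $\EE(X,-)=0$ by right exactness. Here $\alpha$ is an inflation for a different reason: the inflation $b\colon B[U]\infl U'$ of the Bongartz triangle factors as $\psi\alpha$, and in a Krull--Schmidt (hence weakly idempotent complete) extriangulated category this forces $\alpha$ to be an inflation.
\item $\EE(\C_U,B[U])$ does not vanish, since $B[U]$ is projective but, unless it lies in $\add U$, not injective in $\C_U$. The correct reason that $\EE(Y,C')=0$ for $Y\in\C_U$ is the surjection $\EE(Y,\overline U)\to\EE(Y,C')$ together with $\EE(Y,\overline U)=0$.
\end{itemize}

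The paper avoids the identification problem by carrying on with the manipulation you abandoned. Your first paragraph is precisely its first step: \cite[Prop.~3.15]{Nakaoka-Palu_extriangulated_categories}, applied to $B[U]\infl U'\defl I$ and $P\infl 0\defl I$, gives $P\infl B[U]\defl U'$. The dual statement, applied to this triangle and $P\infl U''\defl C[U]$ (which share the first term $P$), produces an object $\overline U$ with $\EE$-triangles $B[U]\infl\overline U\defl C[U]$ and $U''\infl\overline U\defl U'$. The third term is then $C[U]$ itself, with its multiplicities, and what remains is to show $\overline U\in\add U$. Adding trivial triangles gives $B_U\infl\overline U\oplus U^2\defl C_U$, which places $\overline U$ in $\C_U$. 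Then $U''\infl\overline U\defl U'$, with outer terms in $\add U$, makes $\overline U$ projective-injective in $\C_U$, i.e.\ $\overline U\in\add B_U\cap\add C_U=\add U$. The approximation properties follow from $\EE(C[U],U)=0=\EE(U,B[U])$ just as in your Step 4.
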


\begin{proof}
    Since $\C$ is reduced 0-Auslander, there is an $\EE$-triangle $P\infl 0 \defl I$. Using the $\EE$-tirangles in  \eqref{Bongartz&co-Bongarz_in_C} and \cite[Prop. 3.15]{Nakaoka-Palu_extriangulated_categories}, we obtain the following commutative diagrams of $\EE$-triangles
    
\[\begin{tikzcd}[ampersand replacement=\&,cramped]
	\& {B[U]} \& {B[U]} \&\& P \& {U''} \& {C[U]} \\
	P \& {B[U]} \& {U'} \&\& {B[U]} \& {\overline{U}} \& {C[U]} \\
	P \& 0 \& I \&\& {U'} \& {U'}.
	\arrow[equals, from=1-2, to=1-3]
	\arrow["1"', tail, from=1-2, to=2-2]
	\arrow["b", tail, from=1-3, to=2-3]
	\arrow["g", tail, from=1-5, to=1-6]
	\arrow[tail, from=1-5, to=2-5]
	\arrow["c", two heads, from=1-6, to=1-7]
	\arrow[tail, from=1-6, to=2-6]
	\arrow[equals, from=1-7, to=2-7]
	\arrow[tail, from=2-1, to=2-2]
	\arrow[equals, from=2-1, to=3-1]
	\arrow[two heads, from=2-2, to=2-3]
	\arrow[two heads, from=2-2, to=3-2]
	\arrow["f", two heads, from=2-3, to=3-3]
	\arrow["\alpha", tail, from=2-5, to=2-6]
	\arrow[two heads, from=2-5, to=3-5]
	\arrow["\beta", two heads, from=2-6, to=2-7]
	\arrow[two heads, from=2-6, to=3-6]
	\arrow[tail, from=3-1, to=3-2]
	\arrow[two heads, from=3-2, to=3-3]
	\arrow[equals, from=3-5, to=3-6]
\end{tikzcd}\]

We claim that $\overline{U}\in \add U$. Adding the $\EE$-triangles $U\overset{1}{\infl} U \defl 0$ and $0\defl U\overset{1}{\defl} U$ to the middle row of the second diagram above, we get an $\EE$-triangle of the form 

\begin{equation}\label{B_U&C_U_E-triangle}
    B_U\infl \overline{U}\oplus U^2 \defl C_U.
\end{equation}

Let $\C_U = B_U^\perp\cap {^\perp C_U} = \{X\in\C\mid \EE(B_U,X) = 0 = \EE(X,C_U)\}.$
Then $\C_U$ is a 0-Auslander extriangulated subcategory of $\C$ by Definition-Proposition \ref{C_U_is_0-Auslander}. We have that the $\EE$-triangle \eqref{B_U&C_U_E-triangle} is an $\EE_{\C_U}$-triangle. Indeed, $B_U$ and $C_U$ lie in $\C_U$. Moreover, applying $\C(B_U,-)$ and $\C(-,C_U)$ to \eqref{B_U&C_U_E-triangle} we obtain two exact sequences of abelian groups 
$$\EE(B_U,B_U)\to \EE(B_U, \overline{U})\to \EE(B_U,C_U) \quad \text{and} \quad \EE(C_U,C_U)\to \EE(\overline{U},C_U)\to \EE(B_U,C_U),$$
where the outer terms of each sequence are zero by \cite[Lemma 4.13]{PanZhu_SiltingReduction} and the fact that both $B_U$ and $C_U$ are silting objects. Hence, $\EE(B_U, \overline{U})=0= \EE(\overline{U}, C_U)$. This shows that $\overline{U}$ also lies in $\C_U$ and thus \eqref{B_U&C_U_E-triangle} is an $\EE_{\C_U}$-triangle. Next we prove that $\overline{U}\in \add U$. By \cite[Lemma 4.12]{PanZhu_SiltingReduction}, this is equivalent to showing that $\overline{U}\in \proj\C_U\cap \inj\C_U = \add B_U\cap \add C_U = \add U$. Let $Y\in \C_U$. Applying $\C_U(-,Y)$ and $\C_U(Y,-)$ to the $\EE_{\C_U}$-triangle $U''\infl \overline{U}\defl U'$ we get that $\EE_{\C_U}(\overline{U},Y) = 0 = \EE_{\C_U}(Y,\overline{U})$ and the claim follows. It remains to show that $b: B[U]\to \overline{U}$ (resp. $c: \overline{U}\to C[U]$) is a minimal left (resp. right) $\add U$-approximation in $\C$. Apply $\C(-,U)$ to \eqref{Bongartz_co-Bongartz_E-triangle} to get an exact sequence 
$$\C(C[U],U)\to \C(\overline{U},U)\to \C(B[U],U)\to\EE(C[U],U) = 0,$$ 
where the last equality follows from the fact that $C_U = C[U]\oplus U$ is a silting object. Hence $ \C(\overline{U},U)\to \C(B[U],U)$ is an epimorphism and thus $b: \overline{U}\to B[U]$ is a left $\add U$-approximation. Since $\add B[U]\cap \add\overline{U} = 0$, it follows that $b$ is minimal. The argument for the proof of $c: \overline{U}\to C[U]$ being a minimal right $\add U$-approximation is dual. This concludes the proof.  
\end{proof}

The next result gives an explicit bijection between the indecomposable direct summands of $B[U]$ and the indecomposable direct summands of $C[U]$. 

\begin{proposition}\label{exchange_in_C}
    Let $B = B[U]$ and $C= C[U]$. Write $B= \bigoplus_iB_i$ and $C = \bigoplus_iC_i$ as direct sum of indecomposable objects. Let $b_i: B_i\infl \overline{U}_i$ be a minimal left $\add U$-approximation. Then, there is an $\EE$-triangle
    \begin{equation*}
    B_i\overset{b_i}{\infl} \overline{U}_i\overset{c_i}{\defl}C_i.
    \end{equation*}
where $c_i: \overline{U}_i\to C_i$ is a minimal right $\add U$-approxiamtion. The assignment $C_i\mapsto B_i$ defines a bijection between the indecomposable direct summands of the Bongartz complement $B$ and the indecomposable direct summands of the co-Bongartz complement $C$ of $U$. 
\end{proposition}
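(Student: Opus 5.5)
The plan is to decompose the $\EE$-triangle $B[U]\overset{\alpha}{\infl}\overline{U}\overset{\beta}{\defl}C[U]$ of Lemma \ref{Bongartz_co-Bongartz_Lemma} into indecomposable pieces. I would work in the $0$-Auslander category $\C_U$ (Definition-Proposition \ref{C_U_is_0-Auslander}), and then pass to the reduced quotient $\widetilde{\C}_U=\C_U/[U]$, where the bijection becomes the standard exchange between indecomposable projectives and injectives, $\Sigma\colon\proj\widetilde{\C}_U\to\inj\widetilde{\C}_U$.

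\textbf{Step 1: the conflation for a single summand.} Fix an indecomposable summand $B_i$ of $B$ with minimal left $\add U$-approximation $b_i\colon B_i\to\overline{U}_i$. Since $\alpha$ is a minimal left $\add U$-approximation of $B=\bigoplus_i B_i$, we may write $\alpha=\bigoplus_i b_i$ up to isomorphism. The map $b_i$ is an inflation: $B_i\in\proj\C_U$, and by Proposition \ref{0-Auslander-properties}(iii) applied in $\widetilde{\C}_U$, or by comparing with $\alpha$ directly, the cone of $b_i$ exists. Let $B_i\overset{b_i}{\infl}\overline{U}_i\overset{c_i}{\defl}C_i'$ be the resulting conflation. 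By additivity of the realization $\mathfrak{s}$, the direct sum of these conflations realizes the same extension class as \eqref{Bongartz_co-Bongartz_E-triangle}, so $C[U]\cong\bigoplus_i C_i'$. Applying $\C(U,-)$ and using $\EE(U,B_i)=0$ (as $B_U$ is silting), the map $c_i$ is a right $\add U$-approximation. Minimality of $c_i$ follows because $\alpha$ minimal forces no summand of $\overline{U}_i$ to split off, and $\add\overline{U}_i\cap\add C_i'=0$.

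\textbf{Step 2: $C_i'$ is indecomposable and nonzero.} In $\widetilde{\C}_U$, since $\overline{U}_i\cong 0$, the conflation becomes $B_i\infl 0\defl C_i'$. Hence $C_i'\cong\Sigma B_i$ in $\widetilde{\C}_U$, and by Proposition \ref{0-Auslander-properties}(iv) it is indecomposable and nonzero, because $B_i$ is indecomposable with $B_i\notin\add U$ (recall $\proj\widetilde{\C}_U=\add B[U]$). Lifting back to $\C_U$, $C_i'$ has no summand in $\add U$: it is a summand of $C[U]$, and $\add C[U]\cap\add U=0$ since $C_U$ is basic. Its image in $\widetilde{\C}_U$ is indecomposable, and because $\pi_U$ reflects indecomposability on objects with no summands in $\add U$, $C_i'$ is indecomposable in $\C$. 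We set $C_i:=C_i'$.

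\textbf{Step 3: bijectivity.} The assignment $B_i\mapsto C_i$ is injective because $\Sigma$ is an equivalence in $\widetilde{\C}_U$, so $C_i\cong C_j$ implies $B_i\cong B_j$. It is surjective because $C[U]=\bigoplus_i C_i$ by Step 1. Equivalently, one can use the count $|B[U]|=|P|-|U|=|C[U]|$, which holds since both completions are silting. The inverse $C_i\mapsto B_i$ is therefore the claimed bijection. Alternatively, $B_i$ is recovered from $C_i$ as the cocone of the minimal right $\add U$-approximation of $C_i$, by the dual of Step 1.

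\textbf{Main obstacle.} I expect the hardest part to be Step 2, specifically justifying that indecomposability in $\widetilde{\C}_U$ lifts to $\C$. The cleanest route is that $\End_{\widetilde{\C}_U}(C_i)$ is the quotient of $\End_\C(C_i)$ by a radical ideal. This holds because morphisms $C_i\to C_i$ factoring through $\add U$ lie in the radical when $C_i$ has no summand in $\add U$. Idempotents lift modulo the radical in a Krull–Schmidt category, so $C_i$ is indecomposable in $\C$.
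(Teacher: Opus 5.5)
Your proof is correct. It shares the paper's skeleton: you split the $\EE$-triangle $B[U]\overset{\alpha}{\infl}\overline{U}\overset{\beta}{\defl}C[U]$ of Lemma~\ref{Bongartz_co-Bongartz_Lemma} summand-wise, and you identify $C[U]$ with $\bigoplus_i C_i$ by uniqueness of the third term of a conflation, as in Nakaoka--Palu, Rmk.~3.10.

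You differ in the part the paper waves through as ``easy to see''. The paper gives no argument that each cone $C_i$ is indecomposable, or that distinct $B_i$ give distinct $C_i$. Implicitly it counts: $|B[U]|=|C[U]|=|P|-|U|$, and no cone vanishes because $B_i\notin\add U$. That count presupposes $C[U]$ is basic. You instead pass to $\widetilde{\C}_U$, where the piece becomes $B_i\infl 0\defl C_i$, so $C_i\cong\Sigma_{\widetilde{\C}_U}B_i$. You then use that $\Sigma\colon\proj\widetilde{\C}_U\to\inj\widetilde{\C}_U$ is an equivalence, and that $\pi_U$ reflects indecomposability and isomorphism on objects with no summand in $\add U$.

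This is more conceptual, and it does not need to know in advance that $C[U]$ is basic. It also yields $B_i\cong\Omega_{\widetilde{\C}_U}C_i$ directly. The paper has to re-prove exactly this, by a separate (ET3)$^{\mathrm{op}}$ argument, in Case~2 of Lemma~\ref{comm_of_(a)}.

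A few local justifications need repair.

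\emph{(i) $b_i$ is an inflation.} Proposition~\ref{0-Auslander-properties}(iii) applied in $\widetilde{\C}_U$ only gives an inflation in the quotient, not in $\C$. Argue instead that, after identifying $\alpha$ with $\bigoplus_j b_j$, the map $\iota_{\overline{U}_i}\circ b_i=\alpha\circ\iota_{B_i}$ is a composite of inflations, hence an inflation by (ET4). Since $\C$ is Krull--Schmidt, hence weakly idempotent complete, this forces $b_i$ to be an inflation. Also phrase the next step as ``the direct sum of the pieces is a conflation with inflation isomorphic to $\alpha$'', not ``realizes the same extension class''.

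\emph{(ii) Right minimality of $c_i$.} The reason you give does not establish it. Either note that $c_i$ is a direct summand of the right minimal $\beta$. Or argue directly: if $c_i$ vanished on a nonzero summand $Y$ of $\overline{U}_i$, then $\iota_Y$ would factor through $b_i$. That would make $Y\in\add U$ a summand of the indecomposable $B_i\notin\add U$, a contradiction.

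\emph{(iii) $B_i\notin\add U$.} This follows from the convention that $B[U]$ has no summands in $\add U$, which the paper also uses in Lemma~\ref{Bongartz_co-Bongartz_Lemma}. It does not follow from $\proj\widetilde{\C}_U=\add B[U]$.
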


\begin{proof}
    By Lemma \ref{Bongartz_co-Bongartz_Lemma} there is an $\EE$-triangle $B\overset{b}{\infl}\overline{U}\overset{c}{\defl} C$. It easy to see that $b = \bigoplus_i b_i$ and $c = \bigoplus_i c_i$. Hence, the $\EE$-triangle \eqref{B_U&C_U_E-triangle} is the direct sum of $\abs{P}-\abs U$ $\EE$-triangles $B_i\overset{b_i}{\infl} \overline{U}_i\overset{c_i}{\defl}C_i$, with $b_i$ (resp. $c_i$) minimal left (resp. right) $\add U$-approximation. Since $B$ is unique up to isomorphisms by \cite[Rmk. 3.10]{Nakaoka-Palu_extriangulated_categories}, the result follows. 
\end{proof}

The following links presilting objects in $\C$ and support $\tau$-rigid objects in $\CLambda(\Lambda)$. For an object $X\in\C$, write $X = X'\oplus I_X$ where $I_X$ is the largest injective direct summand of $X$.  

\begin{proposition}\label{presilting_supp.tau-rigid}
    The following statements hold. 
    \begin{itemize}
        \item[(i)]\cite[Prop. 4.5, Cor. 4.6]{PanZhu_SiltingReduction} There is a bijection 
        $$H_P: \presilt\C\to \staurigid\Lambda$$ given by $(X'\oplus I_X)\mapsto \C(P,X')\oplus\C(P,\Omega I_X)[1]$. In particular, $H_P$ induces a bijection $\silt\C\to \stautilt\Lambda$; 
        \item[(ii)]\cite[Cor. 4.14]{PanZhu_SiltingReduction} For $U\in \presilt\C$, the Bongartz (resp. co-Bongartz) completion of $U$ corresponds, under the correspondence in (i), to the Bongartz completion (resp. co-Bongartz) completion of $H_P(U)$. In other words, $H_P(B_U) \cong B_{H_P(U)}$ and $H_P(C_U) \cong C_{H_P(U)}$.
    \end{itemize}
\end{proposition}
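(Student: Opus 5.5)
Both parts of Proposition \ref{presilting_supp.tau-rigid} are quoted from \cite{PanZhu_SiltingReduction}. Still, here is how I would prove them directly in our setting.

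\textbf{Part (i).} I would work through the equivalence $\C(P,-)\colon \C/[I]\to\modd\Lambda$ from Proposition \ref{0-Auslander-properties}(v).

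Write a presilting object as $X=X'\oplus I_X$. The plan has four steps.

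First, I would show that $\C(P,X')$ is $\tau$-rigid. Take the conflation $P_1\infl P_0\defl X'$ from Definition \ref{0-Auslander-definition}(b). Applying $\C(P,-)$ gives a projective presentation $\C(P,P_1)\to\C(P,P_0)\to\C(P,X')\to 0$; it is exact because $\C(P,-)$ is exact on deflations from projectives. The equivalence $\proj\C\simeq\proj\Lambda$ is fully faithful, so $\C(P,P_1)\to\C(P,P_0)$ is a presentation of $\C(P,X')$. By the long exact sequence of the previous theorem, $\EE(X',Y)$ is the cokernel of $\C(P_0,Y)\to\C(P_1,Y)$. For $Y$ with no injective summands, this identifies $\EE(X',Y)$ with the cokernel of $\Hom_\Lambda(\C(P,P_0),\C(P,Y))\to\Hom_\Lambda(\C(P,P_1),\C(P,Y))$. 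By the Adachi--Iyama--Reiten criterion (the presentation-surjectivity form of $\Hom(M,\tau N)=0$), $\EE(X',X')=0$ therefore gives $\Hom_\Lambda(\C(P,X'),\tau\C(P,X'))=0$.

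Second, I would check the support condition. Put $Q=\Omega I_X$, so there is a conflation $Q\infl 0\defl I_X$. Then $\EE(I_X,X')\cong\C(Q,X')$ modulo maps factoring through $0$, that is, $\EE(I_X,X')\cong\C(Q,X')\cong\Hom_\Lambda(\C(P,Q),\C(P,X'))$. So $\EE(X,X)=0$ forces $\Hom_\Lambda(\C(P,\Omega I_X),\C(P,X'))=0$.

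Third, I would reverse both computations: every support $\tau$-rigid pair $(M,R)$ lifts to $X'\oplus\Sigma R'$, where $X'$ is the lift of $M$ through the equivalence and $R'$ is the projective with $\C(P,R')\cong R$.

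Fourth, I would get injectivity because the equivalence $\C/[I]\simeq\modd\Lambda$ is bijective on isoclasses of objects with no injective summand, and $\Sigma,\Omega$ are mutually inverse. The silting/tilting statement then follows by counting indecomposable summands: $\abs{X}=\abs{M}+\abs{R}$.

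\textbf{Part (ii), Bongartz completion.} I would use the triangle $B[U]\infl U'\defl I$ of Definition-Proposition \ref{BongartzCoBongartz_of_presilting}. Since $B_U$ is silting, $H_P(B_U)$ is support $\tau$-tilting by (i), and it contains $H_P(U)$. Every non-injective summand of $B[U]$ lies in ${}^\perp\Sigma(\Omega I_U)$ and has $\EE(-,U)=0$. Translating through the identifications in (i), $\C(P,B[U])$ lands in ${}^{\perp}\tau M\cap R^{\perp}$ with $M=\C(P,U')$ the module part. It is also Ext-projective there: for $Y$ in that category, $\EE(B[U],\tilde Y)=0$ because $B[U]$ maps to $\EE(-,\tilde Y)$ via the triangle with the injective $I$. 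The uniqueness in Theorem \ref{BongartzCo-Bongartz}(i) then identifies it with the Bongartz complement.

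\textbf{Part (ii), co-Bongartz completion.} Dually, $C[U]$ is a cokernel of $P\infl U''$, so $\C(P,C[U])$ lies in $\Gen\C(P,U)$. I would conclude using the Gen-characterization together with maximality and the uniqueness in Theorem \ref{BongartzCo-Bongartz}(ii).

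\textbf{Main obstacle.} I expect the hardest step to be matching $\EE(X',Y)=0$ with $\tau$-rigidity when $Y$ has injective summands. This is exactly where the shifted projective part enters, and where the reducedness hypothesis $\C(P,I)=0$ is used.
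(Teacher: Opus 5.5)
The paper gives no argument for this proposition. Both parts are imported from Pan--Zhu, so your self-contained route through the Adachi--Iyama--Reiten presentation criterion is a genuine alternative. Its outline for (i), and for the co-Bongartz half of (ii), is sound. For the co-Bongartz half, two facts suffice: $\C(P,C[U])\in\Gen\C(P,U)$, which follows from $P\infl U''\defl C[U]$; and a support $\tau$-tilting pair is determined by the torsion class generated by its module part. (The condition printed in Theorem \ref{BongartzCo-Bongartz}(ii) is garbled; what you actually use is $\add(N\oplus M)=\P(\Gen M)$.)

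\textbf{Gap in (i): minimality of the presentation.} The AIR criterion says $\Hom_\Lambda(Y,\tau M)=0$ if and only if $\Hom_\Lambda(d_1,Y)$ is surjective, but only for the \emph{minimal} presentation $d_1$ of $M$. The presentation $\C(P,P_1)\to\C(P,P_0)$ induced by $P_1\overset{f}{\infl}P_0\defl X'$ can differ from it in two ways. Summands $R_0\xrightarrow{1}R_0$ are harmless. Summands $\C(P,Q)\to 0$ are not: they add $\Hom_\Lambda(\C(P,Q),Y)$ to the cokernel. So the implication $\EE(X',X')=0\Rightarrow\Hom_\Lambda(M,\tau M)=0$ is fine. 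But ``reversing the computation'' in your third step, and the same move in the Bongartz part, needs that no summand $\C(P,Q)\to0$ with $Q\neq0$ occurs when $X'$ has no injective summands. This is true, but it must be proved. Suppose that, after automorphisms of $P_1$ and $P_0$, $f$ vanishes on a summand $Q$ of $P_1$, with inclusion $\iota_Q$ and retraction $p_Q$. Let $\delta_Q$ be the extension realized by $Q\infl0\defl\Sigma Q$. Applying (ET3) to the squares $(\iota_Q,0)$ and $(p_Q,0)$ between this conflation and the given one yields $h\colon\Sigma Q\to X'$ and $h'\colon X'\to\Sigma Q$ with $(h'h)^*\delta_Q=\delta_Q$. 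Since $\C(\Sigma Q,0)=0$, the map $\psi\mapsto\psi^*\delta_Q$ is injective on $\C(\Sigma Q,\Sigma Q)$. Hence $h'h=1$, so $\Sigma Q$ splits off $X'$. Incidentally, the ``main obstacle'' you name is not one: $\EE(X',Y)=0$ holds trivially for injective $Y$.

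\textbf{Gap in the Bongartz half of (ii).} As written, this step fails. The class ${}^\perp\Sigma(\Omega I_U)={}^\perp I_U$ is all of $\C$, so it gives no information. Through (i), the vanishing $\EE(B[U],U)=0$ becomes $\Hom_\Lambda(M,\tau\,\C(P,B[U]))=0$, not $\Hom_\Lambda(\C(P,B[U]),\tau M)=0$. What you need is $B[U]\in U^\perp$. Write $U=U_0\oplus I_U$ with $U_0$ non-injective, so $M=\C(P,U_0)$ and $R=\C(P,\Omega I_U)$. Then $\EE(U_0,B[U])=0$ gives $\C(P,B[U])\in{}^{\perp_0}\tau M$. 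Also $\EE(I_U,B[U])\cong\C(\Omega I_U,B[U])=0$ gives $\Hom_\Lambda(R,\C(P,B[U]))=0$. Both hold because $B_U$ is presilting. Three further steps are missing.

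(a) You must show that the injective summands of $B[U]$ lie in $\add U$. Otherwise $H_P(B_U)$ could have a shifted part larger than $R[1]$, and then it could not be $B_{H_P(U)}$. This holds: for the defining conflation $B[U]\overset{b}{\infl}U_1\defl I$, the projection onto an injective summand $J$ factors through $b$ because $\EE(I,J)=0$. Hence $J$ is a summand of $U_1$.

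(b) Ext-projectivity needs the comparison $\Ext^1_\Lambda(\C(P,X),\C(P,Y))\hookrightarrow\EE(X,Y)$. This comes from the cokernel description of $\EE(X,Y)$, because $\C(P,P_1)$ maps onto the first syzygy of $\C(P,X)$. Now take a lift $\tilde Y$ of $Y\in{}^{\perp_0}\tau M\cap R^{\perp_0}$. The vanishing $\EE(U_0,\tilde Y)=0$ is the AIR criterion in the converse direction, so it needs the minimality point above. In addition, $\EE(I_U,\tilde Y)\cong\Hom_\Lambda(R,Y)=0$. Only then does the exact sequence $\EE(U_1,\tilde Y)\to\EE(B[U],\tilde Y)\to0$ give $\EE(B[U],\tilde Y)=0$.

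(c) The uniqueness in Theorem \ref{BongartzCo-Bongartz}(i) requires $\add(\C(P,B[U])\oplus M)=\P({}^{\perp_0}\tau M\cap R^{\perp_0})$, not just the inclusion you prove. Equality follows by counting, since the right-hand side is $\add T$ with $\abs{T}=\abs{\Lambda}-\abs{R}$. The count uses (a).
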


For the remaining part of this section, assume that $U\in\presilt\C$ has no injective direct summands so that $H_P(U) = \C(P,U)\in\modd\Lambda$. Let $B = B[U]$ and $C = C[U] = C'\oplus I_C $ be the Bongartz and co-Bongartz complement of $U$ in $\C$, respectively. Our goal is to show that the bijection in Proposition \ref{exchange_in_Lambda} and the bijection in Proposition \ref{exchange_in_C} are compatible via the bijection $H_P$. 

\begin{lemma}\label{compatibilityC'}
    Let $B_i$ be an indecomposable direct summand of $B$ such that the corresponding indecomposable direct summand $C_i$ of $C$ is a summand of $C'$ (see Proposition \ref{exchange_in_C}). Then, $H_P(C_i)$ is the indecomposable direct summand of $H_P(C')$ corresponding to the indecoposable direct summand $H_P(B_i)$ of $H_P(B)$ (see Proposition \ref{exchange_in_Lambda}).  
\end{lemma}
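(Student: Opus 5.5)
Start from the $\EE$-triangle $B_i\overset{b_i}{\infl}\overline{U}_i\overset{c_i}{\defl}C_i$ of Proposition \ref{exchange_in_C}, where $b_i$ is a minimal left and $c_i$ a minimal right $\add U$-approximation, and $C_i$ is non-injective by assumption. Apply the functor $\C(P,-)$. Since $\EE(P,-)=0$, the long exact sequence gives an exact sequence in $\modd\Lambda$
$$\C(P,B_i)\xrightarrow{\C(P,b_i)}\C(P,\overline{U}_i)\xrightarrow{\C(P,c_i)}\C(P,C_i)\to \EE(P,B_i)=0.$$
The three terms are $H_P(B_i)$, an object of $\add H_P(U)$, and $H_P(C_i)$. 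Note that $B$ and $U$ have no injective summands: $U$ by the standing assumption, and $B_U$ because $H_P(B_U)=B_{H_P(U)}$ is a module by Proposition \ref{presilting_supp.tau-rigid}(ii). Also, $H_P(C_i)$ is an indecomposable summand of $C'_{H_P(U)}$, i.e. of the module part $C$ of the co-Bongartz complement of $M:=H_P(U)$.

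Next, show that $\C(P,b_i)$ and $\C(P,c_i)$ are minimal left and right $\add M$-approximations in $\modd\Lambda$. Use the equivalence $\C(P,-)\colon\C/[I]\to\modd\Lambda$ of Proposition \ref{0-Auslander-properties}(v). Since $\C(P,I)=0$, any morphism $B_i\to U$ in $\C$ maps onto $\C(P,B_i)\to\C(P,U)$, and every module map between images comes from a morphism in $\C$. Hence the approximation properties transfer.

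For minimality, work in $\C/[I]$. Suppose an endomorphism $\varphi$ of $\overline{U}_i$ with $\varphi b_i=b_i$ becomes invertible only modulo $[I]$. Then use that $\End_\C(\overline{U}_i)\to \End_{\C/[I]}(\overline{U}_i)$ has nilpotent kernel, by Krull--Schmidt and because $\overline{U}_i$ has no injective summands. Alternatively, argue directly: a common nonzero summand split off by the module map would lift to a split summand in $\C$, since indecomposable non-injective objects remain indecomposable in $\C/[I]$ and isomorphisms lift. This makes $H_P(b_i)$ and $H_P(c_i)$ minimal.

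Finally, compare with Proposition \ref{exchange_in_Lambda}. The sequence $H_P(B_i)\xrightarrow{\mu}H_P(\overline{U}_i)\xrightarrow{\gamma}H_P(C_i)\to 0$ is exact, with $\mu$ a minimal left $\add M$-approximation. Then $H_P(C_i)\neq 0$ forces $\mu$ not to be surjective, so $H_P(B_i)$ lies in $B'$ (case (i) of Proposition \ref{exchange_in_Lambda}). There, $\rho(H_P(B_i))$ is the cokernel of the minimal left $\add M$-approximation of $H_P(B_i)$. This cokernel is unique up to isomorphism and equals $H_P(C_i)$. Hence $\rho(H_P(B_i))\cong H_P(C_i)$, as claimed.

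The main obstacle is the minimality transfer through the quotient by $[I]$. The rest is formal once the exact sequence in $\modd\Lambda$ is established.
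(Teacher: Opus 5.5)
Your route is the paper's: apply $\C(P,-)$ to $B_i\overset{b_i}{\infl}\overline{U}_i\overset{c_i}{\defl}C_i$, transfer the approximation properties through the equivalence $\C/[I]\simeq\modd\Lambda$, and identify $\rho(\C(P,B_i))$ using Proposition~\ref{exchange_in_Lambda}(i). Your extra checks are correct and make explicit what the paper leaves implicit. These are: $B_U$ has no injective summands; $\C(P,B_i)$ falls into case (i) because the cokernel $\C(P,C_i)$ is nonzero; and $\rho$ is the cokernel of the minimal left approximation, so only left minimality is needed. The paper itself disposes of minimality in one line ("follows from indecomposability").

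The step you flag as the main obstacle is not actually settled by either of your sketches. Take $\psi\in\End_\Lambda(\C(P,\overline{U}_i))$ with $\psi\circ\C(P,b_i)=\C(P,b_i)$. A lift $\varphi$ of $\psi$ only satisfies $\varphi b_i\equiv b_i$ modulo $[I]$, so you cannot invoke left minimality of $b_i$ in $\C$. The nilpotence of the kernel of $\End_\C(\overline{U}_i)\to\End_{\C/[I]}(\overline{U}_i)$ lifts invertibility from $\C/[I]$ to $\C$, which is not the issue. Your summand version has the same hole: the lifted summand $V''$ has component $B_i\to V''$ lying in $[I]$, not equal to $0$. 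In general a left minimal morphism in $\C$ need not stay left minimal in $\C/[I]$. For example, a nonzero endomorphism of $\mathbb{P}_S$ factoring through $\Sigma\Lambda$, for $\Lambda=k[x]/(x^2)$, is left minimal in $\C$ but becomes $0$.

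The missing input is that $b_i$ is an inflation and $I$ is injective. Write $\varphi b_i-b_i=ts$ with $s\colon B_i\to I'$, $t\colon I'\to\overline{U}_i$ and $I'\in\add I$. Then $s=s'b_i$ for some $s'\colon\overline{U}_i\to I'$, so $(\varphi-ts')b_i=b_i$. Hence $\varphi-ts'$ is an automorphism, and $\psi=\C(P,\varphi-ts')$ is invertible since $\C(P,I')=0$.

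Alternatively, closer to the paper's remark, use that $\C(P,C_i)$ is indecomposable and not in $\add\C(P,U)$, since $C_i\notin\add U$. A non-minimal left approximation $\mu=(\mu_0,0)\colon\C(P,B_i)\to N'\oplus N''$ with $N''\neq 0$ would have cokernel $\mathrm{coker}(\mu_0)\oplus N''$. This would force $\C(P,C_i)\cong N''\in\add\C(P,U)$, a contradiction.
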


\begin{proof}
    Let 
    \begin{equation}\label{Etriangle}
        B_i\overset{b_i}{\infl} \overline{U}_i\overset{c_i}{\defl} C_i
    \end{equation}
    be the $\EE$-triangle such that $b_i: B_i \to \overline{U}_i$ (resp. $c_i: \overline{U}_i\to C_i$) is a minimal left (resp. right) $\add U$-approximation (see Proposition \ref{exchange_in_C}). Notice that none of the terms in \eqref{Etriangle} has injective direct summands. Hence, applying $H_P = \C(P,-)$ to \eqref{Etriangle}, we obtain an exact sequence in $\modd \Lambda$
    \begin{equation*}
        H_P(B_i)\xrightarrow{H_P(b_i')} H_P(\overline{U}_i)\xrightarrow{H_P(c_i)}H_p(C_i)\to \EE(P,C_i) = 0.
    \end{equation*}
    Since $b_i: B_i\to \overline{U}_i$ (resp. $c_i: \overline{U}_i\to C_i$) is a left (resp. right) $\add U$-approximation , it follows that $H_P(b_i): H_P(B_i)\to H_P(\overline{U}_i)$ (resp. $H_P(c_i): H_P(\overline{U}_i)\to H_P(C_i)$) is a left (resp. right) $\add H_P(U)$-approximation. Minimality follows from $H_P(B_i)$ and $H_P(C_i)$ being indecomposable. We conclude that $H_P(C_i)$ is the indecomposable direct summand of $H_P(C')$ corresponding to the indecoposable direct summand $H_P(B_i)$ of $H_P(B)$ by Proposition \ref{exchange_in_Lambda}(i).
\end{proof}

\begin{lemma}\label{compatibilityI_C}
    Let $B_i$ be an indecomposable direct summand of $B$ such that the corresponding indecomposable direct summand $I_i$ of $C$ is a summand of $I_C$ (see Proposition \ref{exchange_in_C}). Then, $H_P(I_i)$ is the indecomposable direct summand of $H_P(I_C)$ corresponding to the indecomposable direct summand $H_P(B_i)$ of $H_P(B)$ (see Proposition \ref{exchange_in_Lambda}).  
\end{lemma}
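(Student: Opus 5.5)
We start from the $\EE$-triangle $B_i\overset{b_i}{\infl}\overline{U}_i\overset{c_i}{\defl}I_i$ of Proposition \ref{exchange_in_C}, where $I_i$ is indecomposable injective, $b_i$ is a minimal left $\add U$-approximation and $c_i$ a minimal right one. By Proposition \ref{presilting_supp.tau-rigid}(i), $H_P(I_i)=\C(P,\Omega I_i)[1]=Q_i[1]$ with $Q_i:=\C(P,\Omega I_i)$ indecomposable projective. By Proposition \ref{exchange_in_Lambda}(ii), it then suffices to prove two things: first, that there is an exact sequence $Q_i\xrightarrow{\delta_i}H_P(B_i)\xrightarrow{\mu_i}H_P(\overline{U}_i)\to 0$ in $\modd\Lambda$, with $\mu_i$ a minimal left $\add H_P(U)$-approximation; second, that $H_P(B_i)$ is a summand of $B''$ and not of $B'$, which is the same statement, since $\mu_i$ is then an epimorphism. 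Note that $B_i$ and $\overline{U}_i$ have no injective summands. For $B_i$ this holds because $B_U$ is basic silting and $B$ corresponds under $H_P$ to the module part of the Bongartz completion by Proposition \ref{presilting_supp.tau-rigid}(ii). For $\overline{U}_i$ it holds because $U$ has none.

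To get the exact sequence, apply $\C(P,-)$ to the triangle. The long exact sequence gives
$$\C(P,\Omega I_i)\to \C(P,B_i)\to\C(P,\overline{U}_i)\to \C(P,I_i)=0.$$
To get the map from $\C(P,\Omega I_i)$ on the left, I would rotate the triangle backwards. Using the $\EE$-triangle $\Omega I_i\infl 0\defl I_i$ and \cite[Prop. 3.15]{Nakaoka-Palu_extriangulated_categories} (the same homotopy-pullback trick used in the proof of Lemma \ref{Bongartz_co-Bongartz_Lemma}), we obtain an $\EE$-triangle $\Omega I_i\infl B_i\oplus 0\defl \overline{U}_i$, that is, $\Omega I_i\overset{d_i}{\infl}B_i\overset{b_i}{\defl}\overline{U}_i$. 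Applying $\C(P,-)$ to this triangle and using $\EE(P,-)=0$ gives the exact sequence
$$\C(P,\Omega I_i)\xrightarrow{\delta_i}\C(P,B_i)\xrightarrow{\mu_i}\C(P,\overline{U}_i)\to 0,$$
where $\delta_i=\C(P,d_i)$ and $\mu_i=H_P(b_i)$.

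Next comes the approximation property. Any map $H_P(B_i)\to H_P(U)$ lifts to a map $B_i\to U$, because $\C(P,-)$ is full modulo $[I]$ by Proposition \ref{0-Auslander-properties}(v). That lift factors through $b_i$, so $\mu_i$ is a left $\add H_P(U)$-approximation. It is an epimorphism, so $H_P(B_i)$ lies in $B''$. Minimality follows from indecomposability of $H_P(B_i)$, with $H_P(\overline U_i)\neq 0$ unless $\overline U_i=0$, and from $H_P$ being an equivalence on objects without injective summands. Finally, Proposition \ref{exchange_in_Lambda}(ii) asks that $\delta_i$ be a minimal left $\add H_P(B)$-approximation; by uniqueness of the summand $Q_i$ it is enough to identify $Q_i$ as the projective attached to $H_P(B_i)$ there. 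Since the assignment $\rho$ in Proposition \ref{exchange_in_Lambda} is a bijection $B''\leftrightarrow\ind Q$, we need the indecomposable projective $Q'$ with $H^{-1}(\XX_i)\to H^0(\PP_{B_i})$ to equal $Q_i$.

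\textbf{Main obstacle.} I expect this last identification to be the hardest step. I would handle it by counting, together with Lemma \ref{compatibilityC'}. The $C'$-summands are already matched compatibly by Lemma \ref{compatibilityC'}, and $H_P$ is a bijection between $C$ and $C_{H_P(U)}$-complements. So the $B_i$ going to $I_C$ are exactly those whose image lies in $B''$, as shown above. It remains to show that $\rho(H_P(B_i))=Q_i$ rather than some other summand of $Q$.

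For that I would use the triangle \eqref{indec_triangles}, $\PP_{B_i}\to(\PP_U')_i\to\XX_i$ with $\XX_i=Q'[1]$. Comparing it with the image of $\Omega I_i\to B_i\to\overline U_i$ under the standard equivalence between $\C$ and two-term complexes (a projective presentation $P_1\infl P_0\defl X$ gives such a complex), the third term is determined by the cone of $\PP_{B_i}\to\PP_{\overline U_i}$, since both maps are minimal left $\add U$-approximations. Hence $Q'[1]\cong \C(P,\Omega I_i)[1]$. If this comparison is too heavy, a fallback is to use that $\ker\mu_i$ is the image of a projective cover $Q_i\to\ker\mu_i$, which is the characterization of $\rho$.
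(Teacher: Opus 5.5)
Your set-up is the paper's: the same rotated $\EE$-triangle $\Omega I_i\infl B_i\overset{b_i}{\defl}\overline{U}_i$, obtained by the homotopy pullback along $\Omega I_i\infl 0\defl I_i$, and the same exact sequence $\C(P,\Omega I_i)\to H_P(B_i)\xrightarrow{\mu_i} H_P(\overline{U}_i)\to 0$, which shows that $H_P(B_i)$ lies in $B''$.

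The trouble is at the step you flag as the main obstacle. Your primary plan there does not work as written. A general reduced $0$-Auslander category need not be equivalent to $K^{[-1,0]}(\proj\Lambda)$; that is only the special case in the remark closing Section \ref{Sec:Exchange}. Choosing projective presentations does not by itself give a functor sending the conflation $\Omega I_i\infl B_i\defl\overline{U}_i$ to a triangle in $K^b(\proj\Lambda)$. Without such a triangle, the step from ``both maps are minimal left $\add U$-approximations'' to $Q'[1]\cong\C(P,\Omega I_i)[1]$ is unsupported. The counting via Lemma \ref{compatibilityC'} is also superfluous, since you already placed $H_P(B_i)$ in $B''$ directly.

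Your fallback, by contrast, is correct and should be the main line. It needs three facts made explicit.
First, for any left $\add H_P(U)$-approximation $f$ of $X=H_P(B_i)$ one has $\ker f=\bigcap_g\ker g$, the intersection taken over all $g\colon X\to H_P(U)$. So your $\mu_i$ and the map $\mu$ of Proposition \ref{exchange_in_Lambda}(ii) have the same kernel $K$. In particular you do not need minimality of $\mu_i$, which does not in any case follow from indecomposability of the source.
Second, $K\neq 0$: otherwise $\mu_i$ is an isomorphism and $H_P(B_i)\in\add H_P(U)$, contradicting that $B_{H_P(U)}$ is basic.
Third, an epimorphism from an indecomposable projective onto a nonzero module is a projective cover, because its kernel is proper and hence lies in the radical.
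Exactness of your sequence and of the one in Proposition \ref{exchange_in_Lambda}(ii) then exhibits both $\C(P,\Omega I_i)$ and $\rho(H_P(B_i))$ as projective covers of $K$, so they are isomorphic.

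The paper closes the argument differently. It applies $\C(-,B)$ to $\Omega I_i\infl B_i\defl\overline{U}_i$ and uses $\EE(\overline{U}_i,B)=0$ (as $B_U$ is silting) to show that $\Omega I_i\to B_i$ is a minimal left $\add B$-approximation. Pushing this through the full functor $\C(P,-)$ gives a minimal left $\add H_P(B)$-approximation of the indecomposable projective $\C(P,\Omega I_i)$. Uniqueness of minimal approximations then matches it with the map $\delta$ of Proposition \ref{exchange_in_Lambda}(ii).

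So the paper uses the ``$\delta$ is a minimal left $\add B$-approximation'' half of that proposition plus one extension-vanishing in $\C$. Your fallback uses only its exactness half plus elementary facts about projective covers. Neither needs any comparison with two-term complexes.
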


\begin{proof}
     Let 
    \begin{equation}\label{Etriangle1}
        B_i\overset{b_i}{\infl} \overline{U}_i\overset{c_i}{\defl} I_i
    \end{equation}
    be the $\EE$-triangle such that $b_i: B_i\to \overline{U}_i$ (resp. $c_i: \overline{U}_i\to I_i$) is a minimal left (resp. right) $\add U$-approximation (see Proposition \ref{exchange_in_C}). Consider the $\EE$-triangle $\Omega I_i\infl 0 \defl I_i$. By \cite[Prop. 3.15]{Nakaoka-Palu_extriangulated_categories}, there is a commutative diagram of $\EE$-triangles 
    \[\begin{tikzcd}[ampersand replacement=\&,cramped]
	\& {B_i} \& {B_i} \\
	{\Omega I_i} \& {B_i} \& {\overline{U}_i} \\
	{\Omega I_i} \& 0 \& {I_i}.
	\arrow[equals, from=1-2, to=1-3]
	\arrow["1"', tail, from=1-2, to=2-2]
	\arrow["{b_i}", tail, from=1-3, to=2-3]
	\arrow["i", tail, from=2-1, to=2-2]
	\arrow[equals, from=2-1, to=3-1]
	\arrow["{b_i}", two heads, from=2-2, to=2-3]
	\arrow[two heads, from=2-2, to=3-2]
	\arrow["{c_i}", two heads, from=2-3, to=3-3]
	\arrow[tail, from=3-1, to=3-2]
	\arrow[two heads, from=3-2, to=3-3]
\end{tikzcd}\]
We claim that $i: \Omega I_i\to B_i$ is a left minimal $\add B$-approximation. Applying $\C(-,B)$ to the middle row of the above diagram, we obtain an exact sequence 
$$\C(\overline{U}_i, B)\to \C(B_i,B)\xrightarrow{\C(i,B)}\C(\Omega I_i,B)\to \EE(\overline{U}_i, B) = 0.$$
It follows that $\C(i,B): \C(B_i,B)\to\C(\Omega I_i,B)$ is surjective and thus $i: \Omega I_i\to B_i$ is a left $\add B$-approximation. Since $\Omega I_i$ and $B_i$ are indecomposable, minimality follows. Now, applying $\C(P,-)$ to the middle row of the diagram above, we obtain an exact sequence 
\begin{equation}\label{seq1}
    \C(P,\Omega I_i)\xrightarrow{\C(P,i)}\C(P,B_i)\xrightarrow{\C(P,b_i)}\C(P,\overline{U}_i)\to \EE(P,\Omega I_i) = 0,
\end{equation}
 where $\C(P,\Omega I_i)$ (resp. $\C(P,\overline{U}_i)$) is an indecomposable direct summand of $\C(P,\Omega I)$ (resp. $\C(P,U)$). In particular, since $i: \Omega I_i\to B_i$ is a minimal left $\add B$-approximation,  it follows that $\C(P,i): \C(P,\Omega I_i)\to{\C(P,B_i)}$ is a minimal left $\add\C(P,B)$-approximation. On the other hand, using Proposition \ref{exchange_in_Lambda}(ii), we have an exact sequence in $\modd\Lambda$ of the form 
 \begin{equation}\label{seq2}
     \C(P,\Omega I)_i\xrightarrow{\delta_i}\C(P,B)_i\xrightarrow{\mu_i}\overline{\C(P,U)}_i\to 0,
 \end{equation}
 where $\delta_i: \C(P,\Omega I)_i\to \C(P,B)_i$ is a minimal left $\add\C(P,B)$-approximation and $\overline{\C(P,U)}_i$ is in $\add\C(P,U)$. 

By construction, $\C(P,\Omega I_i)\cong \C(P,\Omega I)_i$ (see Proposition \ref{presilting_supp.tau-rigid}(ii)). Hence, comparing the sequences \eqref{seq1} and \eqref{seq2} we obtain a commutative diagram 
\[\begin{tikzcd}[ampersand replacement=\&,cramped,column sep=2.99em]
	{\C(P,\Omega I_i)} \& {\C(P,B_i)} \& {\C(P,\overline{U}_i)} \& 0 \\
	{\C(P,\Omega I)_i} \& {\C(P,B)_i} \& {\overline{\C(P,U)}_i} \& 0
	\arrow["{\C(P,i)}", from=1-1, to=1-2]
	\arrow[from=1-1, to=2-1]
	\arrow["{\C(P,b_i)}", from=1-2, to=1-3]
	\arrow[dashed, from=1-2, to=2-2]
	\arrow[from=1-3, to=1-4]
	\arrow[dashed, from=1-3, to=2-3]
	\arrow["{\delta_i}", from=2-1, to=2-2]
	\arrow["{\mu_i}", from=2-2, to=2-3]
	\arrow[from=2-3, to=2-4]
\end{tikzcd}\]
where the middle vertical row exists since $\C(P,i)$ is a $\C(P,B)$-approximation, and it must be an isomorphism by the minimality of $\C(P,i)$ and $\delta_i$. We conclude that $\C(P,\Omega I_i)$ is the indecomposable direct summand of $C(P,\Omega I_C)$ corresponding to the indecomposable direct summand $H_P(B_i)$ of $H_P(B)$.
\end{proof}

Summarising, we have obtained the following. 

\begin{proposition}\label{compatibility_of_exchange_in_C_and_Lambda}
    Let $U\in \presilt\C$ with no injective direct summands. Let $B = B[U]$ and $C = C[U] = C'\oplus I_C $ be the Bongartz and co-Bongartz complement of $U$ in $\C$, respectively. Then, there is an $\EE$-triangle 
    \begin{equation}\label{Etriangle1}
        B\overset{b}{\infl} \overline{U}\overset{c}{\defl}C,
    \end{equation}
    where $b: B\to \overline{U}$ (resp. $c: \overline{U}\to C$) is a minimal left (resp. right) $\add U$-approximation. The $\EE$-triangle \eqref{Etriangle1} is the direct sum of $\abs P-\abs {C'}$ $\EE$-triangles 
    \begin{equation}\label{E2}
        B'_i\overset{b_i'}{\infl} \overline{U'_i}\overset{c_i'}{\defl}C'_i
    \end{equation}
    and $\abs P -\abs{I_C}$ $\EE$-triangles 
    \begin{equation}\label{E3}
        B''_i\overset{b_i''}{\infl} \overline{U''_i}\overset{c_i''}{\defl}I_i
    \end{equation}
    where $C'_i$ (resp. $I_i$) is an indecomposable direct summand of $C$ (resp. $I_C$). Moreover, $b_i': B_i'\to \overline{U_i'}$ and $b_i'': B_i''\to \overline{U_i''}$ (resp. $c_i': \overline{U'_i}\to C_i'$ and $c_i'': \overline{U_i''}\to I_i$) are left (resp. right) minimal $\add U$-approximations. In particular, $b = \bigoplus_i (b_i'\oplus b_i'')$  and $c = \bigoplus_i (c_i'\oplus c_i'')$. Write $B = B'\oplus B''$, where $B' = \bigoplus_i B_i'$ and $B'' = \bigoplus_iB_i''$.
    \begin{itemize}
        \item[(i)] The assignment $B_i'\overset{\rho'}{\mapsto} C_i'$ established via the $\EE$-triangle \eqref{E2} gives a bijection between the indecomposable direct summands of $B$ and the indecomposable direct summands of $C$ such that $B_i'$ (resp. $C_i'$) is a summand of $B'$ (resp. $C'$). In particular, there is a commutative diagram of bijections 
        \[\begin{tikzcd}[ampersand replacement=\&,cramped]
        	{\ind\add B'} \&\& {\ind\add \C(P,B')} \\
        	{\ind\add C'} \&\& {\ind\add \C(P,C')}
        	\arrow["{H_P}", from=1-1, to=1-3]
        	\arrow["{\rho'}"', from=1-1, to=2-1]
        	\arrow["\rho", from=1-3, to=2-3]
        	\arrow["{H_P}", from=2-1, to=2-3]
        \end{tikzcd}\]
        where the map $\rho: \ind\add \C(P,C')\to \ind\add \C(P,B')$ is described in Proposition \ref{exchange_in_Lambda}(i). 
        \item[(ii)] The assignment $B_i''\overset{\rho'}{\mapsto} I_i$ established via the $\EE$-triangle \eqref{E3} gives a bijection between the indecomposable direct summands of $B$ and the indecomposable direct summands of $C$ such that $B_i''$ (resp. $I_i$) is a summand of $B''$ (resp. $I_C$). In particular, there is a commutative diagram of bijections 
        \[\begin{tikzcd}[ampersand replacement=\&,cramped]
        	{\ind\add B''} \&\& {\ind\add \C(P,\Omega B'')} \\
        	{\ind\add I_C} \&\& {\ind\add \C(P,I_C)}
        	\arrow["{H_P}", from=1-1, to=1-3]
        	\arrow["{\rho'}"', from=1-1, to=2-1]
        	\arrow["\rho", from=1-3, to=2-3]
        	\arrow["{[-1]H_P}", from=2-1, to=2-3]
        \end{tikzcd}\]
        where the map $\rho: \ind\add \C(P,B'')\to \ind\add \C(P,I_C)$ is described in Proposition \ref{exchange_in_Lambda}(ii). 
    \end{itemize}
\end{proposition}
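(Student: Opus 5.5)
The statement is essentially an assembly of the results proved just above, so I will put them together rather than build anything new. First, Lemma \ref{Bongartz_co-Bongartz_Lemma} gives the $\EE$-triangle $B\overset{b}{\infl}\overline{U}\overset{c}{\defl}C$ with $b$ a minimal left and $c$ a minimal right $\add U$-approximation. Next, Proposition \ref{exchange_in_C} splits this triangle as the direct sum of the indecomposable triangles $B_i\overset{b_i}{\infl}\overline{U}_i\overset{c_i}{\defl}C_i$, each with minimal approximations. It also shows that $B_i\mapsto C_i$ is a bijection $\ind\add B\to\ind\add C$.

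I will then sort these triangles according to whether the third term $C_i$ is non-injective or injective. Since $C=C'\oplus I_C$ and the $C_i$ run through the indecomposable summands of $C$ exactly once, this produces the two families \eqref{E2} and \eqref{E3}. Setting $B'$ (resp. $B''$) to be the sum of the $B_i$ whose partner lies in $C'$ (resp. $I_C$), we get $B=B'\oplus B''$ and $b=\bigoplus_i(b_i'\oplus b_i'')$, $c=\bigoplus_i(c_i'\oplus c_i'')$. The counts of triangles should be read as $\abs{C'}$ and $\abs{I_C}$, which add up to $\abs{P}-\abs{U}$. Restricting the bijection of Proposition \ref{exchange_in_C} to each family gives the bijections $\rho'$ in (i) and (ii).

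For the commutative squares, part (i) is exactly Lemma \ref{compatibilityC'}. Since $U$ has no injective summands, neither do $B$, $\overline{U}_i$ or $C_i'$, so $H_P=\C(P,-)$ on these objects. The lemma states that $H_P(C_i')$ is the summand corresponding to $H_P(B_i')$ under $\rho$ of Proposition \ref{exchange_in_Lambda}(i), which is precisely $\rho\circ H_P=H_P\circ\rho'$. Part (ii) is Lemma \ref{compatibilityI_C}. Here $H_P(I_i)=\C(P,\Omega I_i)[1]$, so the bottom arrow $[-1]H_P$ sends $I_i$ to $\C(P,\Omega I_i)$, an indecomposable summand of $Q$. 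The lemma identifies this with $\rho(H_P(B_i''))$ from Proposition \ref{exchange_in_Lambda}(ii).

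The only step needing care is this bookkeeping in (ii): the shift conventions and the labelling of the top-right corner. I will check that the diagram is to be read with top-right entry $\ind\add\C(P,B'')$; the $\Omega$ in the statement appears to be a typo. I will also check that $B''$ corresponds, under $H_P$, exactly to the summands of the Bongartz complement of $H_P(U)$ whose minimal left $\add H_P(U)$-approximation is an epimorphism. This holds because the injectivity of $C_i$ is equivalent to $H_P(C_i')=0$ failing, that is, to $\C(P,c_i)$ having zero target, so the sequence $\C(P,B_i)\to\C(P,\overline{U}_i)\to 0$ makes $\C(P,b_i)$ surjective. This matches the split $B=B'\oplus B''$ in Proposition \ref{exchange_in_Lambda}, so the two decompositions agree, and the compatibility then follows directly from the two lemmas.
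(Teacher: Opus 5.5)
Your proposal is correct and is essentially the paper's proof, which consists of exactly this assembly of Lemma~\ref{Bongartz_co-Bongartz_Lemma}, Proposition~\ref{exchange_in_C} and Lemmas~\ref{compatibilityC'} and~\ref{compatibilityI_C}. Your corrections are right: the triangle counts should be $\abs{C'}$ and $\abs{I_C}$, and the stray $\Omega$ belongs in the bottom-right corner as $\C(P,\Omega I_C)$, which is how you in effect use it. Your check that the splitting $B=B'\oplus B''$ matches that of Proposition~\ref{exchange_in_Lambda} makes explicit a point the paper leaves implicit. Its wording is garbled, though, so state it as follows: $\C(P,C_i)=0$ if and only if $C_i\in\inj\C$, and $\C(P,b_i)$ is surjective if and only if $\C(P,C_i)=0$, by exactness of $\C(P,B_i)\to\C(P,\overline{U}_i)\to\C(P,C_i)\to 0$.
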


\begin{proof}
    The result follows by combining Lemma \ref{Bongartz_co-Bongartz_Lemma},  Proposition \ref{exchange_in_C},  and Lemmas \ref{compatibilityC'}, \ref{compatibilityI_C}.
\end{proof}

\begin{remark}
    Let $\Lambda$ be a finite dimensional algebra. The category $K^{[-1,0]}(\proj\Lambda)$ of 2-term complexes of projective $\Lambda$-modules is an extension closed subcategory of the triangulated category $\K = K^b(\proj\Lambda)$ and therefore extriangulated by Example \ref{extriang_examples}. In particular, $K^{[-1,0]}(\proj\Lambda)$ is a reduced 0-Auslander extriangulated category; see \cite{GNP_HereditaryExtriangulated}. In this case, Proposition \ref{exchange_in_C} recovers the Buan--Marsh bijection described in Proposition \ref{exchange_in_Lambda}. 
\end{remark}

\section{Compatibility of Pan--Zhu and Buan--Marsh bijections}\label{Sec:Compatibility}

Let $U\in \C$ be a presilting object. Let $\C_U = B_U^{\perp}\cap{^\perp}C_U$  and denote by $\widetilde{\C}_U$ the additive quotient $\C_U/[U]$. The category $\C_U$ coincides with the extension-closed subcategory $U^\perp \cap {}^\perp U$ considered in \cite{GNP_HereditaryExtriangulated}; see \cite[Remark 4.20(2)]{PanZhu_SiltingReduction}. Recall from Definition-Proposition \ref{C_U_is_0-Auslander} that $\C_U$ and $\widetilde{\C}_U$ are 0-Auslander and reduced 0-Auslander extriangulated categories, respectively. Let $\presilt_U\C = \{X\in\presilt\C \mid U\in\add X\}$. Note that $\presilt_U\C$ is isomorphic to the set 
$$\{X\in \C\setminus\add U\mid X\oplus U \in\presilt\C\} = \presilt\C_U\setminus\add U.$$ 

Denote by $\pi_U: \C_U\to \widetilde{\C}_U$ the projection functor. We have the following observation 

\begin{lemma}\label{proj_functior_bij}
    The projection functor $\pi_U: \C_U\to \widetilde{\C}_U$ induces a bijection 
    $$\pi_U: \ind\presilt\C_U\setminus\add U \to \{Y\in\ind\widetilde{\C}_U\mid  Y \text{ is presilting in } \widetilde{\C}_U\}.$$
\end{lemma}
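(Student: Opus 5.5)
We need three things: $\pi_U$ sends indecomposable presilting objects of $\C_U$ outside $\add U$ to indecomposable presilting objects of $\widetilde{\C}_U$; the induced map is injective; and it is surjective.

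The key input is the extriangulated structure on $\widetilde{\C}_U=\C_U/[U]$ from Definition-Proposition \ref{C_U_is_0-Auslander}. Since $\add U=\proj\C_U\cap\inj\C_U$, this is the ideal quotient by projective-injectives of the Nakaoka--Palu example recalled in Section \ref{Section_0-Auslander}. That quotient keeps the same objects and satisfies $\widetilde{\EE}(X,Y)=\EE_{\C_U}(X,Y)=\EE(X,Y)$ for all $X,Y\in\C_U$. Consequently an object $X\in\C_U$ is presilting in $\C_U$ if and only if $\pi_U(X)$ is presilting in $\widetilde{\C}_U$. Moreover, $X\in\C_U$ is presilting in $\C_U$ exactly when it is presilting in $\C$, because $\C_U$ is extension-closed with the restricted $\EE$. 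So the presilting condition transfers in both directions.

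Next I would compare indecomposability and isomorphism classes, using that $\C_U$ is Krull--Schmidt and that $[U]$ is the ideal of morphisms factoring through $\add U$. For an indecomposable $X\in\C_U$ not in $\add U$, the ring $\End_{\C_U}(X)$ is local. Every morphism $X\to X$ factoring through $\add U$ lies in its radical, since otherwise it would be invertible and $X$ would be a summand of an object of $\add U$. Hence $\End_{\widetilde{\C}_U}(\pi_U X)=\End(X)/[U](X,X)$ is a nonzero local ring, and $\pi_U X$ is indecomposable and nonzero. For injectivity, suppose $\pi_U X\cong\pi_U X'$ with $X,X'$ indecomposable and not in $\add U$. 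Lifting the mutually inverse maps gives $g f=1_X+h$ with $h\in[U](X,X)\subseteq\operatorname{rad}\End(X)$. Then $gf$ is invertible, and likewise $fg$, so $X\cong X'$ in $\C_U$.

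For surjectivity, take an indecomposable presilting $Y\in\widetilde{\C}_U$ and choose $X\in\C_U$ with $\pi_U X\cong Y$. Decompose $X=X_0\oplus U_0$ with $U_0\in\add U$ and $X_0$ having no summands in $\add U$. Then $\pi_U X\cong\pi_U X_0$ since $\pi_U U_0=0$. By the local-ring argument above, distinct indecomposable summands of $X_0$ stay nonzero and non-isomorphic after applying $\pi_U$. Krull--Schmidt in $\widetilde{\C}_U$, which is Hom-finite with split idempotents, then forces $X_0$ to be indecomposable. By the first step $X_0$ is presilting, which gives surjectivity.

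The main obstacle is the identification $\widetilde{\EE}=\EE$ on $\C_U$, i.e.\ confirming that Pan--Zhu's extriangulated structure on $\widetilde{\C}_U$ is exactly the Nakaoka--Palu quotient by projective-injectives. Once that is granted, as in \cite[Prop. 3.30]{Nakaoka-Palu_extriangulated_categories}, the remaining steps are standard Krull--Schmidt bookkeeping.
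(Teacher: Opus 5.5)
Your proof is correct, but it takes a different route from the paper.

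\textbf{The paper's route.} The paper argues globally through silting objects. It cites Pan--Zhu to get $\silt_U\C=\silt\C_U$. It then uses that silting objects are exactly the maximal presilting objects in a $0$-Auslander category to upgrade this to $\presilt_U\C=\presilt\C_U$. Finally it invokes Pan--Zhu's theorem that $\pi_U$ induces a bijection $\silt\C_U\to\silt\widetilde{\C}_U$. The passage from silting objects to their indecomposable summands is left implicit.

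\textbf{Your route.} You work one indecomposable at a time. The identification $\widetilde{\EE}=\EE|_{\C_U}$ comes from Nakaoka--Palu's quotient by the projective-injectives $\add U=\proj\C_U\cap\inj\C_U$, and it makes the presilting condition transfer in both directions. The local-ring argument, namely $[U](X,X)\subseteq\operatorname{rad}\End(X)$ for indecomposable $X\notin\add U$, then shows three things:
\begin{itemize}
\item $\pi_U$ preserves indecomposability;
\item $\pi_U$ reflects isomorphisms between indecomposables outside $\add U$;
\item every indecomposable of $\widetilde{\C}_U$ lifts to one outside $\add U$.
\end{itemize}

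\textbf{Comparison.} Your argument is more elementary and self-contained. It needs neither the silting bijection nor the maximality characterisation, and it supplies the Krull--Schmidt bookkeeping that the paper's short argument glosses over. The paper's route instead makes the conceptual point that the lemma is the restriction of Pan--Zhu silting reduction to indecomposable summands.

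The step you flag as the main obstacle is not actually an issue. The paper itself uses $\EE_{\widetilde{\C}_{U'}}\cong\EE$ via Nakaoka--Palu, Prop.~3.30, in the proof of Lemma~\ref{comm_of_(d)}. The equality $\proj\C_U\cap\inj\C_U=\add U$ is recorded in the proof of Lemma~\ref{Bongartz_co-Bongartz_Lemma}.

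A minor simplification: in the surjectivity step you only need that the images $\pi_U X_i$ are nonzero. You do not need them to be pairwise non-isomorphic, nor that $\widetilde{\C}_U$ is Krull--Schmidt.
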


\begin{proof}
 The domain of the above map coincides with $\presilt_U\C$. By \cite[Lemma 4.13, Thm. 3.4]{PanZhu_SiltingReduction}, $\silt_U\C = \silt\C_U$. Siltig objects coincide with maximal presilting objects in 0-Auslander extriangulated categories (see \cite[Section 4]{GNP_HereditaryExtriangulated}) and thus $\presilt_U\C = \presilt\C_U$. By \cite[Thm. 4.16]{PanZhu_SiltingReduction} the projection functor $\pi_U$ induces a bijection between $\silt\C_U$ and $\silt\widetilde{\C}_U$. The result follows.
\end{proof}

Inspired by the compatibility of the Pan--Zhu silting reduction with the Jasso reduction of $\tau$-tilting modules (see \cite[Thm. 4.22]{PanZhu_SiltingReduction}), and the results in \cite[Section 3]{borve2021two}, this section aims to link the Pan--Zhu silting reduction with the Buan--Marsh reduction of support $\tau$-rigid objects described in Proposition \ref{epsilonU}. 

The next result can be seen as a 0-Auslander version of \cite[Lemma 3.4]{borve2021two}. We include the proof for completeness.

\begin{lemma}\label{H_tilda}
    There is an equivalence defined by the composite 
    \begin{equation}\label{equiv}
        \widetilde{\C}_U/[C_U]\xrightarrow{\widetilde{\C}_U(B_U, -)}\modd\Gamma_{H_P(U)}\xrightarrow{F_{H_P(U)}^{-1}} J(H_P(U)).
    \end{equation}
    The induced composite bijection $\widetilde{H}_{B_U}$ 
    \begin{equation}\label{eq_tilde}
        \presilt\widetilde{\C}_U\xrightarrow{H_{B_U}}\staurigid\Gamma_{H_P(U)}\xrightarrow{F_{H_P(U)}^{-1}}\staurigid J(H_P(U))
    \end{equation}
    is given by 
    \begin{equation}\label{Htilda_eq}
        \widetilde{H}_{B_U}(X) = \widetilde{H}_{B_U}(X'\oplus I_X) = f_{\C(P,U)}\C(P,X')\oplus f_{\C(P,U)}(\C(P,\Omega_{\widetilde{\C}_U}I_X))[1],
    \end{equation}
    where $I_X\in\inj\widetilde{\C}_U$ and $\Omega_{\widetilde{\C}_U}$ denotes the equivalence $\Omega$ in the reduced 0-Auslander extriangulated category $\widetilde{\C}_U$.
\end{lemma}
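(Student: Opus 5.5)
The plan has two parts: first establish the equivalence \eqref{equiv}, then compute the induced bijection on presilting objects explicitly.

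\textbf{The equivalence.} By Definition-Proposition \ref{C_U_is_0-Auslander}, $\widetilde{\C}_U$ is reduced $0$-Auslander. Its projective generator is the image of $B_U$, whose projectives are $\add B[U]$, and its injectives are $\add C[U]$. Proposition \ref{0-Auslander-properties}(v) then gives an equivalence
$$\widetilde{\C}_U(B_U,-)\colon \widetilde{\C}_U/[C_U]\to \modd \End_{\widetilde{\C}_U}(B_U).$$
So the first step is to identify $\End_{\widetilde{\C}_U}(B_U)=\End_\C(B_U)/[U]$ with $\Gamma_{H_P(U)}$. Since $B_U$ and $C_U$ are not injective-free, I would use the equivalence $\C(P,-)\colon \C/[I]\to\modd\Lambda$ together with $\C(P,I)=0$. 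I also need $\C(B_U,I)=0$; the plan is to argue that $B_U$ contains no injective summand of the type that would obstruct this, using Proposition \ref{presilting_supp.tau-rigid}(ii).

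The key computation is then
$$\End_\C(B_U)/[U]\cong \End_\Lambda(B_{H_P(U)})/[\C(P,U)],$$
and the right-hand side is $\Gamma_{H_P(U)}$ by Theorem \ref{facts_about_J}. The point needing care is that the ideal $[I]$ is absorbed, or does not meet $\End(B_U)$, whenever $U$ has injective summands. In that case $H_P(U)$ has a shifted projective part, and $\Gamma_{H_P(U)}$ is computed via $\P(J(H_P(U)))$; I would match this using Lemma \ref{Gamma_P'[1]^Gamma_M=Gamma_U}. I expect this identification of endomorphism algebras, for $U$ with injective summands, to be the main obstacle. Composing with $F_{H_P(U)}^{-1}$ then gives \eqref{equiv}.

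\textbf{The bijection.} Since \eqref{equiv} is an equivalence of categories, it transports presilting objects exactly as $H_P$ does in Proposition \ref{presilting_supp.tau-rigid}(i), now applied to the reduced category $\widetilde{\C}_U$. Hence $H_{B_U}$ is a bijection $\presilt\widetilde{\C}_U\to\staurigid\Gamma_{H_P(U)}$, and composing with the bijection induced by $F^{-1}_{H_P(U)}$ gives \eqref{eq_tilde}.

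It remains to verify the explicit formula \eqref{Htilda_eq}. For the non-injective part $X'$, take $X'\in\C_U$ and apply $\C(P,-)$ to a right $\add U$-approximation sequence. By Lemma \ref{Bongartz_co-Bongartz_Lemma}-type arguments this is an $\EE$-triangle $U_X\to X'\to X''$. Then check:
\begin{itemize}
\item $\C(P,X'')=f_{\C(P,U)}\C(P,X')$, i.e.\ its image is the torsion-free quotient with respect to $(\Gen \C(P,U),\C(P,U)^{\perp_0})$;
\item $\widetilde{\C}_U(B_U,X')\cong \Hom_\Lambda(B_{H_P(U)},f\,\C(P,X'))$, which is exactly $F_{H_P(U)}$ applied to $f_{\C(P,U)}\C(P,X')$.
\end{itemize}

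For the injective part, apply the same reasoning to $\Omega_{\widetilde{\C}_U}I_X$, which is projective in $\widetilde{\C}_U$ and lies in $\add B[U]$. This yields the summand $f_{\C(P,U)}(\C(P,\Omega_{\widetilde{\C}_U}I_X))[1]$, as in \cite[Lemma 3.4]{borve2021two}.
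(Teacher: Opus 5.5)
The gap is the $\EE$-triangle $U_X\infl X'\defl X''$ that you build on a right $\add U$-approximation. In a $0$-Auslander category such an approximation is in general neither an inflation nor a deflation, so this triangle need not exist.

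Concretely, let $\Lambda=k(1\to 2\to 3)/\mathrm{rad}^2$, $\C=K^{[-1,0]}(\proj\Lambda)$, $U=\PP_{S_2}=(P_3\to P_2)$ and $X'=P_1$; then $X'$ lies in $\C_U$. The approximation $u\colon U\to P_1$, induced by $P_2\to\operatorname{soc}P_1$, has cone $P_3\to P_2\to P_1$ in $K^b(\proj\Lambda)$. This cone has a nonzero morphism to $P_3[2]$, because $\Hom_\Lambda(P_2,P_3)=0$, so it is not in $\C$ and $u$ is not an inflation. Also $\C(\Lambda,u)\colon S_2\to P_1$ is not surjective, so $u$ is not a deflation.

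As a result, your second bullet, $\widetilde{\C}_U(B_U,X')\cong\Hom_\Lambda(\C(P,B_U),f_{\C(P,U)}\C(P,X'))$, is left unproved. This is exactly the isomorphism the paper's proof rests on, and it is the whole content of \eqref{Htilda_eq}; the paper imports it from the proof of \cite[Thm.~4.22]{PanZhu_SiltingReduction}. You can prove it directly in two steps.

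\textbf{Step 1.} Apply $\C(-,J)$, for $J\in\inj\C$, to the Bongartz triangle $B[U]\infl U'\defl\Sigma P$. This shows every morphism from $B_U$ to an injective factors through $\add U$, so $[\inj\C](B_U,-)\subseteq[U](B_U,-)$. Combined with $\C/[\inj\C]\simeq\modd\Lambda$, this gives $\widetilde{\C}_U(B_U,X')\cong\Hom_\Lambda(\C(P,B_U),\C(P,X'))/[\C(P,U)]$.

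\textbf{Step 2.} Apply $\Hom_\Lambda(\C(P,B_U),-)$ to $0\to t\to\C(P,X')\to f_{\C(P,U)}\C(P,X')\to0$, where $t\in\Gen\C(P,U)$ is the torsion part.
\begin{itemize}
\item The induced map is onto because $\Ext^1_\Lambda(\C(P,B_U),\Gen\C(P,U))=0$ by $\tau$-rigidity.
\item Its kernel $\Hom_\Lambda(\C(P,B_U),t)$ is exactly $[\C(P,U)]$. This holds because maps from $\C(P,B[U])$ into $\Gen\C(P,U)$ factor through its left $\add\C(P,U)$-approximation. To see this, apply $\Hom(-,Z)$, for $Z\in\Gen\C(P,U)$, to the triangle of Lemma~\ref{Bongartz_co-Bongartz_Lemma} for $H_P(U)$ in $K^{[-1,0]}(\proj\Lambda)$, and use support $\tau$-rigidity of the co-Bongartz completion.
\end{itemize}
The injective part then follows by applying the same argument to $\Omega_{\widetilde{\C}_U}I_X\in\add B[U]$.

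In the first part, the requirement $\C(B_U,\inj\C)=0$ is false unless $U$ is projective: $U$ is a summand of $B_U$, and $\C(U,\Sigma P)\cong\EE(U,P)$. It is also not what is needed. The factorization through $\add U$ from Step 1 gives $\End_{\widetilde{\C}_U}(B_U)\cong\End_\Lambda(\C(P,B_U))/[\C(P,U)]$ directly.

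Lemma~\ref{Gamma_P'[1]^Gamma_M=Gamma_U} plays no role here. By Proposition~\ref{presilting_supp.tau-rigid}(ii), the module part of $B_{H_P(U)}$ is $\C(P,B_U)$. Theorem~\ref{facts_about_J} already identifies the quotient above with $\Gamma_{H_P(U)}$, even when $H_P(U)$ has a shifted projective summand.
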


\begin{proof}
    The first equivalence in the composite \eqref{equiv} is given in \cite[Subsec. 4.4]{PanZhu_SiltingReduction} while the second equivalence follows from \cite[Thm. 4.12(b)]{Beyond_tau-tilting_theory} (see also \cite[Thm. 3.8]{Jasso_Reduction}).  To prove the expression \eqref{Htilda_eq}, we show that
     $$F_{H_P(U)}(f_{\C(P,U)}\C(P,X'))\cong \widetilde{\C}_U(B_U, X').$$ Write $U = U'\oplus I_U$, where $I_U$ is the largest injective direct summand of $U$. Let $B_U = (B[U]\oplus U')\oplus I_U$ be the Bongartz completion of $U$ in $\C$. By Proposition \ref{presilting_supp.tau-rigid}(ii), we have that
    \begin{align*}
        B_{H_P(U)}&= (B[H_P(U)]\oplus H_P(U'))\oplus H_P(I_U)\\
        &\cong (H_P(B[U])\oplus H_P(U'))\oplus H_P(I_U)\\
        &\cong H_P(B_U). 
    \end{align*}
    Recall from \cite[Thm. 4.12(b)]{Beyond_tau-tilting_theory} that $F_{H_P(U)} = \Hom_\Lambda(H_P(B[U])\oplus H_P(U'), -)$. Then, 
    \begin{align*}
        F_{H_P(U)}(f_{\C(P,U)}\C(P,X')) &= \Hom_\Lambda(H_P(B[U])\oplus H_P(U'), f_{\C(P,U)}\C(P,X'))\\
        &  \cong \Hom_\Lambda(\C(P, B_U), f_{\C(P,U)}\C(P,X'))\\
        &\cong \widetilde{\C}_U(B_U, X'), 
    \end{align*}
    where the last isomorphism is contained in the proof of \cite[Thm. 4.22]{PanZhu_SiltingReduction}. Thus, the desired expression for $\widetilde{H}_{B_U}(X')$ follows. The proof for the expression $\widetilde{H}_{B_U}(I_X)$ is similar. 
\end{proof}

The goal of this section is to prove the following compatibility theorem. 

\begin{theorem}\label{compatibility_thm}
    Let $U$ be a presilting object in $\C$. There is a commutative diagram of bijections given by 
    \begin{equation}\label{compatibility_square}
    \begin{tikzcd}[ampersand replacement=\&,cramped]
    	{\ind\presilt \C_U \setminus\add U} \& {\{W\in\ind\CLambda(\Lambda)\setminus\add H_P(U)\mid W\oplus H_P(U) \text{ is supp. }\tau\text{-rigid}\}} \\
    	{\ind\presilt\widetilde{\C}_U} \& {\{Z\in\ind\CLambda(J(H_P(U)))\mid Z \text{ is supp. }\tau_{J(H_P(U))}\text{-rigid}\}.}
    	\arrow["{H_P}", from=1-1, to=1-2]
    	\arrow["{\pi_U}"', from=1-1, to=2-1]
    	\arrow["{\Eps_{H_P(U)}}", from=1-2, to=2-2]
    	\arrow["{\widetilde{H}_{B_U}}", from=2-1, to=2-2]
    \end{tikzcd}
    \end{equation}
\end{theorem}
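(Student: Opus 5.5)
Since all four maps are already known to be bijections (Proposition \ref{presilting_supp.tau-rigid}(i), Lemma \ref{proj_functior_bij}, Theorem \ref{epsilonU}, Lemma \ref{H_tilda}), the whole task is the identity $\Eps_{H_P(U)}(H_P(X)) \cong \widetilde{H}_{B_U}(\pi_U(X))$ for each indecomposable presilting $X\in\C_U\setminus\add U$. I would first handle the case where $U$ has no injective summands, so that $H_P(U)=M:=\C(P,U)$ is a $\tau$-rigid module and $\Eps_M$ is given explicitly by Proposition \ref{epsilonM}. Then I would reduce the general case to this one, using the factorisation of $\Eps_U$ in diagram \eqref{Eps_U_via_comm_diagram} together with transitivity of presilting reduction.

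For the case $U=U'$ non-injective, I split according to the three cases of Proposition \ref{epsilonM}.

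\textbf{Case (a):} $X$ is non-injective in $\C$ and $\C(P,X)\notin\Gen M$. Then $\Eps_M(H_P(X))=f_M\C(P,X)$. I must check that $\pi_U(X)$ is non-injective in $\widetilde{\C}_U$, i.e.\ $X\notin\add C[U]$. This holds because $\C(P,C')\in\P(\Gen M)\subseteq\Gen M$ and $H_P(I_C)$ is shifted. Formula \eqref{Htilda_eq} then gives $f_M\C(P,X)$ directly.

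\textbf{Case (b):} $X$ is non-injective in $\C$ and $\C(P,X)\in\Gen M$. Then $\C(P,X)$ is a summand of $\C(P,C')$, so $X\cong C'_i$ is a summand of the co-Bongartz complement. In $\widetilde{\C}_U$ the object $\pi_U(X)$ is therefore injective, with $\inj\widetilde{\C}_U=\add C[U]$. I need $\Omega_{\widetilde{\C}_U}(C_i)\cong B_i$, the summand matched to $C_i$ by Proposition \ref{exchange_in_C}. To see this, take the $\EE$-triangle $B_i\infl\overline{U}_i\defl C_i$. After passing to $\widetilde{\C}_U$ the middle term vanishes, so it becomes $B_i\infl 0\defl C_i$, and this defines $\Omega_{\widetilde{\C}_U}$ since $\proj\widetilde{\C}_U=\add B[U]$. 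Then \eqref{Htilda_eq} gives $f_M\C(P,B_i)[1]$. Lemma \ref{compatibilityC'} identifies $\C(P,B_i)$ with $B^{\C(P,X)}_M$, which matches $\Eps_M$.

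\textbf{Case (c):} $X=I_i$ is injective in $\C$, so $H_P(X)=\C(P,\Omega I_i)[1]$ with $\C(P,\Omega I_i)\in\proj\Lambda\cap{}^\perp M$. Here Proposition \ref{compatibility_of_exchange_in_C_and_Lambda}(ii) and Lemma \ref{compatibilityI_C} identify $B^X_M$ with $\C(P,B''_i)$. Again $\Omega_{\widetilde{\C}_U}(I_i)=B''_i$ by the same triangle argument, and the formulas agree.

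For general $U=U'\oplus I_U$, I would write $\widetilde{\C}_U$ as an iterated reduction: first reduce by $U'$, then reduce $\widetilde{\C}_{U'}$ by $\pi_{U'}(I_U)$. The second step is a reduction by an injective object and should correspond to Proposition \ref{epsilonP[1]}. On the $\Lambda$-side, the definition of $\Eps_U$ in \eqref{Eps_U_via_comm_diagram} factors as $\Eps_M$, then $F_M$, then $\Eps_{P'[1]}^{\Gamma_M}$, then the equivalences, with Lemma \ref{Gamma_P'[1]^Gamma_M=Gamma_U} matching the algebras. The non-injective case applied in $\C$ handles the first square. The second square is the case of reducing a reduced 0-Auslander category by an injective object $\Sigma Q$. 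There $\C_{\Sigma Q}$ consists of objects with no maps from $Q$, presilting objects are unchanged except for the removed summand, and $\Eps_{P'[1]}$ is the identity on modules and $f_{P'}$ on shifted projectives. This makes the check short, provided one identifies $\pi_{\Sigma Q}(\Sigma Q_j)$ with $\Sigma$ of the projective $f$-reduction.

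I expect the main obstacle to be this gluing. One has to know that Pan--Zhu reduction is transitive, $\widetilde{\C}_{U'\oplus I_U}\simeq(\widetilde{\C}_{U'})_{\pi(I_U)}$ compatibly with the functors $\widetilde{\C}(B,-)$. One also has to show that the equivalence $F_M$ carries $\widetilde{H}_{B_{U'}}$ to $H_{\pi(B_{U'})}$, so the squares genuinely paste. I would first try to prove transitivity directly from $\C_U={}^\perp U\cap U^\perp$, and to obtain compatibility of Bongartz completions from Proposition \ref{presilting_supp.tau-rigid}(ii) applied inside $\widetilde{\C}_{U'}$.
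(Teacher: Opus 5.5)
Your plan is essentially the paper's proof. You split $U=U'\oplus I_U$, treat the $U'$-square by the three cases of Proposition \ref{epsilonM}, and glue on an injective-reduction square via transitivity and the factorisation \eqref{Eps_U_via_comm_diagram}; this is exactly diagram \eqref{big-diagram}. Your Cases (a), (c), (b) are Cases 1, 2, 3 of Lemma \ref{comm_of_(a)}, with the same identification $\Omega_{\widetilde{\C}_U}C_i\cong B_i$: you push the triangle into the quotient, while the paper compares triangles via (ET3)$^{\mathrm{op}}$.

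The one local difference is how you decide whether $\pi_U(X)$ is injective. You use $\P(\Gen M)$ and Proposition \ref{presilting_supp.tau-rigid}(ii), where the paper uses Lemma \ref{H_P(X)inGenH_P(U)}. In (b) you should justify $\C(P,X)\in\P(\Gen M)$: $\tau$-rigidity gives $\Hom_\Lambda(M,\tau\,\C(P,X))=0$, equivalently $\Ext^1_\Lambda(\C(P,X),\Gen M)=0$.

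Your two gluing worries are routine. Transitivity is Lemma \ref{commutativity_of_piU+V}, proved just as you suggest from $\C_U={}^\perp U\cap U^\perp$. Compatibility with $F_M$ is square (c), which holds by the definition of $\widetilde H_{B_{U'}}$ in Lemma \ref{H_tilda}.

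What is genuinely missing is the clause ``provided one identifies $\pi_{\Sigma Q}(\Sigma Q_j)$\ldots''. That identification is the entire content of square (d), and here you should not copy the paper. You correctly expect $\Eps^{\Gamma_M}_{P'[1]}$ to truncate shifted projectives nontrivially. Lemma \ref{comm_of_(d)}, Case 2, instead asserts in \eqref{claim} that the truncation is trivial, deducing it from $\Omega_{\widetilde{\C}_{U'}}I_X\cong\Omega_{\widetilde{\C}_U}I_X$ via \eqref{OmegaIso}. The bottom row of \eqref{OmegaIso} is not a conflation of $\widetilde{\C}_{U'}$. It comes from $\Omega_{\widetilde{\C}_U}I_X\infl U_0\defl I_X$ with $U_0\in\add U$, and copies of $I_U$ in $U_0$ survive in $\widetilde{\C}_{U'}$.

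Concretely, take $\C=K^{[-1,0]}(\proj kA_2)$ with $P_2$ simple and $\C(P_2,P_1)\neq0$, $U=\Sigma P_2$ and $I_X=\Sigma P_1$. Then $\Omega_{\C}\Sigma P_1=P_1$, but $\Omega_{\widetilde{\C}_U}\Sigma P_1=\PP_{S_1}$ (from the conflation $\PP_{S_1}\infl\Sigma P_2\defl\Sigma P_1$). The left side of \eqref{claim} is $\C(P,\PP_{S_1})\ncong\C(P,P_1)$. The two errors cancel, so the lemma is true, but that argument is not.

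To close the step, work in the following setting.
Let $\mathcal D$ be reduced $0$-Auslander with $\proj\mathcal D=\add P_{\mathcal D}$, $\Gamma=\End_{\mathcal D}(P_{\mathcal D})$ and $H=\mathcal D(P_{\mathcal D},-)$. Let $Q\not\cong Q_j$ be indecomposable projectives.

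First, a right minimal $\add\Sigma Q$-approximation of $\Sigma Q_j$ is a deflation, since morphisms into injectives are deflations (cf.\ Definition-Proposition \ref{BongartzCoBongartz_of_presilting}). This gives a conflation $A\infl\Sigma Q^m\defl\Sigma Q_j$.

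Next, $\EE(\Sigma Q,-)\cong\mathcal D(Q,-)$ (apply $\mathcal D(-,Y)$ to $Q\infl0\defl\Sigma Q$). Applying $\mathcal D(\Sigma Q,-)$ to the conflation, the approximation property gives $\mathcal D(Q,A)=0$. So $A\in\mathcal D_{\Sigma Q}$, and exactly as in your Case (b), $A\cong\Omega_{\widetilde{\mathcal D}_{\Sigma Q}}\Sigma Q_j$.

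Now apply \cite[Prop.~3.15]{Nakaoka-Palu_extriangulated_categories} twice:
first to $A\infl\Sigma Q^m\defl\Sigma Q_j$ and $Q_j\infl0\defl\Sigma Q_j$, which gives $Q_j\infl A\defl\Sigma Q^m$; then to this conflation and $Q^m\infl0\defl\Sigma Q^m$. The result is a conflation $Q^m\infl Q_j\defl A$.

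Hence $H(A)$ is a quotient of $H(Q_j)$ by a submodule in $\Gen H(Q)$. Also $H(A)\in H(Q)^{\perp_0}$, because $\Hom_\Gamma(H(Q),H(A))\cong\mathcal D(Q,A)=0$. Therefore $H(A)\cong f_{H(Q)}H(Q_j)$, which is exactly what Proposition \ref{epsilonP[1]} requires.

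Finally, apply this with $\mathcal D=\widetilde{\C}_{U'}$, $H=H_{B_{U'}}$ and $\Sigma Q=\pi_{U'}(I_U)$. The equality $H(Q)=P'$ follows from your first square applied to $I_U$, together with square (c).
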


Guided by the definition of the map $\Eps_{H_P(U)}$ revised in Section \ref{Sec_signed_tau_exc_seq}, the strategy to prove Theorem \ref{compatibility_thm} is to ``decompose" the square above into smaller commutative diagrams. The first step to achieve this goal is the following lemma. 

\begin{lemma}\label{commutativity_of_piU+V}
    Let $U,V\in\presilt\C$ be such that $U\oplus V$ also lies in $\presilt\C.$ Then there is a commutative diagram of bijections
    \[\begin{tikzcd}[ampersand replacement=\&,cramped]
	{\ind\presilt \C_{U\oplus V}\setminus\add(U\oplus V)} \& {\ind\presilt{\left(\widetilde{\C}_U\right)_{\pi_U (U\oplus V)}\setminus\add \pi_U(V)}} \\
	\& {\ind\presilt\widetilde{\C}_{U\oplus V}.}
	\arrow["{\pi_U}", from=1-1, to=1-2]
	\arrow["{\pi_{U\oplus V}}"', from=1-1, to=2-2]
	\arrow["{\pi_{\pi_U(V)}}", from=1-2, to=2-2]
\end{tikzcd}\]
\end{lemma}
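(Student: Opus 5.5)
Fix $U,V$ with $U\oplus V$ presilting; we may assume $U$ and $V$ share no indecomposable summands. The plan is to compare the two iterated reductions. First show that the intermediate category $(\widetilde{\C}_U)_{\pi_U(U\oplus V)} = (\widetilde{\C}_U)_{\pi_U(V)}$ is the image of $\C_{U\oplus V}$ under $\pi_U$. Then show that quotienting it further by $[\pi_U(V)]$ gives the same category as $\C_{U\oplus V}/[U\oplus V]$. Once these identifications are in place, commutativity on objects is immediate, since each map sends an object to its image in a quotient. Bijectivity of $\pi_U$ and $\pi_{\pi_U(V)}$ on the relevant sets comes from Lemma \ref{proj_functior_bij}, applied once in $\C$ and once in the reduced $0$-Auslander category $\widetilde{\C}_U$. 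Bijectivity of the diagonal $\pi_{U\oplus V}$ then follows, though it is also given directly by Lemma \ref{proj_functior_bij}.

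\textbf{Step 1: identify the intermediate subcategory.} By Definition-Proposition \ref{C_U_is_0-Auslander},
\[
(\widetilde{\C}_U)_{\pi_U(V)} = {}^{\perp}\pi_U(V)\cap \pi_U(V)^{\perp}\quad\text{inside }\widetilde{\C}_U .
\]
The key input is that the extriangulated structure on $\widetilde{\C}_U$ is induced from $\C_U$, so for $X,Y\in\C_U$ we have $\EE_{\widetilde{\C}_U}(X,Y)\cong\EE(X,Y)$. This holds because $\add U=\proj\C_U\cap\inj\C_U$, and quotienting by projective-injectives does not change $\EE$ (the Nakaoka--Palu quotient construction, and \cite{PanZhu_SiltingReduction}). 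Consequently, an object $X\in\C_U$ lies in $(\widetilde{\C}_U)_{\pi_U(V)}$ if and only if $\EE(X,V)=0=\EE(V,X)$. Since $X$ already satisfies $\EE(X,U)=0=\EE(U,X)$, this says exactly that $X\in{}^\perp(U\oplus V)\cap(U\oplus V)^\perp=\C_{U\oplus V}$. Therefore $(\widetilde{\C}_U)_{\pi_U(V)}=\C_{U\oplus V}/[U]$, with the same objects.

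\textbf{Step 2: identify the quotients.} Next I show that $(\C_{U\oplus V}/[U])/[\pi_U(V)]=\C_{U\oplus V}/[U\oplus V]$ as extriangulated categories. On morphisms this is the standard fact that the ideal $[U\oplus V]$ is the sum of $[U]$ and $[V]$. On extriangulated structures, both sides are induced from $\C_{U\oplus V}$ with $\EE$ unchanged, because $U\oplus V$ is projective-injective in $\C_{U\oplus V}$. Hence $\pi_{\pi_U(V)}\circ\pi_U=\pi_{U\oplus V}$ as functors on $\C_{U\oplus V}$.

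\textbf{Step 3: check the object sets match.} By Step 1, the set $\ind\presilt\C_{U\oplus V}\setminus\add(U\oplus V)$ maps under $\pi_U$ into indecomposable presilting objects of $(\widetilde{\C}_U)_{\pi_U(V)}$ not in $\add\pi_U(V)$. Presilting is preserved because $\EE$ is unchanged. Indecomposability is preserved since $\pi_U$ only kills the summands in $\add U$, which are excluded. To get a bijection, apply Lemma \ref{proj_functior_bij} to $\widetilde{\C}_U$ and $\pi_U(U\oplus V)$, together with the bijection $\ind\presilt\C_U\setminus\add U\to\ind\presilt\widetilde{\C}_U$. Objects of $\C_U$ compatible with $V$ correspond exactly to objects of $\widetilde{\C}_U$ compatible with $\pi_U(V)$, again because $\EE$ is preserved. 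Commutativity then follows from Step 2.

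\textbf{Expected obstacle.} The main difficulty is Step 1, specifically justifying that $\EE_{\widetilde{\C}_U}$ agrees with $\EE$ restricted to $\C_U$, and that the subcategory $(\widetilde{\C}_U)_{\pi_U(V)}$ may be computed using the ${}^\perp(-)\cap(-)^\perp$ description in a reduced category. I would first try to extract the former from \cite[Section 4]{PanZhu_SiltingReduction}, and fall back to the Nakaoka--Palu ideal-quotient construction if that source does not state it explicitly.
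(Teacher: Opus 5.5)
Your proposal is correct and takes essentially the same route as the paper. The paper also identifies $(\widetilde{\C}_U)_{\pi_U(V)}$ with $\C_{U\oplus V}/[U]$, using that $\EE_{\widetilde{\C}_U}$ is the restriction of $\EE$; it then identifies the further quotient by $[\pi_U(V)]$ with $\widetilde{\C}_{U\oplus V}$, and reads off bijectivity from Lemma \ref{proj_functior_bij}. Your Steps 2 and 3 only spell out details the paper leaves implicit: that $[U\oplus V]=[U]+[V]$, and that the horizontal map is the restriction of the bijection $\ind\presilt\C_U\setminus\add U\to\ind\presilt\widetilde{\C}_U$.
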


\begin{proof}
    The proof follows a similar argument to that of \cite[Lemma 2.4]{borve2021two}. Since $U\oplus V\in\presilt\C$, it follows that $U\oplus V\in\C_U$, where $\C_U = U^{\perp}\cap{^\perp U}$ by \cite[Rmk. 4.20(2)]{PanZhu_SiltingReduction}. We have that $\pi_U(U\oplus V) = \pi_U(U)\oplus\pi_U(V) = \pi_U(V) = V$ in $\widetilde{\C}_U$. Then, 
    \begin{align*}
        \left(\widetilde{\C}_U\right)_{\pi_U(V)} = \left(\widetilde{\C}_U\right)_{V} &= {^{\perp_{\widetilde{\C}_U}}V}\cap {V^{\perp_{\widetilde{\C}_U}}}\\
        &= \{X\in\widetilde{\C}_U\mid \EE_{\widetilde{\C}_U}(X,V) = 0 = \EE_{\widetilde{\C}_U}(V,X)\}\\
        &= \left\{ X\in \C \mid \EE(X,V)\big|_{\widetilde{\C}_U}= 0= \EE(V,X)\big|_{\widetilde{\C}_U}\right\}\\
        &= ({^\perp(V\oplus U)\cap (V\oplus U)^\perp})/[U].
    \end{align*}
    Hence we get
    \begin{align*}
        \widetilde{\left(\widetilde{\C}_U\right)}_{\pi_U(V)} = \widetilde{\left(\widetilde{\C}_U\right)}_V &= \frac{({^\perp(V\oplus U)\cap (V\oplus U)^\perp})/[U]}{[V]}\\
        &\cong ({^\perp(V\oplus U)\cap (V\oplus U)^\perp})/[U\oplus V]\\
        &= \widetilde{\C}_{U\oplus V}.
    \end{align*}
    Now, all the arrows are bijections by Lemma \ref{proj_functior_bij} and the diagram clearly commutes. 
\end{proof}

In light of Lemma \ref{commutativity_of_piU+V} and the definition of $\Eps_{H_P(U)}$ via the commutative diagram \eqref{Eps_U_via_comm_diagram}, we can "decompose" the square in Theorem \ref{compatibility_thm} into smaller diagrams as follows.

\begin{equation}\label{big-diagram}
\hspace*{-7.em}%
\scalebox{0.83}{%
\begin{tikzcd}[ampersand replacement=\&,cramped, row sep=2.3em,column sep=1em]
        {{\ind\presilt\C_{U}}\setminus\add U} \&\&\& {{\left\{ X\in \ind\CLambda(\Lambda)\setminus\add H_P(U) \;\middle|\; \begin{tabular}{@{}l@{}} $X\oplus H_P(U)$ \\ \text{is supp. $\tau$-rigid} \end{tabular} \right\}}} \\
    \&\& {{\left\{ Y_1\in \ind\CLambda(J(H_P(U'))) \;\middle|\; \begin{tabular}{@{}l@{}} $Y_1\oplus \widetilde{P}[1]$ \\ \text{is supp. $\tau$-rigid} \end{tabular} \right\}}}\\
    \& {\ind\presilt\left(\widetilde{\C}_{U'}\right)_{I_U}}\setminus\add \pi_{U'}(I_U)\& {{\left\{ Y_2\in\ind\CLambda(\Gamma_{H_P(U')})\setminus\add P'[1] \;\middle|\; \begin{tabular}{@{}l@{}} $Y_2\oplus{P'}[1]$ \\ \text{is supp. $\tau$-rigid} \end{tabular} \right\}}} \\
    \&\& {{{\{Y_3\in \ind\CLambda(J_{\Gamma_{H_P(U')}}(P'[1]))\mid Y_3 \text{ is supp. $\tau$-rigid}\}}} }\\
    \&\& 
    \begin{array}{c}
    {{\left\{Y_4\in\ind\CLambda\left(\Gamma^{\Gamma_{H_P(U')}}_{P'[1]}\right)\mid Y_4 \text{ is supp. $\tau$-rigid}\right\}}}\\ \cong \\ {{\{Y_4\in \ind\CLambda(\Gamma_{H_P(U)})\mid Y_4 \text{ is supp. $\tau$-rigid}\}}}
    \end{array}\\ 
    {{\ind\presilt{\widetilde{(\widetilde{\C}_{U'})_{I_U}}}= \ind\presilt\widetilde{\C}_U} } \&\&\& {{{\{Y\in \ind\CLambda(J({H_P(U))})\mid Y \text{ is supp. $\tau$-rigid}\}}}}
    \arrow["{H_P}", from=1-1, to=1-4]
    \arrow["{\texttt{(a)}}"{description}, curve={height=18pt}, draw=none, from=1-1, to=2-3]
    \arrow["{\pi_{U'}}", from=1-1, to=3-2]
    \arrow["{\pi_U}"', from=1-1, to=6-1]
    \arrow["{\texttt{(b)}}"{pos = 0.45}, shift left=3, curve={height=-18pt}, draw=none, from=1-1, to=6-1]
    \arrow["{\Eps_{H_P(U')}}", from=1-4, to=2-3]
    \arrow["{\texttt{(g)}}"{description}, shift left=5, curve={height=-24pt}, draw=none, from=1-4, to=4-3]
    \arrow["{\Eps_{H_P(U)}}", from=1-4, to=6-4]
    \arrow["{F_{H_P(U')}}", from=2-3, to=3-3]
    \arrow["{\widetilde{H}_{B_{U'}}}", from=3-2, to=2-3]
    \arrow["{\texttt{(c)}}"{pos = 0.45}{description}, shift right=5, curve={height=30pt}, no head, draw = none, from=2-3, to=3-3]
    \arrow["{H_{B_{U'}}}"', from=3-2, to=3-3]
    \arrow["{\pi_{\pi_{U'}(I_U)}}", from=3-2, to=6-1]
    \arrow["{\Eps_{P'[1]}^{\Gamma_{H_P(U')}}}", from=3-3, to=4-3]
    \arrow["{F_{P'[1]}^{\Gamma_{H_P(U')}}}", from=4-3, to=5-3]
    \arrow["{F_{H_P(U)}^{-1}}", from=5-3, to=6-4]
    \arrow["{\texttt{(f)}}"{description, pos=0}, shift right=5, curve={height=30pt}, draw=none, from=5-3, to=6-4]
    \arrow["{\widetilde{H}_{B_{I_U}^{\widetilde{\C}_{U'}}}}", from=6-1, to=4-3]
    \arrow["\texttt{(d)}"{pos = 0.15}{description}, shift left=3, curve={height=-30pt}, draw=none, from=3-2, to=6-1]
    \arrow["{\texttt{(e)}}"{description}, shift left=4, curve={height=18pt}, draw=none, from=6-1, to=4-3]
    \arrow["{{H}_{B_{I_U}^{\widetilde{\C}_{U'}}} = H_{B_U}}"{description}, curve={height=6pt}, from=6-1, to=5-3]
    \arrow["{\widetilde{H}_{B_U}}"', from=6-1, to=6-4]
\end{tikzcd}%
}
\end{equation}

where $\widetilde{P}[1] = \Eps_{H_P(U')}(H_P(I_U))$ and $P'[1] = F_{H_P(U')}(\widetilde{P})[1]$. Note that, since $\Gamma^{\Gamma_{H_P(U)}}_{P'[1]} \cong \Gamma_{H_P(U)}$ by Lemma \ref{Gamma_P'[1]^Gamma_M=Gamma_U}, we can identify ${H}_{B_{I_U}^{\widetilde{\C}_{U'}}}$ with ${H}_{B_U}$. 

Showing that the outer square commutes is equivalent to proving that each of the subdiagrams is commutative. Observe that it suffices to show that \eqref{big-diagram}\texttt{(a)}, \texttt{(d)} commute. Indeed, the commutativity of \eqref{big-diagram}\texttt{(c)}, \texttt{(e)}, and \texttt{(f)} follows from Lemma \ref{H_tilda}; while \eqref{big-diagram}\texttt{(b)} and \texttt{(g)} commutes by Lemma \ref{commutativity_of_piU+V} and the definition of  $\Eps_{H_P(U)}$ (see diagram \eqref{Eps_U_via_comm_diagram}), respectively. We prove the commutativity of \texttt{(a)} and \texttt{(d)} in separate lemmas. We need the following observation first. 

\begin{lemma}\label{H_P(X)inGenH_P(U)}
    Let $X\in\ind\presilt{\C}_{U'}$ be such that  $X$ is not in $\inj\C$. Then, $H_P(X)\in\Gen H_P(U')$ if and only if $X\in\inj\widetilde{\C}_{U'}$.
\end{lemma}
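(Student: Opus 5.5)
The plan is to identify both conditions in the statement with a single one: $X$ is an indecomposable direct summand of the non-injective part $C'$ of the co-Bongartz complement $C[U'] = C'\oplus I_{C}$. Two remarks set this up. First, $U'$ is the non-injective part of $U$, so $H_P(U')=\C(P,U')$ is a $\tau$-rigid $\Lambda$-module; write $M := H_P(U')$. Second, $X$ is not injective in $\C$, so $H_P(X)=\C(P,X)$ is an indecomposable module. We may assume $X\notin\add U'$, since otherwise $X$ becomes $0$ in $\widetilde{\C}_{U'}$ and both conditions hold trivially.

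\textbf{Step 1: injectives of $\widetilde{\C}_{U'}$.} By Definition-Proposition \ref{C_U_is_0-Auslander}, $\inj\C_{U'}=\add C_{U'} = \add(C[U']\oplus U')$. Injectives of the quotient $\widetilde{\C}_{U'}$ are the images of those of $\C_{U'}$, so $\inj\widetilde{\C}_{U'} = \add \pi_{U'}(C[U'])$. An indecomposable $X\in\C_{U'}\setminus\add U'$ is therefore injective in $\widetilde{\C}_{U'}$ if and only if $X$ is a summand of $C[U']$. Because $X\notin\inj\C$, this is equivalent to $X$ being a summand of $C'$.

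\textbf{Step 2: ($\Leftarrow$).} Let $X$ be a summand of $C'$. By Proposition \ref{presilting_supp.tau-rigid}(ii), $H_P(C')$ is the module part $N$ of the co-Bongartz complement of $M$; Lemma \ref{compatibilityC'} gives the same at the level of summands. By Theorem \ref{BongartzCo-Bongartz}(ii), $\add(N\oplus M)=\P(\Gen M)\subseteq \Gen M$. Hence $H_P(X)\in\Gen H_P(U')$.

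\textbf{Step 3: ($\Rightarrow$).} Suppose $H_P(X)\in\Gen M$.
\begin{enumerate}
\item Since $X\in\C_{U'} = {}^\perp U'\cap U'^\perp$ and $\EE(X,X)=0$, the object $X\oplus U'$ is presilting. By Proposition \ref{presilting_supp.tau-rigid}(i), $H_P(X)\oplus M$ is $\tau$-rigid, so $\Hom_\Lambda(M,\tau H_P(X))=0$.
\item By the Auslander--Smal{\o} formula, $\Ext^1_\Lambda(H_P(X),Y)\cong D\overline{\Hom}_\Lambda(Y,\tau H_P(X))$. This vanishes for every $Y\in\Gen M$, since any morphism $Y\to\tau H_P(X)$ composed with an epimorphism from $\add M$ is zero.
\item Thus $H_P(X)$ is Ext-projective in $\Gen M$, so $H_P(X)\in\P(\Gen M)=\add(N\oplus M)$.
\item $H_P(X)$ is indecomposable and not in $\add M$, because $H_P$ is a bijection and $X\notin\add U'$. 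Hence $H_P(X)$ is a summand of $N=H_P(C')$.
\end{enumerate}
Injectivity of $H_P$ on indecomposable presilting objects then gives that $X$ is a summand of $C'$. By Step 1, $X\in\inj\widetilde{\C}_{U'}$.

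The step I expect to need the most care is Step 1: checking that injectivity in the quotient $\widetilde{\C}_{U'}$ matches membership in $\add C_{U'}$, i.e.\ that passing to the quotient by $[U']$ neither creates nor destroys injectives. If this is not immediate from Definition-Proposition \ref{C_U_is_0-Auslander}, I would argue directly that $\EE_{\widetilde{\C}_{U'}}=\EE_{\C_{U'}}$ on objects, because the extriangulated structure on the quotient by projective-injectives is induced. The Ext-projectivity argument in Step 3 is standard \cite{tau-tiling-theory}.
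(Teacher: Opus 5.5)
Your proof is correct, but it takes a different route from the paper's. The paper stays inside the extriangulated category.

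\begin{itemize}
\item For ($\Leftarrow$), it lifts the $\EE_{\widetilde{\C}_{U'}}$-triangle $\Omega_{\widetilde{\C}_{U'}}X\infl 0\defl X$ to an $\EE$-triangle $\Omega_{\widetilde{\C}_{U'}}X\infl U''\defl X$ with $U''\in\add U'$. It then applies $\C(P,-)$.
\item For ($\Rightarrow$), it lifts a right $\add\C(P,U')$-approximation of $\C(P,X)$ to a right $\add U'$-approximation $U''\defl X$ in $\C$. This is a deflation because $\C(P,-)$ of it is surjective. The paper shows the cocone lies in $\C_{U'}$, and then reads off $\EE(Y,X)=0$ for all $Y\in\C_{U'}$ from the right exact sequence $\EE(Y,U'')\to\EE(Y,X)\to 0$. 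So $X$ is injective in $\C_{U'}$.
\end{itemize}

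You instead reduce both conditions to ``$X$ is a summand of $C'$''. For this you use:
\begin{itemize}
\item $\inj\widetilde{\C}_{U'}=\add C[U']$;
\item the Pan--Zhu compatibility $H_P(C_{U'})\cong C_{H_P(U')}$;
\item the Auslander--Smal{\o} consequence of the Auslander--Reiten formula;
\item the AIR description $\P(\Gen M)=\add(M\oplus N)$.
\end{itemize}
The paper's Theorem~\ref{BongartzCo-Bongartz}(ii) writes $\Gen$ where $\add$ is meant; your reading is the right one.

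What each route buys:
\begin{itemize}
\item Given those citations, your argument is shorter. It also produces $X$ directly as an indecomposable summand of $C'$, which is the form in which the lemma is really used in Case~3 of Lemma~\ref{comm_of_(a)}.
\item The paper's argument does not depend on the co-Bongartz compatibility or on module-theoretic facts about $\P(\Gen M)$. It uses only $\C/[I]\simeq\modd\Lambda$, a deflation criterion, and the hereditary long exact sequences.
\end{itemize}

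Your concern about Step~1 is settled as you suggest. Since $\add U'=\proj\C_{U'}\cap\inj\C_{U'}$, the quotient has the same $\EE$ by \cite[Prop.~3.30]{Nakaoka-Palu_extriangulated_categories}. The paper in any case records $\inj\widetilde{\C}_{U'}=\add C[U']$ in the proof of Lemma~\ref{comm_of_(a)}.
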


\begin{proof}
    Let $X\in \inj\widetilde{\C}_{U'}$. Then, there exists an $\EE_{\widetilde{\C}_{U'}}$-triangle of the form $\Omega_{\widetilde{\C}_{U'}}(X) \infl 0 \defl X$. Hence, the existence of an $\EE$-triangle $\Omega_{\widetilde{\C}_{U'}}(X) \infl U'' \defl X$ with $U''\in \add U'$. Applying $H_P(-)$, we obtain an exact sequence $H_P\left(\Omega_{\widetilde{\C}_{U'}}(X)\right) \to H_P(U'')\to H_P(X)\to 0$ in $\modd\Lambda$ with $H_P(U'')\in\add H_P(U')$. Thus, $H_P(X)\in\Gen H_P(U')$.

    Conversely, let $H_P(X)\in\Gen H_P(U')$. Then, there is an epimorphism $g: \C(P,U')^r\to\C(P,X)$ for some $r\ >0$. Let $f: \C(P,U'')\to \C(P,X)$ be a right $\add\C(P, U')$-approximation. Since $\C(P,U'')$ is in $\add \C(P,U')$, it follows that there exists a morphism $h: \C(P,U')^r\to \C(P,U'')$ such that $g = f\circ h$, and because $g$ is an epimorphism, so is $h$. Since $\C(P,-)$ induces an equivalence between $\C/[I]$ and $\modd\Lambda$, and both $X$ and $U''$ have no injective direct summands, $f$ lifts to a morphism  $f': U''\to X$ in $\C$. In particular, $f'$ is a right $\add U'$-approximation. Since $\C(P,f')$ is surjective, $f'$ is a deflation by \cite[Lemma 5.12]{FGPPP_Extriangulated_Ideal_Quotients_FGPPP}. Complete $f'$ to an $\EE$-triangle $K\infl U''\overset{f'}{\defl} X$.  We claim that $K$ lies in $\C_{U'}$. Applying $\C(U',-)$ and using that $f'$ is a right $\add U'$-approximation we obtain an exact sequence $\C(U',U'')\to\C(U',X)\to \EE(U',K) = 0$. Applying $\C(-,U')$ we get an exact sequence $\EE(U'',U')\to \EE(K,U')\to 0$. Since $\EE(U'',U')=0$, it follows that $\EE(K,U') = 0$ giving the desired claim. Now, let $Y\in \C_{U'}$. Then we obtain an exact sequence $$\EE_{\C_{U'}}(Y,K)\to \EE_{\C_{U'}}(Y,U')\to \EE_{\C_{U'}}(Y,X)\to 0.$$
    Since $Y\in\C_{U'}$, we get $\EE_{\C_{U'}}(Y,U') = 0$ and thus $\EE_{\C_{U'}}(Y,X) = 0$. The result follows.
\end{proof}

The next result proves the commutativity of the diagram \eqref{big-diagram}\texttt{(a)}.

\begin{lemma}\label{comm_of_(a)}
    The following diagram of bijections commutes
    \[\begin{tikzcd}[ampersand replacement=\&,cramped]
	{{\ind\presilt\C_{U}}\setminus\add U} \&\& {{{\{Y\in \ind\CLambda(\Lambda)\setminus\add H_P(U)\mid Y\oplus H_P(U) \text{ is supp. $\tau$-rigid}\}}}} \\
	{{\ind\presilt\widetilde{\C}_{U'}}\setminus\add{\pi_{U'}(I_U)}} \&\& {{{\{Y\in\ind\CLambda(J(H_P(U')))\mid Y\oplus \widetilde{P}[1] \text{ is supp. $\tau$-rigid}\}}}}.
	\arrow["{H_P}", from=1-1, to=1-3]
	\arrow["{\pi_{U'}}"', from=1-1, to=2-1]
	\arrow["{\Eps_{H_P(U')}}", from=1-3, to=2-3]
	\arrow["{\widetilde{H}_{B_{U'}}}"', from=2-1, to=2-3]
\end{tikzcd}\]
\end{lemma}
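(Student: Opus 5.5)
We prove commutativity elementwise. Fix an indecomposable $X\in\presilt\C_U\setminus\add U$. By the description of $\Eps_{H_P(U')}$ in Proposition \ref{epsilonM}, there are three cases for $W:=H_P(X)$. Either $W$ is a module not in $\Gen H_P(U')$, or $W$ is a module in $\Gen H_P(U')$, or $W=Q[1]$ is a shifted projective with $Q\in{}^{\perp}H_P(U')$. The last case means precisely that $X\in\inj\C$. On the other side, $\widetilde{H}_{B_{U'}}$ is given by formula \eqref{Htilda_eq}. It sends a non-injective object $X'$ of $\widetilde{\C}_{U'}$ to $f_{\C(P,U')}\C(P,X')$, and an injective object $I$ of $\widetilde{\C}_{U'}$ to $f_{\C(P,U')}\C(P,\Omega_{\widetilde{\C}_{U'}}I)[1]$. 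The strategy is to match these two case distinctions using Lemma \ref{H_P(X)inGenH_P(U)}. Since $\pi_{U'}$ is the identity on objects not in $\add U'$, it suffices to compute $\widetilde{H}_{B_{U'}}(X)$ in each case.

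\textbf{Case 1: $X\notin\inj\C$ and $H_P(X)\notin\Gen H_P(U')$.} By Lemma \ref{H_P(X)inGenH_P(U)}, $X$ is not injective in $\widetilde{\C}_{U'}$. Hence $\widetilde{H}_{B_{U'}}(X)=f_{\C(P,U')}\C(P,X)$, which equals $\Eps_{H_P(U')}(H_P(X))$ by Proposition \ref{epsilonM}.

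\textbf{Case 2: $X\notin\inj\C$ and $H_P(X)\in\Gen H_P(U')$.} Lemma \ref{H_P(X)inGenH_P(U)} gives $X\in\inj\widetilde{\C}_{U'}$, so $\widetilde{H}_{B_{U'}}(X)=f_{\C(P,U')}\C(P,\Omega_{\widetilde{\C}_{U'}}X)[1]$. Proposition \ref{epsilonM} gives $\Eps_{H_P(U')}(H_P(X))=f_{\C(P,U')}(B^{H_P(X)}_{H_P(U')})[1]$. So we must show that $H_P(\Omega_{\widetilde{\C}_{U'}}X)$ is the Bongartz summand $B^{H_P(X)}_{H_P(U')}$. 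The argument runs as follows.
\begin{itemize}
\item[(1)] The injectives of $\widetilde{\C}_{U'}$ are $\add C[U']$ and the projectives are $\add B[U']$. So $X$ is a summand $C_i$ of $C[U']$, and it is non-injective in $\C$.
\item[(2)] The triangle $\Omega_{\widetilde{\C}_{U'}}X\infl 0\defl X$ in $\widetilde{\C}_{U'}$ lifts to a $\C$-triangle $\Omega_{\widetilde{\C}_{U'}}X\infl \overline{U'}\defl C_i$ with $\overline{U'}\in\add U'$. By minimality and Proposition \ref{exchange_in_C}, this is the exchange triangle $B_i\infl\overline{U'}_i\defl C_i$. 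Hence $\Omega_{\widetilde{\C}_{U'}}X\cong B_i$.
\item[(3)] Lemma \ref{compatibilityC'}, via Proposition \ref{compatibility_of_exchange_in_C_and_Lambda}(i), shows that $H_P(B_i)$ is the Bongartz summand corresponding to $H_P(C_i)$.
\end{itemize}

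\textbf{Case 3: $X\in\inj\C$, so $H_P(X)=\C(P,\Omega X)[1]$.} Here $X$ is an indecomposable summand $I_i$ of the injective part of $C[U']$, and hence is injective in $\widetilde{\C}_{U'}$. The same lifting argument gives $\Omega_{\widetilde{\C}_{U'}}X\cong B_i$, the summand exchanged with $I_i$ by Proposition \ref{exchange_in_C}. Lemma \ref{compatibilityI_C} then identifies $H_P(B_i)$ with $B^{H_P(X)}_{H_P(U')}$ in the sense of Proposition \ref{exchange_in_Lambda}(ii). Both sides equal $f_{\C(P,U')}\C(P,B_i)[1]$, as required.

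Finally, the restriction of the codomain to objects $Y$ with $Y\oplus\widetilde{P}[1]$ support $\tau$-rigid, and the removal of $\pi_{U'}(I_U)$, are compatible. Indeed, Case 3 applied to the summands of $I_U$ sends $\pi_{U'}(I_U)$ to $\widetilde{P}[1]=\Eps_{H_P(U')}(H_P(I_U))$. Since all maps are bijections, the restricted diagram then commutes.

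The main obstacle is step (2) in Cases 2 and 3. It requires that a triangle with middle term zero in the quotient $\widetilde{\C}_{U'}$ lifts to a triangle in $\C$ whose middle term lies in $\add U'$, with minimal approximations; this is the same mechanism as in the proof of Lemma \ref{H_P(X)inGenH_P(U)}. It also requires checking that this lifted triangle coincides with the exchange triangle of Lemma \ref{Bongartz_co-Bongartz_Lemma}. Uniqueness of minimal approximations up to isomorphism is what pins this down.
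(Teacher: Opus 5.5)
Your proposal is correct and follows essentially the same route as the paper: the same three-way case split (your Cases 1, 2, 3 are the paper's Cases 1, 3, 2), Lemma \ref{H_P(X)inGenH_P(U)} to match the two case distinctions, the identification $\Omega_{\widetilde{\C}_{U'}}X\cong B_i$ via the exchange triangle of Proposition \ref{exchange_in_C}, and Lemmas \ref{compatibilityC'} and \ref{compatibilityI_C} to carry Bongartz summands through $H_P$. The only difference is in step (2), and it is cosmetic. The paper regards the exchange triangle as an $\EE_{\widetilde{\C}_{U'}}$-triangle and compares it with $\Omega_{\widetilde{\C}_{U'}}X\infl 0\defl X$ via (ET3)$^{\mathrm{op}}$ and Nakaoka--Palu's Cor.~3.6, which avoids the minimality check you flag. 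You instead lift to $\C$ and invoke uniqueness of minimal approximations, which works once the representative of $\Omega_{\widetilde{\C}_{U'}}X$ is chosen with no summands in $\add U'$.
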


\begin{proof}
    We distinguish three cases (see Proposition \ref{epsilonM}). 
    
    \vspace{3pt}
    
    \textbf{Case 1: } $X\in\ind\presilt\C_U$ non injective and such that $H_P(X)\notin\Gen H_P(U')$.

    By Proposition \ref{epsilonM}, we have 
    $$\Eps_{H_P(U')}(H_P(X)) = \Eps_{\C(P,U')}(\C(P,X)) = f_{\C(P,U')}\C(P,X),$$
    On the other hand, by Lemma \ref{H_tilda}
    $$\widetilde{H}_{B_{U'}}(\pi_{U'}(X)) = \widetilde{H}_{B_{U'}}(X) = f_{\C(P,U')}\C(P,X).$$
    and the diagram commutes in this case. 

    \vspace{3pt}

    \textbf{Case 2: } $X = I_X\in\ind\presilt\C_U$ is injective in $\C$ and $H_P(I_X) = \C(P,\Omega I_X)[1]\in(\ind\proj\Lambda)[1]$. Then, 
    \begin{align*}
        \Eps_{H_P(U')}(H_P(I_X)) &= \Eps_{\C(P,U')}(\C(P,\Omega I_X)[1]) \\[2pt]
        &= f_{\C(P,U')}\left(B_{\C(P,U')}^{\C(P,\Omega I_X)}\right)[1] &&\text{by Proposition \ref{epsilonM}} \\[2pt]
        &= f_{\C(P,U')}\left(B_{H_P(U')}^{H_P(I_X)[-1]}\right)[1] &&\text{by Definiton of $H_P$}\\[2pt]
        &= f_{\C(P,U')}\left(H_P(B_{U'}^{I_X})\right)[1] &&\text{by Proposition \ref{compatibility_of_exchange_in_C_and_Lambda}(ii)}\\[2pt]
        &= f_{\C(P,U')}(\C(P,B_{U'}^{I_X}))[1] &&\text{since $B_{U'}^{I_X}\notin\inj\C$}.
    \end{align*}
    On the other hand, since $I_X$ lies in $\inj\C$ and thus $I_X$ also lies in $\inj\widetilde{\C}_{U'}$, Lemma \ref{H_tilda} gives
    $$\widetilde{H}_{B_{U'}}(\pi_{U'}(I_X)) = \widetilde{H}_{B_{U'}}(I_X) = f_{\C(P,U')}\C(P,\Omega_{\widetilde{\C}_{U'}}I_X)[1].$$
    We claim $B_{U'}^{I_X}\cong\Omega_{\widetilde{\C}_{U'}}I_X$. We know that $\widetilde{\C}_{U'}$ is a reduced 0-Auslander extriangulated category with $\proj\widetilde{\C}_{U'} = \add B[U']$ and $\inj\widetilde{\C}_{U'} = \add C[U']$. Hence, $\Omega_{\widetilde{\C}_{U'}}$ induces an equivalence of categories $\Omega_{\widetilde{\C}_{U'}}: \add C[{U'}]\to \add B[{U'}]$. Since $I_X\in\inj\widetilde{\C}_{U'}$, it follows that $I_X$ is a non-zero indecomposable direct summand of $C_{U'}$. Hence, there is an $\EE_{\widetilde{\C}_{U'}}$-triangle of the form $\Omega_{\widetilde{\C}_{U'}}I_X\infl 0 \defl I_X$. Moreover, by Proposition \ref{exchange_in_C}, there is an $\EE$-triangle $$B_{U'}^{I_X}\overset{f}{\infl} \overline{U}_{I_X}\overset{g}{\defl} I_X,$$ where $f$ (resp. $g$) is a minimal left (resp. right) $\add U'$-approximation. Since all the terms are in $\widetilde{\C}_{U'}$, the above is an $\EE_{\widetilde{\C}_{U'}}$-triangle. Hence we obtain a commutative diagram of $\EE_{\widetilde{\C}_{U'}}$-triangles
    \[\begin{tikzcd}[ampersand replacement=\&,cramped]
	{\Omega_{\widetilde{\C}_{U'}}I_X} \& 0 \& {I_X} \\
	{B_{U'}^{I_X}} \& {\overline{U}_{I_X}} \& {I_X}
	\arrow[from=1-1, to=1-2]
	\arrow["a"', dashed, from=1-1, to=2-1]
	\arrow[from=1-2, to=1-3]
	\arrow["\cong", from=1-2, to=2-2]
	\arrow[equals, from=1-3, to=2-3]
	\arrow["f", from=2-1, to=2-2]
	\arrow["g", from=2-2, to=2-3]
\end{tikzcd}\]

where the middle vertical arrow is an isomorphism in $\widetilde{\C}_{U'}$ since $\overline{U}_{I_X}\in\add U'$, and the left vertical arrow exists by the axiom $(\text{ET}3)^{\mathrm{op}}$. Since the middle and right vertical arrows are isomorphisms in $\widetilde{\C}_{U'}$, it follows that $a: \Omega_{\widetilde{\C}_{U'}}I_X\to B_{U'}^{I_X}$ is also an isomorphism in $\widetilde{\C}_{U'}$ by \cite[Cor. 3.6]{Nakaoka-Palu_extriangulated_categories}, and therefore an isomorphism in $\C$ as $\Omega I_X$ and $B_{U'}^{I_X}$ have no non-zero direct summands in $\add U'$. 

Thus, 
\begin{align*}
    \widetilde{H}_{B_{U'}}(\pi_{U'}(I_X)) &= f_{\C(P,U')}\C(P,\Omega_{\widetilde{\C}_{U'}}I_X)[1]\\
    &\cong f_{\C(P,U')}\C(P,B_{U'}^{I_X})[1]\\
    &= \Eps_{H_P(U')}(H_P(I_X)),
\end{align*}
and the diagram commutes in this case. 

\vspace{3pt}

\textbf{Case 3: } $X\in\ind\presilt\C_U$ such that $H_P(X)\in\Gen H_P(U')$. 

By Lemma \ref{H_P(X)inGenH_P(U)}, we have $X\in\inj\widetilde{\C}_{U'}$. Hence,
$$\widetilde{H}_{B_{U'}}(\pi_{U'}(X)) = \widetilde{H}_{B_{U'}}(X) = f_{\C(P,U')}\C(P,\Omega_{\widetilde{\C}_{U'}}X)[1]$$
by Lemma \ref{H_tilda}. Since 
$$\Eps_{H_P(U')}(H_P(X)) = f_{\C(P,U')}\C(P,B_{U'}^X)[1]$$ by Proposition \ref{epsilonM}, the commutativity of the square in this case follows by Case 2 above. This concludes the proof. 
\end{proof}

We continue with the commutativity of the diagram \eqref{big-diagram}\texttt{(d)}. 

\begin{lemma}\label{comm_of_(d)}
     The following diagram of bijections commutes
    \[\begin{tikzcd}[ampersand replacement=\&,cramped]
	{\ind\presilt(\widetilde{\C}_{U'})_{I_U}\setminus\add \pi_{U'}(I_U)} \& {{\{Y\in \ind\CLambda({\Gamma_{H_P(U')}})\setminus\add P'[1]\mid Y\oplus{P'}[1] \text{ is supp. $\tau$-rigid}\}}} \\
	{\ind\presilt{\widetilde{(\widetilde{\C}_{U'})_{I_U}}}= \ind\presilt\widetilde{\C}_U} \& {\{Y\in\ind\CLambda(J_{\Gamma_{H_P(U')}}(P'[1]))\mid Y \text{ is supp. $\tau_{J_{\Gamma_{H_P(U')}}(P'[1])}$-rigid}\}}
	\arrow["{H_{B_{U'}}}", from=1-1, to=1-2]
	\arrow["{\pi_{\pi_{U'}(I_U)}}"', from=1-1, to=2-1]
	\arrow["{\Eps_{P'[1]}^{\Gamma_{H_P(U')}}}", from=1-2, to=2-2]
	\arrow["{\widetilde{H}}_{B_{I_U}^{\widetilde{\C}_{U'}}}", from=2-1, to=2-2]
    \end{tikzcd}\]
    where $P'[1] = F_{H_P(U')}(\Eps_{H_P(U')}(H_P(I_U)))[1]$.
\end{lemma}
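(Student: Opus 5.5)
The plan is to reduce Lemma \ref{comm_of_(d)} to the situation already handled in Lemma \ref{comm_of_(a)}, applied inside the reduced $0$-Auslander category $\D := \widetilde{\C}_{U'}$. By Definition-Proposition \ref{C_U_is_0-Auslander}, $\D$ is reduced $0$-Auslander with projective generator $B[U']$ and $\End_{\D}(B[U']) \cong \Gamma_{H_P(U')}$. The presilting object $V := \pi_{U'}(I_U)$ is injective in $\D$. Indeed, $I_U \in \inj\C \cap \C_{U'}$, so $I_U \in \inj\C_{U'} = \add C_{U'}$, and it is not in $\add U'$.

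\textbf{Step 1: identify the top map.} The top map $H_{B_{U'}}$ is exactly the bijection $H_{B[U']}$ of Proposition \ref{presilting_supp.tau-rigid}(i) for the category $\D$. It therefore sends $V$ to $\D(B[U'],\Omega_{\D}V)[1]$. Next we check that this is $P'[1]$, i.e.\ that $\Omega_{\D}V$ corresponds under $F_{H_P(U')}$ to $\widetilde{P}$, where $\widetilde{P}[1] = \Eps_{H_P(U')}(H_P(I_U))$. This follows from Case 2 of Lemma \ref{comm_of_(a)}, which shows $\widetilde{H}_{B_{U'}}(I_U) = \Eps_{H_P(U')}(H_P(I_U))$, combined with Lemma \ref{H_tilda}, which says $\widetilde{H}_{B_{U'}} = F^{-1}_{H_P(U')}\circ H_{B_{U'}}$. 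Consequently the upper-right corner of the square is $\{Y \mid Y\oplus H_{B[U']}(V)$ is supp.\ $\tau$-rigid$\}$ computed in $\D$. The square thus becomes the instance of Theorem \ref{compatibility_thm} for the pair $(\D, V)$ with $V$ purely injective.

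\textbf{Step 2: prove the purely injective case directly.} Take $X \in \ind\presilt\D_V \setminus \add V$. By Proposition \ref{epsilonP[1]}, the map $\Eps_{P'[1]}$ has two branches.

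If $X$ is not injective in $\D$, then $H_{B[U']}(X) = \D(B[U'],X)$ is a module and $\Eps_{P'[1]}$ fixes it. On the other side, $\widetilde{H}_{B^{\D}_V}(X)$ is given by Lemma \ref{H_tilda}, applied in $\D$ with $U = V$. There the torsion-free functor $f_{\D(B[U'],V)}$ is the identity, because $\D(B[U'],V)=0$ by Proposition \ref{0-Auslander-properties}(i). What remains is to check that the equivalence $F$ defining $J_{\Gamma}(P'[1])$ is compatible with the inclusion $J_\Gamma(P'[1]) = P'^{\perp_0}\subseteq \modd\Gamma$.

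If $X = I_X$ is injective in $\D$, then $H_{B[U']}(I_X) = \D(B[U'],\Omega_\D I_X)[1]$, and Proposition \ref{epsilonP[1]} gives $f_{P'}(\D(B[U'],\Omega_\D I_X))[1]$. For the other side, Lemma \ref{H_tilda} gives $f(\D(B[U'],\Omega_{\widetilde{\D}_V}I_X))[1]$. So we must show $\Omega_{\widetilde{\D}_V}I_X$ and $\Omega_\D I_X$ have the same image after applying $f_{P'}$. To do this, compare the $\EE$-triangle $\Omega_\D I_X \infl 0 \defl I_X$ with the triangle $B^{I_X}_V \infl \overline{V} \defl I_X$ from Proposition \ref{exchange_in_C} in $\D$, via (ET3)$^{\mathrm{op}}$, exactly as in Case 2 of Lemma \ref{comm_of_(a)}. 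Applying $\D(B[U'],-)$ then yields an exact sequence whose image modulo the $\Gen P'$ part is the torsion-free part.

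\textbf{Main obstacle.} The hardest point is the second branch: identifying $f_{P'}(\D(B[U'],\Omega_\D I_X))$ with $\D(B[U'],\Omega_{\widetilde{\D}_V} I_X)$ inside $J_\Gamma(P'[1])$, while keeping track of the identification $\Gamma^{\Gamma_{H_P(U')}}_{P'[1]}\cong\Gamma_{H_P(U)}$ from Lemma \ref{Gamma_P'[1]^Gamma_M=Gamma_U}. My first attempt would be to use the exact sequence from the (ET3)$^{\mathrm{op}}$ comparison, noting that $\D(B[U'],\overline{V})\in\add P'$. If that is not enough, the fallback is to characterize both sides as the relative projective of $J_\Gamma(P'[1])$ determined by the Bongartz summand, and then use the uniqueness in Theorem \ref{BongartzCo-Bongartz}.
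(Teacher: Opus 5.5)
You are right to see that Lemma \ref{comm_of_(d)} is the case of Theorem \ref{compatibility_thm} for $(\D,V)$ with $V$ purely injective. Your Step 1 and the module branch of Step 2 coincide with the paper's argument: $P'[1]$ is identified through Case 2 of Lemma \ref{comm_of_(a)} and Lemma \ref{H_tilda}, and $\D(B[U'],V)=0$ makes the torsion-free functor in Lemma \ref{H_tilda} trivial. Two small remarks on that part:
\begin{itemize}
\item The compatibility with $F$ that you list as remaining is already built into Lemma \ref{H_tilda}.
\item What does need a line is the following. Since $\D(B[U'],V)=0$, the minimal left $\add V$-approximation of $B[U']$ is $B[U']\to 0$, so $\add C_V=\inj\D$. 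Hence, for $X\notin\add V$, being injective in $\widetilde{\D}_V$ is the same as being injective in $\D$, and both sides of the square use the same branch.
\end{itemize}

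The genuine gap is the injective branch, which is where the content of the lemma lies, and you leave it open. Your first attempt does not work as stated. Since $\overline V\in\add V\subseteq\inj\D$, Proposition \ref{0-Auslander-properties}(i) gives $\D(B[U'],\overline V)=0$. Comparing $B^{I_X}_V\infl\overline V\defl I_X$ with $\Omega_\D I_X\infl 0\defl I_X$ (via \cite[Prop. 3.15]{Nakaoka-Palu_extriangulated_categories}, as in Lemma \ref{compatibilityI_C}) produces $\Omega_\D I_X\infl B^{I_X}_V\defl\overline V$. Applying $\D(B[U'],-)$ to this only shows that $\D(B[U'],\Omega_\D I_X)\to\D(B[U'],B^{I_X}_V)$ is surjective; it gives no control of the kernel.

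Two inputs close the gap. Write $\Gamma=\Gamma_{H_P(U')}$.
\begin{itemize}
\item Apply \cite[Prop. 3.15]{Nakaoka-Palu_extriangulated_categories} once more, to that triangle and $\Omega_\D\overline V\infl 0\defl\overline V$. This gives $\Omega_\D\overline V\infl\Omega_\D I_X\defl B^{I_X}_V$, hence an exact sequence
\[
\D(B[U'],\Omega_\D\overline V)\longrightarrow\D(B[U'],\Omega_\D I_X)\longrightarrow\D(B[U'],B^{I_X}_V)\longrightarrow 0 .
\]
Its first term lies in $\add P'$, so the kernel of the second map lies in $\Gen P'$.
\item $\D(B[U'],B^{I_X}_V)\in P'^{\perp_0}$. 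Indeed, $\D(B[U'],-)$ induces $\D/[\inj\D]\simeq\modd\Gamma$, and no nonzero map from a projective factors through an injective. So $\Hom_\Gamma(P',\D(B[U'],Y))\cong\D(\Omega_\D V,Y)\cong\EE_\D(V,Y)$. Moreover $\EE_\D(V,B^{I_X}_V)=0$ because $\overline V\defl I_X$ is a right $\add V$-approximation.
\end{itemize}
Uniqueness of the canonical sequence for the torsion pair $(\Gen P',P'^{\perp_0})$ now gives
\[
f_{P'}\D(B[U'],\Omega_\D I_X)\cong\D(B[U'],B^{I_X}_V)\cong\D(B[U'],\Omega_{\widetilde{\D}_V}I_X).
\]
The last isomorphism is the argument of Case 2 of Lemma \ref{comm_of_(a)}, run in $\D$ with $V$ in place of $U'$.

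Do not try to shortcut this by following the paper. For this case the paper claims that $f_{P'}$ acts trivially (equivalently $\widetilde{\C}_{U'}(I_U,I_X)=0$) and that $\Omega_{\widetilde{\C}_{U'}}I_X\cong\Omega_{\widetilde{\C}_U}I_X$. But the bottom row of \eqref{OmegaIso} is not a triangle in $\widetilde{\C}_{U'}$: its middle term there is the $\add I_U$-summand of the minimal right $\add U$-approximation of $I_X$, which need not vanish. Both claims fail in the example of Section \ref{sec:Examples}, with $U=\Sigma\PP_{\begin{smallmatrix}1\\2\end{smallmatrix}}$ (so $U'=0$) and $I_X=\Sigma\PP_{\begin{smallmatrix}2\\1\end{smallmatrix}}$:
\begin{itemize}
\item $\C(I_U,I_X)\cong\Hom_\Lambda(\begin{smallmatrix}1\\2\end{smallmatrix},\begin{smallmatrix}2\\1\end{smallmatrix})\neq 0$;
\item $\Omega_\C I_X=\PP_{\begin{smallmatrix}2\\1\end{smallmatrix}}$, but $\Omega_{\widetilde{\C}_U}I_X=\PP_{\begin{smallmatrix}2\end{smallmatrix}}$;
\item both paths of the square give $2[1]=f_{\begin{smallmatrix}1\\2\end{smallmatrix}}(\begin{smallmatrix}2\\1\end{smallmatrix})[1]$.
\end{itemize}
So your formulation, in which the two $\Omega$'s agree only after applying $f_{P'}$, is the right statement to prove.
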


\begin{proof}

    Since $I_U\in\inj\C$ and $I_U\in\widetilde{\C}_{U'}$, it follows that $I_U\in\inj\widetilde{\C}_{U'}$. Recall that $\widetilde{\C}_{U'}$ is reduced $0$-Auslander, and thus $\widetilde{\C}_{U'}\left(\proj\widetilde{\C}_{U'}, \inj\widetilde{\C}_{U'}\right) = 0$. Hence, since $\proj\widetilde{\C}_{U'} = \add B[U]$, we obtain $\widetilde{\C}_{U'}(B_{U'},I_U) = 0$.
    By Lemma \ref{H_tilda} applied in the reduced 0-Auslander extriangulated category $\widetilde{\C}_{U'}$, we have 
    \begin{align*}
        {\widetilde{H}}_{B_{I_U}^{\widetilde{\C}_{U'}}}(X'\oplus I_X) &= f^{\Gamma_{H_P(U')}}_{\widetilde{\C}_{U'}(B_{U'},I_U)}{\widetilde{\C}_{U'}(B_{U'},X')}\oplus f^{\Gamma_{H_P(U')}}_{\widetilde{\C}_{U'}(B_{U'},I_U)}{\widetilde{\C}_{U'}(B_{U'},\Omega_{\widetilde{(\widetilde{\C}_{U'})}_{I_U}}} I_X)[1]\\
        &={\widetilde{\C}_{U'}(B_{U'},X')}\oplus {\widetilde{\C}_{U'}(B_{U'},\Omega_{\widetilde{(\widetilde{\C}_{U'})}_{I_U}}} I_X)[1], 
    \end{align*}
    where the last equality uses the fact that ${\widetilde{\C}_{U'}(B_{U'},I_U)}=0$. We distinguish two cases (see Proposition \ref{epsilonP[1]}). 

    \vspace{3pt}

    \textbf{Case 1: } $X\in\ind\left(\presilt(\widetilde{\C}_{U'})_{I_U}\setminus\add \pi_{U'}(I_U)\right)$ not in $\inj\widetilde{\C}_U$. 

    By Proposition \ref{epsilonP[1]}, 
    $$\Eps_{P'[1]}^{\Gamma_{H_P(U')}}(H_{B_{U'}}(X)) = \Eps_{P'[1]}^{\Gamma_{H_P(U')}}\left({\widetilde{\C}_{U'}(B_{U'},X)}\right) = {\widetilde{\C}_{U'}(B_{U'},X)},$$ while 
    $${\widetilde{H}}_{B_{I_U}^{\widetilde{\C}_{U'}}}(\pi_{\pi_{U'}(I_U)}(X)) = {\widetilde{H}}_{B_{I_U}^{\widetilde{\C}_{U'}}}(X) = {\widetilde{\C}_{U'}(B_{U'},X)},$$
    and the diagram commutes in this case. 

    \vspace{3pt}
    
    \textbf{Case 2: } $X = I_X\in\ind\left(\inj\widetilde{\C}_{U'}\cap\presilt\left({\widetilde{\C}_{U'}}\right)_{I_U}\right)$.

    By definition, $P'[1] = F_{H_P(U')}(\Eps_{H_P(U')}(H_P(I_U)))[1]$. The commutativity of the square in Lemma \ref{comm_of_(a)} together with that of \eqref{big-diagram}\texttt{(c)} imply that 
    $$P'[1] = H_{B_{U'}}(\pi_U(I_U)) =   H_{B_{U'}}(I_U) = \widetilde{\C}_{U'}\left(B_{U'},\Omega_{\widetilde{\C}_{U'}}I_U\right)[1].$$
    Then, 
    \begin{align*}
        \Eps_{P'[1]}^{\Gamma_{H_P(U')}}(H_{B_{U'}}(I_X)) &= \Eps_{ H_{B_{U'}}(I_U)}^{\Gamma_{H_P(U')}}(H_{B_{U'}}(I_X))\\
        &= \Eps_{ {\widetilde{\C}_{U'}(B_{U'},\Omega_{\widetilde{\C}_{U'}}I_U)}[1]}^{\Gamma_{H_P(U')}}\left(\widetilde{\C}_{U'}(B_{U'},\Omega_{\widetilde{\C}_{U'}}I_X)[1]\right)\\
        &= f^{\Gamma_{H_P(U')}}_{\widetilde{\C}_{U'}(B_{U'},\Omega_{\widetilde{\C}_{U'}}I_U)}\left(\widetilde{\C}_{U'}(B_{U'},\Omega_{\widetilde{\C}_{U'}}I_X)\right)[1]
    \end{align*}
    where the last equality follows by Proposition \ref{epsilonP[1]}. On the other hand, 
    $${\widetilde{H}}_{B_{I_U}^{\widetilde{\C}_{U'}}}(\pi_{\pi_{U'}(I_U)}(I_X)) = {\widetilde{H}}_{B_{I_U}^{\widetilde{\C}_{U'}}}(I_X) = {\widetilde{\C}_{U'}}\left(B_{U'},\Omega_{\widetilde{(\widetilde{\C}_{U'})}_{I_U}} I_X\right)[1].$$

    We claim that 
    \begin{equation}\label{claim}
        f^{\Gamma_{H_P(U')}}_{\widetilde{\C}_{U'}\left(B_{U'},\Omega_{\widetilde{\C}_{U'}}I_U\right)}\left(\widetilde{\C}_{U'}(B_{U'},\Omega_{\widetilde{\C}_{U'}}I_X)\right) = \widetilde{\C}_{U'}\left(B_{U'},\Omega_{\widetilde{\C}_{U'}}I_X\right).
    \end{equation}

    This is the case if and only if $\widetilde{\C}_{U'}\left(B_{U'},\Omega_{\widetilde{\C}_{U'}}I_X\right)\in \widetilde{\C}_{U'}\left(B_{U'},\Omega_{\widetilde{\C}_{U'}}I_U\right)^{\perp_0}$, that is $$\Hom_{\Gamma_{H_P(U')}}\left(\widetilde{\C}_{U'}\left(B_{U'},\Omega_{\widetilde{\C}_{U'}}I_U\right), \widetilde{\C}_{U'}\left(B_{U'},\Omega_{\widetilde{\C}_{U'}}I_X\right)\right) = 0.$$ Since $\widetilde{\C}_{U'}(B_{U'},-)$ is an equivalence, this is equivalent to $\widetilde{\C}_{U'}(\Omega_{\widetilde{\C}_{U'}}I_U, \Omega_{\widetilde{\C}_{U'}}I_X) = 0$, which, since $\Omega_{\widetilde{\C}_{U'}}(-)$ is also an equivalence, reduces to $\widetilde{\C}_{U'}(I_U,I_X) = 0$. There is an $\EE_{\widetilde{\C}_{U'}}$-triangle $\Omega_{\widetilde{\C}_{U'}}I_X\infl 0\defl I_X$. Applying ${\widetilde{\C}_{U'}}(I_U,-)$ we get an exact sequence 
    $${\widetilde{\C}_{U'}}(I_U, \Omega_{\widetilde{\C}_{U'}}I_X)\to 0 \to {\widetilde{\C}_{U'}}(I_U,I_X)\to \EE_{{\widetilde{\C}_{U'}}}(I_U, \Omega_{\widetilde{\C}_{U'}}I_X )\to 0.$$ Then ${\widetilde{\C}_{U'}}(I_U,I_X) = 0$ if and only if $\EE_{{\widetilde{\C}_{U'}}}(I_U, \Omega_{\widetilde{\C}_{U'}}I_X) = 0$. 
    
    Observe that, since $\Omega_{\widetilde{\C}_U}I_X\in \widetilde{\C}_U$ and $U = U'\oplus I_U$, we have $\Omega_{\widetilde{\C}_U}I_X\in\widetilde{\C}_{U'}$. Hence, there is a commutative diagram of $\EE_{{\widetilde{\C}_{U'}}}$-triangles 

    \begin{equation}\label{OmegaIso}
    \begin{tikzcd}[ampersand replacement=\&,cramped]
	{\Omega_{\widetilde{\C}_{U'}}I_X} \& 0 \& {I_X} \\
	{\Omega_{\widetilde{\C}_{U}}I_X}\& 0 \& {I_X}
	\arrow[from=1-1, to=1-2]
	\arrow[from=1-1, to=2-1]
	\arrow[from=1-2, to=1-3]
	\arrow[from=1-2, to=2-2]
	\arrow[equals, from=1-3, to=2-3]
	\arrow[from=2-1, to=2-2]
	\arrow[from=2-2, to=2-3]
    \end{tikzcd}
    \end{equation}

where the left vertical morphism exists by (ET3)$^\text{op}$. Since the middle and right vertical arrows are isomorphisms, so is the third vertical one by \cite[Cor. 3.6]{Nakaoka-Palu_extriangulated_categories}. Hence, 
\begin{align*}
    \EE_{\widetilde{\C}_{U'}}\left(I_U, \Omega_{\widetilde{\C}_{U'}}I_X\right) &\cong \EE_{\widetilde{\C}_{U'}}\left(I_U, \Omega_{\widetilde{\C}_{U}}I_X\right) &&\text{by Diagram \eqref{OmegaIso}}\\
    &\cong \EE\left(I_U, \Omega_{\widetilde{\C}_{U}}I_X\right) &&\text{by \cite[Propo. 3.30]{Nakaoka-Palu_extriangulated_categories}}\\
    &=0 &&\text{since $\Omega_{\widetilde{\C}_{U}}I_X\in\add B_U$ and $B_U$ is silting}.
\end{align*}
Therefore, $\widetilde{\C}_{U'}(I_U, I_X) = 0$ and Claim \eqref{claim} follows.   Thus, we obtain 
\begin{align*}
    {\widetilde{H}}_{B_{I_U}^{\widetilde{\C}_{U'}}}(\pi_{\pi_{U'}(I_U)}(I_X)) &\cong \widetilde{\C}_{U'}\left(B_{U'}, \Omega_{\widetilde{(\widetilde{\C}_{U'})}_{I_U}} I_X\right)[1] &&\text{by Lemma \ref{H_tilda}}\\
    &\cong \widetilde{\C}_{U'}\left(B_{U'}, \Omega_{\widetilde{\C}_{U}} I_X\right)[1] &&\text{by the proof of Lemma \ref{commutativity_of_piU+V}}\\
    &\cong \widetilde{\C}_{U'}\left(B_{U'}, \Omega_{\widetilde{\C}_{U'}} I_X\right)[1] &&\text{by Diagram \eqref{OmegaIso}}\\
    &\cong f^{\Gamma_{H_P(U')}}_{\widetilde{\C}_{U'}(B_{U'},\Omega_{\widetilde{\C}_{U'}}I_U)}\left(\widetilde{\C}_{U'}(B_{U'},\Omega_{\widetilde{\C}_{U'}}I_X)\right)[1] &&\text{by Claim \eqref{claim}}\\
    &\cong \Eps_{P'[1]}^{\Gamma_{H_P(U')}}\left(H_{B_{U'}}(I_X)\right) &&\text{by Proposition \ref{epsilonP[1]}}. 
\end{align*}

In other words, the diagram commutes in this case. This finishes the proof. 
\end{proof}

We now have all the ingredients to prove the main result of this section. 

\begin{proof}[Proof of Theorem \ref{compatibility_thm}]
    The horizontal maps are bijections by Proposition \ref{presilting_supp.tau-rigid} and Lemma \ref{H_tilda}, while the vertical maps are bijections by Lemma \ref{proj_functior_bij} and Theorem \ref{epsilonU}. Proving the commutativity of diagram \eqref{compatibility_square} is equal to showing that the outer square in diagram \eqref{big-diagram} commutes. This amounts to checking that each of the subdiagrams inside \eqref{big-diagram} is commutative. Note that each arrow in \eqref{big-diagram} is a bijection. Commutativity of \eqref{big-diagram}\texttt{(a)} and \eqref{big-diagram}\texttt{(d)} is proved in Lemma \ref{comm_of_(a)} and Lemma \ref{comm_of_(d)}, respectively. Commutavity of \eqref{big-diagram}\texttt{(b)} is given in Lemma \ref{commutativity_of_piU+V}.  The diagram \eqref{big-diagram}\texttt{(c)} commutes by Lemma \ref{H_tilda}, while the commutativity of \eqref{big-diagram}\texttt{(e)} and \texttt{(f)} follow from Lemma \ref{H_tilda} and Lemma  \ref{Gamma_P'[1]^Gamma_M=Gamma_U}. Finally, \eqref{big-diagram}\texttt{(g)} commutes by definition (see Section \ref{Sec_signed_tau_exc_seq}). The claim follows.
\end{proof}

\section{Ordered presilting objects and signed $\tau$-exceptional sequences}\label{Sec:presilting_seq}

In this section, we define \textit{(signed) presilting sequences} and provide an explicit bijection between (signed) presilting sequences in $\C$ and (signed) $\tau$-exceptional sequences in $\CLambda(\Lambda)$. Since the definition of signed presilting sequence turns out to be equivalent to that of an ordered presilting object in $\C$, the above mentioned bijection gives an alternative approach to describe the Buan--Marsh bijection between ordered support $\tau$-rigid objects in $\CLambda(\Lambda)$ and signed $\tau$-exceptional sequences in $\CLambda(\Lambda)$ explained in Theorem \ref{BM_bijection}.  

Motivated by the recursive definition of a signed $\tau$-exceptional sequence (see Definition \ref{Def-signed-tau-exc-seq}) and the compatibility of the Pan--Zhu silting reduction with the Buan--Marsh reduction of support $\tau$-rigid objects (see Theorem \ref{compatibility_thm}), it is natural to recursively define a \textit{(signed) presilting sequence} in the following way. 

\begin{definition}\label{def_signed_presilt_seq}
    Let $t\in\{1,\cdots,\abs{P}\}$. A $t$-tuple of indecomposable objects $(U_1,\cdots,U_t)$ in $\C$ is called \emph{signed presilting sequence} if 
    \begin{itemize}
        \item[(a)] The object $U_t$ is non-injective presilting in $\C$ or $U_t = \Sigma P_{U_t}\in \inj\C$ for an indecomposable projective object $P_{U_t}$ of $\C$; 
        \item[(b)] The sequence 
        $(U_1,\cdots,U_{t-1})$ is a signed presilting sequence in $\widetilde{\C}_{U_t}$.
    \end{itemize}
    If $t = \abs{P}$, $(U_1,\cdots,U_t)$ is called a \emph{signed silting sequence}. A signed presilting sequence in which every object $U_{i}$ is non-injective in $\C_{(U_{i+1},\cdots, U_t)}$ where  $$ \C_{(U_{i+1},\cdots, U_t)}  = \C^{\C_{(U_{i+2},\cdots,U_t)}}_{U_{i+1}},$$ is called a \emph{presilting sequence}. 
\end{definition}

\begin{remark}\label{signed_presilting=ordered_presilting}
    Let $U\in\presilt\C$. By the definition of $\C_U$, we see that if an object $V$ is presilting in $\widetilde{\C}_U$, then it is presilting in $\C$. It follows that a sequence $(U_1,\cdots, U_t)$ of indecomposable objects in $\C$ is a signed presilting sequence if and only if $U_1\oplus\cdots\oplus U_t$ is an ordered presilting object. 
\end{remark}

Denote the iterated $\tau$-perpendicular subcategory by
$$J(M_i,\cdots,M_t) := J_{(M_{i+1},\cdots, M_t)}(M_i).$$

\begin{lemma}\label{J(H_P)=J(HTilde)}
    Let $(U_1,\cdots,U_t)$ be a signed presilting sequence. Then,
    $$ J\left(H_P\left(\bigoplus_{j = i}^t U_j\right)\right) = J\left(\widetilde{H}_{B_{\bigoplus_{j=i+1}^t U_j}}(U_{i}),\cdots,\widetilde{H}_{B_{U_t}}(U_{t-1}), H_P(U_t)\right),$$
     for all $1\leq i <t$.
\end{lemma}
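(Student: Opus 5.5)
We argue by descending induction on $i$, starting at $i=t-1$ and decreasing to $i=1$. The tool is the reduction formula for $\tau$-perpendicular categories of Buan--Hanson \cite[Thm.~6.4]{tau-perpendicular_wide_subategories}, already used in the proof of Lemma \ref{Gamma_P'[1]^Gamma_M=Gamma_U}. If $U\oplus V$ is support $\tau$-rigid in $\CLambda(\Lambda)$, it says
\[
J(U\oplus V)=J_{J(U)}\bigl(\Eps_U(V)\bigr).
\]
We combine this with Theorem \ref{compatibility_thm}, which identifies $\Eps_{H_P(U)}\circ H_P$ with $\widetilde{H}_{B_U}\circ\pi_U$.

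Base case $i=t-1$. By Remark \ref{signed_presilting=ordered_presilting}, $U_{t-1}\oplus U_t$ is presilting in $\C$. Hence $H_P(U_{t-1})\oplus H_P(U_t)=H_P(U_{t-1}\oplus U_t)$ is support $\tau$-rigid by Proposition \ref{presilting_supp.tau-rigid}(i), and $U_{t-1}\in\presilt\C_{U_t}\setminus\add U_t$. The reduction formula gives $J(H_P(U_{t-1}\oplus U_t))=J_{J(H_P(U_t))}(\Eps_{H_P(U_t)}(H_P(U_{t-1})))$. Theorem \ref{compatibility_thm} with $U=U_t$ rewrites the inner term as $\widetilde{H}_{B_{U_t}}(\pi_{U_t}(U_{t-1}))=\widetilde{H}_{B_{U_t}}(U_{t-1})$. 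The right-hand side is then exactly $J(\widetilde{H}_{B_{U_t}}(U_{t-1}),H_P(U_t))$ by the definition of the iterated perpendicular category.

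Inductive step. Write $V=\bigoplus_{j=i+1}^tU_j$, which is presilting by Remark \ref{signed_presilting=ordered_presilting}, and assume the formula for $i+1$. This says $J(H_P(V))=J(\widetilde{H}_{B_{\bigoplus_{j\ge i+2}U_j}}(U_{i+1}),\dots,H_P(U_t))$. Applying the reduction formula to $H_P(V)\oplus H_P(U_i)$ gives
\[
J\Bigl(H_P\bigl(\textstyle\bigoplus_{j\ge i}U_j\bigr)\Bigr)=J_{J(H_P(V))}\bigl(\Eps_{H_P(V)}(H_P(U_i))\bigr).
\]
Theorem \ref{compatibility_thm} with $U=V$ turns $\Eps_{H_P(V)}(H_P(U_i))$ into $\widetilde{H}_{B_V}(U_i)$. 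Substituting the inductive hypothesis for $J(H_P(V))$ gives precisely $J_{(\ldots)}(\widetilde{H}_{B_V}(U_i))=J(\widetilde{H}_{B_V}(U_i),\dots,H_P(U_t))$. Two side conditions must be checked here:
\begin{itemize}
\item $U_i\notin\add V$. This holds because the sum is basic, since $U_i$ is nonzero in $\widetilde{\C}_{V}$.
\item $H_P(U_i)$ is indecomposable. This holds because $H_P$ is a bijection on indecomposables.
\end{itemize}

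The main obstacle is that the lemma's iterated object at step $i$ is taken with respect to the full sum $V$, not recursively inside $\widetilde{\C}_{U_{i+1}}$. The iterated $J$ is formed by applying $J$ inside the previous category, so we must confirm that $\widetilde{H}_{B_V}(U_i)$ really is support $\tau$-rigid in $\CLambda(J(H_P(V)))$. The target set in Theorem \ref{compatibility_thm} guarantees this. We also need that $J_{J(H_P(V))}$, computed intrinsically in the wide subcategory, agrees with the one appearing in Buan--Hanson's formula. This should follow from $J(H_P(V))$ being wide and equivalent to a module category (Theorem \ref{tau-perp-cat=modGamma}), as in the proof of Lemma \ref{Gamma_P'[1]^Gamma_M=Gamma_U}.
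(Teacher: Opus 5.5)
Your proof is correct and follows the paper's argument. It is a descending induction on $i$ that uses Buan--Hanson's formula $J(U\oplus V)=J_{J(U)}(\Eps_U(V))$, then Theorem \ref{compatibility_thm} to replace $\Eps_{H_P(V)}(H_P(U_i))$ by $\widetilde{H}_{B_V}(U_i)$, and finally the inductive hypothesis to rewrite $J(H_P(V))$ as the iterated perpendicular category. Your write-up is in fact slightly more careful than the paper's: you give an explicit base case $i=t-1$ (the paper starts from the vacuous case $t=1$), and you check the side conditions ($U_i\notin\add V$, and $\widetilde{H}_{B_V}(U_i)$ support $\tau$-rigid in $\CLambda(J(H_P(V)))$).
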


\begin{proof}
   We proceed by induction on the length of the sequence. If $t= 1$, there is nothing to prove. So, let $(U_1,\cdots,U_i,U_{i+1},\cdots, U_t)$ be a signed presilting sequence and suppose that the claim holds for $i+1$, that 
   is, 
   \begin{equation}\label{induction_hp}
       J\left(H_P\left(\bigoplus_{j = i+1}^t U_j\right)\right) = J\left(\widetilde{H}_{B_{\bigoplus_{j=i+2}^t U_j}}(U_{i+1}),\cdots,\widetilde{H}_{B_{U_t}}(U_{t-1}), H_P(U_t)\right).
   \end{equation}
   We want to show that the claim holds for $i$, that is,
   $$ J\left(H_P\left(\bigoplus_{j = i}^t U_j\right)\right) = J\left(\widetilde{H}_{B_{\bigoplus_{j=i+1}^t U_j}}(U_{i}),\widetilde{H}_{B_{\bigoplus_{j=i+2}^t U_j}}(U_{i+1}),\cdots,\widetilde{H}_{B_{U_t}}(U_{t-1}), H_P(U_t)\right).$$
   Then, 
   \begin{align*}
       J\left(H_P\left(\bigoplus_{j = i}^t U_j\right)\right) &= J\left(H_P\left(\bigoplus_{j = i+1}^t U_j \right) \oplus H_P(U_i) \right) \\[3pt]
       &= J_{J(H_P(\bigoplus_{j = i+1}^t U_j)}\left(\Eps_{H_P\left(\bigoplus_{j = i+1}^t U_j\right)}\left(H_P(U_i)\right)\right) &&\text{by \cite[Theorem 4.3]{tau-perpendicular_wide_subategories}}\\[3pt]
       &= J_{J(H_P(\bigoplus_{j = i+1}^t U_j)}\left(\widetilde{H}_{B_{\bigoplus_{j = i+1}^t U_j}}(H_P(U_i))\right) &&\text{by Theorem \ref{compatibility_thm}}\\[3pt]
       &=J_{J\left(\widetilde{H}_{B_{\bigoplus_{j=i+2}^t U_j}}(U_{i+1}),\cdots,\widetilde{H}_{B_{U_t}}(U_{t-1}), H_P(U_t)\right)}\left(\widetilde{H}_{B_{\bigoplus_{j = i+1}^t U_j}}(H_P(U_i))\right) &&\text{by the induction hypothesis \eqref{induction_hp}}\\[3pt]
       &= J\left(\widetilde{H}_{B_{\bigoplus_{j=i+1}^t U_j}}(U_i), \widetilde{H}_{B_{\bigoplus_{j=i+2}^t U_j}}(U_{i+1}),\cdots,\widetilde{H}_{B_{U_t}}(U_{t-1}), H_P(U_t)\right).
   \end{align*}
The claim follows. 
\end{proof}

The next result gives an explicit bijection between signed presilting sequences of length $t$ in $\C$ and signed $\tau$-exceptional sequences in of length $t$ in $\CLambda(\Lambda)$. 

\begin{proposition}\label{signed_presilt_seq-signed_tau-exc_seq_bijection}
    Let $t\in\{1,\cdots,n\}$. There is a bijection 
    \[
        \xi_t :
        \vcenter{\hbox{$
        \left\{
        \begin{array}{c}
        \text{signed presilting sequences} \\
        \text{of length } t \text{ in } \C
        \end{array}
        \right\}
        $}}
        \;\longrightarrow\;
        \vcenter{\hbox{$
        \left\{
        \begin{array}{c}
        \text{signed $\tau$-exceptional sequences} \\
        \text{of length } t \text{ in } \mathcal{C}(\Lambda)
        \end{array}
        \right\},
        $}}
        \]
        given by $$ (U_1,\cdots, U_t)\mapsto \left(\widetilde{H}_{B_{\bigoplus_{i=2}^t U_i}}(U_1), \widetilde{H}_{B_{\bigoplus_{i=3}^t U_i}}(U_2),\cdots, \widetilde{H}_{B_{U_t}}(U_{t-1}), H_P(U_t)\right).$$
\end{proposition}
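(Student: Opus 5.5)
The plan is to argue by induction on $t$, matching the recursive definitions of signed presilting sequences (Definition \ref{def_signed_presilt_seq}) and signed $\tau$-exceptional sequences (Definition \ref{Def-signed-tau-exc-seq}) step by step. Theorem \ref{compatibility_thm} is the bridge at each step, and Lemma \ref{J(H_P)=J(HTilde)} identifies the ambient $\tau$-perpendicular categories.

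For $t=1$, condition (a) says $U_1$ is an indecomposable presilting object of $\C$: either non-injective presilting, or $\Sigma P_{U_1}$ with $P_{U_1}$ indecomposable projective. Proposition \ref{presilting_supp.tau-rigid}(i) then shows that $H_P$ restricts to a bijection onto indecomposable support $\tau$-rigid objects of $\CLambda(\Lambda)$. These are exactly the signed $\tau$-exceptional sequences of length one.

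For the inductive step, fix an indecomposable presilting $U_t$ and set $\W:=J(H_P(U_t))$. A signed presilting sequence ending in $U_t$ is the same as a signed presilting sequence $(U_1,\dots,U_{t-1})$ in the reduced $0$-Auslander category $\widetilde{\C}_{U_t}$. A signed $\tau$-exceptional sequence ending in $H_P(U_t)$ is the same as a signed $\tau$-exceptional sequence of length $t-1$ in $\CLambda(\W)$. By Lemma \ref{H_tilda}, $\widetilde{\C}_{U_t}$ has projective generator $B[U_t]$, and the functor $\widetilde{H}_{B_{U_t}}$ is the composite of the analogue of $H_P$ for $\widetilde{\C}_{U_t}$ (namely $H_{B_{U_t}}$, landing in $\CLambda(\Gamma_{H_P(U_t)})$) with the equivalence $F^{-1}_{H_P(U_t)}\colon \modd\Gamma_{H_P(U_t)}\to \W$. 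Applying the induction hypothesis to $\widetilde{\C}_{U_t}$ gives a bijection $\xi_{t-1}^{\widetilde{\C}_{U_t}}$ onto signed $\tau$-exceptional sequences over $\Gamma_{H_P(U_t)}$. Transporting along $F^{-1}_{H_P(U_t)}$, which preserves $\tau$-perpendicular categories and their reductions, yields signed $\tau$-exceptional sequences in $\CLambda(\W)$. Concatenating with $H_P(U_t)$ and ranging over all $U_t$ then gives a bijection, since the first coordinate is already handled bijectively by the case $t=1$.

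The main obstacle is checking that this recursively defined map agrees with the explicit formula in the statement. At stage $i$, the recursion produces the image of $U_i$ under a nested functor: the $\widetilde{H}$-map of the iterated reduction $\widetilde{(\cdots(\widetilde{\C}_{U_t})\cdots)}_{U_{i+1}}$, followed by the equivalences back to $\modd\Lambda$. The statement instead uses $\widetilde{H}_{B_{\bigoplus_{j>i}U_j}}$ computed in $\C$ itself. I would reconcile these in three moves. First, Lemma \ref{commutativity_of_piU+V} iterated identifies the iterated reduction with $\widetilde{\C}_{\bigoplus_{j>i}U_j}$. Second, the explicit formula \eqref{Htilda_eq}, together with Lemma \ref{Gamma_P'[1]^Gamma_M=Gamma_U}, shows that the composite of the nested equivalences is $\widetilde{H}_{B_{\bigoplus_{j>i}U_j}}$. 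Third, Lemma \ref{J(H_P)=J(HTilde)} guarantees that the target category $J(H_P(\bigoplus_{j>i}U_j))$ coincides with the iterated perpendicular category $J(\widetilde{H}(U_{i+1}),\dots,H_P(U_t))$ in which the $i$-th term of a signed $\tau$-exceptional sequence must be support $\tau$-rigid. Equivalently, one can avoid the nesting: Remark \ref{signed_presilting=ordered_presilting} and Theorem \ref{compatibility_thm} (applied with $U=\bigoplus_{j>i}U_j$) show that the $i$-th coordinate equals $\Eps_{H_P(\bigoplus_{j>i}U_j)}(H_P(U_i))$. The whole map is then $\psi_t\circ H_P$ on ordered presilting objects, which is a bijection by Proposition \ref{presilting_supp.tau-rigid} and Theorem \ref{BM_bijection}. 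For this alternative I would still need to check, using \cite[Theorem 4.3]{tau-perpendicular_wide_subategories}, that iterating $\Eps$ matches the one-step $\Eps_{H_P(\bigoplus_{j>i}U_j)}$.
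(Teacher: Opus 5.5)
Your proposal is correct only through your alternative route, which differs from the paper's proof. Your primary route has an unjustified step.

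\textbf{How the paper argues.} The paper never invokes an induction hypothesis inside $\widetilde{\C}_{U_t}$; it works with the explicit formula from the start. Lemma~\ref{H_tilda} says that $\widetilde{H}_{B_{U_{i+1}\oplus\cdots\oplus U_t}}$ is a bijection from indecomposable presilting objects of $\widetilde{\C}_{U_{i+1}\oplus\cdots\oplus U_t}$ (the iterated reduction, by your Move 1) onto indecomposable support $\tau$-rigid objects of $J(H_P(U_{i+1}\oplus\cdots\oplus U_t))$. Lemma~\ref{J(H_P)=J(HTilde)} (your Move 3) identifies that category with the iterated $\tau$-perpendicular category of the later coordinates. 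Well-definedness, injectivity and surjectivity then follow by peeling off $U_t$, then $U_{t-1}$, and so on. Since every coordinate is computed in $\C$ itself, no nested equivalences occur, and your ``main obstacle'' never arises.

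\textbf{The gap in your primary route.} There the obstacle does arise, and Move 2 does not resolve it. Lemma~\ref{Gamma_P'[1]^Gamma_M=Gamma_U} only gives an abstract algebra isomorphism, in the special case of splitting off a shifted projective. It says nothing about whether $F^{-1}_{H_P(U_t)}\circ\widetilde{H}^{\widetilde{\C}_{U_t}}_{B_{U_{t-1}}}$ and $\widetilde{H}_{B_{U_{t-1}\oplus U_t}}$ agree on objects. Formula \eqref{Htilda_eq} only reduces this to the composition identity $f^{J(M)}_{f_M(N)}\circ f_M=f_{M\oplus N}$ for torsion-free functors, plus bookkeeping for the shifted summands, and you neither state nor prove that identity. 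The clean fix is to apply Theorem~\ref{compatibility_thm} in both $\widetilde{\C}_{U_t}$ and $\C$, transport $\Eps$ along $F_{H_P(U_t)}$, and use $\Eps^{J(\U)}_{\Eps_{\U}(\V)}\circ\Eps_{\U}=\Eps_{\U\oplus\V}$ from \cite[Theorem 6.12]{tau-perpendicular_wide_subategories}.

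\textbf{Your alternative route.} It is sound, and it is exactly the computation the paper carries out later in Theorem~\ref{xi-H_P-varphi}.
\begin{itemize}
\item Theorem~\ref{compatibility_thm} rewrites the $i$-th coordinate as $\Eps_{H_P(\bigoplus_{j>i}U_j)}(H_P(U_i))$.
\item Iterating the composition law identifies $\xi_t$ with $\psi_t\circ H_P$.
\item Bijectivity then follows from Proposition~\ref{presilting_supp.tau-rigid}, Remark~\ref{signed_presilting=ordered_presilting} and Theorem~\ref{BM_bijection}.
\end{itemize}
The iteration step is \cite[Theorem 6.12]{tau-perpendicular_wide_subategories}, not Theorem 4.3. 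Theorem 4.3 only gives $J(\U\oplus\V)=J_{J(\U)}(\Eps_{\U}(\V))$, which you still need to match the intermediate categories $\W_i$. There is no circularity, since the commutativity computation does not use bijectivity of $\xi_t$.

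\textbf{Comparison.} Your route gives well-definedness, bijectivity and the comparison $\xi_t=\psi_t\circ H_P$ in one stroke. The cost is that $\xi_t$ then depends on the Buan--Marsh bijection and the Buan--Hanson composition law. The paper's peel-off argument establishes $\xi_t$ without passing through $\psi_t$. Theorem~\ref{xi-H_P-varphi} then compares two independently constructed bijections, which is what justifies presenting $\xi_t$ as a new description of $\psi_t$.
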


\begin{proof}
   Let $t\in\{1,\cdots,n\}$ and let $(U_1,\cdots, U_t)$ be a signed presilting sequence in $\C$. Then, $H_P(U_t)$ is support $\tau$-rigid in $\modd\Lambda$ by Proposition \ref{presilting_supp.tau-rigid}. Furthermore, for each $1\leq i < t$,  Lemma \ref{H_tilda} implies that $\widetilde{H}_{B_{U_i\oplus\cdots\oplus U_t}}(U_{i-1})$ is support $\tau$-rigid in
   $$J(H_P(U_i\oplus\cdots\oplus U_t)) = J\left( \widetilde{H}_{B_{\bigoplus_{j=i+1}^t}U_j}(U_i), \cdots, \widetilde{H}_{B_{U_t}}(U_{t-1}), H_p(U_t)\right),$$
   where the last equality follows from Lemma \ref{J(H_P)=J(HTilde)}. Hence, $\xi_t$ is well-defined. 
      
   We start by proving injectivity. Let $\mathsf{V} = (V_1,\cdots,V_t)$ and $\mathsf{U} = (U_1,\cdots, U_t)$ be signed presilting sequences such that $\xi_t(\mathsf{U}) = \xi_t(\mathsf{V})$. Then $H_P(U_t) = H_P(V_t)$ implies that $U_t = V_t$ since $H_P$ is bijective by Proposition \ref{presilting_supp.tau-rigid}. This gives $\widetilde{H}_{B_{U_t}} = \widetilde{H}_{B_{V_t}}$ and thus, since $\widetilde{H}_{B_X}$ is bijective for a presilting object $X$ in $\C$ by Lemma \ref{H_tilda}, $\widetilde{H}_{B_{U_t}}(U_{t-1}) = \widetilde{H}_{B_{V_t}}(V_{t-1})$ implies $U_{t-1} = V_{t-1}$. Iterating this argument, we get $\mathsf{U} = \mathsf{V}$, giving injectivity. 

   To prove surjectivity, let $\X = (X_1,\cdots, X_t)$ be a signed $\tau$-exceptional sequence in $\CLambda(\Lambda)$. By definition of signed $\tau$-exceptional sequence, $X_t$ is support $\tau$-rigid in $\CLambda(\Lambda)$, and thus Proposition \ref{presilting_supp.tau-rigid} implies $X_t = H_P(U_t)$ for an indecomposable presilting object $U_t$ in $\C$. Next, $X_{t-1}$ is support $\tau$-rigid in $\CLambda(J(H_P(U_t)))$. Then, $X_{t-1} =  \widetilde{H}_{B_{U_t}}(U_{t-1})$ for a presilting object $U_{t-1}$ in $\widetilde{\C}_{U_t}$ by Lemma \ref{H_tilda}. By assumption, $X_{t-2}$ is support $\tau$-rigid in $\CLambda\left(J_{H_P{(U_t)}}(\widetilde{H}_{B_{U_t}}(U_{t-1}))\right)$. 
   By Lemma \ref{J(H_P)=J(HTilde)}, $$J_{H_P{(U_t)}}(\widetilde{H}_{B_{U_t}}(U_{t-1})) = J(H_P(U_{t-1})\oplus H_P(U_t)) = J(H_P(U_{t-1}\oplus U_t)),$$ and thus, Lemma \ref{H_tilda} gives $X_{t-2} = \widetilde{H}_{B_{U_{t-1}\oplus U_t}}(U_{t-2})$ for an indecomposable presilting object $U_{t-2}$ in $\widetilde{\C}_{U_{t-1}\oplus U_t} \cong \widetilde{(\widetilde{\C}_{U_t})}_{U_{t-1}}$. Iterating this argument on the length of the sequence, we can construct a presilting sequence $\mathsf{U} = (U_1,\cdots, U_t)$ in $\C$ such that $\xi_t (\mathsf{U}) = \X$. This proves surjectivity and concludes the proof. 
\end{proof}

\begin{corollary}\label{presilt_seq-tau-exc_seq-bijection}
    The bijection $\xi_t$ restricts to a bijection 
    \[
        \xi_t :
        \vcenter{\hbox{$
        \left\{
        \begin{array}{c}
        \text{presilting sequences} \\
        \text{of length } t \text{ in } \C
        \end{array}
        \right\}
        $}}
        \;\longrightarrow\;
        \vcenter{\hbox{$
        \left\{
        \begin{array}{c}
        \text{$\tau$-exceptional sequences} \\
        \text{of length } t \text{ in } \mathcal{C}(\Lambda)
        \end{array}
        \right\},
        $}}
        \]
\end{corollary}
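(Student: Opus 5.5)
Since $\xi_t$ is already a bijection by Proposition \ref{signed_presilt_seq-signed_tau-exc_seq_bijection}, it suffices to show that a signed presilting sequence $(U_1,\cdots,U_t)$ is a presilting sequence if and only if every entry of $\xi_t(U_1,\cdots,U_t)$ lies in the module part of the relevant $\CLambda(-)$, i.e.\ is not of the form $Q[1]$. I would prove this entry by entry, matching the condition on $U_i$ with the condition on the $i$-th entry of $\xi_t(U_1,\cdots,U_t)$.

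\emph{Last entry.} For $U_t$, the condition that $U_t$ is non-injective in $\C$ is, by the definition of $H_P$ in Proposition \ref{presilting_supp.tau-rigid}(i), exactly the condition $H_P(U_t)=\C(P,U_t)\in\modd\Lambda$. Indeed, $U_t$ is indecomposable, so $U_t=U_t'$ precisely when $I_{U_t}=0$.

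\emph{Entries with $i<t$.} Set $W=\bigoplus_{j>i}U_j$. Lemma \ref{H_tilda} gives
\[
\widetilde{H}_{B_W}(X'\oplus I_X)=f(\C(P,X'))\oplus f(\C(P,\Omega_{\widetilde{\C}_W}I_X))[1].
\]
So for indecomposable $U_i$, the $i$-th entry is a module if and only if $U_i\notin\inj\widetilde{\C}_W$. Two further points need checking.
\begin{itemize}
\item The module summand must be nonzero and indecomposable. This holds because $\widetilde{H}_{B_W}$ is a bijection on indecomposable presilting objects, which is Theorem \ref{compatibility_thm} combined with Lemma \ref{proj_functior_bij}.
\item Non-injectivity of $U_i$ in the iterated category $\C_{(U_{i+1},\cdots,U_t)}$ of Definition \ref{def_signed_presilt_seq} must agree with non-injectivity in $\widetilde{\C}_W$.
\end{itemize}

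\emph{Main obstacle: comparing the two categories.} Using Lemma \ref{commutativity_of_piU+V} inductively, the iterated reduction $\C_{(U_{i+1},\cdots,U_t)}$ is identified with $\widetilde{\C}_W$, up to the question of whether one takes the quotient by $[W]$. I would then check that injectivity is preserved under the projection $\pi_W$. Since $\inj\C_W=\add C_W$ and $\inj\widetilde{\C}_W=\add C[W]$ (Definition-Proposition \ref{C_U_is_0-Auslander}), an indecomposable $U_i\notin\add W$ is injective in $\C_W$ if and only if it lies in $\add C[W]$. This is equivalent to being injective in $\widetilde{\C}_W$. As a cross-check, the criterion of Lemma \ref{H_P(X)inGenH_P(U)} relates this injectivity to the $\Gen$ condition in Proposition \ref{epsilonM} that produces shifted objects.

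\emph{Conclusion.} Combining the cases, $\xi_t$ sends presilting sequences exactly to signed $\tau$-exceptional sequences with all entries in the respective module categories. By Definition \ref{Def-signed-tau-exc-seq}, together with the fact that each $\tau$-perpendicular category is a wide subcategory of $\modd\Lambda$ (Theorem \ref{facts_about_J}), these are exactly the $\tau$-exceptional sequences. Hence $\xi_t$ restricts to the stated bijection, and the inverse restricts as well, since the characterisation is an equivalence.
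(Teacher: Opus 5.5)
Your proposal is correct and takes the same approach as the paper. The paper's proof just cites the explicit formula of Lemma \ref{H_tilda}, which makes $\widetilde{H}_{B_W}(U_i)$ a module exactly when $U_i\notin\inj\widetilde{\C}_W$, together with the bijection of Proposition \ref{signed_presilt_seq-signed_tau-exc_seq_bijection}. You additionally spell out the check the paper leaves implicit: for $U_i\notin\add W$, injectivity in $\C_W$ (where $\inj\C_W=\add C_W$) agrees with injectivity in $\widetilde{\C}_W$ (where $\inj\widetilde{\C}_W=\add C[W]$), and this is what matches Definition \ref{def_signed_presilt_seq} with that formula.
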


\begin{proof}
    The claim follows directly from Lemma \ref{H_tilda} and Proposition \ref{signed_presilt_seq-signed_tau-exc_seq_bijection}.
\end{proof}

Keeping the notation from Section \ref{Sec_signed_tau_exc_seq}, the next result reformulates the Buan--Marsh bijection $\psi_t$ between ordered support $\tau$-rigid objects in $\CLambda(\Lambda)$ and signed $\tau$-exceptional sequences in $\CLambda(\Lambda)$ (see Theorem \ref{BM_bijection}) in terms of the bijection $\xi_t$ from Proposition \ref{signed_presilt_seq-signed_tau-exc_seq_bijection}.

\begin{theorem}\label{xi-H_P-varphi}
    Let $t\in\{1,\cdots,n\}$. Then, there is a commutative diagram of bijections given by

\[\begin{tikzcd}[ampersand replacement=\&,cramped]
	\begin{array}{c} \vcenter{\hbox{$         \left\{         \begin{array}{c}         \text{ordered presilting objects} \\   \text{of length } t \text{ in } \C  \end{array}         \right\}         $}} \end{array} \& \begin{array}{c}  \vcenter{\hbox{$         \left\{         \begin{array}{c}         \text{ordered support $\tau$-rigid objects} \\         \text{of length } t \text{ in } \mathcal{C}(\Lambda)         \end{array}         \right\}        $}} \end{array} \\
	\begin{array}{c} \vcenter{\hbox{$         \left\{         \begin{array}{c}         \text{signed presilting sequences} \\         \text{of length } t \text{ in } \C         \end{array}         \right\}         $}} \end{array} \& \begin{array}{c}  \vcenter{\hbox{$         \left\{         \begin{array}{c}         \text{signed $\tau$-exceptional sequences} \\         \text{of length } t \text{ in } \mathcal{C}(\Lambda)         \end{array}         \right\}.        $}} \end{array}
	\arrow["{H_P}", from=1-1, to=1-2]
	\arrow[equals, from=1-1, to=2-1]
	\arrow["{\psi_t}", from=1-2, to=2-2]
	\arrow["{\xi_t}", from=2-1, to=2-2]
\end{tikzcd}\]
\end{theorem}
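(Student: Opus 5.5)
We want to show $\psi_t\circ H_P = \xi_t$ on ordered presilting objects $(U_1,\dots,U_t)$; here $H_P$ is applied componentwise, giving $(H_P(U_1),\dots,H_P(U_t))$. By Remark \ref{signed_presilting=ordered_presilting}, the left vertical identification is the identity on tuples. Once commutativity holds, all four maps are bijections: $H_P$ by Proposition \ref{presilting_supp.tau-rigid}, $\psi_t$ by Theorem \ref{BM_bijection}, and $\xi_t$ by Proposition \ref{signed_presilt_seq-signed_tau-exc_seq_bijection}. So everything reduces to comparing entries.

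Fix $i$ and set $T_j=H_P(U_j)$. By definition of $\psi_t$, the $i$-th entry is
\[
\Eps^{\W_{i+1}}_{\U_{i+1}}\cdots\Eps^{\W_t}_{\U_t}(T_i).
\]
The $i$-th entry of $\xi_t$ is $\widetilde{H}_{B_{V_{i+1}}}(U_i)$, where $V_{i+1}=\bigoplus_{j=i+1}^tU_j$ (and $H_P(U_t)$ when $i=t$).

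By Theorem \ref{compatibility_thm} applied to the presilting object $V_{i+1}$, we have
\[
\widetilde{H}_{B_{V_{i+1}}}(U_i)=\Eps_{H_P(V_{i+1})}(T_i),
\]
since $\pi_{V_{i+1}}(U_i)=U_i$. It therefore suffices to prove the identity
\[
\Eps_{H_P(V_{i+1})}=\Eps^{\W_{i+1}}_{\U_{i+1}}\circ\cdots\circ\Eps^{\W_t}_{\U_t}
\]
on the relevant objects. Here $\U_j$ is the $j$-th entry of $\psi_t$, which by induction equals the $j$-th entry of $\xi_t$, and $\W_j=J(H_P(V_j))$ by Lemma \ref{J(H_P)=J(HTilde)}.

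I would prove this by downward induction on $i$, using the iterated reduction formula for $\Eps$ from \cite[Thm.~6.12]{tau-perpendicular_wide_subategories}. For support $\tau$-rigid $T\oplus T'$ and suitable $X$, that formula reads
\[
\Eps_{T\oplus T'}(X)=\Eps^{J(T')}_{\Eps_{T'}(T)}\bigl(\Eps_{T'}(X)\bigr).
\]
Apply it with $T'=H_P(V_{i+2})$ and $T=T_{i+1}$. The induction hypothesis, together with Theorem \ref{compatibility_thm}, identifies $\Eps_{T'}(T_{i+1})$ with $\U_{i+1}$ and identifies $\Eps_{T'}$ with the composite of the remaining $\Eps$'s. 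One subtlety: Theorem \ref{compatibility_thm} gives $\widetilde{H}_{B_{V_{i+2}}}(U_{i+1})=\Eps_{H_P(V_{i+2})}(T_{i+1})$ only when $U_{i+1}\notin\add V_{i+2}$. That condition holds because the tuple is basic.

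The main obstacle is precisely this composition law for $\Eps$. If citing \cite{tau-perpendicular_wide_subategories} is not fully satisfactory, an alternative stays inside $\C$. Combine Theorem \ref{compatibility_thm} for $V_{i+2}$, applied in $\C$, with its instance for $\pi_{V_{i+2}}(U_{i+1})$, applied in $\widetilde{\C}_{V_{i+2}}$. Then use Lemma \ref{commutativity_of_piU+V}, which gives $\pi_{V_{i+1}}=\pi_{\pi_{V_{i+2}}(U_{i+1})}\circ\pi_{V_{i+2}}$, together with $\widetilde{(\widetilde{\C}_{V_{i+2}})}_{U_{i+1}}=\widetilde{\C}_{V_{i+1}}$. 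The step still needing care is checking that $\widetilde{H}$ in the iterated reduced category agrees with $\widetilde{H}_{B_{V_{i+1}}}$ under the identification $\Gamma\cong\Gamma_{H_P(V_{i+1})}$ of Lemma \ref{Gamma_P'[1]^Gamma_M=Gamma_U}; this is analogous to the identification made for diagram \eqref{big-diagram}(e),(f).
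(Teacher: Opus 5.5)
Your proposal is correct and follows essentially the same route as the paper. You identify each entry of $\xi_t$ with $\Eps_{H_P(V_{i+1})}(H_P(U_i))$ via Theorem~\ref{compatibility_thm}, then collapse the iterated $\Eps$'s defining $\psi_t$ by induction using \cite[Thm.~6.12]{tau-perpendicular_wide_subategories} and Lemma~\ref{J(H_P)=J(HTilde)}; the paper does the same, writing out the first few steps and iterating. One small slip: since $\W_t=\modd\Lambda$, the correct indexing is $\W_j=J(H_P(V_{j+1}))$ rather than $J(H_P(V_j))$, and your application of the composition law with $T'=H_P(V_{i+2})$ in fact uses the correct version.
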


\begin{proof}
    All arrows involved in the diagram are bijections by Theorem \ref{BM_bijection}, Propositions \ref{H_tilda} and \ref{signed_presilt_seq-signed_tau-exc_seq_bijection}, and Remark \ref{signed_presilting=ordered_presilting}. Let $(T_1,\cdots, T_t)$ be an ordered presilting object (equivalently a signed presilting sequence) in $\C$. Then, $$\psi_t(H_P(T_1,\cdots, T_t)) = \psi_t(H_P(T_1),\cdots, H_P(T_t)) = (\U_1,\cdots, \U_t),$$ where $(\U_1,\cdots, \U_t)$ is as in Theorem \ref{BM_bijection}.  
    Then, 
    \begin{description}
        \item[(t)] $\W_t = \modd\Lambda$ and $\U_t = H_P(T_t)$.
        \item[(t-1)] $\W_{t-1} = J(\U_t) = J(H_P(T_t))$ and 
    $$\U_{t-1} = \Eps_{\U_t}^{\W_t}(\U_{t-1}) = \Eps_{H_P(T_t)}(H_P({T_{t-1})}) = \widetilde{H}_{B_{T_t}}(T_{t-1}),$$
    by Theorem \ref{compatibility_thm}.
    \item[(t-2)]
       \begin{subequations}
    \begin{align*}
        \W_{t-2} &= J_{\W_{t-1}}(\U_{t-1})\\
        &= J_{J(H_P(T_t))}\left(\widetilde{H}_{B_t}(T_{t-1})\right)\\[3pt]
        &= J(H_P(T_{t-1}\oplus T_t)), &&\text{by Lemma \ref{J(H_P)=J(HTilde)}}
    \end{align*}
    and
    \begin{align*}
        \U_{t-2} &= \Eps_{\U_{t-1}}^{\W_{t-1}}\Eps_{\U_t}^{\W_t}(H_P(T_{t-2}))\\[3pt]
                &= \Eps_{\Eps_{H_P(T_t)}(H_P(T_{t-1}))}^{J(H_P(T_t))}\left(\Eps_{H_P(T_t)}\left(H_P(T_{t-2})\right)\right)\\[3pt]
                &= \Eps_{{H_P(T_t)\oplus H_P(T_{t-1})}}\left(H_P(T_{t-2})\right) 
                    &&\text{by \cite[Theorem 6.12]{tau-perpendicular_wide_subategories}}\\[3pt]
                &= \Eps_{H_P(T_{t-1}\oplus T_t)}\left(H_P(T_{t-2})\right)\\[3pt]
                &= \widetilde{H}_{B_{T_{t-1}\oplus T_t}}\left(T_{t-2}\right) 
                    &&\text{by Theorem \ref{compatibility_thm}.}
    \end{align*}
    \end{subequations}
    \item[(t-3)]  
           \begin{subequations}
    \begin{align*}
        \W_{t-3} &= J_{\W_{t-2}}(\U_{t-2})\\
        &= J_{J(\widetilde{H}_{B_{T_t}}(T_{t-1}), H_P(T_t))}(\widetilde{H}_{B_{T_{t-1}\oplus T_t}}(T_{t-2})) &&\text{by Case (t-2)}\\[3pt]
        &= J(\widetilde{H}_{B_{T_{t-1}\oplus T_t}}(T_{t-2}), \widetilde{H}_{B_{T_t}}(T_{t-1}), H_P(T_t))\\[3pt]
        &= J(H_P(T_{t-2}\oplus T_{t-1}\oplus T_t)) &&\text{by Lemma \ref{J(H_P)=J(HTilde)}}, 
    \end{align*}
    and
    \begin{align*}
        \U_{t-3} &= \Eps_{\U_{t-2}}^{\W_{t-2}}\Eps_{\U_{t-1}}^{\W_{t-1}}\Eps_{\U_t}^{\W_t}(H_P(T_{t-3}))\\[3pt]
                &= \Eps_{\Eps_{H_P(T_{t-1}\oplus T_t)}(H_P(T_{t-2}))}^{J(H_P(T_{t-1}\oplus T_t))}\left(\Eps_{H_P(T_{t-1}\oplus T_t)}\left(H_P(T_{t-3})\right)\right) &&\text{by Case (t-2)}\\[3pt]
                &= \Eps_{{H_P(\bigoplus_{i=t-2}^t T_i)}}\left(H_P(T_{t-3})\right) 
                    &&\text{by \cite[Theorem 6.12]{tau-perpendicular_wide_subategories}}\\[3pt]
                &= \widetilde{H}_{B_{\bigoplus_{i=t-2}^t T_i}}\left(H_P(T_{t-3})\right) 
                   &&\text{by Theorem \ref{compatibility_thm}.}
    \end{align*}
    \end{subequations}
    \vspace{-0.5em}
    \[
    \vdots
    \]
    \vspace{-0.5em}
    \item[(i)] $\W_i = J(H_P(T_{i+1}\oplus\cdots\oplus T_t))$ and $\U_{i} = \widetilde{H}_{B_{T_{i+1}}\oplus\cdots\oplus B_{T_t}}(T_i).$\\
    \vspace{-0.5em}
    \[
    \vdots
    \]
    \vspace{-0.5em}
    \item[(1)] $\W_1 = J(H_P(\bigoplus_{i=2}^t(T_i)))$ and $\U_1 = \widetilde{H}_{\bigoplus_{i=2}^t B_{T_i}}(T_1).$
    \end{description}
    
        We conclude that $$\psi_t(H_P(T_1,\cdots,T_t)) = \xi_t(T_1,\cdots, T_t).$$ This finishes the proof. 
\end{proof}

\section{The $\tau$-cluster morphism category of a 0-Auslander extriangulated category}\label{sec:tau-cluster_morph_cat}

Let $\C$ be a reduced $0$-Auslander extriangulated category with enough projectives, and let $P$ be a projective generator. In this section, we define a category, called the \textit{$\tau$-cluster morphism category} of $\C$ and denoted by $\Mcluster(\C)$, whose objects are extension-closed subcategories of $\C$ of the form $\C_U$, and whose morphisms can be described in terms of signed presilting sequences (or, equivalently, ordered presilting objects). 

\begin{definition}\label{tau-cluster-morph-cat-of-C}
    The \emph{$\tau$-cluster morphism category} of $\C$, denoted by $\Mcluster(\C)$, consists of the following data. 
    \begin{itemize}
        \item[(a)] The objects of $\Mcluster(\C)$ are subcategories of $\C$ of the form ${\C}_U$, for $U\in \presilt\C$; 
        \item[(b)] For a an object $\B$ in $\Mcluster(\C)$ and $U\in\presilt\B$, define a formal symbol $h^\B_U$; 
        \item[(c)] Given two objects $\B_1 = \C_U$ and $\B_2$ in $\Mcluster(\C)$, define 
        $$\Hom_{\Mcluster(\C)}(\B_1, \B_2) = {\left\{  h_V^{\B_1} \;\middle|\; \begin{tabular}{@{}l@{}} $V\in\presilt\C_U$, $\add V\cap\add U = 0$ \\ \text{and} $\B_2 = {(\B_1)}_V$ \end{tabular} \right\}}.$$
        In particular, 
        \begin{itemize}
            \item[(i)] if $\B_1 \nsupseteq \B_2$, then $\Hom_{\Mcluster(\C)}(\B_1, \B_2) = \emptyset$, 
            \item[(ii)] $\Hom_{\Mcluster(\C)}(\B_1,\B_1) = \left\{h_0^{\B_1}\right\}$; 
        \end{itemize}
        \item[(d)] Given $h_U^{\B_1}: \B_1\to \B_2$ and $h_V^{\B_2}: \B_2\to \B_3$, define the composition 
        $$h_V^{\B_2}\circ h_U^{\B_2}:= h_{U\oplus V}^{\B_1}.$$
        \end{itemize}
\end{definition}

\begin{remark}
    Let $\C_U$ be an object in $\Mcluster(\C)$ and let $V$ be a presilting object in $\C_U$. Recall that $$\C_U = \{X\in \C \mid \EE(U,X) = 0 = \EE(X, U)\}.$$ Then, 
    \begin{align*}
        \left(\C_U\right)_V &= \{ Y\in\C_U\mid \EE(V,Y) = 0 = \EE(Y,V)\}\\
        &= \left\{Y\in \C\mid \EE(V\oplus U, Y) = 0 = \EE(Y, U\oplus V)\right\}\\
        &= \C_{U\oplus V}.
    \end{align*}
\end{remark}

\begin{lemma}
    $\Mcluster(\C)$ is a well-defined category. 
\end{lemma}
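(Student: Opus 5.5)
The plan is to check the category axioms directly from Definition \ref{tau-cluster-morph-cat-of-C}: composition is well defined (the symbol produced is an honest morphism with the right source and target), identities are given by $h_0^{\B}$, and composition is associative. The main tool is the Remark preceding the lemma, $(\C_U)_V=\C_{U\oplus V}$ for $V\in\presilt\C_U$, together with the fact (Definition-Proposition \ref{C_U_is_0-Auslander}) that $\C_U={}^\perp U\cap U^\perp$.

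\emph{Well-definedness of composition.} Let $h_V^{\B_1}\colon\B_1\to\B_2$ and $h_W^{\B_2}\colon\B_2\to\B_3$ with $\B_1=\C_U$. Then $V\in\presilt\C_U$, $\add V\cap\add U=0$ and $\B_2=\C_{U\oplus V}$. Since $W\in\presilt\C_{U\oplus V}$, we have $\EE(W,W)=0$ and $W\in{}^\perp V\cap V^\perp$. Combined with $\EE(V,V)=0$, this gives $\EE(V\oplus W,V\oplus W)=0$. Also $W\in{}^\perp U\cap U^\perp$, so $V\oplus W\in\presilt\C_U$.

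Next, $\add W\cap\add(U\oplus V)=0$ holds by the hom-set condition for $h_W^{\B_2}$, and together with $\add V\cap\add U=0$ and Krull--Schmidt this gives $\add(V\oplus W)\cap\add U=0$. Finally, applying the Remark twice,
\[(\B_1)_{V\oplus W}=\C_{U\oplus V\oplus W}=(\C_{U\oplus V})_W=\B_3.\]
Hence $h_{V\oplus W}^{\B_1}\in\Hom(\B_1,\B_3)$. Here I read the composition rule in (d) as $h_W^{\B_2}\circ h_V^{\B_1}=h^{\B_1}_{V\oplus W}$; the displayed formula has a typo in the superscript.

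One subtle point I would address first is that $\B_1$ may equal $\C_U$ for several $U$. So the hom-set must not depend on the chosen $U$, or else the objects should be recorded as pairs. I would argue that $U$ is recoverable from $\C_U$ up to $\add$: by Definition-Proposition \ref{C_U_is_0-Auslander}, $\add U=\proj\C_U\cap\inj\C_U$, and this subcategory is intrinsic to the extriangulated subcategory $\C_U$. This makes the condition $\add V\cap\add U=0$ depend only on $\B_1$. I expect this to be the main point needing care; everything else is bookkeeping.

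\emph{Identities and associativity.} For identities, $h_0^{\B}$ is a morphism $\B\to\B$ since $(\B)_0=\B$. The composites $h_0\circ h_V^{\B_1}$ and $h_V^{\B_1}\circ h_0$ both equal $h^{\B_1}_{V\oplus0}=h_V^{\B_1}$, where $h_V$ is understood up to isomorphism of $V$ (equivalently, up to $\add$ for basic objects). For associativity, both bracketings of three composable morphisms give $h^{\B_1}_{V\oplus W\oplus X}$, since direct sum is associative up to isomorphism. The properties (i) and (ii) are consistent with this: (i) follows because $\C_{U\oplus V}\subseteq\C_U$, and (ii) because $(\B_1)_V=\B_1$ forces $\add V\subseteq\proj\B_1\cap\inj\B_1=\add U$, hence $V=0$.
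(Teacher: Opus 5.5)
Your argument is correct and is essentially the paper's proof: you show $V\oplus W\in\presilt\C_U$ with $\add U\cap\add(V\oplus W)=0$, so that $h^{\C_U}_{V\oplus W}$ is a genuine morphism to $\C_{U\oplus V\oplus W}$, then observe that $h_0^{\B}$ is an identity and that both bracketings of a triple composite give $h^{\B}_{V\oplus W\oplus X}$. Your extra remark that $\add U=\proj\C_U\cap\inj\C_U$ correctly makes the hom-sets independent of the choice of $U$, which the paper leaves implicit; note this identity needs $\add B_U\cap\add C_U=\add U$ (cited from Pan--Zhu in the proof of Lemma \ref{Bongartz_co-Bongartz_Lemma}), not only Definition-Proposition \ref{C_U_is_0-Auslander}.
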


\begin{proof}
    We start by proving that the composition law is well-defined. So let $h^{\C_U}_{V}: \C_U\to \left(\C_U\right)_V = \C_{U\oplus V}$ and $h^{\C_{U\oplus V}}_W: \C_{U\oplus V}\to \left(\C_{U\oplus V}\right)_W = \C_{U\oplus V\oplus W}$ be composable morphisms in $\Mcluster(\C)$. Then $\add U\cap\add V = 0$ and $\add (U\oplus V)\cap\add W = 0$, and therefore $\add U\cap\add(V\oplus W) = 0$. Moreover, $V\oplus W$ is presilting in $\C_U$. Hence, the composite $h^{\C_{U\oplus V}}_W\circ h^{\C_U}_V = h^{\C_U}_{V\oplus W}$ is well-defined. 

    For $\B$ in $\Mcluster(\C)$, it is straightforward to check that the morphism $h_0^\B$ satisfies the axioms required for an identity map. We prove that composition is associative. So, let $h_U^{\B}: \B\to \B_U$, $h_V^{\B_U}: \B_U\to (\B_U)_V = \B_{U\oplus V}$, and $h_W^{\B_{U\oplus V}}: \B_{U\oplus V}\to (\B_{U\oplus V})_W = \B_{U\oplus V\oplus W}$ be composable morphisms in $\Mcluster(\C)$. Then, 
    \begin{equation*}
        h_W^{\B_{U\oplus V}}\circ (h_V^{\B_U}\circ h^\B_U) = h^{\B_{U\oplus V}}_W\circ h^\B_{U\oplus V} = h^\B_{U\oplus V\oplus W},
    \end{equation*}
    and 
    \begin{equation*}
        (h^{\B_{U\oplus V}}_W\circ h^{\B_U}_V)\circ h^\B_U = h^{\B_U}_{V\oplus W}\circ h^\B_U = h^\B_{U\oplus V\oplus W}.
    \end{equation*}
    The claim follows. 
\end{proof}

A morphism $h$ in $\Mcluster(\C)$ is called \textit{irreducible} if whenever $h=h_1\circ h_2$, either $h_1 = \id$ or $h_2=\id$. Our first goal is to prove that morphisms in $\Mcluster(\C)$ can be described in terms of signed presilting sequences. In order to make this statement precise, we need some preparation. We begin with the following observation.

\begin{lemma}\label{irreducible_mor_in_M(C)}
    Let $h = h_V^\B$ be a morphism in $\Mcluster(\C)$. Then, $h$ is irreducible if and only if $V$ is indecomposable. 
\end{lemma}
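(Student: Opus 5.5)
Both implications follow from the composition law of Definition \ref{tau-cluster-morph-cat-of-C}(d), together with the Krull--Schmidt property of $\C$. Throughout, write $\B = \C_U$, so that $h = h_V^{\B}\colon \C_U \to \C_{U\oplus V}$ with $V\in\presilt\C_U$ and $\add V\cap\add U = 0$. The identity of an object $\B'$ is $h_0^{\B'}$, so a morphism $h^{\B'}_W$ is an identity exactly when $W=0$. I will also take for granted, as the statement implicitly does, that $h$ is not itself an identity, i.e. $V\neq 0$.

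Suppose first that $V$ is not indecomposable. Then $V\neq 0$ and we can write $V = V_1\oplus V_2$ with both $V_1$ and $V_2$ nonzero. Since $V$ is presilting in $\C_U$, its summand $V_1$ is presilting in $\C_U$ and $\add V_1\cap \add U=0$. This gives a morphism $h_{V_1}^{\C_U}\colon \C_U\to \C_{U\oplus V_1}$, using the Remark preceding the lemma.

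Next I check that $V_2$ defines a morphism out of $\C_{U\oplus V_1}$. It lies in $\C_{U\oplus V_1}$ because $\EE(V,V)=0$ and $\EE(U,V)=0=\EE(V,U)$. It is presilting there. Finally, $\add V_2\cap\add(U\oplus V_1)=0$: by Krull--Schmidt, any indecomposable common summand would have to be a summand of $U$ (excluded) or of $V_1$. Replacing $V$ by its basic version if necessary, as the paper does implicitly since presilting objects are considered up to $\add$, we may choose $V_1, V_2$ with no common summands.

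Then $h_{V_2}^{\C_{U\oplus V_1}}\circ h_{V_1}^{\C_U} = h_{V_1\oplus V_2}^{\C_U} = h$, and neither factor is an identity. Hence $h$ is not irreducible.

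Conversely, suppose $V$ is indecomposable and $h = h_1\circ h_2$. By the definition of the Hom-sets, $h_2 = h_{V_1}^{\C_U}$ and $h_1 = h_{V_2}^{\C_{U\oplus V_1}}$ for some $V_1, V_2$. By the composition law, $h = h_{V_1\oplus V_2}^{\C_U}$. Since morphisms are formal symbols indexed by objects, and these are compared up to isomorphism, this gives $V\cong V_1\oplus V_2$. Because $V$ is indecomposable and $\C$ is Krull--Schmidt, one of $V_1, V_2$ is zero, so $h_2=\id$ or $h_1=\id$. Thus $h$ is irreducible.

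The only delicate point is this last step: the equality of the symbols $h^{\C_U}_V = h^{\C_U}_{V_1\oplus V_2}$ must force $V\cong V_1\oplus V_2$. This holds because the symbols are indexed by (basic) presilting objects up to isomorphism, which I would state explicitly. I would also remark that $h_0$, the identity, is excluded from being irreducible by the convention that irreducible morphisms are non-identities.
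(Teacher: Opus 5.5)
Your proof is correct and follows the same route as the paper. Both directions come directly from the composition law $h^{\C_{U\oplus V_1}}_{V_2}\circ h^{\C_U}_{V_1}=h^{\C_U}_{V_1\oplus V_2}$ and from the fact that $h^{\B}_W$ is an identity exactly when $W=0$. Your extra checks are details the paper's terser argument leaves unstated, and you handle them correctly: that $V_2$ is presilting in $\C_{U\oplus V_1}$ with $\add V_2\cap\add(U\oplus V_1)=0$, and the implicit conventions that $V\neq 0$ and that $V$ is basic.
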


\begin{proof}
    Assume $V$ is indecomposable and write $h = h_V^\B = h_{V_2}^{\B_2}\circ h_{V_1}^{\B_1}$, where $\B_1 = \B$ and $\B_2 = \B_{V_1}$. Then, $h_V^{\B} = h^{\B}_{V_1\oplus V_2}$, and thus $V = V_1\oplus V_2$. Since $V$ is indecomposable, $V_1= 0$ or $V_2 = 0$ which imply $h_1 = \id$ or $h_2 = 0$, and therefore $h$ is irreducible. 

    Conversely, suppose $V=V_1\oplus V_2$. Then, 
    $$ h = h_{V_1\oplus V_2}^{\B} = h^{\B_2}_{V_2}\circ h_{V_1}^{\B_1},$$ where $\B = \B_1$. Since $V_1$ and ${V_2}$ are non-zero, $h^{\B_1}_{V_1}$ and $ h_{V_2}^{\B_2}$ are not identity maps, and thus $h$ is not irreducible. 
\end{proof}

\begin{lemma}\label{signed_presilt_seq&composition_of_maps_in_M(C)}
    Let $\B = \C_U$ be an object in $\Mcluster(\C)$. The following statements are equivalent. 
    \begin{itemize}
        \item[(i)] $(U_1,\cdots, U_t)$ is a signed presilting sequence in $\widetilde{\B}$; 
        \item[(ii)] There are objects $\B_1,\cdots, \B_t$ in $\Mcluster(\C)$ and irreducible morphisms $h_{U_i}^{\B_i}$, for $i=1,\cdots, t$, such that the composite $h^{\B_1}_{U_1}\cdots h^{\B_t}_{U_t}$ is well-defined.  
    \end{itemize}
\end{lemma}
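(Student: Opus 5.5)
We argue by induction on $t$, using the recursive shape of Definition \ref{def_signed_presilt_seq}, Lemma \ref{irreducible_mor_in_M(C)}, and the identity $(\C_U)_V=\C_{U\oplus V}$ from the remark after Definition \ref{tau-cluster-morph-cat-of-C}. We take $\B_t=\B=\C_U$ and $\B_i=\C_{U\oplus U_t\oplus\cdots\oplus U_{i+1}}$ for $i<t$, and we read the composite in (ii) as the chain
\[
\B_t\xrightarrow{h^{\B_t}_{U_t}}\B_{t-1}\to\cdots\to\B_1\xrightarrow{h^{\B_1}_{U_1}}(\B_1)_{U_1}.
\]
Throughout we use that $\widetilde{\B}=\C_U/[U]$, and that by Lemma \ref{proj_functior_bij} and Remark \ref{signed_presilting=ordered_presilting} indecomposable presilting objects of $\widetilde{\C}_U$ correspond to indecomposable objects $X\in\C_U\setminus\add U$ with $X\oplus U$ presilting in $\C$.

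For (i)$\Rightarrow$(ii): by condition (a), $U_t$ is an indecomposable presilting object of $\widetilde{\B}$. Lift it via $\pi_U$ to an indecomposable $U_t\in\presilt\C_U$ with $\add U_t\cap\add U=0$. Then $h^{\B_t}_{U_t}\colon\B_t\to(\B_t)_{U_t}=\C_{U\oplus U_t}=\B_{t-1}$ is a morphism of $\Mcluster(\C)$, and it is irreducible by Lemma \ref{irreducible_mor_in_M(C)}. By condition (b), $(U_1,\dots,U_{t-1})$ is a signed presilting sequence in $\widetilde{(\widetilde{\C}_U)}_{U_t}$. By the computation in the proof of Lemma \ref{commutativity_of_piU+V}, this category is $\widetilde{\C}_{U\oplus U_t}=\widetilde{\B}_{t-1}$. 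The induction hypothesis applied to $\B_{t-1}$ then gives the remaining irreducible morphisms, and they compose with $h^{\B_t}_{U_t}$ because source and target match.

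For (ii)$\Rightarrow$(i): composability forces $\B_{i}=(\B_{i+1})_{U_{i+1}}$, so $\B_i=\C_{U\oplus U_t\oplus\cdots\oplus U_{i+1}}$. Irreducibility gives that each $U_i$ is indecomposable by Lemma \ref{irreducible_mor_in_M(C)}. The definition of morphisms gives that $U_i$ is presilting in $\B_i$ with $\add U_i$ disjoint from $\add(U\oplus U_t\oplus\cdots\oplus U_{i+1})$. Hence $\pi(U_i)$ is a nonzero indecomposable presilting object of $\widetilde{\B}_i$. Running the same identification $\widetilde{\B}_{i}\cong\widetilde{(\widetilde{\B}_{i+1})}_{U_{i+1}}$ backwards, together with induction, yields conditions (a) and (b) at each stage.

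The main obstacle is condition (a), which is not simply ``indecomposable presilting'': an injective $U_t$ must be of the form $\Sigma P_{U_t}$. In the reduced $0$-Auslander category $\widetilde{\B}$, every indecomposable injective is $\Sigma$ of an indecomposable projective by Proposition \ref{0-Auslander-properties}(iv). So this clause is automatic, and every indecomposable presilting object qualifies. A second point to check is that the composite of the $h$'s, as a single morphism $h^{\B}_{U_1\oplus\cdots\oplus U_t}$, loses the ordering. The lemma is stated as an existence of a factorization, so only the chain of irreducibles matters, and the proof must keep track of the factors rather than of the composite.
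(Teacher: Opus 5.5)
Your proposal is correct and follows essentially the same route as the paper: it peels off $U_t$ to obtain the irreducible morphism $h^{\C_U}_{U_t}\colon\C_U\to\C_{U\oplus U_t}$ via Lemma \ref{irreducible_mor_in_M(C)}, then iterates in both directions using $(\C_U)_V=\C_{U\oplus V}$ and the identification $\widetilde{\C}_{U\oplus U_t}\cong\widetilde{(\widetilde{\C}_U)}_{U_t}$. Your explicit check, that the injective clause of condition (a) is automatic because $\Sigma\colon\proj\widetilde{\B}\to\inj\widetilde{\B}$ is an equivalence, is left implicit in the paper but does not change the argument.
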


\begin{proof}
  We prove (i) implies (ii). Let $(U_1,\cdots, U_t)$ be a signed presilting sequence in $\widetilde{\C}_U$. Then, by definition, $U_t$ is indecomposable and presilting in $\widetilde{\C}_U$. Hence, $U_t$ is presilting in $\C_U$ and $\add U_t\cap\add U = 0$, and thus there is a map $h^{\C_U}_{U_t}: \C_U\to \C_{U\oplus U_t}$ in $\Mcluster(\C)$. In particular, $h^{\C_U}_{U_t}$ is irreducible by Lemma \ref{irreducible_mor_in_M(C)}. Now, $U_{t-1}$ is an indecomposable presilting object in $\widetilde{\C}_{U\oplus U_t}$. Hence, $U_{t-1}$ is presilting in $\C_{U\oplus U_t}$ and $\add U_{t-1}\cap\add(U\oplus U_t) = 0$. Thus, we obtain an irreducible morphism $h^{\C_{U\oplus U_t}}_{U_{t-1}}: \C_{U\oplus U_t}\to \C_{U\oplus U_t\oplus U_{t-1}}$, and hence the composite $h^{\C_{U\oplus U_t}}_{U_{t-1}}h^{\C_U}_{U_t}$ is well-defined in $\Mcluster(\C)$. Iterating this argument on the length of the sequence, and setting $\B_t = \C_U$ and $\B_i =(\B_{i+1})_{U_i}$ for $1\leq i \leq t-1$, we can construct irreducible morphisms $h^{\B_i}_{U_i}$ such that the composite $h^{\B_1}_{U_1}\cdots h^{\B_t}_{U_t}$ is well-defined in $\Mcluster(\C)$.

  Conversely, assume that there exist objects $\B_i$ as in (ii) such that the composite $h^{\B_1}_{U_1}\cdots h^{\B_t}_{U_t}$ is well-defined. Set $\B_t = \C_U$. Then, by definition, $U_t$ is presilting in $\C_U$ and $\add U\cap \add U_t = 0$, and hence $U_t$ is presilting in $\widetilde{\C}_U$. Moreover, since $h^{\B_t}_{U_t}$ is irreducible, Lemma \ref{irreducible_mor_in_M(C)} implies that $U_t$ is indecomposable. Next, observe that since the composite $h^{\B_{t-1}}_{U_{t-1}}h^{\B_t}_{U_t}$ is well-defined, it follows that $\B_{t-1} = \C_{U\oplus U_t}$. By definition, $U_{t-1}$ is presilting in $\C_{U\oplus U_t}$ and $\add U_{t-1}\cap \add (U\oplus U_t) = 0$. Hence, $U_{t-1}$ is presilting in $\widetilde{\C}_{U\oplus U_t}\cong \widetilde{\left(\widetilde{\C}_U\right)}_{U_t}$. Moreover, $U_{t-1}$ is indecomposable by Lemma \ref{irreducible_mor_in_M(C)} as $h^{\B_t}_{U_t}$ is irreducible. Iterating this argument, we obtain a signed presilting sequence $(U_1,\cdots, U_t)$ in $\widetilde{\C}_U$. This finishes the proof. 
\end{proof}

\begin{lemma}\label{h_V=h_U1+...+Ut}
    Let $\B$ be an object in $\Mcluster(\C)$, and let $(U_1,\cdots, U_t)$ be a signed presilting sequence in $\widetilde{\B}$. Set $\B_t = \B$ and $\B_i = {\B}_{U_{i+1}\oplus\cdots \oplus U_t}$ for $i= 1,\cdots, t-1$. Then 
    $$ h_{U_1}^{\B_1}\cdots h_{U_t}^{\B_t} = h_{U_1\oplus\cdots \oplus U_t}^{\B_t}.$$
\end{lemma}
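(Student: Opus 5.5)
The plan is induction on $t$, using only the composition law in Definition \ref{tau-cluster-morph-cat-of-C}(d) and the remark that $(\C_U)_V=\C_{U\oplus V}$.

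First check that every morphism in the composite is defined. By Lemma \ref{signed_presilt_seq&composition_of_maps_in_M(C)} and its proof, the sequence $(U_1,\cdots,U_t)$ gives irreducible morphisms $h^{\B_i}_{U_i}\colon \B_i\to(\B_i)_{U_i}$. Writing $\B=\C_U$, the remark above gives
\[
(\B_i)_{U_i}=\C_{U\oplus U_{i+1}\oplus\cdots\oplus U_t\oplus U_i}=\B_{U_i\oplus\cdots\oplus U_t}=\B_{i-1}.
\]
Hence the target of $h^{\B_i}_{U_i}$ is the source of $h^{\B_{i-1}}_{U_{i-1}}$, and the composite $h^{\B_1}_{U_1}\cdots h^{\B_t}_{U_t}$ makes sense.

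The base case $t=1$ is tautological. For $t=2$, rule (d) gives $h^{\B_1}_{U_1}\circ h^{\B_2}_{U_2}=h^{\B_2}_{U_2\oplus U_1}$, and $U_2\oplus U_1\cong U_1\oplus U_2$.

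For the inductive step, set $\B'=\B_{t-1}=\B_{U_t}$. By Definition \ref{def_signed_presilt_seq}(b), $(U_1,\cdots,U_{t-1})$ is a signed presilting sequence in $\widetilde{\C}_{U\oplus U_t}$, which is $\widetilde{\B'}$. Its associated categories are $\B'_{U_{i+1}\oplus\cdots\oplus U_{t-1}}=\B_{U_{i+1}\oplus\cdots\oplus U_t}=\B_i$. The induction hypothesis therefore gives
\[
h^{\B_1}_{U_1}\cdots h^{\B_{t-1}}_{U_{t-1}}=h^{\B_{t-1}}_{U_1\oplus\cdots\oplus U_{t-1}}.
\]
Composing on the right with $h^{\B_t}_{U_t}$ and applying (d) once more yields $h^{\B_t}_{U_t\oplus U_1\oplus\cdots\oplus U_{t-1}}=h^{\B_t}_{U_1\oplus\cdots\oplus U_t}$. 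Associativity, proved in the lemma that $\Mcluster(\C)$ is a well-defined category, makes the bracketing irrelevant.

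The only real point of care is bookkeeping. I expect no genuine obstacle, only two small identifications. First, the symbols $h_V^\B$ depend on $V$ only up to isomorphism, or equivalently through $\add V$, so $U_t\oplus W\cong W\oplus U_t$ gives the same symbol. Second, the inductive hypothesis must be applied in $\widetilde{\B'}$; this uses the iterated identification $\widetilde{\C}_{U\oplus U_t}\cong\widetilde{(\widetilde{\C}_U)}_{U_t}$ from Lemma \ref{commutativity_of_piU+V}.
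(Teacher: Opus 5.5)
Your proposal is correct and follows essentially the paper's argument. The paper simply says the identity follows from $\B_i=\B_{U_{i+1}\oplus\cdots\oplus U_t}$ together with the composition rule in Definition \ref{tau-cluster-morph-cat-of-C}(d); your induction on $t$ unpacks that iteration, and makes explicit the bookkeeping the paper leaves implicit, namely the order of direct summands and the identification $\widetilde{\C}_{U\oplus U_t}\cong\widetilde{(\widetilde{\C}_U)}_{U_t}$.
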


\begin{proof}
    This follows from the fact that $\B_i = {\B}_{U_{i+1}\oplus \cdots\oplus U_t}$ and Definition \ref{tau-cluster-morph-cat-of-C}(d).
\end{proof}

We are now ready to state and prove an extriangulated version of \cite[Proposition 11.8]{A_category_of_wide_subcategories}. 

\begin{proposition}
    Let $\B$ be an object in $\Mcluster(\C)$ and let $V\in\presilt\widetilde{\B}$ with $\abs{V} = t$. There is a bijection between the following sets. 
    \begin{itemize}
        \item[(a)] Signed presilting sequences $(U_1,\cdots, U_t)$ with $V = U_1\oplus \cdots \oplus U_t$ in $\widetilde{\B}$; 
        \item[(b)] Factorisations of $h_V^{\B}$ into composition of irreducible morphisms in $\Mcluster(\C)$. 
    \end{itemize}
\end{proposition}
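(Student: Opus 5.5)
We construct explicit maps in both directions and check that they are mutually inverse, using Lemmas \ref{irreducible_mor_in_M(C)}, \ref{signed_presilt_seq&composition_of_maps_in_M(C)} and \ref{h_V=h_U1+...+Ut}. Write $\B = \C_U$.

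\textbf{From sequences to factorisations.} Given a signed presilting sequence $(U_1,\cdots,U_t)$ in $\widetilde{\B}$ with $V = U_1\oplus\cdots\oplus U_t$, set $\B_t=\B$ and $\B_i = \B_{U_{i+1}\oplus\cdots\oplus U_t}$. By Lemma \ref{signed_presilt_seq&composition_of_maps_in_M(C)} each $h^{\B_i}_{U_i}$ is a morphism of $\Mcluster(\C)$ and the composite $h^{\B_1}_{U_1}\cdots h^{\B_t}_{U_t}$ is defined. Each factor is irreducible by Lemma \ref{irreducible_mor_in_M(C)}, since $U_i$ is indecomposable. By Lemma \ref{h_V=h_U1+...+Ut}, the composite equals $h^{\B}_{V}$. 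Hence the assignment
$$(U_1,\cdots,U_t)\mapsto \bigl(h^{\B_1}_{U_1},\cdots,h^{\B_t}_{U_t}\bigr)$$
lands in the set (b).

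\textbf{From factorisations to sequences.} Let $h^\B_V = g_1\circ\cdots\circ g_s$ be a factorisation into irreducible morphisms. Each $g_j$ has the form $h^{\B'_j}_{W_j}$ for some object $\B'_j$, and each $W_j$ is indecomposable by Lemma \ref{irreducible_mor_in_M(C)}. The composition law, Definition \ref{tau-cluster-morph-cat-of-C}(d), gives $h^\B_V = h^\B_{W_1\oplus\cdots\oplus W_s}$. Equality of these formal symbols with the same source forces $V\cong W_1\oplus\cdots\oplus W_s$. Since the morphisms are defined with $\add$ of successive pieces pairwise disjoint, the $W_j$ are pairwise non-isomorphic. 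Thus $s = \abs{V} = t$, and by Krull--Schmidt the $W_j$ are exactly the indecomposable summands of $V$, each appearing once. Lemma \ref{signed_presilt_seq&composition_of_maps_in_M(C)} then shows that $(W_1,\cdots,W_t)$ is a signed presilting sequence in $\widetilde{\B}$.

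\textbf{Mutual inverses.} The sources are forced, so $\B'_j = \B_{W_{j+1}\oplus\cdots\oplus W_t}$: composability fixes the target of each morphism as the source of the next, and each target is determined by $\B'_{j+1}$ via the Remark $(\C_U)_V = \C_{U\oplus V}$. Consequently a factorisation is uniquely determined by its ordered tuple $(W_1,\cdots,W_t)$, and the two assignments are inverse to each other.

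\textbf{Main obstacle.} The delicate point is the step from equality of morphisms to $V\cong\bigoplus W_j$. The morphisms are formal symbols, so two symbols with the same source are equal exactly when their subscripts agree up to isomorphism. This is where basicness enters: it should be justified by the condition $\add V\cap\add U=0$ and the $\add$-disjointness imposed at each composition step. Making this precise requires treating hom-sets as indexed by isoclasses of basic presilting objects, and I would state that convention explicitly before running the argument above.
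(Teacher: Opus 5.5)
Your proposal is correct and follows essentially the same route as the paper. Both directions are built from Lemmas \ref{irreducible_mor_in_M(C)}, \ref{signed_presilt_seq&composition_of_maps_in_M(C)} and \ref{h_V=h_U1+...+Ut}, and the inverse check reduces to the observation that composability forces the intermediate objects; your extra care in deducing $s=t$ (the $W_j$ are pairwise non-isomorphic by the $\add$-disjointness conditions, then apply Krull--Schmidt) and in stating the isoclass/basicness convention for the symbols $h^{\B}_V$ fills in what the paper dispatches with ``$t=t'$ by Lemma \ref{h_V=h_U1+...+Ut}'' and ``one easily checks''.
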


\begin{proof}
    Given a sequence $(U_1,\cdots,U_t)$ as in (a), set $\B_t = \B$ and $\B_i = {(\B_{i+1})}_{U_{i+1}}$. Lemma \ref{signed_presilt_seq&composition_of_maps_in_M(C)} implies that the composite $h_{U_1}^{\B_1}\cdots h_{U_t}^{\B_t}$ is well-defined. Lemma \ref{h_V=h_U1+...+Ut} gives $h_{U_1}^{\B_1}\cdots h_{U_t}^{\B_t} = h^{\B}_{U_1\oplus\cdots\oplus U_t}$ where each $h_{U_i}^{\B_i}$ is irreducible by Lemma \ref{irreducible_mor_in_M(C)}. 

    Conversely, let $h_V^{\B} = h^{\B_1}_{U_1}\cdots h_{U_{t'}}^{\B_t}$ be a factorisation of $h_V^{\B}$ into irreducible morphisms in $\Mcluster(\C)$. Then, $t = t'$ by Lemma \ref{h_V=h_U1+...+Ut}. Each $U_i$ is indecomposable by Lemma \ref{irreducible_mor_in_M(C)}. Since $h^{\B_1}_{U_1}\cdots h_{U_t}^{\B_t}$ is well-defined, Lemma \ref{signed_presilt_seq&composition_of_maps_in_M(C)} gives a signed presilting sequence $(U_1,\cdots, U_t)$ in $\B$ with $V = U_1\oplus \cdots \oplus U_t$ by Lemma \ref{h_V=h_U1+...+Ut}. One easily checks that these constructions are inverse of each other obtaining the desired bijection. 
\end{proof}

\section{Recovering the $\tau$-cluster morphism category of $\Lambda$}\label{sec:tau-cluster-of-Lambda}

The main aim of this section is to reconstruct the $\tau$-cluster morphism category of $\Lambda = \End_\C(P)$ from the $\tau$-cluster morphism category of $\C$. We recall the definition of the $\tau$-cluster morphism category of a finite dimensional algebra from \cite{tau-perpendicular_wide_subategories}. 

\begin{definition}[{\cite[Def. 6.1]{tau-perpendicular_wide_subategories}}]\label{def_tau-cluster_morph_cat}
    Let $\Lambda$ be a finite dimensional algebra. The \emph{$\tau$-cluster morphism category} of $\Lambda$, denoted as $\Mcluster(\Lambda)$, consists of the following data. 
    \begin{enumerate}
        \item[(a)] The objects of $\Mcluster(\Lambda)$ are the $\tau$-perpendicular subcategories of $\modd\Lambda$. 
        \item[(b)] For a $\tau$-perpendicular subcategory $\W\subseteq \modd{\Lambda}$ and $U\in \C(\W)$ support $\tau$-rigid and basic, define a formal symbol $g_U^\W$. 
        \item[(c)]  Given $\W_1, \W_2$ two $\tau$-perpendicular subcategories of $\modd{\Lambda}$, we define 
        $$\Hom_{\Mcluster(\Lambda)}(\W_1,\W_2) = {\left\{ g_U^{\W_1} \;\middle|\; \begin{tabular}{@{}l@{}} U \text{ is a basic support $\tau$-rigid object in } $\C(\W_1)$ \\ \text{and} $\W_2 = J_{\W_1}(U)$ \end{tabular} \right\}}.$$
        In particular: 
        \begin{enumerate}
            \item[(i)] If $\W_1 \not\supseteq \W_2$, then $\Hom_{\Mcluster(\Lambda)}(\W_1,\W_2) = \emptyset$;
            \item[(ii)] $\Hom_{\Mcluster(\Lambda)}(\W_1,\W_1) = \left\{g_0^{\W_1}\right\}$. 
        \end{enumerate}
        \item[(d)]  Given $g_U^{\W_1}:\W_1\to \W_2$ and $g_V^{\W_2}: \W_2\to \W_3$ in $\Mcluster(\Lambda)$, denote $\widetilde{V}:= (\mathcal{E}_U^{\W_1})^{-1}(V)$. We define the composition to be 
        $$ g_V^{\W_2}\circ g_U^{\W_1}:= g_{U\oplus\widetilde{V}}^{\W_1}.$$
    \end{enumerate}
\end{definition}

We construct a functor from $\Mcluster(\C)$ to $\Mcluster(\Lambda)$ as follows. 

\begin{proposition}\label{functor-tau-cluster-morph-cat}
    There is a functor $F: \Mcluster(\C)\to \Mcluster(\Lambda)$ given by
    $$ {\C}_U\mapsto J(H_P(U))$$
    on objects, and 
    $$h_{V}^{{\C}_U} \mapsto g^{J(H_P(U))}_{\widetilde{H}_{B_U}(V)}$$
    on morphisms. 
\end{proposition}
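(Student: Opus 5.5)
We need to check four things: $F$ is well defined on objects, it is well defined on morphisms (correct source and target), it preserves identities, and it preserves composition.

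\textbf{Objects.} If $U\in\presilt\C$, then $H_P(U)$ is support $\tau$-rigid by Proposition \ref{presilting_supp.tau-rigid}. Hence $J(H_P(U))$ is a $\tau$-perpendicular subcategory. The difficulty is that the same subcategory $\C_U$ may come from different presilting objects, so we must show $J(H_P(U))$ depends only on $\C_U$. I would recover $\add U$ from $\C_U$ as $\proj\C_U\cap\inj\C_U=\add B_U\cap\add C_U=\add U$ (Definition-Proposition \ref{C_U_is_0-Auslander} and \cite[Lemma 4.12]{PanZhu_SiltingReduction}). Taking basic objects, $U$ is then determined by $\C_U$, and so is $J(H_P(U))$.

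\textbf{Morphisms.} Let $h_V^{\C_U}\colon\C_U\to\C_{U\oplus V}$ with $\add V\cap\add U=0$. Then $\pi_U(V)$ is presilting in $\widetilde{\C}_U$. By Lemma \ref{H_tilda}, $\widetilde H_{B_U}(V)$ is a basic support $\tau$-rigid object in $\CLambda(J(H_P(U)))$; basicness holds because $\widetilde H_{B_U}$ is additive and bijective on indecomposables. For the target, Theorem \ref{compatibility_thm} applied summandwise gives $\widetilde H_{B_U}(V)=\Eps_{H_P(U)}(H_P(V))$. Then \cite[Thm. 4.3]{tau-perpendicular_wide_subategories} (the same step as in Lemma \ref{J(H_P)=J(HTilde)}) yields
\[
J_{J(H_P(U))}\bigl(\widetilde H_{B_U}(V)\bigr)=J(H_P(U)\oplus H_P(V))=J(H_P(U\oplus V)),
\]
which is $F(\C_{U\oplus V})$. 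Applying $\Eps$ summandwise needs a little care when $V$ is decomposable. I would cite \cite{A_category_of_wide_subcategories}: $\Eps_U$ extends additively to basic objects with $U\oplus V$ support $\tau$-rigid.

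\textbf{Identities and composition.} Identities are immediate: $\widetilde H_{B_U}(0)=0$, so $h_0\mapsto g_0$. For composition, take $h_V^{\C_U}$ followed by $h_W^{\C_{U\oplus V}}$, which composes to $h_{V\oplus W}^{\C_U}$. Its image is $g^{J(H_P(U))}_{\widetilde H_{B_U}(V\oplus W)}$. The composite of the images is $g^{J(H_P(U))}_{\widetilde H_{B_U}(V)\oplus\widetilde V'}$, where
\[
\widetilde V'=\bigl(\Eps^{J(H_P(U))}_{\widetilde H_{B_U}(V)}\bigr)^{-1}\bigl(\widetilde H_{B_{U\oplus V}}(W)\bigr).
\]
It therefore suffices to show $\Eps^{J(H_P(U))}_{\widetilde H_{B_U}(V)}(\widetilde H_{B_U}(W))=\widetilde H_{B_{U\oplus V}}(W)$. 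Rewriting with Theorem \ref{compatibility_thm}, the left side is $\Eps^{J(H_P(U))}_{\Eps_{H_P(U)}(H_P(V))}\Eps_{H_P(U)}(H_P(W))$. By \cite[Thm. 6.12]{tau-perpendicular_wide_subategories} this equals $\Eps_{H_P(U\oplus V)}(H_P(W))$, and applying Theorem \ref{compatibility_thm} once more for $U\oplus V$ gives $\widetilde H_{B_{U\oplus V}}(W)$. This is exactly the computation done in the proof of Theorem \ref{xi-H_P-varphi}.

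The main obstacle is the well-definedness on objects, that is, showing $\C_U$ determines $\add U$. The rest is the bookkeeping in the composition step: confirming that Theorem \ref{compatibility_thm}, which is stated for indecomposables, extends additively to the decomposable objects $V$ and $W$.
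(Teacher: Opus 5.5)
Your proposal is correct and follows the paper's proof. The target identification $J_{J(H_P(U))}(\widetilde{H}_{B_U}(V)) = J(H_P(U\oplus V))$ is the argument behind Lemma \ref{J(H_P)=J(HTilde)}, which the paper cites. Your composition step is the paper's computation via Theorem \ref{compatibility_thm} and Buan--Hanson's Theorem 6.12, run in the forward direction: you show $\Eps^{J(H_P(U))}_{\widetilde H_{B_U}(V)}(\widetilde H_{B_U}(W)) = \widetilde H_{B_{U\oplus V}}(W)$ and use additivity of $\widetilde H_{B_U}$, while the paper computes the preimage and uses additivity of $\Eps_{H_P(U)}$.

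Your extra check that $\C_U$ determines $\add U = \proj\C_U\cap\inj\C_U = \add B_U\cap\add C_U$ is valid and worth including. The paper's proof passes over this point, since it only invokes Proposition \ref{presilting_supp.tau-rigid} for objects, which shows that $J(H_P(U))$ is an object of $\Mcluster(\Lambda)$ but not that it is independent of the choice of $U$.
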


\begin{proof}
    $F$ is well-defined on objects by Proposition  \ref{presilting_supp.tau-rigid}. Let $h_V^{{\C}_U}: {\C}_U \to {\left({\C}_U\right)}_V$ be a morphism in $\Mcluster(\C)$. Notice that
    \begin{align*}
        F\left({\left({\C}_U\right)}_V\right) = F\left({\C}_{U\oplus V}\right) = J(H_P(U\oplus V)).
    \end{align*}
    Moreover,
    $$F\left(h_V^{{\C}_U}\right) = g_{\widetilde{H}_{B_U}(V)}^{J(H_P(U))}: J(H_P(U))\to J_{J(H_P(U))}\left(\widetilde{H}_{B_U}(V)\right),$$
    where 
    $$J_{J(H_P(U))}\left(\widetilde{H}_{B_U}(V)\right) = J(H_P(U\oplus V))$$ 
    by Lemma \ref{J(H_P)=J(HTilde)}. Hence, $F$ is well-defined on morphisms. Clearly, $F\left(h_0^{{\C}_U}\right) = g_0^{J(H_P(U))}$, and thus the identity morphism is preserved. It remains to show that $F$ preserves composition.  So let $h_V^{\C_U}: {\C}_U\to {\C}_{U\oplus V}$ and $h_W^{{\C}_{U\oplus V}}: {\C}_{U\oplus V}\to {\C}_{U\oplus V\oplus W}$ be composable morphisms in $\Mcluster(\C)$. Then, 
    \begin{align*}
        F\left(h_{W}^{{\C}_{U\oplus V}}\right)\circ F\left( h_V^{{\C}_U}\right) &= g^{J(H_P(U\oplus V))}_{\widetilde{H}_{B_{U\oplus V}}(W)}\circ g^{J(H_P(U)))}_{\widetilde{H}_{B_U}(V)} &&\text{by definition of $F$}\\[3pt]
        &= g^{J(H_P(U))}_{\widetilde{H}_{B_U}(V)\oplus \left( \Eps ^{J(H_P(U))}_{\widetilde{H}_{B_U}(V)}\right)^{-1}\left(\widetilde{H}_{B_{U\oplus V}}(W)\right)} &&\text{by Definition \ref{def_tau-cluster_morph_cat}(d)}.
    \end{align*}
    We compute the second direct summand of the subscript of the above expression. 
    \begin{align*}
        \left( \Eps ^{J(H_P(U))}_{\widetilde{H}_{B_U}(V)}\right)^{-1}\left(\widetilde{H}_{B_{U\oplus V}}(W)\right) &= \left( \Eps ^{J(H_P(U))}_{\widetilde{H}_{B_U}(V)}\right)^{-1}\left( \Eps_{H_P(U\oplus V)}(H_P(W))\right) &&\text{by Theorem \ref{compatibility_thm}}\\
        &= \left( \Eps ^{J(H_P(U))}_{\widetilde{H}_{B_U}(V)}\right)^{-1}\left(\Eps_{H_P(U)\oplus H_P(V)}(H_P(W))\right) &&\text{$H_P$ is additive}\\
        &= \left( \Eps ^{J(H_P(U))}_{\widetilde{H}_{B_U}(V)}\right)^{-1}\left( \Eps^{J(H_P(U))}_{\Eps_{H_P(U)}(H_P(V))}\left(\Eps_{H_P(U)}(H_P(W))\right)\right) &&\text{by \cite[Theorem 6.12]{tau-perpendicular_wide_subategories}}\\
        &= \left( \Eps ^{J(H_P(U))}_{\Eps_{H_P(U)}(H_P(V))}\right)^{-1}\left( \Eps^{J(H_P(U))}_{\Eps_{H_P(U)}(H_P(V))}\left(\Eps_{H_P(U)}(H_P(W))\right)\right) &&\text{by Theorem \ref{compatibility_thm}}\\[3pt]
        &= \Eps_{H_P(U)}(H_P(W)).
    \end{align*}
    Hence, we obtain 
    \begin{align*}
        F\left(h_{W}^{{\C}_{U\oplus V}}\right)\circ F\left( h_V^{{\C}_U}\right) &= g^{J(H_P(U))}_{\widetilde{H}_{B_U}(V)\oplus \Eps_{H_P(U)}(H_P(W))}&&\text{by the above computation}\\[3pt]
        &= g^{J(H_P(U))}_{\Eps_{H_P(U)}(H_P(V))\oplus \Eps_{H_P(U)}(H_P(W))} &&\text{by Theorem \ref{compatibility_thm}}\\
        &= g^{J(H_P(U))}_{\Eps_{H_P(U)}(H_P(V\oplus W))} &&\text{$\Eps_{H_P(U)}$ is additive}\\[3pt]
        &= g^{J(H_P(U))}_{\widetilde{H}_{B_U}(V\oplus W)} &&\text{by Theorem \ref{compatibility_thm}}.
    \end{align*}
    On the other hand, 
    \begin{align*}
        F\left(h_{W}^{{\C}_{U\oplus V}}\circ  h_V^{{\C}_U}\right) &= F\left( h^{{\C}_U}_{V\oplus W}\right)&&\text{by Definition \ref{tau-cluster-morph-cat-of-C}(d)}\\[3pt]
        &= g^{J(H_P(U))}_{\widetilde{H}_{B_U}(V\oplus W)} &&\text{by definition of $F$}\\[3pt]
        &= F\left(h_{W}^{{\C}_{U\oplus V}}\right)\circ F\left( h_V^{{\C}_U}\right) &&\text{by the above computation}.
    \end{align*}
    Thus, $F$ preserves composition, and the claim follows. 
\end{proof}

Recall that a functor $G: \mathcal{C}\to\mathcal{D}$ is a \textit{discrete fibration} if for all objects $C\in\mathcal{C}$ and every morphism $g: G(C)\to D$ in $\mathcal{D}$, there exists a unique morphism $f: C\to C'$ in $\mathcal{C}$ such that $G(f) = g$. 

The following is the main result of this section. 

\begin{theorem}\label{F_dense_faithful_discrete_fibration}
    The functor $F:\Mcluster(\C)\to \Mcluster(\Lambda)$ from Proposition \ref{functor-tau-cluster-morph-cat} is dense, faithful, and a discrete fibration. 
\end{theorem}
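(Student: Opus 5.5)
I would prove the three properties separately, using throughout the bijections $H_P$ (Proposition \ref{presilting_supp.tau-rigid}), $\widetilde{H}_{B_U}$ (Lemma \ref{H_tilda}) and the identity $J_{J(H_P(U))}(\widetilde{H}_{B_U}(V)) = J(H_P(U\oplus V))$ from Lemma \ref{J(H_P)=J(HTilde)}.

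\emph{Density.} I would show that every $\tau$-perpendicular subcategory $\W$ of $\modd\Lambda$ is of the form $J(T)$ for some basic support $\tau$-rigid $T\in\CLambda(\Lambda)$; this holds by the definition of $\tau$-perpendicular subcategories in \cite{tau-perpendicular_wide_subategories}. By Proposition \ref{presilting_supp.tau-rigid}(i) we have $T = H_P(U)$ for a presilting object $U\in\C$, and then $F(\C_U) = J(H_P(U)) = \W$. So $F$ is in fact surjective on objects, which gives density.

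\emph{Discrete fibration.} Fix an object $\C_U$ and a morphism $g = g^{J(H_P(U))}_{Z}\colon J(H_P(U))\to \W'$, where $Z$ is a basic support $\tau_{J(H_P(U))}$-rigid object. Since $\widetilde{H}_{B_U}$ is a bijection from $\presilt\widetilde{\C}_U$ onto support $\tau$-rigid objects of $\CLambda(J(H_P(U)))$, there is a unique basic $V\in\presilt\widetilde{\C}_U$ with $\widetilde{H}_{B_U}(V) = Z$. Lifting $V$ along $\pi_U$ (Lemma \ref{proj_functior_bij}) gives a unique presilting object of $\C_U$ with $\add V\cap\add U = 0$. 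Then $h_V^{\C_U}\colon \C_U\to\C_{U\oplus V}$ is a morphism with $F(h_V^{\C_U}) = g$, and its target maps to $J(H_P(U\oplus V)) = J_{J(H_P(U))}(Z) = \W'$. For uniqueness, take any $h_{V'}^{\C_U}$ with $F(h_{V'}^{\C_U}) = g$. Equality of the formal symbols forces $\widetilde{H}_{B_U}(V') = Z = \widetilde{H}_{B_U}(V)$, and injectivity gives $V' = V$.

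\emph{Faithfulness.} Two parallel morphisms $h_V^{\C_U}, h_{V'}^{\C_U}\colon \C_U\to\B_2$ with the same image are morphisms out of the same source that map to the same morphism. By the uniqueness part of the discrete fibration argument, $V = V'$.

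The main obstacle is a subtlety in the bookkeeping: a symbol $h_V^{\B}$ is indexed by an object $\B$ of $\Mcluster(\C)$, and the same subcategory $\C_U$ might arise from different $U$. I expect to check first that $\C_U$ determines $\add U$, as the intersection $\proj\C_U\cap\inj\C_U = \add U$ (the argument in the proof of Lemma \ref{Bongartz_co-Bongartz_Lemma}, via \cite[Lemma 4.12]{PanZhu_SiltingReduction}). This makes $F$ well defined on objects and makes $\widetilde{H}_{B_U}$ depend only on the object $\C_U$. With that settled, the remaining steps reduce to the bijectivity statements already established.
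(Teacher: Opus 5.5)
Your proof is correct, but for the discrete-fibration and faithfulness parts it follows a different and more direct route than the paper. Density is argued the same way in both.

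\emph{Faithfulness in the paper.} The paper factors $h$ and $h'$ into irreducible morphisms and turns these factorisations into signed presilting sequences in $\widetilde{\C}_U$. It pushes them through $\xi_t$ and uses \cite[Proposition 11.8]{A_category_of_wide_subcategories} to write $F(h)=g_{\overline{\varphi}_t(\xi_t(U_1,\dots,U_t))}$. It then concludes from the injectivity of $\overline{\varphi}_t$ and $\xi_t$.

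\emph{Discrete fibration in the paper.} The paper factors $g$ into irreducibles and takes the corresponding signed $\tau$-exceptional sequence. It lifts that sequence along $\xi_t$, reassembles $h=h^{\C_U}_{X_1\oplus\cdots\oplus X_t}$, and checks $F(h)=g$ factor by factor.

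\emph{Your route.} You observe that, by definition, $F(h^{\C_U}_V)=g^{J(H_P(U))}_{\widetilde{H}_{B_U}(V)}$ for arbitrary, not necessarily indecomposable, $V$. So existence and uniqueness of lifts both reduce to $\widetilde{H}_{B_U}$ being a bijection on all of $\presilt\widetilde{\C}_U$, which is Lemma \ref{H_tilda}. Faithfulness is then the formal fact that every discrete fibration is faithful.

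\emph{Comparison.} The paper's route shows explicitly how factorisations in $\Mcluster(\C)$ match factorisations in $\Mcluster(\Lambda)$ via $\xi_t$, which fits the paper's theme. Your route is shorter and more robust in three ways:
\begin{itemize}
\item You prove uniqueness of the lift explicitly, whereas the paper only exhibits a lift.
\item You avoid choosing factorisations into irreducibles, which are far from unique.
\item You avoid relying on injectivity of $\overline{\varphi}_t$; as a sum of entries, that map forgets the ordering of a sequence.
\end{itemize}

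Your preliminary check that $\add U=\proj\C_U\cap\inj\C_U$ is determined by the subcategory $\C_U$ is a point the paper leaves implicit. It is exactly what makes the object assignment of $F$, the hom-sets of $\Mcluster(\C)$ (through the condition $\add V\cap\add U=0$), and the map $\widetilde{H}_{B_U}$ depend only on the object $\C_U$ rather than on a chosen $U$.
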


\begin{proof}
    We start by proving that the functor $F$ is dense. So, let $\W$ be an object in $\Mcluster(\Lambda)$. Then, $\W = J(X)$ for a support $\tau$-rigid object $X\in\CLambda(\Lambda)$. By Proposition \ref{presilting_supp.tau-rigid}, $X$ is isomorphic to $H_P(U)$ for a presilting object $U$ in $\C$, and thus $J(X) = J(H_P(U))$.  Hence, by definition, $F(\C_U) = \W$, proving $F$ is dense. 

    To prove that $F$ is faithful, let $h,h'\in \Hom_{\Mcluster(\C)}\left({\C}_U,{\C}_{U\oplus V}\right)$ be such that $F(h) = g = F(h')$ in $\Hom_{\Mcluster(\Lambda)}(J(H_P(U)), J(H_P(U\oplus V))$. Decompose $h$ and $h'$ into irreducible morphisms 
    $$h = h^{\B_1}_{U_1}\cdots h^{\B_t}_{U_t} \quad \text{ and } \quad h' = h^{\B_1'}_{U'_1}\cdots h^{\B'_t}_{U_t'},$$
    where $\B_t = {\C}_U = \B_t'$. Then $(U_1,\cdots,U_t)$ and $(U_1',\cdots, U_t')$ are signed presilting sequences in $\widetilde{\C}_U$ by Lemma \ref{irreducible_mor_in_M(C)}. Applying Proposition \ref{signed_presilt_seq-signed_tau-exc_seq_bijection} in the reduced 0-Auslander extriangulated category $\widetilde{\C}_U$, we obtain 
    \begin{align*}
        \xi_t(U_1,\cdots, U_t) = \left( \widetilde{H}_{B_{U\oplus \bigoplus_{i=2}^{t-1}U_i}}(U_1),\cdots, \widetilde{H}_{B_{U\oplus U_t}}(U_{t-1}), \widetilde{H}_{B_U}(U_t)\right).
    \end{align*}
    Then, 
    $$F(h) = F\left(h^{\B_1}_{U_1}\right)\cdots F\left(h^{\B_t}_{U_t}\right) = g^{J(H_P(U\oplus \bigoplus_{i=2}^t U_i))}_{\widetilde{H}_{B_{U\oplus \bigoplus_{i=2}^t U_i}}(U_1)}\cdots g^{J(H_P(U))}_{\widetilde{H}_{B_U}(U_t)} = g_{\overline{\varphi}_t(\xi_t(U_1,\cdots, U_t))}^{J(H_P(U))},$$ 
    where the last equality follows from \cite[Proposition 11.8]{A_category_of_wide_subcategories}. Here  $\overline{\varphi}_t(\xi_t(U_1,\cdots, U_t))$ denotes the sum of the entries of the support $\tau$-rigid objects $\varphi_t(\xi_t(U_1,\cdots, U_t))$; see \cite{tauExcSeq_BM}. Similarly, one gets 
    $$F(h') = F\left(h^{\B_1'}_{U_1'}\right)\cdots F\left(h^{\B_t'}_{U_t'}\right) = g^{J(H_P(U\oplus \bigoplus_{i=2}^t U_i'))}_{\widetilde{H}_{B_{U\oplus \bigoplus_{i=2}^t U_i'}}(U_1')}\cdots g^{J(H_P(U))}_{\widetilde{H}_{B_U}(U_t')} = g_{\overline{\varphi}_t(\xi_t(U'_1,\cdots, U'_t))}^{J(H_P(U))}.$$
    The assumption $F(h) = g = F(h')$ implies 
    \begin{align*}
       \overline{\varphi}_t(\xi_t(U_1,\cdots, U_t)) &=  \overline{\varphi}_t\left(\widetilde{H}_{B_{U\oplus \bigoplus_{i=2}^t U_i}}(U_1), \cdots, \widetilde{H}_{B_U}(U_t)\right) \\
       &= \overline{\varphi}_t\left(\widetilde{H}_{B_{U\oplus \bigoplus_{i=2}^t U'_i}}(U'_1), \cdots, \widetilde{H}_{B_U}(U'_t)\right) \\
        &= \overline{\varphi}_t(\xi_t(U_1',\cdots, U_t')),
    \end{align*}
    and thus $\xi_t(U_1,\cdots,U_t) = \xi_t(U_1',\cdots, U_t')$ by the injectivity of $\overline{\varphi}_t$. Since $\xi_t$ is bijective, it follows that $(U_1,\cdots, U_t) = (U_1',\cdots, U_t')$ which implies $h = h'$ by Lemma \ref{h_V=h_U1+...+Ut}, and thus $F$ is faithful.  

    It remains to show that $F$ is a discrete fibration. Let $\C_U$ be an object in $\Mcluster{(\C)}$ and let $g: F(\C_U)\to \W$ be a morphism in $\Mcluster{(\Lambda)}$. Then $F(\C_U)= J(H_P(U))$ and $g = g_\V^{J(H_P(U))}: J(H_P(U))\to J_{J(H_P(U))}(\V)$, where $\V = \widetilde{H}_{B_U}(V)$ for a presilting object $V$ in $\widetilde{\C}_U$ by Lemma \ref{H_tilda}. Notice that $J_{J(H_P(U))}\left(\widetilde{H}_{B_U}(V)\right) = J(H_P(U\oplus V))$ by Lemma \ref{J(H_P)=J(HTilde)}. Let $$g = g^{J(H_P(U))}_{\widetilde{H}_{B_U}(V)}= g^{\W_1}_{\U_1}\cdots g_{\U_t}^{\W_t},$$
    be a decomposition of $g$ into irreducible maps with $\W_t = J(H_P(U))$ and $\W_i = J_{\W_{i+1}}(\U_{i+1})$ for $i= 1,\cdots, t-1$. By \cite[Proposition 11.8]{A_category_of_wide_subcategories}, this decomposition corresponds to a signed $\tau$-exceptional sequence $\U = (\U_1, \cdots, \U_t)$ with $\overline{\varphi}_t(\U) = \widetilde{H}_{B_U}(V)$. Proposition~\ref{signed_presilt_seq-signed_tau-exc_seq_bijection} gives the existence of a unique signed presilting sequence $\X = (X_1,\dots,X_t)$ in $\widetilde{\C}_U$ such that $\xi_t(\X)=\U$. This yields a morphism $ h = h^{{\C}_U}_{X_1\oplus\cdots\oplus X_t}$ in $\Mcluster(\C)$ together with a decomposition \[ h = h^{{\C}_U}_{X_1\oplus\cdots\oplus X_t} = h^{\B_1}_{X_1}\cdots h^{\B_t}_{X_t} \] into irreducible maps, where $\B_t={\C}_U$ and $ \B_i={\C}_{U\oplus X_t\oplus\cdots\oplus X_{t-i+1}}$ for $i=1,\dots,t-1$. By definition of $\xi_t$ applied in the reduced 0-Auslander extriangulated category $\widetilde{\C}_U$, we have 
    \begin{align*}
        \xi_t(\X) = \left( \widetilde{H}_{B_U\oplus \bigoplus_{i=2}^{t-1}X_i}(X_1),\cdots, \widetilde{H}_{B_U\oplus X_t}(X_{t-1}), \widetilde{H}_{B_U}(X_t)\right)
        = \left(\U_1,\cdots, \U_{t-1}, \U_t\right).
    \end{align*}
    Using Lemma \ref{J(H_P)=J(HTilde)} and the definition of $F$, we inductively obtain
    \begin{align*}
        &F\left(h_{X_t}^{\B_t}\right) = F\left( h_{X_t}^{{\C}_U}\right) = g^{J(H_P(U))}_{\widetilde{H}_{B_U}(X_t)} = g^{J(H_P(U)))}_{\U_t};\\
        &F\left( h_{X_{t-1}}^{\B_{t-1}}\right) = F\left( h_{X_{t-1}}^{{\C}_{U\oplus X_t}}\right) = g^{J(H_P(U\oplus X_t))}_{\widetilde{H}_{B_{U\oplus X_t}}(X_{t-1})} = g^{J_{J(H_P(U))}(\widetilde{H}_{B_U}(X_t))}_{\widetilde{H}_{B_{U\oplus X_t}(X_{t-1})}} = g_{\U_{t-1}}^{\W_{t-1}};\\
        &\quad\vdots\\
        &F\left( h_{X_{1}}^{\B_{1}}\right) = F\left( h_{X_{1}}^{{\C}_{U\oplus \bigoplus _{i=2}^{t} X_i}}\right) = g^{J(H_P(U\oplus \bigoplus _{i=2}^{t} X_i))}_{\widetilde{H}_{B_{U\oplus \bigoplus _{i=2}^{t} X_i}}(X_{1})} = g^{J_{J(H_P(U\oplus \bigoplus _{i=2}^{t} X_i))}(\widetilde{H}_{B_{U\oplus \bigoplus _{i=2}^{t} X_i}}(X_1))}_{\widetilde{H}_{B_{U\oplus \bigoplus _{i=2}^{t} X_i}(X_{1})}} = g_{\U_{1}}^{\W_{1}}.
    \end{align*}
    Hence, 
    \begin{align*}
        F\left( h^{{\C}_U}_{\bigoplus_{i=1}^{t}X_i}\right) = F\left( h^{\B_1}_{X_1}\cdots h^{\B_t}_{X_t}\right) = F\left(h^{\B_1}_{X_1}\right)\cdots F\left(h^{\B_t}_{X_t}\right) = g^{\W_1}_{\U_1}\cdots g_{\U_t}^{\W_t} = g^{J(H_P(U))}_{\widetilde{H}_{B_U}(V)} = g
    \end{align*}
    with $\overline{\varphi}_t(\xi_t(\X)) = \overline{\varphi}_t(\U) = \widetilde{H}_{B_U}(V)$. This shows that $F$ is a discrete fibration and concludes the proof. 
\end{proof}

\begin{remark}
    The functor $F: \Mcluster{(\C)\to \Mcluster(\Lambda)}$ is not full in general. Indeed, consider $\Lambda = k(\bullet)$ and take $\C$ to be $K^{[-1,0]}(\proj\Lambda)$. Then, the $\tau$-cluster morphism category of $\C$ and the $\tau$-cluster morphism category of $\Lambda$ are given by 
     \begin{equation*}
     \begin{tikzcd}[ampersand replacement=\&,cramped]
    	\& \C \\
    	{\C_P} \&\& {\C_{\Sigma P}}
    	\arrow["P"', from=1-2, to=2-1]
    	\arrow["{\Sigma P}", from=1-2, to=2-3]
    \end{tikzcd}
    \text{and} \qquad
     \begin{tikzcd}[ampersand replacement=\&,cramped]
    	{\modd\Lambda} \\
    	0
    	\arrow["P"', shift right, from=1-1, to=2-1]
    	\arrow["{P[1]}", shift left, from=1-1, to=2-1]
    \end{tikzcd}
    \end{equation*}
    respectively. Notice that, since there is a non-split $\EE$-triangle $P\infl 0\defl \Sigma P$, we always have $\C_P\neq \C_{\Sigma P}$. Clearly, the map 
    $$\Hom_{\Mcluster(\C)}(\C, \C_{\Sigma P})\to \Hom_{\Mcluster{\Lambda}}(F(\C), F(\C_{\Sigma P})) =  \Hom_{\Mcluster(\Lambda)}(\modd\Lambda, 0)$$
    is not surjective. In other words, $F$ is not full.
\end{remark}

Our next goal is to reconstruct the $\tau$-cluster morphism category $\Mcluster(\Lambda)$ from $\Mcluster(\C)$ and the functor $F: \Mcluster(\C)\to \Mcluster(\Lambda)$. To do this, we construct a quotient of the $\tau$-cluster morphism of $\C$, denoted by ${\Mcluster(\C)}/{\sim}$, given by an equivalence relation on the objects and arrows of $\Mcluster(\C)$ that identifies objects and morphisms that are mapped to the same image in $\Mcluster(\Lambda)$.

\begin{definition}\label{congruence_cat_def}
    Let $\mathcal{C}, \mathcal{D}$ be small categories and let $F: \mathcal{C}\to \mathcal{D}$ be a discrete fibration.
    \begin{itemize}
        \item Define an equivalence relation $\sim$ on the objects of $\mathcal{C}$ as follows. For $x, y\in \mathcal{C}$, we set $x\sim y$ if and only if $F(x) = F(y)$. We denote the equivalence class of the object $c$ in ${\mathcal{C}}/{\sim}$ by $[x]$.
        \item For $x,y\in\mathcal{C}$, define an equivalence relation $\sim$ on 
        $$\bigsqcup_{\substack{x', y' \in \mathcal{C} \\ F(x') =  F(x),\; F(y') = F(y)}} \mathrm{Hom}_{\mathcal{C}}(x', y')$$
        by setting $f\sim g$ if and only if $F(f) = F(g)$. We denote the equivalence class of $f$ by $[f]$.
    \end{itemize}
    The category ${\mathcal{C}}/{\sim}$ consists of the following data. 
    \begin{itemize}
        \item[(a)] The objects of ${\mathcal{C}}/{\sim}$ are given by $\text{ob}(\mathcal{C})/{\sim}$. 
        \item[(b)] Given two objects in $[x]$ and $[y]$ in ${\mathcal{C}}/{\sim}$, define 
        $$\Hom_\mathcal{{\mathcal{C}/{\sim}}}([x],[y]) = \left(\bigsqcup_{\substack{x', y' \in \mathcal{C} \\ F(x') = F(x),\; F(y') = F(y)}} \mathrm{Hom}_{\mathcal{C}}(x', y')\right) \Big/ {\sim}$$
        For an object $[x]$ in ${\mathcal{C}}/{\sim}$ define $\id_{[x]} = [\id_x]$. 
        \item[(c)] Let $\alpha \colon X \to Y$ and $\beta \colon Y \to Z$ be composable morphisms in ${\mathcal{C}}/{\sim}$. Then there exist morphisms $f \colon x \to y$ and $g \colon y' \to z$ in $\mathcal{C}$ such that $[f] = \alpha$, $[g] = \beta$, and $F(y) = F(y')$. Since $F$ is a discrete fibration, there exists a unique lift $\overline{g} \colon y \to z'$ such that $F(\overline{g}) = F(g)$. We then define the composition by
        \[
        \beta \circ \alpha := [\overline{g} \circ f].
        \]
    \end{itemize}
\end{definition}

\begin{lemma}
    Let $F:\mathcal{C}\to\mathcal{D}$ be a discrete fibration. The category $\mathcal{C}/{\sim}$ from Definition \ref{congruence_cat_def} is well-defined. 
\end{lemma}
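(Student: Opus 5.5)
We must check that the data in Definition \ref{congruence_cat_def} form a category: the relations are equivalence relations, composition is independent of the chosen representatives, identities are neutral, and composition is associative. The first point is immediate, since both relations are pullbacks of equality along $F$ (on objects and on morphisms, respectively). Throughout, the organising principle is that every construction should be transported to $\mathcal{D}$ via $F$. There, equality is what determines classes, and functoriality of $F$ handles all the bookkeeping.

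First we must make sure the composition in (c) is defined at all. Let $[f]\colon [x]\to[y]$ with $f\colon x'\to y''$, and $[g]\colon[y]\to[z]$ with $g\colon y'\to z''$, where $F(y'')=F(y)=F(y')$. By the discrete fibration property applied at the object $y''$ to the morphism $F(g)\colon F(y'')=F(y')\to F(z'')$, there is a unique $\overline g\colon y''\to z'''$ with $F(\overline g)=F(g)$. In particular $F(z''')=F(z'')=F(z)$, so $[\overline g\circ f]$ is indeed a morphism $[x]\to[z]$.

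The key step is independence of representatives. Suppose $f_1\sim f_2$ and $g_1\sim g_2$, and let $\overline{g}_1,\overline{g}_2$ be the corresponding lifts. Then
\[F(\overline g_i\circ f_i)=F(\overline g_i)\circ F(f_i)=F(g_i)\circ F(f_i),\]
and the right-hand side does not depend on $i$, because $F(f_1)=F(f_2)$ and $F(g_1)=F(g_2)$. Hence $\overline g_1 f_1\sim\overline g_2 f_2$. Note that this uses only functoriality; the uniqueness in the fibration property is needed merely so that the recipe in (c) is a genuine function rather than a choice. The same calculation shows that $[f]\mapsto F(f)$ sends composition in $\mathcal C/{\sim}$ to composition in $\mathcal D$. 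This observation does most of the remaining work.

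Identities and associativity follow by the same device. For the right identity, take $[\id_y]$ followed by $[g]$: the lift of $g$ to $y$ is some $\overline g$ with $F(\overline g)=F(g)$, so $[\overline g\circ\id_y]=[\overline g]=[g]$. For the left identity, $[f]$ followed by $[\id_x]$ with $f\colon x'\to y'$ and $F(y')=F(x)$ lifts $\id_{F(x)}$ at $y'$ to $\id_{y'}$, by uniqueness of lifts, giving $[f]$. For associativity, both bracketings of $[h][g][f]$ are classes whose image under $F$ is $F(h)F(g)F(f)$. Since a class is determined by its image under $F$ (by definition of $\sim$) and its source and target classes are the same, the two bracketings are equal. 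The only step requiring genuine care is the existence and independence of composition; the rest reduces to the fact that the map $[f]\mapsto F(f)$ is injective on each hom-set of $\mathcal C/{\sim}$ and compatible with composition.
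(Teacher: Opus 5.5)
Your proof is correct and follows essentially the same route as the paper: you check independence of representatives with the functoriality computation $F(\overline g_i\circ f_i)=F(g_i)\circ F(f_i)$, and you settle identities and associativity by noting that both sides have the same image under $F$, and that image determines the class. Packaging this as the injective, composition-compatible assignment $[f]\mapsto F(f)$ only streamlines the paper's explicit lift computations; it also shows that only the \emph{existence} of lifts is used, since any lift gives the same class, so your side remark that uniqueness is needed to make the recipe a function is unnecessary.
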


\begin{proof}
    We begin by verifying that the composition law is independent of the choice of the representative. Let $\alpha: X\to Y$ and $\beta: Y\to Z$ be composable morphisms in ${\mathcal{C}}/{\sim}$. Then there exist $f: x\to y$ and $g:y'\to z$ in $\mathcal{C}$ with $F(y) = F(y')$ such that $\alpha = [f]$ and $\beta = [g]$. Since $F$ is a discrete fibration, there exists a unique lift $\overline{g}: y\to z'$ such that $F(g) = F(\overline{g})$. Then $[g]\circ [f] = [\overline{g}\circ f]$. Now let $\widetilde{f}: \widetilde{x}\to\widetilde{y}$ and $\widetilde{g}: \widetilde{y'}\to \widetilde{z}$ be two other representatives of $[f]$ and $[g]$, respectively. By the discrete fibration property, there exists a unique lift ${\overline{\widetilde{g}}}: \widetilde{y}\to\widetilde{z'}$ such that $F\left({\overline{\widetilde{g}}}\right) = F\left(\widetilde{g}\right)$. Then $\left[\widetilde{g}\right]\circ\left[\widetilde{f}\right] = \left[\overline{\widetilde{g}}\circ\widetilde{f}\right]$. Notice that 
    \begin{align*}
        F\left(\overline{g}\circ f\right) = F\left(\overline{g}\right)\circ F(f) = F(g)\circ F\left(\widetilde{f}\right) = F\left(\widetilde{g}\right)\circ F\left(\widetilde{f}\right) = F\left(\overline{\widetilde{g}}\right)\circ F\left(\widetilde{f}\right) = F\left(\overline{\widetilde{g}}\circ\widetilde{f}\right).
    \end{align*}
    In other words $\overline{g}\circ f \sim \overline{\widetilde{g}}\circ\widetilde{f}$, and therefore $\left[\overline{g}\circ f\right] = \left[\overline{\widetilde{g}}\circ\widetilde{f}\right]$.
    
    It is easy to see that for an object $[x]$ in ${\mathcal{C}}/{\sim}$, the map $\id_{[x]} = [\id_x]$ satisfies the axioms required for an identity map. It remains to show that the composition law is associative. Let $\alpha: X\to Y$, $\beta: Y\to W$, and $\gamma: W\to Z$ be composable morphisms in ${\mathcal{C}}/{\sim}$. Then there exist morphisms $f: x\to y$, $g: y'\to w$, and $h: w'\to z$ in $\mathcal{C}$ with $F(y) = F(y')$ and $F(w) = F(w')$ such that $\alpha = [f]$, $\beta = [g]$, and $\gamma = [h]$. Since $F$ is a discrete fibration, there exists a unique lift $\overline{g}:  y\to w_1$ such that $F(g)=F(\overline{g})$. Hence, $[g]\circ[f] = [\overline{g}\circ f]$. Since $F(w') = F(w_1)$, there is a unique lift $\overline{h_1}: w_1\to z_1$ such that $F\left(\overline{h_1}\right) = F(h)$. Thus, 
    $$[h]\circ ([g]\circ [f]) = \left[\overline{h_1}\circ\overline{g}\circ f\right].$$    
    Now, compute $[h]\circ[g]$ first. Since $F(w) = F(w')$, we get a unique morphism $\overline{h_2}: w\to z_2$ such that $F\left(\overline{h_2}\right) =F(h)$. Then, $[h]\circ[g] = \left[\overline{h_2}\circ g\right]$. Because $F(y) = F(y')$, we obtain a unique lift $\overline{\overline{h_2}\circ g}: y\to z_2'$ such that $F\left(\overline{h_2}\circ g\right) = F\left(\overline{\overline{h_2}\circ g}\right)$. Hence, 
    $$ ([h]\circ [g])\circ[f] = \left[\left(\overline{\overline{h_2}\circ g}\right)\circ f\right].$$ We observe that 
    \begin{align*}
        F\left(\overline{\overline{h_2}\circ g}\circ f\right) &= F\left(\overline{\overline{h_2}\circ g}\right)F(f) = F\left(\overline{h_2}\circ g\right)F(f)=F\left(\overline{h_2}\right)F(g)F(f) \\ &=F(h)F(g)F(f) = F\left(\overline{h_1}\right)F\left(\overline{g}\right)F(f) = F\left(\overline{h_1}\circ\overline{g}\circ f\right).
    \end{align*}
    Hence $\overline{\overline{h_2}\circ g}\circ f\sim \overline{h_1}\circ\overline{g}\circ f$, and therefore 
    $$[h]\circ([g]\circ [f]) = \left[\overline{h_1}\circ\overline{g}\circ f\right] = \left[\overline{\overline{h_2}\circ g}\circ f\right] = ([h]\circ[g])\circ[f].$$
    This proves that the composition law is associative, and the claim follows. 
\end{proof}

\begin{lemma}\label{discete_fibration_equivalence}
    Let $F:\mathcal{C}\to\mathcal{D}$ be dense, faithful, and a discrete fibration. Then, the category ${\mathcal{C}}/{\sim}$ is equivalent to $\mathcal{D}$.
\end{lemma}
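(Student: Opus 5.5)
The natural approach is to write down an explicit functor $\overline{F}\colon {\mathcal{C}}/{\sim}\to\mathcal{D}$ induced by $F$ and show that it is an isomorphism of categories, hence in particular an equivalence. On objects set $\overline{F}([x]) = F(x)$. On morphisms set $\overline{F}([f]) = F(f)$. Both assignments are well defined by the very definition of $\sim$: objects are identified exactly when they have the same image, and morphisms exactly when they have the same image. Morphisms of ${\mathcal{C}}/{\sim}$ from $[x]$ to $[y]$ are classes of maps $x'\to y'$ with $F(x')=F(x)$ and $F(y')=F(y)$. Hence $F(f)$ lies in $\Hom_{\mathcal{D}}(F(x),F(y))$, so $\overline{F}$ respects sources and targets.

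The next step is functoriality. For identities, $\overline{F}([\id_x]) = \id_{F(x)}$. For composition, recall that $[g]\circ[f] = [\overline{g}\circ f]$ with $F(\overline{g}) = F(g)$. Then
\[\overline{F}([g]\circ[f]) = F(\overline{g})F(f) = F(g)F(f) = \overline{F}([g])\,\overline{F}([f]).\]

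It remains to show that $\overline{F}$ is bijective on objects and on each Hom-set.
\begin{itemize}
\item[(i)] \emph{Objects.} Injectivity holds by construction of $\sim$. Surjectivity holds because $F$ is dense. In the paper's usage this means surjective on objects, as in the proof of Theorem \ref{F_dense_faithful_discrete_fibration}. If only essential surjectivity is meant, the same argument gives essential surjectivity of $\overline{F}$, which still suffices for an equivalence.
\item[(ii)] \emph{Faithfulness.} If $\overline{F}([f]) = \overline{F}([g])$, then $F(f)=F(g)$, so $f\sim g$. This step is automatic and does not even use that $F$ is faithful.
\item[(iii)] \emph{Fullness.} Take $d\colon F(x)\to F(y)$ in $\mathcal{D}$. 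Because $F$ is a discrete fibration, $d$ has a unique lift $f\colon x\to y'$ with $F(f)=d$. In particular $F(y')=F(y)$, so $[f]\in\Hom_{{\mathcal{C}}/{\sim}}([x],[y])$ and $\overline{F}([f]) = d$.
\end{itemize}
Thus $\overline{F}$ is fully faithful and essentially surjective, which gives the claimed equivalence.

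The only delicate point is fullness, which needs the discrete fibration property. The lift may land at an object $y'$ different from $y$. This is harmless because $y'$ lies in the class $[y]$, and that is exactly why Hom-sets in ${\mathcal{C}}/{\sim}$ were defined as disjoint unions over all representatives. Faithfulness of $F$ is not needed for this argument.
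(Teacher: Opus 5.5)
Your proof is correct and follows essentially the same route as the paper, which defines the same induced functor $H([c])=F(c)$, $H([f])=F(f)$ and gets essential surjectivity from density of $F$ and fullness from the discrete fibration property, exactly as you do. The one difference is faithfulness: the paper invokes faithfulness of $F$ (somewhat loosely, since representatives of two classes in $\Hom([c],[c'])$ need not lie in the same Hom-set of $\mathcal{C}$), whereas you correctly observe that $F(f)=F(f')$ already gives $[f]=[f']$ by the definition of $\sim$, so that hypothesis is not needed.
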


\begin{proof}
    Let $H: {\mathcal{C}}/{\sim}\to \mathcal{D}$ be the functor defined as $H([c]) = F(c)$ on objects and $H([f]) = F(f)$ on morphisms. We claim that $H$ is an equivalence of categories. Let $d\in\mathcal{D}$. Since $F$ is dense, $d\cong F(c)$ for some object $c\in\mathcal{D}$. Thus, taking the corresponding equivalence class in ${\mathcal{C}}/{\mathcal{\sim}}$, we get $H([c]) = F(c)\cong d$, proving that $H$ is dense. We show that $\Hom_{{\mathcal{C}/{\sim}}}([c],[c'])\cong \Hom_\mathcal{D}(F(c),F(c'))$. Let $f: F(c)\to F(c')$ be a morphism in $\Hom_\mathcal{D}(F(c),F(c'))$. Since $F$ is a discrete fibration, there exists a unique morphism $l: c\to x$ in $\mathcal{C}$ with $F(x)=F(c')$ and $F(l) = f$. This defines a morphism $[l]: [c]\to[x]$ with $H([l]) = F(l) = f$. Now let $[f],[f']: [c]\to [c']$ be two morphisms in $\Hom_{{\mathcal{C}/{\sim}}}([c],[c'])$ such that $H([f]) = H([f'])$. Then $F(f) = F(f')$. Since $F$ is faithful, $f = f'$, so $[f] = [f']$. In other words, $H$ is also fully faithful and therefore an equivalence.
\end{proof}

Applying the above construction to $\Mcluster(\C)$ we obtain the following. 

\begin{theorem}
    The categories ${\Mcluster(\C)}/{\sim}$ and $\Mcluster(\Lambda)$ are equivalent.
\end{theorem}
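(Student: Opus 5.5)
This follows by combining Theorem \ref{F_dense_faithful_discrete_fibration} with Lemma \ref{discete_fibration_equivalence}. The plan is to check that the hypotheses of Definition \ref{congruence_cat_def} and Lemma \ref{discete_fibration_equivalence} are met for $F\colon \Mcluster(\C)\to\Mcluster(\Lambda)$ from Proposition \ref{functor-tau-cluster-morph-cat}, and then read off the equivalence.

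\textbf{Step 1: smallness.} Definition \ref{congruence_cat_def} is stated for small categories, so first I would verify that both $\Mcluster(\C)$ and $\Mcluster(\Lambda)$ are small.
\begin{itemize}
\item[(a)] Objects of $\Mcluster(\C)$ are subcategories $\C_U$, indexed by presilting objects $U$. Since $\C$ is Krull--Schmidt and Hom-finite, basic presilting objects up to isomorphism form a set. Moreover, $\C_U$ depends only on $\add U$.
\item[(b)] Morphisms are formal symbols $h^{\B}_V$ indexed by such objects, so they also form a set.
\item[(c)] The same argument applies to $\Mcluster(\Lambda)$, using support $\tau$-rigid objects.
\end{itemize}
I would also record that the relation $\sim$ is well defined on $\Mcluster(\C)$. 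This is immediate, since it is defined purely via equality of images under $F$.

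\textbf{Step 2: invoke the earlier results.} By Theorem \ref{F_dense_faithful_discrete_fibration}, $F$ is dense, faithful, and a discrete fibration. Hence the quotient $\Mcluster(\C)/{\sim}$ is a well-defined category by the lemma following Definition \ref{congruence_cat_def}. Lemma \ref{discete_fibration_equivalence} then gives that the induced functor
\[
H\colon \Mcluster(\C)/{\sim}\to\Mcluster(\Lambda),\qquad [\C_U]\mapsto J(H_P(U)),\qquad [h]\mapsto F(h),
\]
is an equivalence.

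\textbf{Main obstacle.} The substantive point is fullness of $H$ on hom-sets $\Hom([c],[c'])$. The lift $l\colon c\to x$ produced by the discrete fibration property only satisfies $F(x)=F(c')$, not $x=c'$. This is exactly why the hom-sets in Definition \ref{congruence_cat_def} are taken over all representatives $x'\sim c$ and $y'\sim c'$, so $[l]$ is a legitimate morphism $[c]\to[c']$.

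Faithfulness of $H$ is then tautological: two classes with the same image under $F$ are identified by the definition of $\sim$. I would double-check this point rather than relying on faithfulness of $F$, which only compares morphisms with the same source and target.

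With these remarks, the theorem follows directly.
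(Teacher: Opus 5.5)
Your proposal is correct and is exactly the paper's argument, which simply combines Theorem \ref{F_dense_faithful_discrete_fibration} with Lemma \ref{discete_fibration_equivalence}. Your extra remarks are also sound. Faithfulness of $H$ does follow directly from the definition of $\sim$, since the representatives need not be parallel. Smallness, however, is better justified through the bijection $H_P$ with support $\tau$-rigid objects over $\Lambda$ than by Krull--Schmidt and Hom-finiteness alone, which do not by themselves guarantee that isomorphism classes form a set.
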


\begin{proof}
    The claim follows directly by combining Theorem \ref{F_dense_faithful_discrete_fibration} and Lemma \ref{discete_fibration_equivalence}. 
\end{proof}

\section{Example}\label{sec:Examples}

\begin{example}
    Let $\Lambda = kQ/I$ where $Q = $ 
    \begin{tikzcd}[ampersand replacement=\&,cramped]
	1 \& 2,
	\arrow["\alpha", curve={height=-6pt}, from=1-1, to=1-2]
	\arrow["\beta", curve={height=-6pt}, from=1-2, to=1-1]
    \end{tikzcd} 
    and $I$ is the ideal generated by $\alpha\beta$ and $\beta\alpha$. Let $\C := K^{[-1,0]}(\proj\Lambda)$. Then, $\C$ is an extension closed subcategory of the triangulated category $K^b(\proj\Lambda)$ and therefore extriangulated. Moreover, $\C$ is reduced $0$-Auslander; see \cite[Rmk. 5.5]{some_appl_of_extriang_cat}. The AR quiver of $\Lambda$ is given by 
    \begin{equation*}
    \begin{tikzcd}[ampersand replacement=\&,cramped]
	   \& \begin{array}{c} \begin{smallmatrix}  2 \\ 1 \end{smallmatrix} \end{array} \&\& \begin{array}{c} \begin{smallmatrix} 1 \\ 2 \end{smallmatrix} \end{array} \& \\
	       {\begin{smallmatrix} 1  \end{smallmatrix}} \&\& {\begin{smallmatrix} 2 \end{smallmatrix}} \&\& {\begin{smallmatrix} 1 \end{smallmatrix}},
	   \arrow[from=1-2, to=2-3]
	   \arrow[from=1-4, to=2-5]
	   \arrow[from=2-1, to=1-2]
	   \arrow[from=2-3, to=1-4]
	   \arrow[dashed, from=2-3, to=2-1]
	   \arrow[dashed, from=2-5, to=2-3]
    \end{tikzcd}
    \end{equation*}
    while the AR quiver of $\C$ is given by
    \begin{equation*}
        \begin{tikzcd}[ampersand replacement=\&,cramped]
    	\& \begin{array}{c} \PP_{\begin{smallmatrix}  2 \\ 1 \end{smallmatrix}} \end{array} \&\& \begin{array}{c} \Sigma\PP_{\begin{smallmatrix}  1 \\ 2 \end{smallmatrix}} \end{array} \&\&\& \\
    	\&\&\& \begin{array}{c} \PP_{\begin{smallmatrix} 1 \\ 2 \end{smallmatrix}} \end{array} \&\& \begin{array}{c} \Sigma\PP_{\begin{smallmatrix}  2 \\ 1 \end{smallmatrix}} \end{array} \\
    	{\PP_{\begin{smallmatrix}  1 \end{smallmatrix}}} \&\& {\PP_{\begin{smallmatrix} 2 \end{smallmatrix}}} \&\& {\PP_{\begin{smallmatrix} 1 \end{smallmatrix}}} \&\& {\PP_{\begin{smallmatrix}  2  \end{smallmatrix}}}
    	\arrow[from=1-2, to=3-3]
    	\arrow[dashed, from=1-4, to=1-2]
    	\arrow[from=1-4, to=3-5]
    	\arrow[from=2-4, to=3-5]
    	\arrow[dashed, from=2-6, to=2-4]
    	\arrow[from=2-6, to=3-7]
    	\arrow[from=3-1, to=1-2]
    	\arrow[from=3-3, to=1-4]
    	\arrow[from=3-3, to=2-4]
    	\arrow[dashed, from=3-3, to=3-1]
    	\arrow[from=3-5, to=2-6]
    	\arrow[dashed, from=3-5, to=3-3]
    	\arrow[dashed, from=3-7, to=3-5]
    \end{tikzcd}
    \end{equation*}
    where $\PP_M$ denotes the minimal projective presentation of a $\Lambda$-module $M$. Let $P = \PP_{\begin{smallmatrix}1\end{smallmatrix}}\oplus \PP_{\begin{smallmatrix}2\\1\end{smallmatrix}}$ be a projective generator for $\C$. Observe that the functor $\C(P,-)$ induces an equivalence between $\C/[\Sigma P]$ and $\modd\Lambda$. The $\tau$-cluster morphism category of $\Lambda$ is depicted in Figure \ref{fig:tau-cluster-morphism-category}. The vertices of $\Mcluster(\Lambda)$ are $\tau$-perpendicular subcategories $J(X)\subseteq \modd\Lambda$ for a support $\tau$-rigid object $X$ in $\mathcal{C}(\Lambda)$. For an object $\W$ in $\Mcluster(\Lambda)$, there is a non-identity irreducible morphism $g^\W_V: \W\to J_\W(V)$ for every indecomposable support $\tau$-rigid object $V$ in $\mathcal{C}(\W)$ shown as an arrow between $\W$ and $J_\W(V)$ labeled by $V$. Since $J_\W(P) = J_\W(P[1])$ for an indecomposable projective object $P$ in $\W$, there are two arrows from $\W$ to $J_{\W}(P)$ labeled by $P$ and $P[1]$. Notice that complete signed $\tau$-exceptional sequences can be read off from Figure \ref{fig:tau-cluster-morphism-category} composing arrows from $\modd\Lambda$ to $0$. For example, there is a path $\modd\Lambda\xrightarrow{2} J(2)\xrightarrow{\begin{smallmatrix} 2\\1\end{smallmatrix}} J_{J(2)}\left(\begin{smallmatrix}2\\1\end{smallmatrix}\right) = 0$. This corresponds to the (signed) $\tau$-exceptional sequence $\left( \begin{smallmatrix} 2\\1\end{smallmatrix}, 2\right)$. Since $\begin{smallmatrix} 2\\1\end{smallmatrix}$ is a projective object in $J(2)$, it can be assigned a shift sign to get the signed $\tau$-exceptional sequence $\left( \begin{smallmatrix} 2\\1\end{smallmatrix}[1], 2\right)$. 

    \begin{figure}[H]
    \centering
    \begin{tikzcd}[ampersand replacement=\&,cramped]
    	\&\& {J(2)} \&\& 0 \\
    	\begin{array}{c} J\left(\begin{smallmatrix} 2\\1 \end{smallmatrix}\right) = J\left(\begin{smallmatrix} 2\\1 \end{smallmatrix}{[1]}\right) \end{array} \&\& {\modd\Lambda} \&\& \begin{array}{c} J\left(\begin{smallmatrix} 1\\2 \end{smallmatrix}\right) = J\left(\begin{smallmatrix} 1\\2 \end{smallmatrix}{[1]}\right) \end{array} \\
    	0 \&\& {J(1)}
    	\arrow["\begin{array}{c} \begin{smallmatrix} 2\\1 \end{smallmatrix}{[1]} \end{array}"', shift right, from=1-3, to=1-5]
    	\arrow["\begin{array}{c} \begin{smallmatrix} 2\\1 \end{smallmatrix} \end{array}", shift left, from=1-3, to=1-5]
    	\arrow["{1[1]}"', shift right, from=2-1, to=3-1]
    	\arrow["1", shift left, from=2-1, to=3-1]
    	\arrow["2", from=2-3, to=1-3]
    	\arrow["\begin{array}{c} \begin{smallmatrix} 2\\1 \end{smallmatrix} \end{array}"', shift right, from=2-3, to=2-1]
    	\arrow["\begin{array}{c} \begin{smallmatrix} 2\\1 \end{smallmatrix}{[1]} \end{array}", shift left, from=2-3, to=2-1]
    	\arrow["\begin{array}{c} \begin{smallmatrix} 1\\2 \end{smallmatrix}{[1]} \end{array}"', shift right, from=2-3, to=2-5]
    	\arrow["\begin{array}{c} \begin{smallmatrix} 1\\2 \end{smallmatrix} \end{array}", shift left, from=2-3, to=2-5]
    	\arrow["1", from=2-3, to=3-3]
    	\arrow["{2[1]}"', shift right, from=2-5, to=1-5]
    	\arrow["2", shift left, from=2-5, to=1-5]
    	\arrow["\begin{array}{c} \begin{smallmatrix} 1\\2 \end{smallmatrix} \end{array}"', shift right, from=3-3, to=3-1]
    	\arrow["\begin{array}{c} \begin{smallmatrix} 1\\2 \end{smallmatrix}{[1]} \end{array}", shift left, from=3-3, to=3-1]
    \end{tikzcd}
    \caption{The $\tau$-cluster morphism category of $\Lambda$.}
    \label{fig:tau-cluster-morphism-category}
    \end{figure}
On the other hand, the $\tau$-cluster morphism category of $\C$ is displayed in Figure \ref{fig:tau-cluster-morphism-category-C}. The objects of $\Mcluster(\C)$ are subcategories of $\C$ of the form $\C_U = U^\perp\cap {^\perp U}$ for a presilting object $U$ in $\C$. For an object $\C_U$ in $\Mcluster(\C)$, there is a non-identity irreducible morphism $h^{\C_U}_V: \C_U\to \C_{U\oplus V}$, denoted by $V$, for every indecomposable presilting object $V\in\C_U$ with $\add V\cap\add U = 0$, see Definition \ref{tau-cluster-morph-cat-of-C}. Similarly to the $\Mcluster(\Lambda)$ case, one can read off signed presilting sequences from Figure \ref{fig:tau-cluster-morphism-category-C} concatenating arrows from $\C$ to $\C_U$, where $\abs{U} = 2$. For example, there is a path $$\C\xrightarrow{\PP_{\begin{smallmatrix}2\\1\end{smallmatrix}}} \C_{\PP_{\begin{smallmatrix}2\\1\end{smallmatrix}}}\xrightarrow{\PP_{\begin{smallmatrix}1\\2\end{smallmatrix}}}\C_{\PP_{\begin{smallmatrix}2\\1\end{smallmatrix}}\oplus\PP_{\begin{smallmatrix}1\\2\end{smallmatrix}}},$$ which corresponds to the signed presilting sequence $\left(\PP_{\begin{smallmatrix}1\\2\end{smallmatrix}}, \PP_{\begin{smallmatrix}2\\1\end{smallmatrix}}\right)$. We have that $\C_{\PP_{\begin{smallmatrix}2\\1\end{smallmatrix}}} = \left\{\PP_{\begin{smallmatrix}2\\1\end{smallmatrix}}, \PP_{\begin{smallmatrix}1\\2\end{smallmatrix}}, \PP_{\begin{smallmatrix}2\end{smallmatrix}}\right\}$, where $\proj\C_{\PP_{\begin{smallmatrix}2\\1\end{smallmatrix}}} = \left\{\PP_{\begin{smallmatrix}2\\1\end{smallmatrix}}, \PP_{\begin{smallmatrix}1\\2\end{smallmatrix}}\right\}$ and $\inj\C_{\PP_{\begin{smallmatrix}2\\1\end{smallmatrix}}} = \left\{\PP_{\begin{smallmatrix}2\\1\end{smallmatrix}}, \PP_{\begin{smallmatrix}2\end{smallmatrix}}\right\}$. Hence, since $\PP_{\begin{smallmatrix}1\\2\end{smallmatrix}}$ is projective in $\C_{\PP_{\begin{smallmatrix}2\\1\end{smallmatrix}}}$, it can be shifted (in $\C_{\PP_{\begin{smallmatrix}2\\1\end{smallmatrix}}}$) to get the signed presilting sequence $\left(\Sigma_{\C_{\PP_{\begin{smallmatrix}2\\1\end{smallmatrix}}}}\left(\PP_{\begin{smallmatrix}1\\2\end{smallmatrix}}\right), \PP_{\begin{smallmatrix}2\\1\end{smallmatrix}}\right) = \left(\PP_{\begin{smallmatrix}2\end{smallmatrix}}, \PP_{\begin{smallmatrix}2\\1\end{smallmatrix}}\right)$.
\begin{figure}[H]
\centering
\resizebox{\textwidth}{!}{%
\begin{tikzcd}[ampersand replacement=\&, cramped, column sep=large, row sep=large]
    \& {\redbox{\C_{\PP_{\begin{smallmatrix} 2\\1 \end{smallmatrix}}\oplus{\PP_{\begin{smallmatrix} 1\\2 \end{smallmatrix}}}}}} \& {\redbox{\C_{\PP_{\begin{smallmatrix} 2 \end{smallmatrix}}\oplus{\Sigma\PP_{\begin{smallmatrix} 2\\1 \end{smallmatrix}}}}}} \&\& {\redbox{\C_{\PP_{\begin{smallmatrix} 2 \end{smallmatrix}}\oplus{\PP_{\begin{smallmatrix} 2\\1 \end{smallmatrix}}}}}} \& {\redbox{\C_{\PP_{\begin{smallmatrix} 1\\2 \end{smallmatrix}}\oplus{\PP_{\begin{smallmatrix} 2\\1 \end{smallmatrix}}}}}} \& \\
    {\redbox{\C_{\PP_{\begin{smallmatrix} 2\\1 \end{smallmatrix}}\oplus{\PP_{\begin{smallmatrix} 2 \end{smallmatrix}}}}}} \& {\magentabox{\C_{\PP_{\begin{smallmatrix} 2\\1 \end{smallmatrix}}}}} \&\& {\displaystyle\C_{\PP_{\begin{smallmatrix} 2 \end{smallmatrix}}}} \&\& {\bluebox{\C_{\PP_{\begin{smallmatrix} 1\\2 \end{smallmatrix}}}}} \& {\redbox{\C_{\PP_{\begin{smallmatrix} 1\\2 \end{smallmatrix}}\oplus{\PP_{\begin{smallmatrix} 1 \end{smallmatrix}}}}}} \\
    {\redbox{\C_{\Sigma\PP_{\begin{smallmatrix} 2\\1 \end{smallmatrix}}\oplus{\PP_{\begin{smallmatrix} 1 \end{smallmatrix}}}}}} \&\&\& \displaystyle\C \&\&\& {\redbox{\C_{\Sigma\PP_{\begin{smallmatrix} 1\\2 \end{smallmatrix}}\oplus{\Sigma\PP_{\begin{smallmatrix} 2 \end{smallmatrix}}}}}} \\
    \& {\magentabox{\C_{\Sigma\PP_{\begin{smallmatrix} 2\\1 \end{smallmatrix}}}}} \&\& {\displaystyle\C_{\PP_{\begin{smallmatrix} 1 \end{smallmatrix}}}} \&\& {\bluebox{\C_{\Sigma\PP_{\begin{smallmatrix} 1\\2 \end{smallmatrix}}}}} \\
    {\redbox{\C_{\Sigma\PP_{\begin{smallmatrix} 2\\1 \end{smallmatrix}}\oplus{\Sigma\PP_{\begin{smallmatrix} 1\\2 \end{smallmatrix}}}}}} \&\& {\redbox{\C_{\PP_{\begin{smallmatrix} 1 \end{smallmatrix}}\oplus{\Sigma\PP_{\begin{smallmatrix} 1\\2 \end{smallmatrix}}}}}} \&\& {\redbox{\C_{\PP_{\begin{smallmatrix} 1 \end{smallmatrix}}\oplus{\PP_{\begin{smallmatrix} 1\\2 \end{smallmatrix}}}}}} \&\& {\redbox{\C_{\Sigma\PP_{\begin{smallmatrix} 1\\2 \end{smallmatrix}}\oplus{\Sigma\PP_{\begin{smallmatrix} 2\\1 \end{smallmatrix}}}}}}
    \arrow["{\PP_{\begin{smallmatrix} 1\\2 \end{smallmatrix}}}"', color=red, from=2-2, to=1-2]
    \arrow["{\PP_{\begin{smallmatrix} 2 \end{smallmatrix}}}", color=blue, from=2-2, to=2-1]
    \arrow["{\Sigma\PP_{\begin{smallmatrix} 2\\1 \end{smallmatrix}}}", from=2-4, to=1-3]
    \arrow["{\PP_{\begin{smallmatrix} 2\\1 \end{smallmatrix}}}"', from=2-4, to=1-5]
    \arrow["{\PP_{\begin{smallmatrix} 2\\1 \end{smallmatrix}}}", color=ForestGreen, from=2-6, to=1-6]
    \arrow["{\PP_{\begin{smallmatrix} 1 \end{smallmatrix}}}"', color=purple, from=2-6, to=2-7]
    \arrow["{\PP_{\begin{smallmatrix} 2\\1 \end{smallmatrix}}}"', from=3-4, to=2-2]
    \arrow["{\PP_{\begin{smallmatrix} 2 \end{smallmatrix}}}"', from=3-4, to=2-4]
    \arrow["{\PP_{\begin{smallmatrix} 1\\2 \end{smallmatrix}}}", from=3-4, to=2-6]
    \arrow["{\Sigma\PP_{\begin{smallmatrix} 2\\1 \end{smallmatrix}}}", from=3-4, to=4-2]
    \arrow["{\PP_{\begin{smallmatrix} 1 \end{smallmatrix}}}", from=3-4, to=4-4]
    \arrow["{\Sigma\PP_{\begin{smallmatrix} 1\\2 \end{smallmatrix}}}", from=3-4, to=4-6]
    \arrow["{\PP_{\begin{smallmatrix} 1 \end{smallmatrix}}}"', color=red, from=4-2, to=3-1]
    \arrow["{\Sigma\PP_{\begin{smallmatrix} 1\\2 \end{smallmatrix}}}", color=blue, from=4-2, to=5-1]
    \arrow["{\Sigma\PP_{\begin{smallmatrix} 1\\2 \end{smallmatrix}}}"', from=4-4, to=5-3]
    \arrow["{\PP_{\begin{smallmatrix} 1\\2 \end{smallmatrix}}}", from=4-4, to=5-5]
    \arrow["{\PP_{\begin{smallmatrix} 2 \end{smallmatrix}}}", color=ForestGreen, from=4-6, to=3-7]
    \arrow["{\Sigma\PP_{\begin{smallmatrix} 2\\1 \end{smallmatrix}}}"', color=purple, from=4-6, to=5-7]
\end{tikzcd}
}
\caption{The $\tau$-cluster morphism category of $\C$.}
\label{fig:tau-cluster-morphism-category-C}
\end{figure}
There is a functor $F: \Mcluster(\C)\to \Mcluster(\Lambda)$ given by $F(H_P(U))$ on objects and $F\left(h^{\C_U}_V\right) = g^{J(H_P(U))}_{\widetilde{H}_{B_U}(V)}$ on morphisms; see Proposition \ref{functor-tau-cluster-morph-cat}. We have $F\left(\C_{\PP_{\begin{smallmatrix} 2 \end{smallmatrix}}}\right) = J(2)$, $F\left(\C_{{\PP_{\begin{smallmatrix} 2\\1 \end{smallmatrix}}}}\right) = J\left({{\begin{smallmatrix} 2\\1 \end{smallmatrix}}}\right) = J\left({{\begin{smallmatrix} 2\\1 \end{smallmatrix}}[1]}\right) = F\left(\C_{\Sigma\PP_{\begin{smallmatrix} 2\\1 \end{smallmatrix}}}\right)$. Moreover, $F\left(\C_U\right) = 0$ for every $U$ of rank $2$. Noting that the category $\Mcluster(\C)$ is symmetric, one can readily compute the images of the remaining objects under $F$. Objects of $\Mcluster(\C)$ that are sent to the same image in $\Mcluster(\Lambda)$ via $F$ are indicated by boxes of the same colour. We also observe that $F$ is surjective on objects. The functor $F$ sends the arrow $\PP_{\begin{smallmatrix} 2\\1 \end{smallmatrix}} \colon \C \to \C_{\PP_{\begin{smallmatrix} 2\\1 \end{smallmatrix}}}$ in $\Mcluster(\C)$ to the arrow ${\begin{smallmatrix} 2\\1 \end{smallmatrix}} \colon \modd\Lambda \to J(2)$ in $\Mcluster(\Lambda)$. Continuing in the same way, the arrows $\PP_{\begin{smallmatrix} 2\\1 \end{smallmatrix}}: \C_{\PP_{\begin{smallmatrix} 2\\1 \end{smallmatrix}}} \to \C_{\PP_{\begin{smallmatrix} 2\\1 \end{smallmatrix}} \oplus \PP_{\begin{smallmatrix} 1\\2 \end{smallmatrix}}}$ and $\PP_{\begin{smallmatrix} 2 \end{smallmatrix}} \colon \C_{\PP_{\begin{smallmatrix} 2\\1 \end{smallmatrix}}} \to \C_{\PP_{\begin{smallmatrix} 2\\1 \end{smallmatrix}} \oplus \PP_{\begin{smallmatrix} 2 \end{smallmatrix}}}$ in $\Mcluster(\C)$ are mapped to the arrows $1 \colon J(2) \to 0$ and $1[1] \colon J(2) \to 0$ in $\Mcluster(\Lambda)$, respectively. In other words, the signed presilting sequence $\left(\PP_{\begin{smallmatrix} 2 \end{smallmatrix}}, \PP_{\begin{smallmatrix} 2\\1 \end{smallmatrix}}\right)$ in $\C$ is sent to the signed $\tau$-exceptional sequence $\left(1[1], {\begin{smallmatrix} 2\\1 \end{smallmatrix}}\right)$ in $\CLambda(\Lambda)$; see Proposition~\ref{signed_presilt_seq-signed_tau-exc_seq_bijection}. In a similar way, one can compute the images of morphisms in $\Mcluster(\C)$ under $F$. Morphisms in $\Mcluster(\C)$ that are sent to the same image via $F$ are highlighted in the same colour.. We observe that $F$ is faithful. Moreover, for each object $F(\C_U)$ in $\Mcluster(\Lambda)$ and every morphism $g \colon F(\C_U) \to \W$ in $\Mcluster(\Lambda)$, there exists a unique morphism $h \colon \C_U \to \C_W$ in $\Mcluster(\C)$ such that $F(h) = g$. In other words, $F$ is a discrete fibration; see Theorem~\ref{F_dense_faithful_discrete_fibration}.

By identifying objects and morphisms that are sent to the same image under $F$, we obtain a quotient category ${\Mcluster(\C)}/{\sim}$, which is depicted in Figure~\ref{fig:Mcluster-quotient}. Clearly, ${\Mcluster(\C)}/{\sim}$ is equivalent to the $\tau$-cluster morphism category $\Mcluster(\Lambda)$. In this way, the latter is recovered from the $\tau$-cluster morphism category of $\C$ together with the functor $F$.

\begin{figure}[H]
\centering
\begin{tikzcd}[ampersand replacement=\&,cramped]
    \&\& {\left[\C_{\PP_{\begin{smallmatrix} 2 \end{smallmatrix}}}\right]} \&\& \begin{array}{c} \left[\C_{\PP_{\begin{smallmatrix} 1\\2 \end{smallmatrix}}\oplus{\PP_{\begin{smallmatrix} 2\\1 \end{smallmatrix}}}}\right] \end{array} \\
    \\
    \begin{array}{c} \left[\C_{\PP_{\begin{smallmatrix} 2\\1 \end{smallmatrix}}}\right] \end{array} \&\& {[\C]} \&\& \begin{array}{c} \left[\C_{\PP_{\begin{smallmatrix} 1\\2 \end{smallmatrix}}}\right] \end{array} \\
    \\
    \begin{array}{c} \left[\C_{\PP_{\begin{smallmatrix} 1\\2 \end{smallmatrix}}\oplus{\PP_{\begin{smallmatrix} 2\\1 \end{smallmatrix}}}}\right] \end{array} \&\& {\left[\C_{\PP_{\begin{smallmatrix} 1 \end{smallmatrix}}}\right]}
    \arrow["{\left[\PP_{\begin{smallmatrix}2\\1\end{smallmatrix}}\right]}"', shift right, from=1-3, to=1-5]
    \arrow["{\left[\Sigma\PP_{\begin{smallmatrix}2\\1\end{smallmatrix}}\right]}", shift left, from=1-3, to=1-5]
    \arrow["{\left[\PP_{\begin{smallmatrix}2\end{smallmatrix}}\right]}"', shift right, from=3-1, to=5-1]
    \arrow["{\left[\PP_{\begin{smallmatrix}1\end{smallmatrix}}\right]}", shift left, from=3-1, to=5-1]
    \arrow["{\left[\PP_{\begin{smallmatrix}2\end{smallmatrix}}\right]}", from=3-3, to=1-3]
    \arrow["{\left[\Sigma\PP_{\begin{smallmatrix}2\\1\end{smallmatrix}}\right]}"', shift right, from=3-3, to=3-1]
    \arrow["{\left[\Sigma\PP_{\begin{smallmatrix}2\\1\end{smallmatrix}}\right]}", shift left, from=3-3, to=3-1]
    \arrow["{\left[\Sigma\PP_{\begin{smallmatrix}1\\2\end{smallmatrix}}\right]}"', shift right, from=3-3, to=3-5]
    \arrow["{\left[\PP_{\begin{smallmatrix}1\\2\end{smallmatrix}}\right]}",  shift left, from=3-3, to=3-5]
    \arrow["{\left[\PP_{\begin{smallmatrix}1\end{smallmatrix}}\right]}", from=3-3, to=5-3]
    \arrow["{\left[\PP_{\begin{smallmatrix}2\end{smallmatrix}}\right]}", from=3-5, to=1-5]
    \arrow["{\left[\Sigma\PP_{\begin{smallmatrix}2\end{smallmatrix}}\right]}"', shift right=2, from=3-5, to=1-5]
    \arrow["{\left[\Sigma\PP_{\begin{smallmatrix}1\\2\end{smallmatrix}}\right]}"', shift right, from=5-3, to=5-1]
    \arrow["{\left[\PP_{\begin{smallmatrix}1\\2\end{smallmatrix}}\right]}", shift left, from=5-3, to=5-1]
\end{tikzcd}
\caption{The category ${\Mcluster(\C)}/{\sim}$.}
\label{fig:Mcluster-quotient}
\end{figure}
\end{example}

\paragraph{\textbf{Acknowledgment:}} This work forms part of my PhD at the University of Leeds, funded by the School of Mathematics. I am also affiliated with the EPSRC Programme Grant EP/W007509/1, Combinatorial Representation Theory: Algebra and its Interfaces with Geometry, Topology and Combinatorics. I am deeply grateful to Bethany R. Marsh for her supervision and for our many inspiring discussions.

\maketitle

\nocite{*}
\bibliographystyle{alpha}
\bibliography{references}

\end{document}

%% file: macro.tex
\newcommand{\abs}[1]{\left\lvert#1\right\rvert}
\newcommand{\news}[1]{{\color{green} #1}}
\newcommand{\new}[1]{{\color{blue} #1}}
\newcommand{\newp}[1]{{\color{purple} #1}}
\newcommand{\old}[1]{{\color{red} #1}}
\definecolor{darkgreen}{rgb}{0.33, 0.42, 0.18}
\newcommand{\rjm}[1]{{\color{darkgreen} #1}}

\newcommand{\proj}{\mathop{\rm proj}\nolimits}%
\newcommand{\inj}{\mathop{\rm inj}\nolimits}%
\providecommand{\Gen}{\mathop{\rm Gen}\nolimits}%
\providecommand{\Cogen}{\mathop{\rm Cogen}\nolimits}%
\providecommand{\Filt}{\mathop{\rm Filt}\nolimits}%
\providecommand{\cone}{\mathop{\rm cone}\nolimits}%
\providecommand{\cocone}{\mathop{\rm cocone}\nolimits}%
\providecommand{\Sub}{\mathop{\rm Sub}\nolimits}%
\providecommand{\rad}{\mathop{\rm rad}\nolimits}%
\providecommand{\coker}{\mathop{\rm coker}\nolimits}%
\providecommand{\im}{\mathop{\rm im}\nolimits}%
\providecommand{\btop}{\mathop{\rm top}\nolimits}%
\newcommand{\infl}{\rightarrowtail}
\newcommand{\defl}{\twoheadrightarrow}

\def\A{\mathcal{A}}
\def\C{\mathscr{C}}
\def\B{\mathscr{B}}
\def\D{\mathcal{D}}
\def\P{\mathcal{P}}
\def\Y{\mathbb{Y}}
\def\Z{\mathcal{Z}}
\def\K{\mathcal{K}}
\def\F{\mathcal{F}}
\def\Eps{\mathcal{E}}
\def\T{\mathcal{T}}
\def\U{\mathcal{U}}
\def\V{\mathcal{V}}
\def\W{\mathcal{W}}
\def\E{\mathsf{E}}
\def\CLambda{\mathcal{C}}
\def\M{\mathsf{M}}
\def\N{\mathsf{N}}
\def\X{\mathsf{X}}
\def\L{\mathsf{L}}
\def\Mcluster{\mathfrak{M}}

\def\CC{\mathbb{C}}
\def\XX{\mathbb{X}}
\def\PP{{\mathbb P}}
\def\EE{{\mathbb E}}

\providecommand{\add}{\mathop{\rm add}\nolimits}%
\providecommand{\End}{\mathop{\rm End}\nolimits}%
\providecommand{\Ext}{\mathop{\rm Ext}\nolimits}%
\providecommand{\Hom}{\mathop{\rm Hom}\nolimits}%
\providecommand{\ind}{\mathop{\rm ind}\nolimits}%
\providecommand{\pd}{\mathop{\rm pd}\nolimits}%
\providecommand{\id}{\mathop{\rm id}\nolimits}%
\providecommand{\gldim}{\mathop{\rm gl.dim}\nolimits}%
\newcommand{\modd}{\mathop{ \rm mod}\nolimits}%
\newcommand{\stautilt}{\textnormal{s$\tau$-tilt }}
\newcommand{\taugenmin}{\textnormal{$\tau$-gen-min }}
\newcommand{\ftors}{\textnormal{f-tors }}

\providecommand{\presilt}{\mathop{\rm presilt}\nolimits}%
\newcommand{\staurigid}{\textnormal{s$\tau$-rigid }}%
\providecommand{\silt}{\mathop{\rm silt}\nolimits}%